\documentclass[11pt,a4paper]{article}
\usepackage{epsf,epsfig,amsfonts,amsgen,amsmath,amstext,amsbsy,amsopn,amsthm}
\usepackage{amsmath}
\usepackage{enumerate}
\usepackage{hhline}
\usepackage{multirow}
\usepackage{multicol}
\usepackage{booktabs}
\usepackage{tabularx,array}
\usepackage{lineno}
\usepackage{amsfonts,amsthm,amssymb,bm}
\usepackage{amsfonts}
\usepackage{enumitem}
\usepackage{graphics}
\usepackage{latexsym,bm}
\usepackage{amsfonts,amsthm,amssymb,bbding}
\usepackage{indentfirst}
\usepackage{graphicx}
\usepackage{authblk}

\usepackage{xcolor}

\definecolor{paperblue}{HTML}{174A7E}
\definecolor{paperred}{HTML}{A33A2B}

\usepackage{color}
\usepackage[colorlinks=true,anchorcolor=blue,filecolor=blue,linkcolor=red,urlcolor=blue,citecolor=blue]{hyperref}
\usepackage{float}
\usepackage{subcaption}
\usepackage{tikz,enumerate}

\usepackage{xcolor}
\usetikzlibrary{backgrounds,positioning,shapes.geometric}

\usetikzlibrary{calc}
\usepackage{geometry}
\newtheorem{theorem}{Theorem}[section]

\newtheorem{lemma}[theorem]{Lemma}
\newtheorem{claim}{Claim}[theorem]

\newtheorem{corollary}[theorem]{Corollary}
\newtheorem{conjecture}[theorem]{Conjecture}

\newtheorem{proposition}[theorem]{Proposition}

\usepackage[numbers,sort&compress]{natbib}

\begin{document}

\title{\bf The chromatic number of $\{\mathrm{ISK}_4, \textup{diamond}\}$-free graphs}

\author[1]{Qing Cui\footnote{Email: cui@nuaa.edu.cn.}}
\author[2]{Kaiyang Lan\footnote{Email: kylan95@126.com (Corresponding author).}}

\affil[1]{School of Mathematics, Nanjing University of Aeronautics and Astronautics, Nanjing, 211106, Jiangsu, China}
\affil[2]{School of Mathematics and Statistics, Minnan Normal University, Zhangzhou, 363000, Fujian, China}

\date{}
\maketitle



\begin{abstract}
An $\mathrm{ISK}_4$ is a graph that is isomorphic to a subdivision of $K_4$; a \textit{diamond} is $K_4$ minus one edge; and a \textit{bowtie} consists of two triangles sharing exactly one vertex.
L\'ev\^eque, Maffray, and Trotignon~(2012) conjectured that every $\mathrm{ISK}_4$-free graph is $4$-colorable.
Chen, Chen, Cui, Feng, and Liu~(2021) proved this for $\{\textup{diamond},\textup{bowtie}\}$-free graphs. 
We prove this conjecture for diamond-free graphs.
Moreover, the bound is optimal.
\end{abstract}

{\bf Keywords:} $\mathrm{ISK}_4$, diamond 

{\bf 2020 Mathematics Subject Classification:} 05C15, 05C75

\section{Introduction}

We consider only finite simple graphs throughout this paper.
For a positive integer $t$, let $K_t$, $P_t$ and $C_t$ denote the complete graph, the path and the cycle on $t$ vertices, respectively (with \(t\geq 3\) for \(C_t\)).
For graphs $G$ and $H$, we say that $G$ \textit{contains} $H$ if $H$ is isomorphic to an induced subgraph of $G$, and otherwise, $G$ is $H$-\textit{free}. 
For a family $\mathcal{H}$, the graph $G$ is $\mathcal{H}$-\textit{free} if $G$ is $H$-free
for every graph $H\in \mathcal{H}$.
A graph class $\mathcal{G}$ is $\chi$-\textit{bounded} if there is a function $f:\mathbb{N}\to\mathbb{N}$ (a $\chi$-\textit{binding function}) for which $\chi(G)\le f(\omega(G))$ holds for every induced subgraph \(G\) of a graph in $\mathcal{G}$, where $\chi(G)$ and $\omega(G)$ denote the chromatic number and clique number, respectively.
A $\chi$-bounded class is \textit{polynomially $\chi$-bounded} if such an $f$ can be chosen to be a polynomial.
The study of $\chi$-boundedness originates with Gy{\'a}rf{\'a}s \cite{GST87}, and has since attracted extensive research on classes defined by forbidden induced subgraphs.
For background and recent developments, we refer the reader to \cite{CSSS2023,Nguyen2025,BR2004,IS2019,AS2020}.

A \textit{subdivision} of a graph \(G\) is a graph obtained from \(G\) by replacing the edges of \(G\) with internally vertex-disjoint paths of length at least one between their end vertices, with all internal vertices outside \(V (G)\).
Scott~\cite{scott1997induced} conjectured that for every graph $H$, the class of graphs containing no induced subdivision of $H$ is $\chi$-bounded. This conjecture was refuted in general by Pawlik, Kozik, Krawczyk, Laso{\'n}, Micek, Trotter, and Walczak~\cite{pawlik2014triangle}; nevertheless, it is known to be true for several choices of $H$, including all graphs with at most four vertices~\cite{chudnovsky2012excluding}.
Recently, Nguyen, Scott and Seymour~\cite{nguyen2025subdivisions} proved that, for every fixed integer \(t \geq 3\), every \(n\)-vertex graph with \(n \geq 2\) containing no induced subdivision of \(K_t\) has a stable set of size at least $n/\operatorname{polylog}n$, where the polynomial depends on \(t\).
In this paper, we focus on the case $H=K_4$.

An $\mathrm{ISK}_4$ is a graph that is isomorphic to a subdivision of $K_4$.
L\'ev\^eque, Maffray and Trotignon \cite{LMT2012} studied the structure
of $\mathrm{ISK}_4$-free graphs and obtained several decomposition theorems.
Since no $\mathrm{ISK}_4$-free graphs whose chromatic number is five or more are
known, they proposed the following conjecture:

\begin{conjecture}[\cite{LMT2012}]\label{ISK4C}
	Every $\mathrm{ISK}_4$-free graph $G$ satisfies $\chi(G)\leq 4$.
\end{conjecture}


Subsequent work progressively strengthened the available coloring bounds.
For example, Le~\cite{Le2017} obtained the explicit bound $24$ in general; Chen, Chen, Cui, Feng and Liu~\cite{ChenEtAl2020} improved the general bound to $8$.
Chudnovsky, Liu, Schaudt, Spirkl, Trotignon, and Vu\v{s}kovi\'c~\cite{ChudnovskyEtAl2019} proved the optimal statement that every $\{\mathrm{ISK}_4,K_3\}$-free graph is $3$-colorable.

The role of triangles was isolated further by Chen, Chen, Cui, Feng, and Liu~\cite{ChenEtAl2021},
who proved that every $\{\mathrm{ISK}_4,\textup{diamond},\textup{bowtie}\}$-free graph is
$4$-colorable.
Here a \textit{diamond} is $K_4$ minus one edge, and a \textit{bowtie} consists of two triangles sharing exactly one vertex (see~\autoref{fig:diamond-bowtie}).
Most recently, Liu, Sun, and Wang~\cite{LiuSunWang2026} strengthened that result to $3$-colorability and obtained a detailed decomposition of the same class.
However, Conjecture~\ref{ISK4C} remains open in general.
\begin{figure}[htbp]
	\centering
	\begin{tikzpicture}[
		v/.style={circle,draw,fill=black,inner sep=0pt,minimum size=1.5mm}]
		\begin{scope}
			\node[v] (a) at (-.65,0) {};
			\node[v] (b) at (.65,0) {};
			\node[v] (c) at (0,.85) {};
			\node[v] (d) at (0,-.85) {};
			\draw[blue, line width=1pt] (a)--(b) (a)--(c)--(b) (a)--(d)--(b);
			\node at (0,-1.35) {diamond};
		\end{scope}
		\begin{scope}[xshift=4.2cm]
			\node[v] (z) at (0,0) {};
			\node[v] (p) at (-1,.72) {};
			\node[v] (q) at (-1,-.72) {};
			\node[v] (r) at (1,.72) {};
			\node[v] (s) at (1,-.72) {};
			\draw[blue, line width=1pt] (z)--(p)--(q)--(z) (z)--(r)--(s)--(z);
			\node at (0,-1.35) {bowtie};
		\end{scope}
	\end{tikzpicture}
	\caption{The two triangle configurations.}
	\label{fig:diamond-bowtie}
\end{figure}

In this paper, we improve the result of Chen et al.~\cite{ChenEtAl2021} by removing the bowtie-free assumption.

\begin{theorem}\label{thm:main-intro}
	Every $\{\mathrm{ISK}_4,\textup{diamond}\}$-free graph is $4$-colorable.
\end{theorem}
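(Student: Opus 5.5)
We argue by contradiction via a minimal counterexample. Let $G$ be an $\{\mathrm{ISK}_4,\textup{diamond}\}$-free graph with $\chi(G)\ge 5$ and with $|V(G)|$ minimum. Then $G$ is $5$-vertex-critical. Hence $G$ is $2$-connected, has no clique cutset, and has minimum degree at least $4$. In particular, $G$ has no cutset inducing a clique: a $4$-coloring of each side, after permuting colors on the clique, would combine into a $4$-coloring of $G$. Since $G$ is diamond-free, every edge lies in at most one triangle. Also $\omega(G)\le 3$, because $K_4$ is itself an $\mathrm{ISK}_4$.

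If $G$ is bowtie-free, the theorem of Chen, Chen, Cui, Feng and Liu~\cite{ChenEtAl2021} gives $\chi(G)\le 4$, a contradiction. So $G$ contains a bowtie: a vertex $v$ lying in two triangles $vab$ and $vcd$ with $\{a,b\}$ anticomplete to $\{c,d\}$ (diamond-freeness forces the anticompleteness). The core of the proof is a structural analysis around such a bowtie center. The aim is to show that it yields a small cutset, which contradicts criticality.

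Concretely, I would study the connectivity of $G-v$ between $\{a,b\}$ and $\{c,d\}$. Suppose there are two vertex-disjoint paths in $G-v$, one from $\{a,b\}$ to $\{c,d\}$ and a second one arranged suitably. Then, using that $a,b$ and $c,d$ are adjacent, we build an induced subdivision of $K_4$. Its branch vertices would be among $v,a,b,c,d$ or the path attachment points, and we choose shortest paths and take induced subpaths to avoid chords. The diamond-free hypothesis is what excludes short chords that would otherwise spoil inducedness: any vertex with two adjacent neighbors on a triangle is forbidden.

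The intended conclusion is a separation lemma. In $G$, either $v$ is a cut vertex separating $ab$ from $cd$, or there is a cutset of the form $\{v,x\}$ or $\{v,x,y\}$ of a restricted type. Here $\{v,x,y\}$ would have to be a triangle or a pair, which are clique cutsets, or else $v$ together with a vertex not adjacent to it. The last case is the hard one. For a $2$-cutset $\{v,x\}$ with $vx\notin E$, I would use the standard trick for $\mathrm{ISK}_4$-free graphs. Replace one side $H_1$ by the graph $H_1'$ obtained by adding a path or edge between $v$ and $x$, and check that $H_1'$ remains $\{\mathrm{ISK}_4,\textup{diamond}\}$-free and smaller. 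By minimality, 4-color $H_1'$ and $H_2'$ so that $v,x$ receive different colors in both, or equal colors in both. Then glue the two colorings. Choosing the gadget for the equal-colors option without creating a diamond requires care. I would use a path of length $2$ or $3$, since edges added there lie in no triangle.

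The main obstacle is the Menger-type step. We must show that $G-v$ cannot be too well connected between the two triangles of the bowtie, which means exhibiting an $\mathrm{ISK}_4$ from the connecting paths in every configuration. I would first take a minimal connected induced subgraph $P$ of $G-v-N(v)$ attaching to both triangles, and classify how its endpoints attach to $\{a,b\}$ and to $\{c,d\}$: one neighbor, or both, which creates a triangle. Then I would combine $P$ with a second disjoint route guaranteed by the absence of $2$-cutsets through $v$. Each attachment pattern should give a theta or prism structure which, together with the edges $ab$ and $cd$, closes into an $\mathrm{ISK}_4$. If the direct case analysis grows too large, I would fall back on the decomposition theorems of L\'ev\^eque, Maffray and Trotignon~\cite{LMT2012} for $\mathrm{ISK}_4$-free graphs. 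These give that a graph containing a suitable prism or $K_{3,3}$-like substructure has a clique cutset or a proper $2$-cutset. That would reduce the work to the bowtie-rich, decomposition-free case, which I expect to be series-parallel-like and hence easily $4$-colorable.
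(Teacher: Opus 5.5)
\textbf{There is a genuine gap: your separation lemma is false, and the rest of the argument depends on it.}

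Your central claim is that two disjoint routes in $G-v$ between the triangles $vab$ and $vcd$ of a bowtie must close up into an $\mathrm{ISK}_4$, which would force a small cutset through $v$. This fails. Suppose the routes are $a$--$c$ and $b$--$d$, chosen as in your plan: induced, mutually anticomplete, and avoiding $N(v)$ internally. Together with the edges $ab,cd$ they form a hole on which $v$ has exactly the four neighbours $a,b,c,d$. That is an exact-four wheel with opposite short sectors $ab,cd$ (Figure~\ref{fig:exact-four-wheel}), not a subdivision of $K_4$. Such wheels are permitted in $\{\mathrm{ISK}_4,\text{diamond}\}$-free graphs, and so are the prisms your case analysis expects.

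Here is a concrete family where the lemma fails. Let $Q$ be triangle-free with maximum degree three. Then every neighbourhood in $L(Q)$ is a disjoint union of at most two cliques of size at most two. So $L(Q)$ is $\{K_4,\text{claw},\text{diamond}\}$-free, hence $\mathrm{ISK}_4$-free by Lemma~\ref{lem:claw-diamond-isk4}. Every edge $rs$ of $Q$ with $d_Q(r)=d_Q(s)=3$ is a bowtie centre in $L(Q)$. Take $Q$ to be the Petersen graph: $L(Q)$ is $4$-regular, $4$-connected and $4$-chromatic. It therefore has every property you extract from criticality (no clique cutset, minimum degree at least $4$, bowties), yet it has no cutset of size at most three at all. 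The only extra hypothesis, $\chi(G)\ge 5$, plays no role in your proposed derivation. So the lemma cannot be proved as planned. Bowtie-rich graphs with no decomposition are also not ``series--parallel-like'': the paper must treat line graphs of subcubic graphs as a basic class, coloured via \ref{L3} and Vizing's theorem~\ref{L5}.

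\textbf{The fallback to \cite{LMT2012} does not close the gap.} Once clique cutsets, $K_{3,3}$ (\ref{L2}) and prisms (\ref{L3}) are handled, what remains is the class of $\{\mathrm{ISK}_4,\text{diamond},\text{prism},K_{3,3}\}$-free graphs. By \ref{L1}, these contain wheels unless they are series--parallel, and their wheel centres are often bowtie centres. This is where the paper does its main work, and your proposal has no counterpart to it:
\begin{itemize}
\item For a globally spoke-minimal proper wheel, it proves sector separation (Theorem~\ref{thm:sector-separation}).
\item It deduces that the lobe of an inclusion-minimal full-star pair is series--parallel (Lemma~\ref{lem:min-lobe-sp}).
\item It extends a stable triangle transversal across such a lobe using a $3$-colouring of a series--parallel minor (Lemma~\ref{lem:lobe-gluing}).
\item The triangle-free remainder is then $3$-coloured by \ref{L4}.
\end{itemize}

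\textbf{Your proper $2$-cutset step is also incomplete.} A path gadget of length $2$ or $3$ between $v$ and $x$ forces neither equal nor distinct colours on them. Plain $4$-colourability of smaller graphs gives no control over this signature. The paper needs a strengthened rooted statement (Theorem~\ref{thm:rooted-marker}, giving Lemma~\ref{lem:two-signatures}) to realise both signatures on the minimal side of the cutset before gluing.
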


This result is best possible.

\begin{theorem}\label{thm:sharp-intro}
	There exists an $\{\mathrm{ISK}_4,\text{diamond}\}$-free graph of chromatic number $4$.
\end{theorem}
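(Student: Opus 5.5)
We would prove the statement by an explicit construction: $G=L(P)$, the line graph of the Petersen graph $P$. Since $P$ is cubic with chromatic index $4$, we have $\chi(G)=\chi'(P)=4$. It then remains to check that $G$ is diamond-free and $\mathrm{ISK}_4$-free. If the $\mathrm{ISK}_4$ check failed, we would switch to the line graph of another triangle-free cubic graph of class two and run the same argument, since nothing below uses more than these properties of $P$.

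\emph{Diamond-freeness.} A triangle in a line graph $L(H)$ comes either from three edges of $H$ sharing a common end or from a triangle of $H$. Since $P$ is triangle-free and cubic, every triangle of $G$ is the set of three edges at some vertex of $P$. A diamond in $G$ consists of two triangles sharing two vertices. These would be two edges of $P$ lying in the stars of two distinct vertices $u\neq v$, and two edges both joining $u$ and $v$ is impossible in a simple graph. Hence $G$ is diamond-free.

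\emph{$\mathrm{ISK}_4$-freeness.} This is the main obstacle. Suppose $G$ contains an induced subgraph $S$ that is a subdivision of $K_4$, with branch vertices $b_1,\dots,b_4$. Line graphs are claw-free, so each branch vertex $b_i$ has two adjacent neighbours in $S$. In a subdivision of $K_4$, two neighbours of a branch vertex are adjacent only when all three vertices are branch vertices forming an unsubdivided triangle. Since $G$ is also $K_4$-free (no four edges of $P$ pairwise meet), $S$ cannot be $K_4$ itself. The claw-free condition therefore forces every branch vertex to lie in a triangle of branch vertices. Four branch vertices cannot be covered by pairwise edge-disjoint triangles in the required way without creating a $K_4$ or a diamond. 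Two triangles on four branch vertices must share an edge, which gives a diamond or a $K_4$ on $\{b_1,\dots,b_4\}$ in $S$, and this is not a subdivision of $K_4$ unless it is $K_4$ itself. This contradiction shows $G$ is $\mathrm{ISK}_4$-free.

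The delicate points are verifying the claim about adjacency of neighbours of branch vertices in an induced subdivision, and the final covering argument. For the covering argument we would carefully enumerate which pairs $b_ib_j$ are unsubdivided edges of $S$.
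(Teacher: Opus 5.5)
Your proof is correct and follows the paper's route. The paper also takes the line graph of a triangle-free graph of maximum degree three with chromatic index four, and derives $\mathrm{ISK}_4$-freeness from claw-, diamond- and $K_4$-freeness exactly as you do (Lemma~\ref{lem:claw-diamond-isk4}). The difference is the root graph: the paper uses $K_{3,3}$ with one edge subdivided, where $\chi'\ge 4$ follows by counting ($10$ edges, matchings of size at most $3$), while for the Petersen graph counting fails ($3\cdot 5=15=|E(P)|$), so you must rely on its classical non-$3$-edge-colourability. One small fix: the final contradiction is that the branch vertices induce a diamond or $K_4$ in $G$, which you already excluded. It is not that such a configuration ``is not a subdivision of $K_4$'', which is false, since subdividing one edge of $K_4$ makes its branch vertices induce a diamond.
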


%
%
%

A \textit{prism} consists of two disjoint triangles joined by three pairwise internally vertex-disjoint paths with distinct ends in each triangle, with no additional edges among the paths.

The proof has three parts. First, we adapt the wheel arguments of Liu--Sun--Wang and replace every use of the bowtie-free hypothesis, which yields the sector-separation theorem (Theorem~\ref{thm:sector-separation}).
Second, for the core class of \(\{\mathrm{ISK}_4,\text{diamond},\text{prism},K_{3,3}\}\)-free graphs, we find a stable set hitting every triangle (Corollary~\ref{thm:core-transversal}).
Third, we use the L\'ev\^eque--Maffray--Trotignon decompositions (\cite[Lemma~2.2, 3.3 and 7.2]{LMT2012}).


\section{Preliminaries and the proof outline}
\label{sec:preliminaries}

For a vertex $v$, we write $N(v)$ and $N[v]$ for its open and closed neighborhoods, respectively.
For $X\subseteq V(G)$, put \(N(X)=\{u\in V(G)\setminus X: u\text{ has a neighbor in }X\}\).
If $D\subseteq V(G)$, then $N_G(D)$ denotes this same external
neighborhood $N(D)$.  A stable set meeting every
triangle will be called a \textit{stable triangle transversal}.

A \textit{hole} in a graph is an induced cycle of length at least four.  An orientation
of a hole is a choice of one of its two cyclic directions.  For a subgraph
$J$ and a set $X\subseteq V(G)$, write $N_J(X)=N(X)\cap V(J)$; for a
single vertex $v$, abbreviate $N_J(\{v\})$ to $N_J(v)$.  If $R$ is a path,
$R^*$ denotes its interior.
If $X$ is disjoint from a subgraph $J$, an \textit{attachment of $X$ on
	$J$} is a vertex of $J$ with a neighbor in $X$; equivalently, it is the
$J$-end of an $X$--$J$ edge.
A \textit{wheel} is a pair $W=(C,x)$, where $C$ is a hole and $x\notin V(C)$
has at least three neighbors on $C$.  The vertex $x$ is its \textit{centre}, the
hole $C$ is its \textit{rim}, the vertices of $N(x)\cap V(C)$ are its \textit{spokes}, and the subpaths of $C$ between
cyclically consecutive spokes are its \textit{sectors}.  A sector consisting of one
edge is short; all other sectors are long.  The interior of a sector is the
sector with its two spokes deleted.  Distinct sectors are \textit{consecutive}
when they share a spoke; in a wheel with exactly four spokes, two sectors that do
not share a spoke are \textit{opposite}.  The order of a path is its number of
vertices.  If $R$ is a path and $p,q\in V(R)$,
then $pRq$ denotes the unique $pq$-subpath of $R$.  A \textit{tail of $R$ at
	$p$} is either end-subpath of $R$ having ends $p$ and one end of $R$.
For $v\notin V(C)\cup\{x\}$, we say that $v$ is \textit{proper for $W$} if
all neighbors of $v$ on $C$ lie in one sector and
\[
|N(v)\cap V(C)|>2 \quad\Longrightarrow\quad vx\in E(G).
\]
The wheel is \textit{proper} if every vertex outside $V(C)\cup\{x\}$ is
proper for it.
We write
\[
\sigma(W)=|N(x)\cap V(C)|.
\]
We also put $V(W)=V(C)\cup\{x\}$.  A wheel $W$ with $\sigma(W)=4$ is
called an \textit{exact four-spoke wheel}, or simply an \textit{exact-four wheel} (see \autoref{fig:exact-four-wheel}).


\begin{figure}[htbp]
	\centering
	\begin{tikzpicture}[
		rv/.style={circle,draw,fill=black,inner sep=0pt,minimum size=1.5mm},
		spoke/.style={draw=blue,line width=1.2pt},
		short/.style={draw=red,line width=1.2pt},
		rim/.style={draw,line width=.7pt}]
		
		\def\R{2.25} 
		
		\node[rv] (p)  at (90:\R) {};
		\node[rv,label=above right:$a_3$] (a3) at (50:\R) {};
		\node[rv,label=right:$a_4$] (a4) at (10:\R) {};
		\node[rv] (q1) at (-30:\R) {};
		\node[rv] (q2) at (-70:\R) {};
		\node[rv] (q3) at (-110:\R) {};
		\node[rv] (q4) at (-150:\R) {};
		\node[rv,label=left:$a_1$] (a1) at (170:\R) {};
		\node[rv,label=above left:$a_2$] (a2) at (130:\R) {};
		
		\node[rv,label=below:$x$] (x) at (0,0) {};
		
		\draw[rim] (a2)--(p)--(a3);
		\draw[rim] (a4)--(q1)--(q2)--(q3)--(q4)--(a1);
		
		\draw[short] (a1)--(a2) (a3)--(a4);
		
		\foreach \z in {a1,a2,a3,a4} \draw[spoke] (x)--(\z);
		
		\node[font=\scriptsize] at (0, 2.75) {long sector};
		\node[font=\scriptsize] at (0, -2.75) {long sector};
		\node[font=\scriptsize,text=red] at (-2.4, 1.5) {short};
		\node[font=\scriptsize,text=red] at (2.4, 1.5) {short};
		
	\end{tikzpicture}
	\caption{An exact four-spoke wheel with two opposite short sectors.  The
		normalisation lemma preserves the centre, all four spokes, and the two red
		rim edges.}
	\label{fig:exact-four-wheel}
\end{figure}

A \textit{clique cutset} is a clique whose deletion disconnects the graph.  A
pair of nonadjacent vertices $\{a,b\}$ is a \textit{proper $2$-cutset} if there
is a partition
\begin{equation}\label{eq:proper-cutset-split}
	V(G)=X\mathbin{\dot\cup}\{a,b\}\mathbin{\dot\cup}Y
\end{equation}
such that $X,Y\ne\emptyset$, $X$ is anticomplete to $Y$, both
$G[X\cup\{a,b\}]$ and $G[Y\cup\{a,b\}]$ contain an $a$--$b$ path, and neither
of these two blocks is itself an $a$--$b$ path.  We call
$(X,Y,\{a,b\})$ a split of the cutset.  An \textit{oriented split} is a
split in which the first side $X$ is designated.  A graph is
\textit{series--parallel}
if it contains no subdivision of $K_4$ as a (not necessarily induced)
subgraph.

For completeness, we recall the basic graph called a \textit{rich square}.
Let $S=s_1s_2s_3s_4s_1$ be an induced square.  A \textit{long link of $S$}
is an induced path $P$ with distinct ends $p,p'$ such that either
\[
N_S(p)=\{s_1,s_2\},\quad N_S(p')=\{s_3,s_4\},
\]
or
\[
N_S(p)=\{s_1,s_4\},\quad N_S(p')=\{s_2,s_3\},
\]
and no internal vertex of $P$ has a neighbor in $S$.  A \textit{short link}
is one vertex $p$ with $N_S(p)=V(S)$.  A rich square is a graph $R$
containing such an $S$ for which $R-S$ has at least two components and
every component of $R-S$ is a long or short link of $S$.  Distinct
components, of course, are anticomplete.  This is the convention of
L\'ev\^eque--Maffray--Trotignon \cite{LMT2012}.
A rich square is \textit{long} if all its links are long; equivalently,
it is wheel-free \cite[Section~1]{LMT2012}.

Following the same convention, a complete bipartite or complete tripartite
graph is called \textit{thick} if it contains an induced $K_{3,3}$.

We use the following published results.

\begin{enumerate}[label=(L\arabic*)]
	\item If an ISK$_4$-free graph is not series--parallel, then it contains a
	prism, a wheel, or an induced $K_{3,3}$ \cite[Lemma~2.2]{LMT2012}.\label{L1}
	\item An ISK$_4$-free graph containing an induced $K_{3,3}$ is either a
	thick complete bipartite or complete tripartite graph, or it has a
	clique cutset of size at most three \cite[Lemma~3.3]{LMT2012}.\label{L2}
	\item An ISK$_4$-free graph containing a prism is either the line graph of a
	simple graph of maximum degree at most three, a rich square, or it has
	a clique cutset of size at most three or a proper $2$-cutset
	\cite[Lemma~7.2]{LMT2012}.\label{L3}
	
	\item Let $G$ be a rich square that contains no wheel. Then $G$ is 3-colorable \cite[Lemma~11.3]{LMT2012}.\label{L4.5}
	\item Every $\{\mathrm{ISK}_4,\text{triangle}\}$-free graph is
	$3$-colorable \cite[Theorem 3]{ChudnovskyEtAl2019}.\label{L4}
	\item Every simple graph of maximum degree at most three has a proper
	$4$-edge-coloring, by Vizing's theorem \cite{Vizing1964}.\label{L5}
\end{enumerate}

We shall also use that the class of series--parallel graphs is minor-closed,
and that every nonempty induced subgraph of a series--parallel graph has a
vertex of degree at most two.  In particular, every series--parallel graph is
$2$-degenerate and hence $3$-colorable.

We use the following elementary extension of the linked-hole principle
\cite[Lemma~2.3]{LiuSunWang2026}.  Let $D$ be an induced cycle, possibly
a triangle.  A vertex $v\notin V(D)$ is \textit{linked} to $D$ if there are three induced paths $P_1,P_2,P_3$ such that
\begin{itemize}
	\item $P_1^*\cup P_2^*\cup P_3^*\cup \{v\}$ is disjoint from $D$;
	\item each $P_i$ has one end $v$ and the other end in $D$, and there are no other edges between $\bigcup_i (V(P_i)\setminus V(D))$ and $D$;
	\item for $i,j\in\{1,2,3\}$ with $i\ne j$, $V(P_i)\cap V(P_j)=\{v\}$;
	\item if \(i\neq j\) and $x\in P_i$ is adjacent to $y\in P_j$, then either $v\in\{x,y\}$ or $\{x,y\}\subseteq V(D)$; and
	\item if $v$ has a neighbor $d\in D$, then $d\in V(P_i)$ for some $i$.
\end{itemize}
These conditions ensure that \(D\cup P_1\cup P_2\cup P_3\) is an induced subdivision of \(K_4\), with branch vertices
\(v\) and the three distinct ends on \(D\).
Consequently an ISK$_4$-free graph has no vertex
linked to an induced cycle.  We call this the \textit{linked-cycle principle}.

\subsection*{Proof outline}

The remainder of the paper is organized as follows. 
Section~\ref{sec:local-wheels} proves proper-wheel existence and four-spoke normalization; Section~\ref{sec:local-attachments} establishes the local attachment lemmas. Sections~\ref{sec:wheel-repairs} and~\ref{sec:path-attachments} address 
wheel repairs and the attachments of induced paths, including the complete 
four-spoke base case.
Section~\ref{sec:sector-separation} assembles these results and proves the sector separation theorem. Section~\ref{sec:core} constructs rooted triangle transversals, Section~\ref{sec:proper-two-cut} treats proper 2-cutsets, and Sections~\ref{sec:main-proof} and~\ref{sec:sharpness} complete the coloring proof and establish sharpness.

\section{Local structure of wheels}
\label{sec:local-wheels}

We first record the consequences of sparse neighborhoods and then prove
an external-connector lemma, the existence of a proper wheel, and four-spoke
normalization.  We use the definitions of holes, wheels, sectors,
properness, path interiors, and tails from Section~\ref{sec:preliminaries}.
For vertices $a,b$ on a hole $C$, $aCb$ denotes the subpath specified by
the context.

We begin with an elementary consequence of the two forbidden configurations.

\begin{lemma}\label{lem:sparse-neighborhood}
	If $G$ is $\{\mathrm{ISK}_4,\textup{diamond}\}$-free, then \(\Delta(G[N(v)])\le 1\) for every \(v\in V(G)\).
	Moreover, let $P$ be an induced path and let $y\notin V(P)$ have at least
	three neighbors on $P$, with extreme neighbors $p,q$.  Then $pPq$ has
	at least three edges, and replacing it by $p-y-q$ gives a strictly
	shorter induced path with the same ends as $P$.
\end{lemma}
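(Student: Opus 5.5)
The first assertion follows from the two forbidden configurations, and the second from the first plus a length count. For the first, I will argue by contradiction. Suppose some $u\in N(v)$ has two distinct neighbours $w_1,w_2$ in $N(v)$. If $w_1w_2\in E(G)$, then $\{v,u,w_1,w_2\}$ induces a $K_4$. Since $K_4$ is itself a (trivial) subdivision of $K_4$, this is an $\mathrm{ISK}_4$, a contradiction. If $w_1w_2\notin E(G)$, then $\{v,u,w_1,w_2\}$ induces $K_4$ minus the edge $w_1w_2$, that is, a diamond, again a contradiction. Hence every vertex of $G[N(v)]$ has degree at most one, so $\Delta(G[N(v)])\le 1$.

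For the second assertion, let $p,q$ be the extreme neighbours of $y$ on $P$. Since $y$ has at least three neighbours on $P$, there is a neighbour $r$ of $y$ strictly inside $pPq$, so $pPq$ has at least two edges. Suppose $pPq$ has exactly two edges. Then $pPq=p\,r\,q$, and because $P$ is induced, $pq\notin E(G)$. Now $r\in N(y)$ is adjacent to both $p$ and $q$, which also lie in $N(y)$. So $r$ has degree two in $G[N(y)]$, contradicting the first part; equivalently, $\{y,p,r,q\}$ induces a diamond. Therefore $pPq$ has at least three edges.

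Finally, let $P'$ be obtained from $P$ by replacing $pPq$ with $p\,y\,q$. Its vertex set is $(V(P)\setminus V(pPq)^{*})\cup\{y\}$, where $V(pPq)^{*}$ denotes the interior of $pPq$. We check that $P'$ is induced. The subgraph induced on $V(P)\setminus V(pPq)^{*}$ is the disjoint union of the two tails of $P$, one at $p$ and one at $q$, with no edges between them, because $P$ is induced. By the choice of $p,q$ as extreme neighbours, every neighbour of $y$ on $P$ lies in $pPq$. Hence the only neighbours of $y$ among the retained vertices are $p$ and $q$. Thus $P'$ is an induced path with the same ends as $P$. Its length is $|E(P)|-|E(pPq)|+2\le |E(P)|-1$, so it is strictly shorter.

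No step is a real obstacle; the only point requiring care is using the extremality of $p,q$ to rule out extra chords from $y$ to the tails.
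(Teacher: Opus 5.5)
Your proof is correct and follows the paper's argument essentially step for step. It uses the same $K_4$/diamond case split on $\{v,u,w_1,w_2\}$ for the first assertion, the diamond on $\{y,p,r,q\}$ to rule out a two-edge segment $pPq$, and extremality of $p,q$ to show the rerouted path is induced; your explicit count $|E(P)|-|E(pPq)|+2\le|E(P)|-1$ simply spells out the ``strictly shorter'' claim.
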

\begin{proof}
	Suppose that \(y,z,w\in N(v)\) and \(yz,yw\in E(G)\). If \(zw\notin E(G)\),
	then \(G[\{v,y,z,w\}]\) is a diamond.
	If \(zw\in E(G)\), then those four vertices induce \(K_4\), which is itself an \(\mathrm{ISK}_4\). Both alternatives are impossible.
	For the second assertion, at least three neighbors of \(y\) lie on \(pPq\), so this
	subpath has at least two edges. If it were \(p-r-q\), then \(yr\in E(G)\)
	and \(\{y,p,r,q\}\) would induce a diamond. Thus it has at least three edges.
	By the extremal choice of \(p\) and \(q\), the vertex \(y\) has no neighbor
	among the retained vertices of \(P\) other than \(p\) and \(q\). Hence replacing \(pPq\) by \(p-y-q\)
	yields an induced path with the same ends as \(P\), proving the assertion.
\end{proof}

A three-spoke wheel is itself an induced subdivision of $K_4$.  Consequently
every wheel used below has at least four spokes.  Lemma
\ref{lem:sparse-neighborhood} also says that the short sectors form a
matching in the cyclic sequence of sectors.  Thus a $k$-spoke wheel has at
least $\lceil k/2\rceil$ long sectors.  Every wheel therefore has two vertex-disjoint long sectors.  Indeed, if there
are exactly two long sectors, there are four spokes and the two long
sectors are opposite.  If there are at least three, some two are disjoint,
since the sector-intersection graph is a cycle of length at least four
and contains no triangle.

\subsection{An external connector to a hole}

\begin{lemma}\label{lem:external-connector-obstruction}
	Let $G$ be any graph, let $R$ be a hole, let $a,b\in V(R)$ be distinct, and let $A,B$ be
	its two $a$--$b$ subpaths.  Let $Q$ be an induced $x$--$w$ path
	disjoint from $R$, where $x\ne w$.  Suppose that
	\[
	N_R(x)=\{a,b\},\quad N_R(Q^*)=\emptyset,\quad
	\emptyset\ne N_R(w)\cap V(B^*),\quad
	N_R(w)\subseteq V(B)\setminus\{b\}.
	\]
	Then $G[V(R)\cup V(Q)]$ contains an induced subdivision of $K_4$
	or an induced prism, unless there is a vertex $t\in V(B^*)$ such that
	\[
	ab,at\in E(R),\quad N_R(w)=\{a,t\}.
	\]
	In this exceptional case, $R^*=x-bBt-wQx$ is a hole and $(R^*,a)$
	is an exact-four wheel with cyclically ordered spokes $x,b,t,w$ and
	opposite short sectors $xb,tw$.
\end{lemma}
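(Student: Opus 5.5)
The plan is to build one auxiliary hole through $x$, $b$ and $w$, and then study how the vertex $a$ and the path $A$ attach to it. Every bad configuration will yield either a vertex linked to a cycle (so an induced $K_4$-subdivision, by the linked-cycle principle) or an induced prism.

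\textbf{The auxiliary hole.} Orient $B$ from $a$ to $b$. Let $t$ be the neighbour of $w$ on $B$ closest to $b$; by hypothesis $t\in V(B^*)$. Put
\[
C' = x-bBt-wQx .
\]
This is a hole, for the following reasons.
\begin{itemize}
\item $N_R(x)=\{a,b\}$ and $a\notin bBt$, so $x$ has exactly one neighbour on $bBt$.
\item $N_R(Q^*)=\emptyset$, so interior vertices of $Q$ have no neighbours on $bBt$.
\item By the choice of $t$, $w$ has exactly one neighbour on $bBt$, namely $t$.
\item $C'$ contains $x,b,t,w$, which are distinct, so its length is at least four.
\end{itemize}
The rest of $R$ meets $C'$ only in a restricted way. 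The path $A$ joins $a$ to $b$, and $A^*$ has no neighbour on $C'\setminus\{b\}$. The vertex $a$ is adjacent to $x$. Apart from $x$, the only possible neighbours of $a$ on $C'$ are $b$ (if $ab\in E$), $w$ (if $aw\in E$), and $t$ (if $t$ is the successor $a'$ of $a$ on $B$).

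\textbf{Case $ab\notin E$, so $A$ has an interior.} Let $s$ be the neighbour of $w$ on $B$ closest to $a$ (possibly $s=a$, or $s=t$). Consider the three paths $a-x$, $aAb$, and $aBs\,w$ (or $aBt$ when $s=t$). They leave $a$ and end at the three distinct vertices $x,b,w$ (respectively $t$) of $C'$. Their interiors are pairwise anticomplete, since they lie in different parts of $R\cup Q$, and they avoid $C'$. So $a$ is linked to $C'$, which contradicts ISK$_4$-freeness, unless $aBs$ has an extra edge into $C'$. The only way this can happen is that $s\ne t$ and $st\in E$, so that $wst$ is a triangle. In that subcase I would use $aBs$ and the triangle $\{w,s,t\}$ as one end of a prism. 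The three disjoint paths of the prism would run through $A\cup\{x\}$ and $Q$ to the other end, formed near $x$, $a$ or $b$. This triangle bookkeeping is the step I expect to be the main obstacle. I would handle it by shortening with Lemma~\ref{lem:sparse-neighborhood}, which forbids diamonds, so $w$ cannot see three consecutive vertices. The outcome is either an induced prism or again a linked vertex.

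\textbf{Case $ab\in E$, so $A=ab$ and $B$ is long.}
\begin{itemize}
\item If $aw\notin E$, the same three-path argument, now with the edges $a-x$ and $a-b$ and the path $aBs\,w$, links $a$ to $C'$. The edge $xb$ causes no problem because both ends lie on the cycle. The only exception is again a triangle $wst$, which is handled as above.
\item If $aw\in E$, then $(C',a)$ is a wheel with spokes among $x,b,t,w$. A three-spoke wheel is an ISK$_4$, so all four are spokes. Hence $at\in E$, that is, $t=a'$ and $B=a\,t\cdots b$. Since $t$ was the neighbour of $w$ closest to $b$, every neighbour of $w$ on $B$ lies on $aBt=\{a,t\}$, so $N_R(w)=\{a,t\}$.
\end{itemize}
In this last situation $C'$ is exactly the hole $R^*$ of the statement. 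The cyclic order of the spokes on it is $x,b,t,w$. The edges $xb$ and $tw$ are rim edges, hence short and opposite sectors, and the other two sectors are the long ones. This gives the claimed exceptional exact-four wheel.
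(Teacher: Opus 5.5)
\textbf{The gap is the triangle subcase.} Your main device is sound in the generic cases, and it differs from the paper's route: you link $a$ to the auxiliary hole $C'=x-bBt-wQx$, whereas the paper links $x$ to $R$, or to $R$ rerouted through $w$. The proof breaks, however, at the configuration you yourself flag: $s\ne t$ with $st\in E$. There your proposal gives only a sketch, and the sketch does not work.

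\begin{itemize}
\item By the extremal choices of $s$ and $t$, this case forces $N_R(w)=\{s,t\}$. So there is nothing to shorten.
\item Lemma~\ref{lem:sparse-neighborhood} is not available anyway. The statement is for an arbitrary graph $G$, and its conclusion must be an ISK$_4$ or a prism, not a diamond. Indeed, when $s=a$ and $Q=xw$, the set $\{x,w,a,t\}$ does induce a diamond; this is harmless only because it extends to an ISK$_4$.
\item More seriously, a prism with the triangle $wst$ at one end and a second triangle ``near $x$, $a$ or $b$'' cannot exist when $ab\notin E$. For example, if $ab\notin E$ and $Q$ has length at least two, then $wst$ is the only triangle in $G[V(R)\cup V(Q)]$. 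In that case the required object must be an ISK$_4$.
\item ``Or again a linked vertex'' is not an argument. Your linked vertex $a$ cannot be linked to $C'$ here, precisely because of the chord $st$.
\end{itemize}

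\textbf{The fix.} Change both the linked vertex and the cycle: use $x$ and the triangle $wst$, which the linked-cycle principle allows.
\begin{itemize}
\item If $ab\notin E$, the paths $Q$, $x-aBs$ and $x-bBt$ link $x$ to $wst$; this also covers $s=a$. The nonedge $ab$, the fact $N_R(w)=\{s,t\}$, $N_R(x)=\{a,b\}$, and $N_R(Q^*)=\emptyset$ exclude every extra edge. This gives an ISK$_4$ with branch vertices $x,w,s,t$.
\item If $ab\in E$, you are in the branch $aw\notin E$, so $s\ne a$. Then the triangles $xab$ and $wst$, joined by $Q$, $aBs$ and $bBt$, form an induced prism.
\end{itemize}
With this subcase filled in, your argument is complete.

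\textbf{A minor point.} In the exceptional case you say the two sectors other than $xb$ and $tw$ are long. This fails when $Q$ is the single edge $xw$, since the sector $wx$ is then short. The statement does not need this claim, so simply drop it.
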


\begin{proof}
	If $wa\in E(G)$, let $t$ be the $w$-neighbor on $B$ closest to $b$.
	If $at\notin E(G)$, the cycle $w-tBbAa-w$ is a hole, and
	$xa,xb,Q$ link $x$ to it.  Extremality of $t$ excludes every further
	$w$-edge on $tBb$; the nonedge $at$ and the other attachment assumptions
	exclude all remaining chords and cross-edges.  If $at\in E(G)$ but
	$ab\notin E(G)$, the six paths
	\[
	Q,\quad xa,\quad wa,\quad wt,\quad at,\quad x-bBt
	\]
	induce a subdivision of $K_4$ with branch vertices $x,w,a,t$.
	If both $at,ab\in E(G)$, then $t$ is the first vertex after $a$ on
	$B$ and the last $w$-neighbor before $b$, so $N_R(w)=\{a,t\}$.  The cycle $x-bBt-wQx$ is a hole:
	$R$ being a hole gives at least two edges on $tBb$, extremality of
	$t$ excludes extra $w$-edges, and $Q^*$ is anticomplete to $R$.
	With center $a$, this hole has exactly the four spokes $x,b,t,w$,
	with opposite short sectors $xb,tw$.
	
	Hence assume $wa\notin E(G)$, and let $r,t$ be the
	first and last $w$-neighbors on $B$, ordered from $a$ to $b$.
	If $r=t$, then $xa,xb,xQw-r$ link $x$ to $R$.  If $r\ne t$ and
	$rt\notin E(G)$, replace $rBt$ in $R$ by $r-w-t$.  Extremality of
	$r,t$ makes the result a hole, to which $xa,xb,Q$ link $x$.
	
	Finally suppose $rt\in E(G)$.  Then $r,t$ are consecutive on $B$
	and are the only neighbors of $w$ on $R$.  If $ab\notin E(G)$, the
	six paths
	\[
	Q,\quad wr,\quad wt,\quad rt,\quad x-aBr,\quad x-bBt
	\]
	induce a subdivision of $K_4$ with branch vertices $x,w,r,t$.
	If $ab\in E(G)$, then $A=ab$, and the triangles $xab$ and $wrt$,
	joined by $Q,aBr,bBt$, form an induced prism.  Chordlessness of $R$,
	inducedness of $Q$, and the displayed attachment assumptions exclude
	every additional edge in both constructions.
\end{proof}

\subsection{Existence of a proper wheel}

\begin{lemma}\label{lem:proper-wheel-no-bowtie}\label{lem:proper-wheel-exists}
	Let $G$ be an \(\{\mathrm{ISK}_4,\text{diamond},
	\text{prism}\}\)-free graph.
	If $G$ contains a wheel, then $G$ contains a proper wheel.
\end{lemma}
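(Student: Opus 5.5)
We argue by extremality. Among all wheels of $G$, choose $W=(C,x)$ so that first the rim $C$ is as short as possible, and then $\sigma(W)$ is as large as possible. We aim to show that $W$ is proper, that is, every vertex $v\notin V(W)$ is proper for $W$. Suppose instead that some $v$ is not proper. Then either (a) $v$ has neighbours on $C$ in two different sectors, so its neighbourhood on $C$ is not contained in a single sector; or (b) all neighbours of $v$ on $C$ lie in one sector $S$, $|N_C(v)|\ge 3$ and $vx\notin E(G)$. In each case we produce a wheel with a shorter rim, or with the same rim and more spokes, or else an $\mathrm{ISK}_4$, diamond or prism.

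\textbf{Case (b).} Let $p,q$ be the extreme neighbours of $v$ on $S$. By Lemma~\ref{lem:sparse-neighborhood} applied to the path $S$, $pSq$ has at least three edges, and replacing $pSq$ by $p-v-q$ gives a strictly shorter hole $C'$. The vertex $x$ is nonadjacent to $v$. Since $v$ has neighbours only inside $S$, $x$ keeps all spokes outside the interior of $pSq$. The spokes of $x$ lie at the ends of $S$ and possibly at $p,q$, so $x$ still has at least $\sigma(W)-0\ge 3$ neighbours on $C'$; indeed the ends of $S$ remain on $C'$ because $p,q$ lie on $S$. Thus $(C',x)$ is a wheel with a shorter rim, a contradiction. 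A little care is needed to check that $C'$ is induced; this follows from extremality of $p,q$.

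\textbf{Case (a).} Here $v$ has neighbours in the interiors, or at spokes, of at least two sectors that are not contained in a common sector. If $vx\in E(G)$, we use the hole $C$ and the centre $x$ together with $v$. Choose a sector-respecting subpath of $C$ between two neighbours of $v$ lying in different sectors and containing a spoke. Either $v$ together with a subpath of $C$ and $x$ yields a shorter hole on which $x$ (or $v$) has at least three neighbours, giving a shorter-rim wheel, or we obtain a diamond in $N(x)$. If $vx\notin E(G)$, we take the hole $C$ and regard $v$ as a potential centre. If $v$ has at least three neighbours on $C$, then $(C,v)$ is a wheel; combining it with $x$ we look for a shorter rim through $v$ that still carries three spokes of $x$. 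If $v$ has exactly two neighbours $a,b$ on $C$, in different sectors, we apply Lemma~\ref{lem:external-connector-obstruction}-style reasoning with $x$ playing the role of the connector: $x$ has neighbours on both $a$--$b$ subpaths of $C$. This yields either a vertex linked to a hole, which is an $\mathrm{ISK}_4$, or a prism, or the exceptional exact-four configuration, which gives a wheel with a shorter rim $x-\dots$. If $v$ has a single neighbour, or its neighbours are adjacent, an analogous rerouting through $x$ produces a shorter hole with enough spokes.

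\textbf{Main obstacle.} The main difficulty is case (a) with $vx\notin E(G)$ and exactly two nonadjacent neighbours $a,b$ in different sectors. We must choose the subpath of $C$ so that the new hole $a-v-b$ plus one arc is strictly shorter than $C$ and still contains at least three neighbours of $x$. We will try first the arc that contains fewer spokes-interior vertices. If that arc carries at most two spokes, we instead exhibit an $\mathrm{ISK}_4$ by linking $x$ to the hole formed by $v$ and the other arc, using the spokes as the three paths. The prism outcome arises exactly when $a,b$ are consecutive with short sectors; this is why prism-freeness is assumed. The diamond-free hypothesis (via Lemma~\ref{lem:sparse-neighborhood}) disposes of the triangle cases.
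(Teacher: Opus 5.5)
Your Case (b) is correct. The extreme neighbours $p,q$ give $|E(pSq)|\ge 3$ by Lemma~\ref{lem:sparse-neighborhood}, $(pSq)^*$ contains no spoke, and so $(C',x)$ is a wheel on a strictly shorter hole. The paper gets this, and more, by noting that three neighbours of $v$ on $xSx$ make $(xSx,v)$ a shorter-rim wheel. Hence $v$ has at most two neighbours in each sector, and at most one if $vx\in E(G)$; this bound drives everything afterwards.

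The gap is Case (a), which is the whole substance of the lemma and which you only describe as intentions. Two of your subcases are vacuous: a single neighbour, or two adjacent neighbours, makes $v$ proper. More importantly, rerouting $C$ through $v$ and invoking ``shortest rim, then most spokes'' does not close the argument. Take $vx\notin E(G)$ and $N_C(v)=\{a,b\}$, where one $a$--$b$ arc is $a-w-b$ with $w$ a spoke. The hole through $v$ and the other arc has the same length as $C$ and one spoke fewer, so neither extremality criterion is violated. Since $x$ has at least three neighbours on that hole, it is a wheel, not a hole to which $x$ is linked, so your fallback also fails. The paper's Claim~\ref{clm:minimal-wheel-gap} handles exactly this configuration. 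Diamond-freeness lets $x$ see at most one of $a,b$, and then $w$ is the centre of a three-spoke wheel on the hole $v\,a\,P\,x_1\,x\,x_2\,P\,b\,v$ passing through $x$. Likewise, if $\sigma(W)=4$ and $a,b$ are interior to opposite sectors, each rerouted hole carries only two spokes. ``Using the spokes as the three paths'' then gives only two paths, and a third must be built through a spoke on the other side and checked for chords.

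Your proposed use of Lemma~\ref{lem:external-connector-obstruction} with $x$ as connector on $R=C$ is not available. That lemma requires the connector's first vertex to have exactly two rim neighbours, while $x$ has at least four neighbours on $C$. The paper applies it with the rerouted hole $D=v\,t\,S_1\,b_1\,S_3\,a_2\,S_2\,z\,v$ as rim, on which $x$ sees exactly $b_1,a_2$, and with connector $Q=x-a_1S_1s$. This is what disposes of $v$ having two neighbours $s\ne t$ in one sector when the witnessing sectors are nonconsecutive.

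What your proposal lacks is the intermediate structure:
\begin{itemize}
\item the per-sector bound above;
\item the claim that the stretch of $C$ between consecutive neighbours of $v$ contains at most two spokes;
\item the claim that a sector containing two neighbours of $v$ must contain one in its interior;
\item the split according to whether the two sectors witnessing non-properness are consecutive. The consecutive case uses the linked-cycle principle and, for the triangles $vst$ and $xb_1b_2$, prism-freeness.
\end{itemize}
Without these, Case (a) is unproved, both when $vx\in E(G)$ and when $vx\notin E(G)$ with $|N_C(v)|\ge 3$. Your spoke-count tie-break plays no role in any of it.
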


\begin{proof}
	Among all wheels $W=(C,x)$ in $G$, choose one with $|V(C)|$ minimum.  We
	prove that $W$ is proper.  A three-spoke wheel is itself an induced
	subdivision of $K_4$, so $W$ has at least four spokes.  Suppose to the contrary that
	$v\in V(G)\setminus(V(C)\cup\{x\})$ is not proper for $W$.
	If, for some sector $S$, the vertex $v$ has at least three neighbors on the
	cycle $xSx$, then either $xSx$ is a hole and $(xSx,v)$ is a wheel with a rim
	strictly shorter than $C$, or $xSx$ is a triangle and the four relevant
	vertices induce $K_4$.  Both alternatives are impossible.  Consequently
	$v$ has at most two neighbors in every sector; if $vx\in E(G)$, it has at
	most one neighbor in $V(S)$ for every sector $S$.
	
	\begin{claim}
		\label{clm:minimal-wheel-gap}
		Let $P$ be a subpath of $C$ with ends $v_1,v_2\in N_C(v)$ and with
		$P^*\cap N(v)=\emptyset$.  Then $P$ contains at most two spokes of
		$W$.
	\end{claim}
	
	\begin{proof}[Proof of Claim~\ref{clm:minimal-wheel-gap}]
		Suppose that $P$ contains at least three spokes, and let $P'$ be the other
		$v_1v_2$-subpath of $C$.  If $P'=v_1v_2$, then
		$N_C(v)=\{v_1,v_2\}$ is contained in the sector containing that rim edge,
		so $v$ is proper, a contradiction.  Hence $P'$ is not an edge, and the
		cycle $vPv$ is a hole on which $x$ has at least three neighbors.
		Minimality of $C$ therefore implies that $P'$ has at most three vertices,
		and consequently $P'=v_1wv_2$.
		
		We must have $wx\in E(G)$.  Indeed, if $wx\notin E(G)$, then $P'$ lies in
		one sector.  If $vw\in E(G)$, the vertex $v$ has three neighbors in that
		sector, while if $vw\notin E(G)$, then
		$N_C(v)=\{v_1,v_2\}$; either conclusion is impossible.  Since $G$ is
		diamond-free, \(vw\notin E(G)\) and \(|N(x)\cap\{v_1,v_2\}|\le 1\).
		After interchanging $v_1$ and $v_2$ if necessary, assume
		$xv_1\notin E(G)$.  Let $x_1$ and $x_2$ be, respectively, the spokes on
		$P$ closest to $v_1$ and to $v_2$.
		
		If $xv\in E(G)$, then $x$ is linked to the hole
		$vv_1wv_2v$ by
		\[
		xv,\quad xw,\quad xx_2Pv_2,
		\]
		a contradiction.  Thus $xv\notin E(G)$.  Now \(D=vv_1Px_1xx_2Pv_2v\) is a hole, and $w$ has precisely the three neighbors $v_1,x,v_2$ on
		$D$.  Hence $(D,w)$ is a three-spoke wheel, equivalently an induced
		subdivision of $K_4$, again a contradiction.  This proves the claim.
	\end{proof}

	\begin{claim}
		\label{clm:minimal-wheel-sector}
		For every sector $S$, either $v$ has a neighbor in $S^*$, or $v$ has
		at most one neighbor in $V(S)$.
	\end{claim}
	
	\begin{proof}[Proof of Claim~\ref{clm:minimal-wheel-sector}]
		Suppose that $S=a_1Cb_1$ satisfies
		$N(v)\cap S^*=\emptyset$ while $v$ has at least two neighbors in
		$V(S)$.  Then $va_1,vb_1\in E(G)$ and $vx\notin E(G)$.  The sector is not
		short, since otherwise $G[\{x,v,a_1,b_1\}]$ is a diamond.
		
		Let $Q=C-S^*$.  Since $v$ is not proper, it has a neighbor in
		$Q^*$.  Starting at $b_1$ along $Q$, let $v^*$ be the first such
		neighbor.  Let $S_1=b_1Ca_2$ be the sector of $Q$ incident with $b_1$,
		and let $S_2=a_2Cb_2$ be the next sector.  Applying
		Claim~\ref{clm:minimal-wheel-gap} to $b_1Qv^*$ gives
		\[
		v^*\in V(b_1Qb_2)\setminus\{b_1,b_2\}.
		\]
		If $v^*\in V(S_2)$, the choice of $v^*$ makes \(D=vb_1Qv^*v\) an induced cycle, possibly a triangle.  The three paths
		\[
		xb_1,\quad xa_2,\quad xa_1v
		\]
		link $x$ to $D$: their ends $b_1,a_2,v$ are distinct, the sector
		$a_1Cb_1$ is long, and $a_1,a_2$ are nonconsecutive spokes.
		This contradicts the linked-cycle principle.
		Thus $v^*\in S_1^*$.  In this case $v$ is linked
		to the hole $xS_1x$ by
		\[
		vb_1,\quad vv^*,\quad va_1x.
		\]
		Here $v$ cannot also be adjacent to $a_2$: it already has the two
		neighbors $b_1,v^*$ in $S_1$, and every sector contains at most two
		neighbors of $v$.  This is again a contradiction, and proves the claim.
	\end{proof}
	
	Put $A=N_C(v)$.  Since $v$ is not proper and has at most two neighbors in
	every sector, $A$ is not contained in one sector: otherwise $|A|\le2$, and
	$v$ would satisfy both conditions in the definition of a proper vertex.
	Let us call two distinct sectors $T,T'$ a \textit{witnessing pair} when
	\[
	A\cap\bigl(V(T)\setminus V(T')\bigr)\ne\emptyset
	\quad\text{and}\quad
	A\cap\bigl(V(T')\setminus V(T)\bigr)\ne\emptyset.
	\]
	To see that a witnessing pair exists, suppose first that $A$ contains a
	vertex in the interior of a sector $T$ and a vertex outside $V(T)$.  A sector
	containing the latter vertex, together with \(T\), forms a witnessing pair.  If instead
	every member of $A$ is a spoke, the fact that $A$ is not contained in one
	sector gives two nonconsecutive spokes; choosing incident sectors directed
	away from the other spoke again gives exclusive witnesses.  Thus a
	witnessing pair exists.
	We distinguish whether a witnessing pair can consist of consecutive sectors.
	
	\medskip
	\noindent\textbf{Case 1: no witnessing pair is consecutive.}
	We first show that $A$ contains no spoke.  Suppose that a spoke $q$ belongs
	to $A$.  Orient $C$, let $T^+=qCq^+$ be the sector immediately following
	$q$, and let $p^+$ be the first vertex of $A$ encountered after $q$.  The
	vertices $q,p^+$ are consecutive in the cyclic order of $A$, so
	Claim~\ref{clm:minimal-wheel-gap}, applied to $qCp^+$, says that this path
	contains at most two spokes.
	
	If $p^+=q^+$, then $A\cap(T^+)^*=\emptyset$ while both ends of $T^+$
	belong to $A$, contrary to Claim~\ref{clm:minimal-wheel-sector}.  If $p^+$
	occurs after $q^+$, the gap bound places $p^+$ in the interior of the sector
	$U^+$ immediately following $T^+$; reaching the other end of $U^+$ would
	already put three spokes on $qCp^+$.  Then $q$ and $p^+$ are exclusive
	attachments to the consecutive sectors $T^+,U^+$, a contradiction.  Hence
	$p^+\in(T^+)^*$.
	
	The reverse orientation similarly puts the first vertex $p^-\in A$
	preceding $q$ in the interior of the sector $T^-$ immediately preceding
	$q$.  If $p^-=p^+$, that vertex would lie in the disjoint interiors of
	$T^-$ and $T^+$.  Otherwise $p^-,p^+$ are exclusive attachments to the
	consecutive sectors $T^-,T^+$.  Both alternatives are impossible.  Thus
	\begin{equation}
		\label{eq:case1-no-spoke-neighbor}
		A\cap N_C(x)=\emptyset.
	\end{equation}
	
	Every member of $A$ now lies in the interior of a unique sector.  Since $A$
	is not contained in one sector, choose two cyclically consecutive vertices
	$u_1,u_2\in A$ whose sectors $S_1,S_2$ are distinct, and orient $u_1Cu_2$
	from $u_1$ to $u_2$.  These sectors are not consecutive, for otherwise
	$u_1,u_2$ would witness them.  The interior of $u_1Cu_2$ is disjoint from
	$A$, so Claim~\ref{clm:minimal-wheel-gap} puts at most two spokes on this
	path.  Distinct nonconsecutive sectors require at least two; hence there are
	exactly two.  Name them $b_1,a_2$ in order, and name the other sector ends so
	that
	\[
	S_1=a_1Cb_1,\quad S_3=b_1Ca_2,\quad S_2=a_2Cb_2.
	\]
	Now $N(v)\cap V(S_3)=\emptyset$.
	Let $s,t$ be the neighbors of $v$ in $S_1$ closest to $a_1,b_1$,
	respectively, and let $z$ be the neighbor of $v$ in $S_2$ closest to
	$a_2$.
	
	The cycle \(D=vtS_1b_1S_3a_2S_2zv\) is a hole.  If $xv\in E(G)$, then $x$ is linked to $D$ by
	$xv,xb_1,xa_2$; hence $xv\notin E(G)$.  If $s=t$, then $x$ is linked to
	$D$ by
	\[
	xb_1,\quad xa_2,\quad xa_1S_1t.
	\]
	Thus $s\ne t$.  Apply Lemma~\ref{lem:external-connector-obstruction}
	with rim $R=D$, distinguished vertices $a=b_1$, $b=a_2$, and
	\[
	A=S_3,\quad B=b_1S_1t-v-zS_2a_2,\quad Q=x-a_1S_1s.
	\]
	The path $Q$ is induced and disjoint from $D$.  Its internal vertices
	are anticomplete to $D$: the choice of $s$ and
	\eqref{eq:case1-no-spoke-neighbor} exclude edges to $v$, while
	chordlessness of $C$ excludes edges to the retained rim segments.
	Here $s\ne t$ separates the $S_1$-parts, and the four distinct spokes
	$a_1,b_1,a_2,b_2$ ensure that $a_1$ has no neighbor on the retained
	$a_2S_2z$ segment.  Moreover,
	\[
	N_D(x)=\{b_1,a_2\},\quad
	v\in N_D(s)\subseteq\{v,t\}\subseteq V(B)\setminus\{a_2\}.
	\]
	Thus every hypothesis of the lemma holds.  Its exceptional outcome
	would require $sb_1\in E(G)$, which is impossible because $t$ lies
	strictly between $s$ and $b_1$ on $S_1$.  Hence $G$ contains an induced
	subdivision of $K_4$ or a prism, a contradiction.
	
	\medskip
	\noindent\textbf{Case 2: a consecutive witnessing pair exists.}
	Write
	\[
	S_1=a_1Cb_1,\quad S_2=b_1Cb_2.
	\]
	Since $W$ has at least four spokes, the complementary $b_2$--$a_1$ arc
	contains another spoke; hence $a_1,b_2$ are distinct and nonadjacent.
	Let $s,t$ be the neighbors of $v$ in $S_1$ closest to $a_1,b_1$,
	respectively, and let $z,w$ be the neighbors of $v$ in $S_2$ closest to
	$b_1,b_2$, respectively.
	
	Suppose first that $t=b_1$.  Claim~\ref{clm:minimal-wheel-sector} gives
	$s\in S_1^*$ and $w\in S_2^*$, and the per-sector neighbor bound
	gives $xv\notin E(G)$.  Diamond-freeness implies that $sb_1$ and $b_1w$
	cannot both be edges.  By symmetry assume $sb_1\notin E(G)$.  Then $v$ is
	linked to the hole $xS_2x$ by
	\[
	vb_1,\quad vw,\quad vsS_1a_1x,
	\]
	a contradiction.  Thus $t\ne b_1$, and symmetrically $z\ne b_1$.
	
	If $vx\in E(G)$, the per-sector neighbor bound gives $s=t$ and $z=w$.
	Diamond-freeness gives $s\ne a_1$ or $w\ne b_2$; otherwise
	$G[\{x,v,a_1,b_2\}]$ is a diamond.  By symmetry assume $s\ne a_1$.
	The three paths
	\[
	vx,\quad vz,\quad vsS_1b_1
	\]
	link $v$ to the induced cycle $xS_2x$, possibly a triangle.
	Here $s\in S_1^*$, $z\ne b_1$, and the per-sector neighbor bound
	and the extreme choices of $s,z$ exclude every extra attachment.
	This is a contradiction.  Therefore $vx\notin E(G)$.
	
	Suppose that $s\ne t$.  If $z\ne b_2$, then $v$ is linked to $xS_1x$ by
	$vs,vt,vzS_2b_1$; if $z=b_2$ but $b_1b_2\notin E(G)$, use instead the
	third path $vzx$.  Consequently
	\[
	z=b_2,\quad b_1b_2\in E(G).
	\]
	If $st\notin E(G)$, then $b_2$ is linked to the hole \(xa_1S_1svtS_1b_1x\) by $b_2b_1,b_2v,b_2x$.  Thus $st\in E(G)$.  The triangles
	$vst$ and $xb_1b_2$, together with
	\[
	sS_1a_1x,\quad tS_1b_1,\quad vb_2,
	\]
	now form an induced prism, a contradiction.  Hence $s=t$.
	By symmetry $z=w$.  Finally, $x$ is linked to the hole
	$vtS_1b_1S_2zv$ by
	\[
	xb_1,\quad xa_1S_1t,\quad xb_2S_2z,
	\]
	again a contradiction.
	
	Both cases are impossible.  Hence the minimum-rim wheel $W$ is proper.
\end{proof}

\subsection{Four-spoke normalization at a prescribed centre}

\begin{lemma}\label{lem:fixed-centre-four-spoke}\label{lem:n4}
	Let $G$ be an \(\{\mathrm{ISK}_4,\text{diamond},
	\text{prism}\}\)-free graph.
	Suppose that $W=(C,x)$ has exactly four spokes
	$a_1,a_2,a_3,a_4$, in this cyclic order on $C$, and that \(a_1a_2,\ a_3a_4\in E(C)\).
	Then $G$ contains a proper wheel centred at $x$, with exactly the same four
	spokes, in which $a_1a_2$ and $a_3a_4$ are still opposite short sectors.
\end{lemma}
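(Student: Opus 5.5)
Among all wheels $W'=(C',x)$ centred at $x$ whose spoke set is exactly $\{a_1,a_2,a_3,a_4\}$ and whose rim contains the edges $a_1a_2$ and $a_3a_4$, we choose one with $|V(C')|$ minimum; $W$ itself shows this family is nonempty. Such a rim consists of the two rim edges together with two long sectors, an $a_2$--$a_3$ path $L_1$ and an $a_4$--$a_1$ path $L_2$, both anticomplete to $x$ in their interiors. We claim that this minimal $W'$ is proper. Suppose some vertex $v\notin V(C')\cup\{x\}$ is not proper. The argument then repeats the local analysis from the proof of Lemma~\ref{lem:proper-wheel-exists}, with one change: minimality of the rim is only available inside the restricted family. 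So every use of ``a shorter wheel contradicts minimality'' must be replaced by ``a shorter wheel with the same four spokes and the same short sectors,'' or by a direct $\mathrm{ISK}_4$/diamond/prism contradiction.

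The first step is to handle $vx\in E(G)$. Since $x$ has exactly four neighbours on $C'$ and $G[N(x)]$ has maximum degree at most one by Lemma~\ref{lem:sparse-neighborhood}, $v$ can be adjacent to at most one spoke. The aim here is a contradiction through the linked-cycle principle, applied to $x$ or to $v$ on the cycles $xL_ix$, $xa_1a_2x$, $xa_3a_4x$. If that fails, we use $v$ as a shortcut. Suppose $v$ has two neighbours $p,q$ on $L_1$ that are far apart, nonadjacent to $x$'s other spokes, and $vx\notin E$. Then replacing the subpath $pL_1q$ by $p-v-q$ keeps the spoke set and both short edges, and shortens the rim. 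By Lemma~\ref{lem:sparse-neighborhood}, if $v$ has at least three neighbours on $L_1$, this replacement is strictly shorter. So $v$ has at most two neighbours on each long sector, and if it has two, they are consecutive or the replacement contradicts minimality. Consecutive neighbours give a triangle attached to the rim, which should lead to a prism or a diamond.

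The second step treats the case $vx\notin E(G)$ with neighbours in both $L_1$ and $L_2$, or in a long sector together with a spoke outside it. Here we follow Cases~1 and~2 of the proof of Lemma~\ref{lem:proper-wheel-exists}. We pick the neighbours of $v$ extremal toward the spokes. We form the hole that runs through $v$ and one short sector, then apply Lemma~\ref{lem:external-connector-obstruction} with $Q$ a path from $x$ through a spoke along a long sector. Its exceptional outcome is an exact-four wheel centred at a spoke, not at $x$, so we must rule it out. The plan is to check that the exceptional configuration forces a diamond at $x$, or else yields a shorter rim around $x$ through $v$ with the same spokes.

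The main obstacle is the minimality replacements. A rerouting through $v$ can create a new neighbour of $x$ on the rim, namely $v$ itself when $vx\in E$, or can destroy one of the prescribed short edges. Each shortening must therefore be checked to keep exactly the spokes $a_1,\dots,a_4$. Where it does not, we expect to close the case with a three-spoke wheel (an $\mathrm{ISK}_4$) or a diamond on $\{x,v,a_i,a_{i+1}\}$.
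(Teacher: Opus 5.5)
\textbf{There is a genuine gap: the one-sector confinement step is never proved.} Your outer frame matches the paper's: take a minimum-length rim among holes whose neighbourhood of $x$ is exactly $\{a_1,a_2,a_3,a_4\}$ in the same cyclic order and which keep $a_1a_2,a_3a_4$, then use the shortcut $p-v-q$. That confinement claim, that no outside vertex has rim neighbours in two different sectors, is the heart of the lemma.

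You propose to ``repeat the local analysis'' of Lemma~\ref{lem:proper-wheel-exists}, but that analysis rests on minimality over \emph{all} wheels. Its opening bound (at most two neighbours per sector) comes from comparing with the wheel $(xSx,v)$, whose centre is $v$. Claim~\ref{clm:minimal-wheel-gap} compares with a wheel at $x$ whose spoke set differs. Neither comparison is available in your restricted family, and you leave the replacements as expectations (``should lead to'', ``we expect to close''). The route through Lemma~\ref{lem:external-connector-obstruction} also cannot close as stated: its exceptional outcome is an exact-four wheel, which is no contradiction here, since such wheels exist (this lemma produces one).

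Your shortcut step does not fill the gap either. Replacing $pL_1q$ by $p-v-q$ gives a hole only if $v$ has no other rim neighbour. That is exactly what fails for a nonproper $v$ seeing two sectors, so ``$v$ has at most two neighbours on each long sector'' is unjustified in the case that matters. Also, two adjacent neighbours inside one long sector are harmless: such a $v$ is proper.

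\textbf{The missing idea: confinement needs no minimality and holds for every rim in the family.} The paper proves it directly, using only the forbidden configurations.
\begin{itemize}
\item If $vx\in E(G)$, then $v$ sees \emph{no} spoke (not merely at most one), because each $a_i$ already has its unique $G[N(x)]$-neighbour among the spokes. If $v$ sees both long sectors, take $p\in N(v)\cap P^*$ closest to $a_3$ and $q\in N(v)\cap Q^*$ closest to $a_4$. The triangle $xa_3a_4$ with paths $xv$, $vpPa_3$, $vqQa_4$ is then an $\mathrm{ISK}_4$.
\item If $vx\notin E(G)$ and $v$ sees both $P$ and $Q$, there are three subcases:
  \begin{itemize}
  \item Two nonadjacent extreme neighbours on $P$ give an $\mathrm{ISK}_4$ built on $xa_3a_4$.
  \item An adjacent pair $p,p'$ gives a prism between the triangle $vpp'$ and $xa_3a_4$ or $xa_1a_2$, or else a diamond.
  \item Singleton attachments give a diamond on $\{x,v,a_i,a_{i+1}\}$, or an $\mathrm{ISK}_4$ with branch vertex $p$.
  \end{itemize}
\end{itemize}
Once confinement is established for all rims in the family, minimality is needed only for a vertex with $vx\notin E(G)$ and at least three neighbours in one long sector. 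There your shortcut works as you describe.
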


\begin{proof}
	\setcounter{claim}{0}
	Let $\mathcal R$ be the family of oriented holes $R$ satisfying
	\begin{enumerate}
		\item $x\notin V(R)$ and $N_R(x)=\{a_1,a_2,a_3,a_4\}$;
		\item the orientation encounters $a_1,a_2,a_3,a_4$ in this cyclic order;
		\item $a_1a_2,a_3a_4\in E(R)$.
	\end{enumerate}
	The original oriented rim $C$ belongs to $\mathcal R$.  For
	$R\in\mathcal R$, let
	\[
	P_R=a_2Ra_3,\quad Q_R=a_4Ra_1
	\]
	be the two sectors other than the specified short sectors.  Neither is
	short.  For if $a_2a_3\in E(R)$, then
	$G[\{x,a_1,a_2,a_3\}]$ is a diamond; the argument for $Q_R$ is symmetric.
	
	We prove a sector-local statement uniformly over this family.
	
	\begin{claim}
		\label{clm:four-spoke-local}
		For every $R\in\mathcal R$ and every $y\notin V(R)\cup\{x\}$, the set
		$N_R(y)$ is contained in one sector of $(R,x)$.
	\end{claim}
	
	\begin{proof}[Proof of Claim~\ref{clm:four-spoke-local}]
		Fix an arbitrary $R\in\mathcal R$ and, for this proof only, write
		\[
		C:=R,\quad P:=P_R,\quad Q:=Q_R.
		\]
		We distinguish whether $y$ is adjacent to the centre.
		
		First suppose that $xy\in E(G)$.
		Each spoke already has a neighbor among the other spokes.  Since
		$y\in N(x)$, Lemma~\ref{lem:sparse-neighborhood} implies that $y$ is
		adjacent to none of $a_1,a_2,a_3,a_4$.
		
		If $y$ sees both $P^*$ and $Q^*$, choose
		$p\in N(y)\cap P^*$ closest to $a_3$, and choose
		$q\in N(y)\cap Q^*$ closest to $a_4$.  The triangle $xa_3a_4$ and the
		three paths
		\[
		xy,\quad ypPa_3,\quad yqQa_4
		\]
		form an induced subdivision of $K_4$.  The extremal choices of $p,q$,
		together with $ya_3,ya_4\notin E(G)$, give
		\[
		N(y)\cap V(pPa_3)=\{p\},\quad
		N(y)\cap V(qQa_4)=\{q\}.
		\]
		The sector interiors are anticomplete to $x$, and $C$ is chordless.  Thus $y$
		cannot see both long sectors.  Since it sees no spoke, all of $N_C(y)$ is
		contained in one sector.
		
		Now suppose that $xy\notin E(G)$.
		Assume for a contradiction that both
		\[
		B_P=N(y)\cap V(P),\quad B_Q=N(y)\cap V(Q)
		\]
		are nonempty.
		
		Suppose first that $|B_P|\ge2$.  In the order from $a_2$ to $a_3$, let
		$p$ and $p'$ be the first and last vertices of $B_P$.
		
		If $pp'\notin E(G)$, reverse the cyclic labelling
		$(a_1,a_2,a_3,a_4)$ to $(a_4,a_3,a_2,a_1)$ if necessary so that the
		member $q$ of $B_Q$ closest to $a_4$ is different from $a_1$.  This
		choice is possible: the only obstruction in one orientation is
		$B_Q=\{a_1\}$, which disappears after reversal.  The triangle
		$xa_3a_4$, with fourth branch vertex $y$ and paths
		\begin{equation}
			\label{eq:n4-nonadjacent-extremes}
			ypPa_2x,\quad yp'Pa_3,\quad yqQa_4,
		\end{equation}
		is an induced subdivision of $K_4$.  The extreme choices exclude extra
		edges at $y$, the nonedge $pp'$ separates the two $P$-tails, and
		$q\ne a_1$ excludes the possible chord $a_1a_2$.
		
		Suppose instead that $pp'\in E(G)$.  Then $pp'\in E(C)$ and
		$B_P=\{p,p'\}$.  By the same reversal, assume $p'\ne a_3$; otherwise
		reverse, using $p\ne a_2$, which follows because the sector $P$ is
		long.  Choose $q\in B_Q$ closest to $a_4$.  If $q\ne a_1$, the
		triangles $xa_3a_4$ and $ypp'$, joined by
		\begin{equation}
			\label{eq:n4-first-prism}
			xa_2Pp,\quad a_3Pp',\quad a_4Qqy,
		\end{equation}
		form an induced prism.  If $q=a_1$, then $B_Q=\{a_1\}$.  The case
		$p=a_2$ gives the diamond $G[\{x,y,a_1,a_2\}]$; otherwise the
		triangles $xa_1a_2$ and $ypp'$, joined by
		\[
		xa_3Pp',\quad a_2Pp,\quad a_1y,
		\]
		form an induced prism.  In both prism constructions the two triangles
		are disjoint by the displayed endpoint inequalities, and the extreme
		choices and chordlessness of $C$ exclude every other cross-edge.
		Thus $|B_P|\ge2$ is impossible; by symmetry so is $|B_Q|\ge2$.
		
		Consequently $B_P=\{p\}$ and $B_Q=\{q\}$.  The pairs
		$(p,q)=(a_2,a_1)$ and $(a_3,a_4)$ give diamonds with $x,y$.
		Excluding them, the same reversal lets us assume simultaneously
		$p\ne a_3$ and $q\ne a_1$: failure of these inequalities in both
		orientations would give one of the two excluded pairs.  The triangle
		$xa_3a_4$, with fourth branch vertex $p$ and paths
		\[
		pPa_2x,\quad pPa_3,\quad pyqQa_4,
		\]
		is now an induced subdivision of $K_4$.  The singleton attachment
		sets and the two endpoint inequalities exclude additional edges.
		
		Thus $B_P$ and $B_Q$ cannot both be nonempty.  Hence all neighbors of
		$y$ on $R$ lie in a single sector of $(R,x)$.  Since $R\in\mathcal R$ was
		arbitrary, the claim follows.
	\end{proof}
	
	Choose $R\in\mathcal R$ with $|V(R)|$ minimum, and put $P=P_R$, $Q=Q_R$.
	If $(R,x)$ is not proper, Claim~\ref{clm:four-spoke-local}, applied to this
	particular $R$, yields a vertex $y\notin V(R)\cup\{x\}$ whose $R$-neighbors
	all lie in one sector.  Its
	only possible violation of properness is
	\begin{equation}
		\label{eq:n4-improper-vertex}
		xy\notin E(G),\quad |N_R(y)|\ge3.
	\end{equation}
	Since a short sector contains only two vertices, all these neighbors lie
	in $P$ or all lie in $Q$.  By symmetry, assume that they lie in $P$.
	
	Let $p,q$ be the two outermost neighbors of $y$ on $P$, with
	$pRq\subseteq P$, and replace $pRq$ by the two-edge path $pyq$.  Denote the
	resulting cycle by $R'$.  By the choice of $p,q$, the vertex $y$ has no
	neighbor other than $p,q$ on the retained $qRp$-arc.  Since $R$ is a hole,
	$R'$ is a hole.  Lemma~\ref{lem:sparse-neighborhood} and
	\eqref{eq:n4-improper-vertex} give $|E(pRq)|\ge3$, so this replacement
	strictly shortens the rim.
	
	The replacement takes place inside a long sector and $xy\notin E(G)$.
	Consequently all four old spokes remain, no new spoke is created, their
	cyclic order is unchanged, and the two prescribed short edges
	$a_1a_2,a_3a_4$ remain.  Hence $R'\in\mathcal R$ but
	$|V(R')|<|V(R)|$, a contradiction.  The wheel $(R,x)$ is proper.
\end{proof}

\paragraph{Normalization and global minimality.}
We write \textbf{(N4)} for the fixed-centre four-spoke normalization of
Lemma~\ref{lem:fixed-centre-four-spoke}.  A proper wheel $W$ in $G$ is
called \textit{globally spoke-minimal} if
\begin{equation*}
	W \text{ has the minimum number of spokes among all proper wheels of }G.
	\tag{GM}\label{eq:global-minimal-wheel}
\end{equation*}
The centre is allowed to vary.  This quantifier is essential: several
replacement wheels have a centre different from that of the original
wheel.  Minimising only among wheels with a prescribed centre does not
exclude these replacements.  Whenever a globally spoke-minimal wheel has
at least five spokes, any exact-four output to which \textbf{(N4)} applies
therefore gives a contradiction, regardless of its centre.

\section{Local attachment lemmas}
\label{sec:local-attachments}

Throughout this section,
\[
G \text{ is }\{\mathrm{ISK}_4, \text{diamond}, \text{prism}\}
\text{-free}.
\]
All local arguments needed later are proved here; no portion of a cited proof is
silently imported.  The terminology of wheels, spokes, sectors, proper
vertices, and proper wheels is that fixed in Section~\ref{sec:preliminaries}.
The abbreviations \textbf{(N4)} and~\eqref{eq:global-minimal-wheel}
refer to the normalization and global minimality conventions fixed at the
end of Section~\ref{sec:local-wheels}.

\subsection{An exact-four replacement}

\begin{lemma}\label{lem:modified-lsw-32}
	Let $W=(C,x)$ be a proper wheel, let $u,v\in N(x)$ with $uv\in E(G)$,
	and let $W'=(C',x)$ be an exact-four wheel containing $u$ and $v$.
	Let $S$ be the sector of $W'$ that contains $u$ but not $v$, and suppose
	that
	\begin{equation}
		V(C')\setminus V(S)\subseteq V(C).
		\label{eq:32-containment}
	\end{equation}
	Then $G$ contains a proper exact-four wheel.  Its center need not be $x$.
	In particular, if $W$ satisfies~\eqref{eq:global-minimal-wheel} and has at
	least five spokes, the conclusion is impossible.
\end{lemma}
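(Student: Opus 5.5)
The plan is to reduce everything to the normalization lemma \textbf{(N4)} (Lemma~\ref{lem:fixed-centre-four-spoke}). The target is an exact-four wheel, centred at $x$ or at some other vertex, whose two short sectors are opposite.

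\textbf{Setting up.} Write the spokes of $W'$ in cyclic order as $u,v,c,d$. Since $uv\in E(G)$ and both are spokes, $uv$ is a short sector of $W'$, and $S$ is the sector $dC'u$. Hypothesis~\eqref{eq:32-containment} then says that the arc $vC'c\,C'd$ with $d$ removed lies on $C$. Because $C$ is a hole, this arc is a subpath of $C$. On it, $x$ sees exactly $v$ and $c$, so $v$ and $c$ are cyclically consecutive spokes of $W$. The sector $vC'c$ of $W'$ is also a sector of $W$, and the vertices of $cC'd$ other than $d$ lie in the sector of $W$ that starts at $c$. Lemma~\ref{lem:sparse-neighborhood} shows that $vc$ and $uv$ are not both edges, since $x,u,v,c$ would form a diamond. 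So the short sectors of $W'$ form a matching: either $cd$ is a second short sector or $uv$ is the only one.

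\textbf{Case $cd\in E(G)$.} Here $uv$ and $cd$ are opposite short sectors of the exact-four wheel $W'$. Lemma~\ref{lem:fixed-centre-four-spoke} directly gives a proper exact-four wheel centred at $x$, and we are done.

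\textbf{Case $cd\notin E(G)$.} Here the containment hypothesis and the properness of $W$ must be exploited. First apply properness of $W$ to $u$ if $u\notin V(C)$, and to $d$ if $d\notin V(C)$. Since $u$ is adjacent to $x$ and to $v\in V(C)$, all its $C$-neighbors lie in one sector of $W$ at $v$, and it has at most one neighbor per sector by Lemma~\ref{lem:sparse-neighborhood}. Similarly, the $C$-neighbors of $d$ lie in one sector, which must contain $c$'s outgoing sector. Next follow the path $S$ from $d$ to $u$ and use these restrictions to locate where $S$ leaves $C$. I then expect either a linked vertex (forbidden by the linked-cycle principle), an application of Lemma~\ref{lem:external-connector-obstruction} producing an ISK$_4$ or prism, or its exceptional outcome. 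The exceptional outcome is precisely an exact-four wheel with two opposite short sectors, centred at a vertex of the rim ($v$ or $c$ in the natural labelling), and \textbf{(N4)} is then applied to that wheel. This explains why the conclusion allows the centre to differ from $x$.

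\textbf{Final assertion and main obstacle.} The last sentence is immediate: a globally spoke-minimal proper wheel with at least five spokes cannot coexist with a proper four-spoke wheel, by~\eqref{eq:global-minimal-wheel}. The main obstacle is the second case, namely choosing the right hole $R$, the distinguished vertices $a,b$, and the connector $Q$ (built from $x$ or from a subpath of $S$) so that all the attachment hypotheses of Lemma~\ref{lem:external-connector-obstruction} hold. The boundary situations $u\in V(C)$ and $d\in V(C)$ will need separate checking.
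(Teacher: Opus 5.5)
\textbf{The gap is in the case $cd\notin E(G)$.} Your case $cd\in E(G)$ and your handling of the final assertion match the paper.

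In the case $cd\notin E(G)$ your plan only studies how $C'$ (the sector $S$ and the vertices $u,d$) sits relative to $C$. It expects to end in a forbidden configuration or in the exceptional outcome of Lemma~\ref{lem:external-connector-obstruction}, to be fed to \textbf{(N4)}. This cannot work, for two reasons.
\begin{itemize}
\item Properness is a condition on vertices \emph{outside} a wheel, and your plan never examines such a vertex.
\item This case is not contradictory, and the graph need not contain any wheel with two opposite short sectors.
\end{itemize}
For a concrete instance, let $G$ be the hole $c_0c_1\cdots c_7c_0$ together with $x$ adjacent to $c_0,c_1,c_3,c_5$, and take $W=W'$, $u=c_0$, $v=c_1$.
\begin{itemize}
\item $G$ is $\{\mathrm{ISK}_4,\text{diamond},\text{prism}\}$-free: it has a single triangle, and a hole plus a centre with four spokes contains no $\mathrm{ISK}_4$.
\item All hypotheses hold, and $cd=c_3c_5\notin E(G)$.
\item $S=c_5c_6c_7c_0$ lies entirely on $C$.
\item $W$ is the only wheel in $G$.
\end{itemize}
So there is no linked vertex, no connector, and nothing for \textbf{(N4)} to act on. The lemma holds here only because the one-short-sector wheel $W'$ is itself proper, and your argument has no step that produces that outcome.

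\textbf{The missing idea is an extremal choice followed by a properness check.} The paper considers all pairs $(\widehat W,\widehat S)$ in which $\widehat W=(\widehat C,x)$ is an exact-four wheel through $u,v$, $\widehat S$ is its sector containing $u$ but not $v$, and $V(\widehat C)\setminus V(\widehat S)\subseteq V(C)$. It takes one with $|V(\widehat C)|$ minimum. If the two far spokes are nonadjacent, it proves this minimal wheel is proper by analysing a hypothetical nonproper vertex $y$:
\begin{itemize}
\item First, $y$ must have a neighbour on $\widehat S$. Otherwise properness of $W$ transfers to $\widehat W$ through the subpath $vT_2w_2T_3r_0$, which lies on $C$; this is exactly what the containment hypothesis is for.
\item Then the subcases $xy\in E(G)$ and $xy\notin E(G)$ each yield one of three things. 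The first is a forbidden configuration. The second is a four-spoke wheel centred at $x$ with opposite short sectors $uv$ and $yw_i$, to which \textbf{(N4)} applies. The third is a member of the family with a strictly shorter rim, obtained by rerouting through $y$; the containment survives because everything outside the new sector at $u$ still lies on $C$.
\end{itemize}
The minimisation is essential. The hypotheses say nothing about outside vertices that attach only to the part of $S$ lying off $C$, so nothing makes the given $W'$ proper.
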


\begin{proof}
	With $W,u,v$ fixed, consider every pair $(\widehat W,\widehat S)$ in which
	$\widehat W=(\widehat C,x)$ is an exact-four wheel containing $u,v$,
	$\widehat S$ is its sector containing $u$ but not $v$, and
	$V(\widehat C)\setminus V(\widehat S)\subseteq V(C)$.  Choose such a pair
	with $|V(\widehat C)|$ minimum and rename it $(W',S)$, where
	$W'=(C',x)$.  Since $uv\in E(G)$ and $C'$ is a hole, $uv$ is a short
	sector.  Write the four spokes in cyclic order as \(w_1,u,v,w_2\).
	If $w_1w_2\in E(G)$, then $uv$ and $w_1w_2$ are opposite short
	sectors, so (N4) gives the required proper exact-four wheel.
	Hence assume $w_1w_2\notin E(G)$, and put
	\[
	T_1=w_1C'u,\quad T_2=vC'w_2,\quad T_3=w_2C'w_1.
	\]
	Thus $T_1=S$, and all three $T_i$ are long: this is immediate for $T_3$,
	while $T_1,T_2$ are long by Lemma~\ref{lem:sparse-neighborhood}, since
	$uv\in E(G[N(x)])$.  If $W'$ is proper, there is nothing to prove.  Suppose
	that $y\notin V(W')$ is not proper for $W'$.
	
	We first show that
	\begin{equation}
		N(y)\cap V(T_1)\ne\emptyset.
		\label{eq:b2-y-meets-t1}
	\end{equation}
	Let $r_0$ be the neighbor of $w_1$ on $T_3$, and put
	$P=vT_2w_2T_3r_0$.  By~\eqref{eq:32-containment}, $V(P)\subseteq V(C)$;
	since $C$ is a hole, every edge of $C'$ with both ends in $V(C)$ is a rim
	edge of $C$.  Hence $P$ is a subpath of $C$.  The only
	vertices of $P$ adjacent to $x$ are $v,w_2$.  Suppose that $y$ has no neighbor on
	$T_1$.  If $y\notin V(C)$, properness of $y$ for $W$ puts
	$N(y)\cap V(P)$ in one sector of $W$.  The two parts of $P$ on opposite
	sides of the spoke $w_2$ lie in distinct sectors, so
	$N(y)\cap V(C')$ lies in one sector of $W'$; properness for $W$ also
	gives $xy\in E(G)$ whenever this set has more than two vertices.  Thus
	$y$ is proper for $W'$, a contradiction.  If $y\in V(C)$,
	chordlessness of $C$ gives $|N(y)\cap V(P)|\leq 2$.  With at most one
	such neighbor, $y$ is again proper for $W'$.  With two, these are the two
	cyclic neighbors of $y$ on $C$.  Since $P\subseteq C-y$ contains both,
	it follows that $P=C-y$.  Then $W$ has at most the three spokes
	$v,w_2,y$, contrary to the fact that every wheel in $G$ has at least
	four spokes.  This proves~\eqref{eq:b2-y-meets-t1}.
	
	Let $y_1$ be the neighbor of $y$ on $T_1$ closest to $u$.  Delete the edge $uv$ from $C'$ and orient the resulting path as \(Q=vT_2w_2T_3w_1T_1u\).
	Let $y'_1$ be the first neighbor of $y$ encountered on $Q$ from $v$.
	Whenever below we close a displayed subpath of $C'$ through $y$, the
	result is a hole: the choices of the extreme neighbors exclude further
	edges from $y$ to the retained subpath, and the chordlessness of $C'$
	excludes all remaining chords.
	
	Suppose first that $xy\in E(G)$.  By
	Lemma~\ref{lem:sparse-neighborhood}, $|N(x)\cap N(y)|\leq1$;
	also $yu,yv\notin E(G)$, since $uv$ is already an edge in $G[N(x)]$.
	Put $p=y_1$ and $r=y'_1$.  Since $y$ is not proper for
	$W'$ but $xy\in E(G)$, it has a neighbor outside $T_1$; hence
	$r\in (V(T_2)\cup V(T_3))\setminus\{w_1\}$.
	
	If $r\in V(T_2)$, consider the hole \(D=y-pT_1u-vT_2r-y\).
	Its spokes with centre $x$ are $y,u,v$, together with $w_1$ if $p=w_1$
	and $w_2$ if $r=w_2$.  The common-neighbor bound permits at most one
	of these extra spokes.  With none we obtain a forbidden three-spoke wheel;
	with one, the opposite short sectors are $uv,yw_i$, so (N4) supplies the
	required proper exact-four wheel.
	
	Thus assume $r\in T_3^*$.  We also have $p\ne w_1$: otherwise
	$w_1$ is the only neighbor of $y$ on $T_1$ and all its rim neighbors
	lie in $T_3$, making $y$ proper for $W'$.  The cycle \(D=y-pT_1u-vT_2w_2T_3r-y\) is therefore an exact-four wheel rim with centre $x$ and spokes
	$y,u,v,w_2$.  Its sector containing $u$ but not $v$ is $y-pT_1u$,
	and all vertices outside that sector lie on $C$.  Thus it satisfies
	\eqref{eq:32-containment}.  The replaced arc $pT_1w_1T_3r$ has at least
	two edges; if it has at least three, $D$ is shorter than $C'$, a
	contradiction.  In the remaining case this arc is $p-w_1-r$.
	If $yw_1\in E(G)$, then $\{y,p,w_1,r\}$ induces a diamond.  Otherwise
	$y-p-w_1-r-y$ is a hole.  The choice of $r$ as the first neighbor
	from $w_2$, together with $rw_1\in E(C')$, makes $r$ the only neighbor
	of $y$ on $T_3$.  Hence $x$ is linked to this hole by
	\[
	xy,\quad xw_1,\quad xw_2T_3r.
	\]
	The sector interiors are anticomplete to $x$, and chordlessness of
	$C'$ and the stated neighborhood of $y$ on $T_3$ exclude all extra
	edges.  This final contradiction proves that
	\begin{equation}
		xy\notin E(G).\label{eq:b2-xy-nonedge}
	\end{equation}
	
	Suppose that $y_1=w_1$.  Then $w_1$ is the only neighbor of $y$ on
	$T_1$.  If $y'_1\in V(T_2)\setminus\{w_2\}$, the hole \(y-w_1T_1u-vT_2y'_1-y\) has exactly the three neighbors $w_1,u,v$ of $x$, a contradiction.
	Hence $y'_1\in V(T_3)$, and every neighbor of $y$ on $C'$ lies on
	$T_3$.  Since $T_3-w_1\subseteq C$ and $xy\notin E(G)$, properness for $W$
	(or chordlessness of $C$ if $y\in V(C)$) gives $|N_{T_3}(y)|\le3$.
	Equality makes $(xT_3x,y)$ a forbidden three-spoke wheel, while at most
	two neighbors make $y$ proper for $W'$, again a contradiction.
	Thus
	\begin{equation}
		y_1\ne w_1.
		\label{eq:b2-y1-not-w1}
	\end{equation}
	
	Suppose next that $y'_1\in V(T_2)$.  If $yv\in E(G)$, then
	$yu\notin E(G)$, since otherwise $\{x,y,u,v\}$ induces a diamond.
	Apply Lemma~\ref{lem:external-connector-obstruction} with
	\[
	R=x-uT_1w_1-x,\quad a=u,\quad b=x,\quad
	A=ux,\quad B=uT_1w_1-x,\quad Q=v-y.
	\]
	The path $Q$ lies outside the hole $R$, and $N_R(v)=\{u,x\}$.
	By \eqref{eq:b2-y-meets-t1}, $y$ has a neighbor on $T_1$; since
	$yu,yx\notin E(G)$, all its $R$-neighbors lie in $B^*$.
	The exceptional outcome would require $yu\in E(G)$, so every outcome
	is impossible.
	
	If $yv\notin E(G)$, apply the same lemma with
	\[
	R=x-vT_2w_2-x,\quad a=v,\quad b=x,\quad
	A=vx,\quad B=vT_2w_2-x,\quad Q=uT_1y_1-y.
	\]
	Here $N_R(u)=\{v,x\}$.  By \eqref{eq:b2-y1-not-w1}, the internal
	vertices of $Q$ lie in $T_1^*$ and are anticomplete to $R$; the
	extreme choice of $y_1$ makes $Q$ induced.  Moreover,
	$y'_1\in T_2-\{v\}$ gives a neighbor of $y$ in $B^*$, and
	$yv,yx\notin E(G)$ put all its $R$-neighbors there.
	The exceptional outcome requires $yv\in E(G)$, again impossible.  Hence
	\begin{equation}
		y'_1\notin V(T_2).
		\label{eq:b2-y1prime-not-t2}
	\end{equation}
	
	If $y'_1\in V(T_3)\setminus\{w_1\}$, then \(y-y_1T_1u-vT_2w_2T_3y'_1-y\) is a hole on which $x$ has exactly the three neighbors $u,v,w_2$,
	again an induced subdivision of $K_4$.  It follows from the definition
	of $y'_1$ and~\eqref{eq:b2-y1prime-not-t2} that \(N(y)\cap V(C')\subseteq V(T_1)\).
	Since $xy\notin E(G)$ and $y$ is not proper for $W'$, it has at least
	three neighbors on $T_1$.  Replace the segment
	$y'_1T_1y_1$ of $C'$ by $y'_1-y-y_1$.  Explicitly, the resulting hole is \(D=u-vT_2w_2T_3w_1T_1y'_1-y-y_1T_1u\).
	The wheel $(D,x)$ is exact-four, with spokes $u,v,w_2,w_1$, and its
	sector containing $u$ but not $v$ is
	$w_1T_1y'_1-y-y_1T_1u$; hence it still satisfies
	\eqref{eq:32-containment}.  By the shortcut assertion of
	Lemma~\ref{lem:sparse-neighborhood}, the old segment $y'_1T_1y_1$ has
	at least three edges, so the replacement strictly shortens $C'$, a
	contradiction.
	
	No nonproper vertex $y$ exists.  Hence $W'$ is proper unless one of the
	preceding N4 configurations has already supplied a proper exact-four
	wheel, and the lemma follows.  Under~\eqref{eq:global-minimal-wheel}, a
	proper exact-four wheel, with any centre, contradicts the global spoke-minimality of \(W\) when \(\sigma(W) \geq 5\), proving the final assertion.
\end{proof}

\subsection{Replacing a rim sector}

\begin{lemma}\label{lem:modified-lsw-33}
	Let $W=(C,x)$ be a proper wheel, let $u,v\in N(x)$ with $uv\in E(G)$,
	and suppose that a wheel $W'=(C',x)$ contains $u,v$ and satisfies \(V(C')\setminus\{u,v\}\subseteq V(C)\).
	Then there is a proper wheel with center $x$ and with no more spokes than
	$W'$.
\end{lemma}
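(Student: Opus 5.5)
Among all wheels $\widehat W=(\widehat C,x)$ with centre $x$ that contain $u,v$, satisfy $V(\widehat C)\setminus\{u,v\}\subseteq V(C)$, and have no more spokes than $W'$, I would choose one with $|V(\widehat C)|$ minimum and rename it $W'=(C',x)$. The aim is to show that $W'$ is proper. Since $uv\in E(G)$ and $C'$ is a hole, $uv$ is a short sector of $W'$. If $W'$ has exactly four spokes, and the sector opposite $uv$ is also short, then (N4) gives a proper wheel with centre $x$ and four spokes, and we are done. So the interesting case is when $W'$ is not already normalised.

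Suppose $y\notin V(W')$ is not proper for $W'$. First I would treat the case where $y$ has no neighbor among $u,v$ and no neighbor in the two sectors $T_1,T_2$ at $u$ and $v$. Then the argument of \eqref{eq:b2-y-meets-t1} in Lemma~\ref{lem:modified-lsw-32} applies. The path $C'-\{u,v\}$ is a subpath of $C$, because $C$ is a hole and every edge of $C'$ with both ends in $V(C)$ is a rim edge of $C$. So properness of $y$ for $W$ forces its attachments on this path into one sector of $W$, hence into one sector of $W'$. If $y\in V(C)$, chordlessness of $C$ does the same job. Either way $y$ is proper for $W'$, a contradiction. Consequently $y$ sees $u$, $v$, or the interior of $T_1$ or $T_2$.

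Next I would split on whether $xy\in E(G)$.

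\emph{Case $xy\in E(G)$.} Lemma~\ref{lem:sparse-neighborhood} gives $yu,yv\notin E(G)$ and at most one common neighbor of $x$ and $y$. Let $r$ be the first neighbor of $y$ on $C'$ in the direction away from $u$, and define the hole $D$ by closing through $y$ as in Lemma~\ref{lem:modified-lsw-32}. Then $(D,x)$ has spokes $y,u,v$ plus at most one extra spoke. If there is no extra spoke, we get a three-spoke wheel, which is forbidden. If there is one, we get an exact-four wheel with opposite short sectors $uv$ and $yw$, and (N4) finishes. Otherwise the rerouting through $y$ removes some spokes, so the number of spokes does not grow. Since $V(D)\setminus\{u,v\}\subseteq V(C)\cup\{y\}$, I need $y\in V(C)$ or a shorter rim to stay in the family.

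\emph{Case $xy\notin E(G)$.} Here I would apply Lemma~\ref{lem:external-connector-obstruction} with $R=x-uT_1w_1-x$ (or its mirror at $v$) and connector $Q=v-y$ or $Q=uT_1y_1-y$, exactly as in the proof of Lemma~\ref{lem:modified-lsw-32}. This excludes attachments straddling $uv$. The remaining non-proper configuration is then three or more neighbors of $y$ inside one sector. Shortcutting that sector through $y$ via Lemma~\ref{lem:sparse-neighborhood} gives a strictly shorter rim with the same spokes.

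The main obstacle is keeping the invariant $V(\widehat C)\setminus\{u,v\}\subseteq V(C)$ under these reroutings, since $y$ need not lie on $C$. My first fix would be to weaken the family, requiring only that the rim minus the sectors at $u$ and $v$ lies in $C$, which is the analogue of \eqref{eq:32-containment} on both sides. With that invariant, the containment argument of the first step still works for vertices avoiding $T_1\cup T_2$, and all shortcuts in those sectors stay in the family. Any step that drops the spoke count to exactly four with two opposite short sectors is closed off by (N4).
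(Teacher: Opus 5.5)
There is a genuine gap: the configuration that actually carries this lemma is never handled. In the case $xy\notin E(G)$ you claim that, after the external-connector applications copied from Lemma~\ref{lem:modified-lsw-32}, the only surviving nonproper configuration is three or more neighbours of $y$ in one sector. That is false once $\sigma(W')\ge5$. Those applications need $y$ to have a neighbour on $T_1$ and its first neighbour from $v$ to lie on $T_2$ (or the mirror image). The other contradictions you would import also depend on $\sigma(W')=4$: these are the holes in that proof on which $x$ has \emph{exactly} three neighbours, such as the one through $y'_1\in T_3$ with spokes $u,v,w_2$. With more spokes those holes are just wheels with fewer spokes.

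Concretely, take a nonproper $y$ with $yu\in E(G)$, so $xy,yv\notin E(G)$. Suppose its only other rim neighbour $y'$ lies on the far part of $C'$, outside $T_2$ and outside the sector following $T_1$ at $w_1$. None of your steps excludes this $y$. In the paper it is in fact the \emph{only} case. Since $W$ is itself proper, one may assume no proper wheel with centre $x$ has at most $k=\sigma(W')$ spokes, so $\sigma(C,x)>k$. This spoke gap gives the rim-transfer observation that every nonproper vertex sees $u$ or $v$, and then $N_{C'}(y)=\{u,y'\}$ with $y'$ in exactly that far region. The paper reroutes to the hole $y-uT_1w_1\cdots y'-y$, which avoids $v$ and has fewer than $k$ spokes. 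The rest of the proof shows this wheel is proper, using the choice of $y$ minimising $uC'y'-y$.

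Your case $xy\in E(G)$ has the same defect: ``$y,u,v$ plus at most one extra spoke'' holds only when $N_{C'}(y)\subseteq V(T_1)\cup V(T_2)$. The invariant problem you flag is postponed by the weakened family, not solved. With the weakened invariant, the configuration above could be rerouted the other way, through $y-u-v\,T_2\,w_2\cdots y'-y$, which keeps $u,v$ and is strictly shorter. But the outer spoke $w_1$ of the sector at $u$ may now lie outside $C$; your own reroute through a nonproper $y\in N(x)$ that sees $T_1^*$ and the far arc makes $y$ the new outer spoke. This causes two problems.
\begin{enumerate}
\item[(i)] Transferring properness from $W$ to vertices avoiding $T_1\cup T_2$ now requires excluding a sector of $W$ whose trace on the far arc consists of two pieces at its two ends. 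That exclusion is exactly where the remark ``otherwise $W$ itself has at most $k$ spokes'' is needed, and you never make it.
\item[(ii)] Consider a nonproper $y\notin V(C)$ that sees such a $w_1$ and a far-arc vertex beyond the far sector at $w_1$, and nothing else in $T_1\cup T_2$. Properness for $W$ says nothing about the edge $yw_1$. The only hole through $y$ in $G[V(\widehat C)\cup\{y\}]$ containing $u,v$ puts $y$ on the far arc, outside your family, so rim-length minimality yields nothing there.
\end{enumerate}
As written, the proposal is a plan whose key cases are missing, not a proof.
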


\begin{proof}
	Write \(\sigma(R,x)=|N(x)\cap V(R)|\) for the number of spokes of a wheel with rim $R$.  Put $k=\sigma(C',x)$
	and suppose, for a contradiction, that no proper wheel with centre $x$
	has at most $k$ spokes.  Since $W=(C,x)$ is proper, we have
	\begin{equation}
		\sigma(C,x)>k.                                      \label{eq:B3-spoke-gap}
	\end{equation}
	If both $u$ and $v$ belonged to $C$, then
	$V(C')\subseteq V(C)$.  The graph induced by $V(C)$ contains only the
	edges of the single cycle $C$, and a proper vertex subset of a cycle cannot
	contain a cycle.  Consequently $C'=C$, and $W'=W$ would already be proper.
	Thus at least one
	of $u,v$ is a new spoke outside $C$.
	
	We first record the elementary rim-transfer observation that will be used
	several times.  If
	\[
	t\notin V(C')\cup\{x\},\quad
	N(t)\cap\{u,v\}=\emptyset,
	\]
	and either $t\in V(C)$ or $t$ is proper for $W$, then $t$ is proper for
	$W'$.  To see this,
	note first that $C'-\{u,v\}$ is a path contained in $C$ (an edge between
	two of its vertices is necessarily an edge of the hole $C$).  If
	$t\notin V(C)$, all the neighbors of $t$ on $C$ lie in one sector $S$
	of $W$.  The trace of $S$ on $C'-\{u,v\}$ lies in one sector of $W'$,
	unless the complementary subpath of $C$ is contained in $S$.  In the
	latter case that complementary subpath has no spoke of $W$ in its
	interior.  Thus every old spoke outside $\{u,v\}$ occurs on
	$C'-\{u,v\}$, every member of $\{u,v\}\cap V(C)$ remains a spoke on
	$C'$, and at least one of $u,v$ is a new spoke not on $C$.  Hence
	$\sigma(C',x)\geq\sigma(C,x)$, contrary to
	\eqref{eq:B3-spoke-gap}.  Thus $N_{C'}(t)$ lies in one sector of $W'$.
	If it has more than two members, properness for $W$ gives $tx\in E(G)$,
	so the second condition in the definition of a proper vertex also holds.
	If $t\in V(C)\setminus V(C')$, chordlessness of $C$ gives at most two
	neighbors of $t$ on $C'$.  They can fail to lie in one sector of $W'$
	only when $V(C)\setminus V(C')=\{t\}$; but then passing from $C$ to
	$C'$ removes at most the one spoke $t$ and introduces at least one new
	spoke among $u,v$, again giving
	$\sigma(C',x)\geq\sigma(C,x)$.  This proves the observation.
	
	Let $T_1,T_2$ be the sectors of $W'$ incident, respectively, with $u$
	and $v$ but different from the short sector $uv$.  Let $w_i$ be the end
	of $T_i$ different from $u$ or $v$, respectively, and orient $C'$ so that \(w_1,u,v,w_2\) occur in this cyclic order.  If $W'$ is proper, it is the required wheel,
	so choose a vertex $y\notin V(C')\cup\{x\}$ that is not proper for $W'$.
	The rim-transfer observation shows that $y$ is adjacent to $u$ or $v$;
	after reversing the notation if necessary, assume \(yu\in E(G)\).
	The edge $xv$ in $G[N(u)]$ and Lemma~\ref{lem:sparse-neighborhood} give
	\begin{equation}
		xy,yv\notin E(G).                                  \label{eq:B3-y-nonedges}
	\end{equation}
	
	We next determine the remaining neighbors of $y$ on $C'$.  If
	$y\notin V(C)$, properness for $W$ and $xy\notin E(G)$ give at most two
	neighbors on $C'-\{u\}$; if $y\in V(C)\setminus V(C')$, the same bound
	follows from the chordlessness of $C$.  If the bound were attained, then,
	together with $u$, the vertex $y$ would have exactly three neighbors on
	the hole $C'$, and $(C',y)$ would be a three-spoke wheel, hence an induced
	subdivision of $K_4$.  On the other hand, if \(y\) had no neighbor in
	$C'-\{u\}$, then it would be proper for $W'$.  Consequently there is a
	unique vertex $y'\in V(C')\setminus\{u\}$ adjacent to $y$:
	\begin{equation}
		N(y)\cap V(C')=\{u,y'\}.                            \label{eq:B3-y-neighborhood}
	\end{equation}
	The two vertices $u,y'$ do not lie in a common sector of $W'$.  Among all
	choices of a nonproper vertex and, if necessary, the symmetric choice of
	$u$ or $v$, choose $y$ so that \(P_{u,y}=uC'y'-y\), where the $u$--$y'$ subpath avoids $v$, has as few vertices as possible.
	
	We now locate $y'$.  If $y'\in V(T_2)$, then \(y-u-vT_2y'-y\) is a hole and $x$ is linked to it by
	\[
	xv,\quad xu,\quad xw_2T_2y',
	\]
	contrary to the linked-cycle principle.  Let $T_3$ be the sector incident
	with $w_1$ on the side opposite $T_1$, and let $w_3$ be its other end
	(possibly $w_3=w_2$).  If $y'\in V(T_3)$, then $y'\neq w_1$ because
	$u,y'$ do not lie in a common sector, and $x$ is linked to the hole \(y-uT_1w_1T_3y'-y\) by
	\[
	xu,\quad xw_1,\quad xw_3T_3y'.
	\]
	Thus neither possibility occurs.
	In particular, \(w_3\neq w_2\), since otherwise the excluded sectors cover \(C'\).
	Let $Q$ be the $w_3$--$w_2$ subpath of
	$C'$ whose interior avoids $u$.  Then $y'\in Q^*$.
	
	Put
	\begin{equation}
		D=y-uT_1w_1T_3w_3Qy'-y                            \label{eq:B3-D}
	\end{equation}
	and $W_1=(D,x)$.  Equation~\eqref{eq:B3-y-neighborhood} and the
	chordlessness of $C'$ show that $D$ is a hole.
	It retains the three distinct spokes \(u, w_1, w_3\), so \((D, x)\) is a wheel.
	Its spokes are precisely the spokes of $W'$ on the $u$--$y'$ subpath avoiding $v$: the vertex $y$
	is not a spoke, while at least the spoke $v$ has been deleted.  Hence
	\begin{equation}
		\sigma(D,x)<k.                           \label{eq:B3-fewer-spokes}
	\end{equation}
	If $W_1$ is proper, it is the required wheel.  We may therefore choose
	$z\notin V(D)\cup\{x\}$ that is not proper for $W_1$.
	No vertex of $V(C')\setminus V(D)$ can serve as $z$.
	The deleted $u$--$y'$ arc contains the distinct internal vertices
	$v,w_2$, since $y'\notin V(T_2)$.  By chordlessness of $C'$, each
	deleted rim vertex has at most one neighbor on the retained arc;
	none is adjacent to $y$ by~\eqref{eq:B3-y-neighborhood}.
	It is therefore proper for $W_1$.
	Thus
	\begin{equation}
		z\notin V(C')\setminus V(D).             \label{eq:B3-z-not-deleted}
	\end{equation}
	Let $S_y$ denote the sector of $W_1$ that contains the whole two-edge
	subpath $u-y-y'$, and let $h$ be its end different from $u$.
	
	Suppose that $zu\in E(G)$.  The triangle $xuv$ and the edge $uz$
	force $zx,zv\notin E(G)$: exactly one of these edges would give a
	diamond on $\{x,u,v,z\}$, and both would give a $K_4$.  We have \(|N(z)\cap(V(C')\setminus\{u\})|\leq2\).
	Indeed, $zv\notin E(G)$ and $V(C')\setminus\{u,v\}\subseteq V(C)$;
	if $z\notin V(C)$, the bound follows from properness for $W$ and
	$zx\notin E(G)$, while if $z\in V(C)$ it follows from chordlessness
	of $C$.  Equality would make $(C',z)$ a three-spoke wheel.  Therefore
	$|N_{C'}(z)|\leq2$.
	
	If $zy\in E(G)$, then $N_D(z)$ contains $u,y$ and has at most three
	vertices.  Since $u,y$ lie in the sector $S_y$, two neighbors would
	make $z$ proper for $W_1$, whereas three would make $(D,z)$ a
	three-spoke wheel.  Thus $zy\notin E(G)$.
	Now $N_D(z)$ has at most two vertices and contains $u$.  The two
	sectors of $W_1$ incident with $u$ are $T_1$ and $S_y$.  Because $z$
	is not proper for $W_1$, it follows that
	\[
	N_D(z)=\{u,q\},\quad q\notin V(T_1)\cup V(S_y).
	\]
	The vertex $q$ lies in the interior of the $u$--$y'$ subpath of $C'$
	avoiding $v$.  Moreover, $u,q$ do not lie in a common sector of $W'$,
	whose two sectors at $u$ are $T_1$ and $uv$.  Hence $z$ is an admissible
	nonproper vertex for $W'$, and $uC'q-z$ is shorter than $P_{u,y}$,
	contrary to its choice.  We conclude that
	\begin{equation}
		zu\notin E(G).\label{eq:B3-zu-zero}
	\end{equation}
	
	Suppose that $zv\in E(G)$.  By~\eqref{eq:B3-zu-zero}, the edge $zx$
	would make $\{x,u,v,z\}$ a diamond, so $zx\notin E(G)$.  Properness
	for $W$, or chordlessness of $C$ when $z\in V(C)$, gives
	$|N(z)\cap(V(C')\setminus\{v\})|\leq2$.  Equality would make
	$(C',z)$ a three-spoke wheel.  If this set were empty, or its unique
	member $z'$ did not lie on $D$, then $N_D(z)\subseteq\{y\}$ and $z$
	would be proper for $W_1$.  Thus $z'$ exists and lies on $D$.  If
	$zy\notin E(G)$, again $|N_D(z)|=1$, a contradiction.  Otherwise $z$
	is linked to $D$ by $zy$, $zz'$, and $z-v-u$.  These have distinct
	ends because $zu\notin E(G)$ and $z'\in V(C')$; the only neighbor
	of $v$ on $D$ is $u$, by the location of $y'$ and chordlessness of
	$C'$, while $vy\notin E(G)$.  Thus $zv\notin E(G)$.
	
	The rim-transfer observation now makes $z$ proper for $W'$.  Put
	$P=uC'y'$, where $P$ avoids $v$, and $A=N_{C'}(z)$, and choose a
	sector $S$ of $W'$ containing $A$.  The following rim-trace facts will
	be used.  Every sector of $W'$ has its intersection with $P$ in one
	sector of $W_1$.  Moreover, if a sector is not wholly contained in
	$P$, its intersection with $P$ lies in $S_y$: it can meet $P$ only at
	$u$, or in the terminal path $hPy'$, where $h$ is the last spoke on
	$P$.  This includes $h=y'$ when $y'$ is a spoke.  These statements
	follow by traversing $P$ from the spoke $u$: each successive pair of
	retained spokes determines an unchanged sector, and the final
	segment $hPy'$ belongs to the new sector $S_y$.
	
	If $zy\notin E(G)$, the first rim-trace fact and properness for $W'$
	make $z$ proper for $W_1$: no rim neighbor is added, and any retained
	set of more than two neighbors still forces $zx\in E(G)$.  Hence
	$zy\in E(G)$ and $N_D(z)=(A\cap V(P))\cup\{y\}$.
	
	We claim that $A\cap V(P)$ is not contained in $V(S_y)$.  Otherwise
	all neighbors of $z$ on $D$ lie in $S_y$, and its failure to be proper
	would force $zx\notin E(G)$ and $|N_D(z)|\geq3$.  Properness for $W'$
	then gives $|A|\leq2$, so $|N_D(z)|=3$, an excluded three-spoke wheel.
	The claim and the second rim-trace fact show that $V(S)\subseteq V(P)$;
	in particular, $A\subseteq V(P)$ and $A\ne\emptyset$.
	
	If $A=\{a\}$, the claim gives $a\notin V(S_y)$, so $a,u,y'$ are
	distinct.  The paths $yu$, $yy'$, and $y-z-a$ link $y$ to $C'$:
	$N_{C'}(y)=\{u,y'\}$ by~\eqref{eq:B3-y-neighborhood}, and
	$N_{C'}(z)=\{a\}$.  If $|A|=2$, then $z$ has exactly three neighbors
	on $D$.  Both cases are impossible, and therefore $|A|\geq3$.
	
	Let $z_1,z_2$ be the first and last members of $A$ on $S$, and put
	$R=z_1Sz_2$.  There is another member of $A$ in $R^*$, so
	$z_1z_2\notin E(G)$: $R$ lies in one sector of a wheel, and its
	complementary rim arc contains other spokes.  Since $R\subseteq P$,
	neither endpoint $u,y'$ of $P$ lies in $R^*$.  Replace $R$ in
	$C'$ by $z_1-z-z_2$, obtaining $C_z$.  The complementary
	$z_1$--$z_2$ arc is induced, and the extremal choices leave $z$ with
	no neighbor in its interior; thus $C_z$ is a hole.  It retains
	$u,y'$, and $y$ has exactly the three neighbors $u,y',z$ on it,
	by~\eqref{eq:B3-y-neighborhood}.  This final three-spoke wheel is a
	contradiction, proving the lemma.
\end{proof}

\subsection{The adjacent-pair lemma}

\begin{lemma}\label{lem:modified-lsw-34}
	Let $W=(C,x)$ satisfy~\eqref{eq:global-minimal-wheel} and have at least
	five spokes.  If \(u,v\in N(x)\setminus V(C)\) and \(uv\in E(G)\), and both $u$ and $v$ have a neighbor on $C$, then \(N_C(u)\cup N_C(v)\) is contained in one sector of $W$.
\end{lemma}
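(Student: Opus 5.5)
We argue by contradiction: suppose $N_C(u)\cup N_C(v)$ is not contained in one sector of $W$. First we fix the local picture. Since $uv$ is an edge of $G[N(x)]$, Lemma~\ref{lem:sparse-neighborhood} shows that neither $u$ nor $v$ is adjacent to any spoke of $W$. Properness of $W$ then puts $N_C(u)$ in the interior of a single sector $S_u$ and $N_C(v)$ in the interior of a single sector $S_v$, with $S_u\ne S_v$. Let $u_1,u_2$ be the extreme neighbours of $u$ on $S_u$, and $v_1,v_2$ those of $v$ on $S_v$.

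The plan is to build a wheel centred at $x$ with fewer spokes and then invoke Lemma~\ref{lem:modified-lsw-33}. Of the two $S_u$--$S_v$ arcs of $C$, choose one and call it $A$. Together with the edge $uv$ it closes into the cycle
\[
C'=u-u_iCv_j-v-u,
\]
where $u_i$ and $v_j$ are the extreme neighbours facing $A$. Extremality and $N_C(u)\subseteq S_u^*$, $N_C(v)\subseteq S_v^*$ make $C'$ a hole. Its spokes are $u,v$ together with the spokes of $W$ in the interior of $A$.

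If some choice of $A$ contains at least one spoke, then $(C',x)$ is a wheel with $V(C')\setminus\{u,v\}\subseteq V(C)$. Lemma~\ref{lem:modified-lsw-33} then gives a proper wheel with at most $\sigma(C',x)$ spokes. Since $\sigma(W)\ge5$ spokes are split between the two arcs, we pick the arc $A$ with fewer spokes. This gives $\sigma(C',x)\le 2+\lfloor(\sigma(W)-2)/2\rfloor$ roughly, and we must check this is strictly less than $\sigma(W)$, which contradicts \eqref{eq:global-minimal-wheel}. The check is easy once $\sigma(W)\ge5$. The counting has to account for the spokes ending $S_u$ and $S_v$ that lie on $A$, but these still leave a gap of at least one. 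Where $\sigma(C',x)=4$ with $uv$ a short sector, Lemma~\ref{lem:modified-lsw-32} is the alternative route, since $V(C')$ outside the sector at $u$ lies in $C$.

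The remaining cases are degenerate. The first is when one arc contains no spoke. This cannot happen for distinct sectors unless they are consecutive and $A$ is the common spoke region, in which case the other arc carries all the other spokes. A second degenerate case is $C'$ having only the spokes $u,v$ and one more, which gives a three-spoke wheel and contradicts ISK$_4$-freeness outright. When $S_u$ and $S_v$ are consecutive with common spoke $s$, the short arc gives a hole on which $x$ sees $u,v,s$ only, again a three-spoke wheel.

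The main obstacle I expect is the case where $u$ or $v$ has a single neighbour on $C$, or the two sectors are consecutive with $s$ adjacent to both extreme neighbours. Here $C'$ may fail to be a hole, or may be a short cycle. In that case I would instead use the linked-cycle principle: link $x$ (via $xu$, $xs$, and a spoke path) to the short cycle, or find a prism with the triangle $xuv$. These mirror the consecutive-sector arguments in Case~2 of Lemma~\ref{lem:proper-wheel-exists}.
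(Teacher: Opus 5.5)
Your proof is correct and is essentially the paper's argument: the paper forms both holes $D_1,D_2$ through the two inter-sector arcs and notes that $\sigma(D_1,x)+\sigma(D_2,x)=\sigma(W)+4<2\sigma(W)$. This matches your choice of the arc with fewer spokes, whose exact bound is $\lfloor\sigma(W)/2\rfloor+2<\sigma(W)$. The paper then applies Lemma~\ref{lem:modified-lsw-33} to that wheel, contradicting global spoke-minimality, as you do.

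Your closing worries are unfounded. Each arc must pass through an end of $S_u$, so it contains a spoke, and $C'$ always has at least five vertices and is a hole. You should also drop the aside about Lemma~\ref{lem:modified-lsw-32}: its containment hypothesis fails because $v\notin V(C)$ lies outside the sector at $u$.
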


\begin{proof}
	By Lemma~\ref{lem:sparse-neighborhood},
	\begin{equation}
		\{u,v\}\text{ is anticomplete to }N_C(x),
		\quad N(u)\cap N(v)=\{x\}.
		\label{eq:34-sparse-pair}
	\end{equation}
	For the first assertion, a spoke adjacent to $u$, for example, would give
	$u$ two neighbors in $G[N(x)]$, namely $v$ and that spoke.  For the
	second, a further common neighbor $z$ of $u,v$ would make
	$G[\{u,v,x,z\}]$ a diamond when $xz\notin E(G)$ and a $K_4$ when
	$xz\in E(G)$.
	
	Suppose for a contradiction that $N_C(u)\cup N_C(v)$ is not contained in
	one sector.  Properness of $W$ gives sectors
	\[
	S_1=a_1Cb_1,\quad S_2=a_2Cb_2
	\]
	such that $N_C(u)\subseteq V(S_1)$ and
	$N_C(v)\subseteq V(S_2)$.  Necessarily $S_1\ne S_2$.  Orient the
	sectors so that $a_1,b_1,a_2,b_2$ occur in this cyclic order, allowing
	$b_1=a_2$ or $b_2=a_1$ when the sectors are consecutive.
	By~\eqref{eq:34-sparse-pair}, every attachment of $u$ or $v$ lies
	in the interior of its sector.  Choose
	\begin{align*}
		u^+&\in N_C(u) &&\text{closest to $b_1$ on $S_1$},&
		u^-&\in N_C(u) &&\text{closest to $a_1$ on $S_1$},\\
		v^-&\in N_C(v) &&\text{closest to $a_2$ on $S_2$},&
		v^+&\in N_C(v) &&\text{closest to $b_2$ on $S_2$}.
	\end{align*}
	
	Let $R_1$ be the $u^+$--$v^-$ subpath of $C$ that contains the
	$b_1$--$a_2$ arc between the two sectors, and let $R_2$ be the
	$u^-$--$v^+$ subpath that contains the other inter-sector arc, from
	$a_1$ to $b_2$.  Then
	\begin{equation}
		D_1=u-u^+R_1v^--v-u,
		\quad
		D_2=u-u^-R_2v^+-v-u
		\label{eq:34-two-holes}
	\end{equation}
	are holes.  Indeed, the four extremal choices exclude further edges from
	$u$ or $v$ to the interiors of the displayed paths; all remaining
	vertices of those paths lie on the hole $C$; and
	\eqref{eq:34-sparse-pair} excludes the two possible cross-edges from an
	end chosen for one of $u,v$ to the other vertex.
	
	For $i\in\{1,2\}$, let $r_i$ be the number of spokes of $W$ on $R_i$.
	Each inter-sector arc contains at least one old spoke, including
	when its two ends coincide.  Thus $(D_i,x)$ is a wheel, with precisely
	$u,v$ and the old spokes on $R_i$ as its spokes, and \(\sigma(D_i,x)=r_i+2\).
	The interiors of $S_1$ and $S_2$ contain no old spoke, and the two
	inter-sector arcs partition the old spokes of $W$.  Consequently
	\[
	r_1+r_2=\sigma(W),
	\quad
	\sigma(D_1,x)+\sigma(D_2,x)=\sigma(W)+4<2\sigma(W).
	\]
	One of the two wheels therefore has fewer spokes than $W$.
	Lemma~\ref{lem:modified-lsw-33}, applied to that wheel, produces a proper
	wheel with centre $x$ and no more spokes, contradicting the global
	minimality of $W$.
\end{proof}


\subsection{The three-leaf attachment lemma}

We first state the cyclic attachment hypothesis explicitly.  Let
$H$ be an induced subdivision of $K_{1,3}$, with arms $P_a,P_b,P_c$,
common end $s$, and leaves $a,b,c$.  Suppose that
$V(H)\setminus\{a,b,c\}$ is anticomplete to the hole $C$.  Put
\[
A=N_C(a),\quad B=N_C(b),\quad K=N_C(c).
\]
These assumptions already imply $V(H)\cap V(C)=\emptyset$.  If
$z\in V(H)\setminus\{a,b,c\}$ also lay on $C$, either rim neighbor of $z$
would contradict anticompleteness; if a leaf, say $a$, lay on $C$, then its
neighbor on $P_a$ would give the same contradiction.
We say that these sets form \textit{noncrossing cyclic attachment blocks}
if there are three closed subpaths $I_a,I_b,I_c$ of $C$, possibly consisting
of one vertex, occurring in cyclic order such that
\[
A\subseteq V(I_a),\quad B\subseteq V(I_b),\quad
K\subseteq V(I_c),
\]
and any two block paths are vertex-disjoint or share only a common end.

\begin{lemma}\label{lem:modified-lsw-35}
	Let $C$ be a hole and let $H,P_a,P_b,P_c,s,a,b,c$ be as above.  Assume
	that $A,B,K$ form noncrossing cyclic attachment blocks and that, for
	distinct $r,t\in\{a,b,c\}$,
	\begin{equation}
		N_C(r)\neq N_C(t),\quad
		|N_C(r)\cap N_C(t)|\leq 1.
		\label{eq:35-distinct-attachments}
	\end{equation}
	Then at least one of the following holds:
	\begin{enumerate}[label=(\roman*)]
		\item one of $a,b,c$ is anticomplete to $C$;\label{lsw-35-1}
		\item $G$ contains a proper exact-four wheel.\label{lsw-35-2}
	\end{enumerate}
	Consequently, if a globally spoke-minimal proper wheel with at least five
	spokes has been fixed, outcome~\ref{lsw-35-2} is impossible, and at least one of
	$a,b,c$ is anticomplete to $C$.
\end{lemma}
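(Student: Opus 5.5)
The plan is to assume that none of $a,b,c$ is anticomplete to $C$, so $A,B,K$ are all nonempty, and then either produce a forbidden configuration ($\mathrm{ISK}_4$, diamond or prism) or an exact-four wheel to which \textbf{(N4)} applies. The underlying model is the standard LSW argument: the claw $H$ together with the hole $C$ should yield an induced $K_4$-subdivision with branch vertices $s$ and three attachment points on $C$. Fix the cyclic order $I_a,I_b,I_c$. For each leaf $r$, let $r^-,r^+$ be its extreme neighbours on $I_r$ in this cyclic orientation.

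First I treat the case where some leaf, say $a$, has $|A|=1$, $A=\{\alpha\}$. I extend the arm $P_a$ by the edge $a\alpha$. For $b$, I replace $b$'s attachment by a single edge to $b^-$ or $b^+$, choosing the endpoint so that the resulting three endpoints on $C$ are distinct and pairwise separated; this is possible by \eqref{eq:35-distinct-attachments}. If all three leaves had singleton attachments at distinct points, $s$ would be linked to $C$ and we would have a contradiction. In general a leaf $r$ has several neighbours. I therefore reroute the hole: replace $r^-I_r r^+$ by $r^- - r - r^+$. This gives a hole, since the extremal choices forbid further edges, and Lemma~\ref{lem:sparse-neighborhood} makes the rerouting legal when $|N_C(r)|\ge3$. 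Only one leaf can be absorbed into the rim in this way. When two leaves have at least two attachments, I use the alternative that absorbs leaf $a$ into $C$ and then regard $s$ as linked to the new hole via $P_b,P_c$ and $P_a$ ending at $a$. I first check that this is legitimate: the noncrossing condition ensures that the ends of $P_b,P_c$ chosen on $C$ avoid the rerouted arc, possibly sharing a block endpoint.

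The cases where this linkage breaks down are exactly the degenerate ones where attachment blocks meet. This happens when a leaf $r$ has two adjacent neighbours $r^-r^+$, so that rerouting would create a triangle rather than a hole. It also happens when two blocks share an endpoint that both leaves see, which \eqref{eq:35-distinct-attachments} limits to a single common vertex. In the triangle case I use two leaves whose attachments are edges of $C$. The two triangles $r r^- r^+$ joined through $H$ and $C$ then give either a prism, which is forbidden, or, when the third leaf forces a shortcut, a hole through $s$ on which the common vertex is a centre with exactly four spokes and two opposite short sectors. That is precisely the exceptional pattern of Lemma~\ref{lem:external-connector-obstruction}, and I plan to invoke that lemma with $Q$ an arm of $H$ to organise this case analysis. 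Whenever the outcome is an exact-four wheel with opposite short sectors, \textbf{(N4)} yields a proper exact-four wheel, which is outcome \ref{lsw-35-2}.

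The main obstacle is the shared-endpoint configuration, where $I_a$ and $I_b$ meet at a vertex $t$ adjacent to both $a$ and $b$. Diamond-freeness forbids $ab$-type triangles only partially, so here I would try building the hole $a - t - b - P_b - s - P_a - a$ with a neighbour on $C$ as centre. Then I would verify the spoke count by using $|N_C(a)\cap N_C(b)|\le1$ and the extremal choices. The final sentence of the statement is immediate: a proper exact-four wheel contradicts \eqref{eq:global-minimal-wheel} when $\sigma(W)\ge5$.
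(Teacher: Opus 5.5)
Your proposal has a genuine gap at its central step. You assert that only one leaf can be absorbed into the rim. You then try to link $s$ to the rerouted hole using $P_a$ (ending at the absorbed leaf $a$), together with $P_b,P_c$ each extended by a single chosen edge to $b^{\pm}$, $c^{\pm}$. Your opening move, ``replace $b$'s attachment by a single edge to $b^-$ or $b^+$'', has the same problem. Neither is a legitimate operation. The linked-cycle principle forbids edges from the off-rim path vertices to $D$ other than the path ends. A non-absorbed leaf with two or more rim neighbours therefore leaves extra chords, and the union is not an induced subdivision of $K_4$. So once two or three leaves have several attachments, your construction yields no contradiction, and your sketch handles this case nowhere.

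The missing idea is that the noncrossing-block hypothesis lets you absorb \emph{all} multi-attachment leaves simultaneously.
\begin{itemize}
\item For each such leaf $t$, let $\ell_t,r_t$ be its extreme attachments in $I_t$, and replace $J_t=\ell_tCr_t\subseteq I_t$ by $\ell_t-t-r_t$.
\item The $J_t$ have pairwise disjoint interiors and meet only at common ends, so the result is a cycle $C^*$.
\item $C^*$ is induced provided every $J_t$ has at least two edges, since the leaves are pairwise nonadjacent and $N_{C^*}(t)=\{\ell_t,r_t\}$.
\item Then $s$ is linked to $C^*$ by $sP_tt$ for absorbed leaves and by $sP_tt-t'$ for singleton leaves. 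The ends are distinct because $N_C(r)\ne N_C(t)$.
\end{itemize}

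In particular, the shared-endpoint configuration you single out as the main obstacle is harmless. A shared block end simply becomes a rim vertex of $C^*$, or the end of a singleton arm joined to an inserted leaf by a rim edge of $C^*$.

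The only real obstruction is a single leaf, say $a$, with $N_C(a)=\{a_1,a_2\}$ and $a_1a_2\in E(C)$. One such leaf suffices; you do not need two triangles or a prism. It must be dealt with before rerouting, in two cases.
\begin{itemize}
\item If $N_C(b),N_C(c)\subseteq\{a_1,a_2\}$, then by \eqref{eq:35-distinct-attachments} these are the two distinct singletons. In that case $a$ is linked to the hole $a_1-a_2-cP_csP_bb-a_1$.
\item Otherwise, name $a_1,a_2$ so that $a_2\notin N_C(b)$ and $b$ sees $C-\{a_1,a_2\}$. Apply Lemma~\ref{lem:external-connector-obstruction} to $R=C$ with the rim edge $a_1a_2$ and the outside path $Q=aP_asP_bb$. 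Note that $Q$ runs through two arms via $s$, not along a single arm. The only permitted outcome is an exact-four wheel with opposite short sectors, which \textbf{(N4)} makes proper.
\end{itemize}
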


\begin{proof}
	\setcounter{claim}{0}
	Suppose that all three leaves have neighbors on $C$.  We first establish
	the following local assertion:
	\begin{claim}\label{modified-lsw-35claim1}
		If a leaf has at least two neighbors on $C$, then either it has two
		nonadjacent neighbors on $C$, or outcome~\ref{lsw-35-2} of the lemma holds.
	\end{claim}
	\begin{proof}[Proof of Claim~\ref{modified-lsw-35claim1}]
	It is enough to consider a leaf $a$ satisfying
	\begin{equation}
		N_C(a)=\{a_1,a_2\},\quad a_1a_2\in E(C).
		\label{eq:35-adjacent-pair-a}
	\end{equation}
	Indeed, a clique in a hole has size at most two, so this is the only case
	in which a leaf has at least two rim neighbors but no nonadjacent pair.
	If both of the other attachment sets are contained in
	$\{a_1,a_2\}$, condition~\eqref{eq:35-distinct-attachments} permits a
	relabeling such that $a_1b,a_2c\in E(G)$.  Then $a$ is linked to the hole \(a_1-a_2-cP_csP_bb-a_1\) by the three paths given by $aa_1$, $aa_2$, and $aP_as$,
	contradicting $\mathrm{ISK}_4$-freeness.
	
	Otherwise, after interchanging $b$ and $c$ if necessary, $b$ has a
	neighbor on $C\setminus\{a_1,a_2\}$.  The intersection bound in
	\eqref{eq:35-distinct-attachments} allows us to name $a_1,a_2$ so that
	$a_2\notin N_C(b)$.  Apply Lemma~\ref{lem:external-connector-obstruction}
	with rim $R=C$, distinguished rim vertices $a_1,a_2$, and outside path
	$Q=aP_asP_bb$.  Take the first rim arc to be the edge $a_1a_2$ and the
	second to be its complementary arc.  The path $Q$ is induced, its
	interior is anticomplete to $C$, and the attachment assumptions follow
	from $N_C(a)=\{a_1,a_2\}$, $a_2\notin N_C(b)$, and the neighbor of
	$b$ outside this pair.  The subdivision and prism outcomes are forbidden.
	The remaining outcome is an exact-four wheel with two opposite short
	sectors, which \textbf{(N4)} turns into a proper exact-four wheel.
	This proves the local assertion.
	\end{proof}
	
	Assume now that outcome~\ref{lsw-35-2} does not hold.  By the local assertion, every
	leaf having more than one attachment has two nonadjacent neighbors on $C$.
	For each such leaf $t\in\{a,b,c\}$, let $\ell_t,r_t$ be its first and last
	attachments along the block $I_t$, and let \(J_t=\ell_tCr_t\subseteq I_t\) be the subpath whose interior contains
	$N_C(t)\setminus\{\ell_t,r_t\}$.  Each $J_t$ has at least two edges:
	otherwise $t$ would have exactly two adjacent attachments, contrary to
	the local assertion.  Replace $J_t$ by the two-edge path $\ell_t-t-r_t$.  By the noncrossing-block condition, the paths $J_t$ have pairwise
	disjoint interiors and can meet only at common ends, so these replacements
	can be performed simultaneously.
	
	We first verify that the result, denoted by $C^*$, is a simple cycle.
	Traverse $C$ once in cyclic order and replace each encountered subpath $J_t$
	by $\ell_t-t-r_t$.  Every retained vertex of $C$ is encountered once and the
	inserted leaves are distinct and lie outside $C$.  If two replacement paths
	meet at a block endpoint $q$, then $q$ is an end of both deleted subpaths and
	the two new edges incident with $q$ replace the two corresponding rim edges.
	If only one deleted subpath has end $q$, its new edge and the one retained
	rim edge give degree two at $q$.  Thus the resulting closed walk has no
	repeated vertex and every one of its vertices has degree two.  It is
	therefore a simple cycle; since $G$ is simple, it has at least three vertices.
	
	The cycle $C^*$ is induced.  An edge of $C$ whose two ends survive every
	replacement remains an edge of $C^*$, since each deleted subpath $J_t$
	has at least two edges.  Thus two retained vertices of $C$ cannot form a
	chord of $C^*$.  For every inserted leaf $t$, all vertices of
	$N_C(t)\setminus\{\ell_t,r_t\}$ lie in the deleted interior of $J_t$, and
	hence \(N_{C^*}(t)=\{\ell_t,r_t\}\).
	Finally, distinct inserted leaves are nonadjacent because $H$ is an induced
	subdivision of $K_{1,3}$.  These observations exclude every possible chord
	of $C^*$.
	
	We now construct three paths from $s$ to $C^*$.  If a leaf $t$ has
	multiple attachments, use $sP_tt$, whose end $t$ lies on $C^*$.  If $t$
	has the unique attachment $t'$, use $sP_tt-t'$.  The vertex $t'$ is
	retained on $C^*$: if $t'\in V(I_r)$ for a leaf $r$ with multiple
	attachments, then $t'$ is a common end of $I_t,I_r$ and hence lies
	outside $J_r^*$.  These three paths are
	internally vertex-disjoint, and there are no extra edges between them or
	from their interiors to $C^*$.  Their ends on $C^*$ are distinct:
	distinctness of the three attachment sets handles singleton arms, while
	inserted leaves are mutually distinct and lie outside $C$.  More explicitly,
	if the endpoint $q$ of a singleton-attachment arm is also an extreme
	attachment of an inserted leaf $r$, then $qr$ has both ends on $C^*$ and is
	a rim edge of $C^*$, not an extra cross-edge between path interiors.  Thus the three displayed paths link $s$ to the induced cycle $C^*$,
	a contradiction.  This also
	covers the case of no replacements, when $C^*=C$.  Hence one of the two
	stated outcomes must hold.
\end{proof}

\section{Wheel repairs and endpoint control}\label{sec:wheel-repairs}

We now work in the class \(\mathcal{C}\) of \(\{\mathrm{ISK}_4,\text{diamond}, \text{prism},K_{3,3}\}\)-free graphs, so the $K_{3,3}$-free hypothesis is imposed
in addition to the three exclusions used in the preceding section.
A sector $S$ of $W=(C,x)$ is a \textit{carrier} of a set
$A\subseteq V(C)$ if $A\subseteq V(S)$.
We use the notation and the linked-cycle principle from
Section~\ref{sec:preliminaries}, together with \textbf{(N4)} and the
definition of global spoke-minimality from Section~\ref{sec:local-wheels}.
The repair lemmas below state their own wheel hypotheses; global
spoke-minimality is required only where it is explicitly assumed.
Since a three-spoke wheel is an induced subdivision of $K_4$, every wheel
$W$ in an \(\{\mathrm{ISK}_4,\text{diamond}, \text{prism},K_{3,3}\}\)-free graph satisfies
\begin{equation}\label{eq:global-wheel-at-least-four-full}
	\sigma(W)\ge 4.
\end{equation}
Also, $K_4$ itself is an induced subdivision of $K_4$, so every graph in
$\mathcal C$ is $K_4$-free.  We first prove the auxiliary repair and endpoint
lemmas, then establish the complete four-spoke attachment case.

\subsection{Auxiliary attachment lemmas}

\begin{lemma}\label{lem:sector-carrier-facts}
	Let $W=(C,x)$ have at least four spokes.
	\begin{enumerate}[label=(\roman*)]
		\item If a set $Z$ is contained in a sector and is covered by two
		intersecting sectors, one member of that pair contains all of $Z$.\label{carrier-facts-1}
		\item Let $S_1,S_2$ be disjoint sectors with ends
		$a_1,b_1,a_2,b_2$ in cyclic order, and let $Q_1$ be the
		$b_2$--$a_1$ rim path avoiding $b_1$.  Suppose $Q_1^*$ contains
		a spoke.  Let nonempty sets $A_h\subseteq V(S_h)$ satisfy
		$A_1\ne\{b_1\}$ and $A_2\ne\{a_2\}$.  If two intersecting
		sectors cover $A_1\cup A_2$, then \(A_1=\{a_1\}\), \(A_2=\{b_2\}\), and $Q_1$ consists of exactly two sectors.  Those two sectors are
		the covering pair.\label{carrier-facts-2}
	\end{enumerate}
\end{lemma}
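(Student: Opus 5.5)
This is a purely combinatorial statement about arcs of the cycle $C$ divided by $k=\sigma(W)\ge4$ spokes. We treat the sectors as the edges of a cycle of length $k$, whose vertices are the spokes. Two distinct sectors intersect exactly when they are consecutive, and then they share exactly one spoke. Since $k\ge4$, no vertex of $C$ lies in three sectors, and the union of two consecutive sectors $T\cup T'$ is a subpath of $C$ with a single interior spoke, namely the common end $m$.

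For \ref{carrier-facts-1}, let $Z\subseteq V(S)$, and let $T,T'$ be consecutive sectors with common spoke $m$ and $Z\subseteq V(T)\cup V(T')$. If $S\in\{T,T'\}$ we are done, so assume otherwise. Because $k\ge4$, the sector $S$ can meet $T\cup T'$ only in at most one end of $T$ and at most one end of $T'$ other than $m$; these are spokes. If $S$ meets both $T$ and $T'$, then $S,T,T'$ form a triangle in the sector cycle when $k=3$ and are otherwise impossible; for $k\ge4$ that would force $S$ to share ends with both and to close a $3$-cycle, which is impossible. Hence $S\cap(T\cup T')$ lies in a single sector of the pair, which then contains $Z$. (If $Z=\emptyset$ the statement is trivial.)

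For \ref{carrier-facts-2}, let the covering pair be consecutive sectors $T,T'$ with common spoke $m$, and put $U=T\cup T'$. Applying \ref{carrier-facts-1} to $A_1$ and to $A_2$ separately, each $A_h$ lies in a single member of the pair. If both lay in the same sector, then that sector would meet both $S_1\setminus\{b_1\}$ and $S_2\setminus\{a_2\}$. Using the hypotheses $A_1\ne\{b_1\}$ and $A_2\ne\{a_2\}$, the points used either lie in sector interiors or are the spokes $a_1$ and $b_2$. A single sector distinct from $S_1,S_2$ can contain only ends of these, so it would have to have ends $a_1$ and $b_2$. That would make $Q_1$ a single sector, contradicting the assumption that $Q_1^*$ contains a spoke. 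If instead the sector equals $S_1$ or $S_2$, it cannot meet the other sector except possibly at a shared end, which is excluded because $S_1,S_2$ are disjoint.

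So without loss of generality $A_1\subseteq V(T)$ and $A_2\subseteq V(T')$, with $T,T'\ne S_1,S_2$ by the same disjointness argument. Now $T$ meets $S_1$ in a spoke, which is not $b_1$ unless $A_1=\{b_1\}$. Hence $A_1=\{a_1\}$ and $a_1$ is an end of $T$; symmetrically, $A_2=\{b_2\}$ and $b_2$ is an end of $T'$. Then $U$ is a rim path from $a_1$ to $b_2$ whose only interior spoke is $m$. It must be $Q_1$ rather than the other $a_1$--$b_2$ arc, because that arc contains $S_1\cup S_2$, which have interior vertices not in $U$, or contains at least three spokes. Therefore $Q_1=T\cup T'$ consists of exactly two sectors, and these are the covering pair.

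The main obstacle is the bookkeeping about which spokes can be shared when $k=4$, where $Q_1$ and the opposite arc both have length two in the sector cycle. This is where the assumption that $Q_1^*$ contains a spoke is needed, together with careful use of the exclusions $A_1\ne\{b_1\}$ and $A_2\ne\{a_2\}$.
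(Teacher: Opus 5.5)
Part (i) is correct and is essentially the paper's argument. You note that a carrier $S\notin\{T,T'\}$ cannot meet both $T$ and $T'$, since the sector cycle of length $k\ge4$ has no triangle. The paper notes that if neither member contains $Z$, then $T,T'$ are the two neighbors of $S$, which are disjoint for $k\ge4$. Your outline of (ii) also follows the paper. However, the step that carries most of the content of (ii) is dismissed with an invalid reason.

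The gap is the sentence ``with $T,T'\ne S_1,S_2$ by the same disjointness argument.'' Disjointness of $S_1,S_2$ gives only $T\ne S_2$ and $T'\ne S_1$, since $\emptyset\ne A_1\subseteq V(T)\cap V(S_1)$ and similarly for $A_2$. It says nothing against $T=S_1$ or $T'=S_2$, and your next step, ``$T$ meets $S_1$ in a spoke'', needs exactly $T\ne S_1$. This case is not vacuous. With five spokes $c_0,\dots,c_4$ in order, $S_1=c_0Cc_1$ and $S_2=c_2Cc_3$, the intersecting pair consisting of $S_1$ and the sector $c_1Cc_2$ covers $A_1\cup\{a_2\}$ for every $A_1\subseteq V(S_1)$. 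So ruling out $T=S_1$ is precisely where $A_2\ne\{a_2\}$ and the spoke in $Q_1^*$ are needed; this is the core of the paper's proof. Here is the missing argument. If $T=S_1$, then $T'$ is the neighbor of $S_1$ at $b_1$ or at $a_1$, and $\emptyset\ne A_2\subseteq V(T')\cap V(S_2)$. The neighbor at $b_1$ lies on the $b_1$--$a_2$ arc avoiding $a_1$, so it meets $S_2$ at most in $a_2$, which $A_2\ne\{a_2\}$ forbids. The neighbor at $a_1$ lies in $Q_1$ and meets $S_2$ only if its ends are $b_2,a_1$, i.e.\ only if $Q_1$ is a single sector, which the interior spoke forbids. (The $a_1$ case can also be read off from your preceding paragraph, since then $T'$ meets both $S_1\setminus\{b_1\}$ and $S_2\setminus\{a_2\}$.) The case $T'=S_2$ is symmetric, and with this inserted the rest of your argument goes through.

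Two smaller points. First, to identify $T\cup T'$ with $Q_1$, use that the other $a_1$--$b_2$ arc has the two spokes $b_1,a_2$ in its interior, while $T\cup T'$ has only $m$. Your stated reasons do not work as written, because sectors may be short and both arcs contain at least three spokes. Second, your closing remark about $k=4$ is mistaken. For $k=4$, two disjoint sectors leave $Q_1$ a single sector, so the hypotheses of (ii) already force $k\ge5$.
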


\begin{proof}
	For \ref{carrier-facts-1}, let $S$ carry $Z$.  If neither covering sector contains
	$Z$, neither is $S$.  Distinct sectors meet only at a common end;
	therefore the two covering sectors must be the sectors immediately
	before and after $S$.  These do not intersect when there are at least
	four spokes, a contradiction.
	
	For \ref{carrier-facts-2}, use \ref{carrier-facts-1} for each of $A_1,A_2$.  If $A_1\ne\{a_1\}$,
	then $A_1\ne\{b_1\}$ forces one covering sector to be $S_1$.
	The other covering sector, which meets $S_1$, can carry $A_2$ only by carrying the
	singleton $\{a_2\}$ on the $b_1$ side, or the singleton
	$\{b_2\}$ on the $a_1$ side.  The former is excluded by hypothesis;
	the latter would make $Q_1$ a single sector, contrary to the
	internal spoke.  Hence $A_1=\{a_1\}$, and symmetrically
	$A_2=\{b_2\}$.  Neither $S_1$ nor $S_2$ can then belong to an
	intersecting covering pair.  The pair must consist of the two end
	sectors of $Q_1$; they intersect exactly when $Q_1$ has two sectors.
\end{proof}

\begin{lemma}\label{lem:adjacent-sector-multiplicity}
	Let $G$ be \(\{\mathrm{ISK}_4,\text{diamond}, \text{prism},K_{3,3}\}\)-free, and let $W=(C,x)$ be a wheel in $G$.  Let $S_1,S_2$ be
	consecutive sectors of $W$, with $V(S_1)\cap V(S_2)=\{y\}$, and let
	$v\in N(x)\setminus V(C)$.  Suppose that \(N_C(v)\subseteq V(S_1)\cup V(S_2)\) and that $v$ has a neighbor in each of
	$V(S_1)\setminus V(S_2)$ and $V(S_2)\setminus V(S_1)$.  Then
	\[
	|N(v)\cap(V(S_i)\setminus V(S_{3-i}))|\ge2
	\quad(i=1,2).
	\]
	Moreover, if equality holds for either $i=1$ or $i=2$, then
	$vy\in E(G)$.
\end{lemma}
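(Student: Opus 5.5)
Fix notation: $S_1=aCy$ and $S_2=yCb$, where $a,y,b$ are consecutive spokes. Since $vx\in E(G)$, Lemma~\ref{lem:sparse-neighborhood} says $v$ has at most one neighbour in $N(x)$, hence at most one spoke neighbour. Let $p\in V(S_1)\setminus\{y\}$ be the neighbour of $v$ closest to $y$, and let $q\in V(S_2)\setminus\{y\}$ be the neighbour closest to $y$.

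\textbf{Step 1: the cycle through $v$.} Form $D=v-pS_1y-yS_2q-v$. It is a hole, or a triangle if $p,q$ are adjacent to $y$. The extremal choices leave $v$ with no further neighbours on $pS_1yS_2q$ apart from possibly $y$. If $vy\notin E(G)$, then $x$ sees $y$ and $v$ on $D$, plus $p$ if $p=a$ and $q$ if $q=b$.

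\textbf{Step 2: the lower bound $\ge2$.} Suppose, for a contradiction, that $v$ has exactly one neighbour in $V(S_1)\setminus V(S_2)$, namely $p$. Then I link $x$ to a suitable induced cycle, in one of two ways.
\begin{itemize}
\item If $vy\notin E(G)$ and $p\ne a$, take the cycle $D$. The paths $xv$, $xy$, and $xaS_1p$ link $x$ to $D$. The second path stays off $D$ except at $p$ because $a\ne p$, and the neighbour of $a$ on $S_1$ closest to $p$ has no edge to $v$ by the uniqueness of $p$. Should $q=b$ give an extra spoke, I instead use a path through $b$, or replace $D$ by the cycle through $xaS_1p$ and $v$.
\item If $p=a$, then $v$ sees the spoke $a$, so $vy\notin E(G)$ and $vb\notin E(G)$. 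Consider the hole $v-a-(C\text{ avoiding }S_1\cup S_2)-b\ldots$. It is awkward, so instead take the hole $H=xaS_1yx$, which is induced because $S_1$ is a sector. Here $v$ sees exactly $x$ and $a$ on $H$. A third path from $v$ runs through $q$ along $S_2$ to $y$, or to $b$ and then $x$. This either links $v$ to $H$ or gives three neighbours, hence a contradiction.
\item If $vy\in E(G)$, then $v$ sees $y,x$ and $p$ is not a spoke. Take the hole $xaS_1yx$, with $v$ adjacent to $x,y,p$; this gives a three-spoke wheel on a hole, or a linked vertex, either way an $\mathrm{ISK}_4$.
\end{itemize}
By symmetry the same holds for $S_2$.

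\textbf{Step 3: the equality clause.} Suppose $v$ has exactly two neighbours $p'$ (farther) and $p$ (nearer $y$) in $S_1\setminus\{y\}$, and that $vy\notin E(G)$. Apply Lemma~\ref{lem:external-connector-obstruction} with the rim $R=xaS_1yx$, distinguished vertices $x,y$ (or $a,y$), and the outside path taken from $q$ along $S_2$ to $v$. Alternatively, combine the triangles $vpp'$ (if $pp'\in E(G)$) and $xy\cdot$ with three paths to obtain a prism. If $pp'\notin E(G)$, then $v$ with $x,p,p'$ on the hole $xaS_1yx$ is a three-spoke wheel; the only escape is $p'=a$, which is handled by linking through $S_2$. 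If $pp'\in E(G)$, then the triangle $vpp'$ together with the triangle $xyy^+$ (where $y^+$ is the neighbour of $y$ on $S_2$), or the cycle structure, gives a prism or, via Lemma~\ref{lem:external-connector-obstruction}, a diamond with $x$. In all cases we reach a contradiction, so $vy\in E(G)$.

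The main obstacle is the case analysis around spoke endpoints ($p=a$, $q=b$) and adjacencies to $y$. Each case must reach a forbidden linked configuration, prism or diamond without overlooking an extra chord. I expect to lean on Lemma~\ref{lem:external-connector-obstruction} to absorb most of these subcases uniformly.
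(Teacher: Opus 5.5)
Your lower-bound argument (Step 2) is sound, though loosely written. If $vy\in E(G)$, then $v$ has exactly the three neighbours $x,y,p$ on the hole $xS_1x$. If $p=a$, sparse neighbourhoods give $vy,vb\notin E(G)$, so $q\in S_2^*$ and the paths $vx,va,vqS_2y$ link $v$ to $xS_1x$. If $p\ne a$ and $vy\notin E(G)$, the paths $xv,xy,xaS_1p$ link $x$ to $D$ when $q\ne b$. When $q=b$, the vertex $x$ has exactly the three neighbours $v,y,b$ on $D$, so your ``path through $b$'' is just the edge $xb$.

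This is a mild variant of the paper's argument, which never builds $D$. After excluding $vy$ as you do, the paper links $v$ to $xS_1x$ by $vz,vx,vwS_2y$ for any $z$, provided the neighbour $w$ of $v$ on $S_2$ nearest $y$ is not $a_2$. Otherwise it excludes $z=a_1$ via the diamond or $K_4$ on $\{x,v,a_1,a_2\}$ and links $v$ to $xS_2x$. Incidentally, $D$ is never a triangle, since it contains $v,p,y,q$.

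The gap is in Step 3. You treat $pp'\in E(G)$ and $p'=a$ as escapes from the three-spoke-wheel argument. For $pp'\in E(G)$ you propose a prism on the triangles $vpp'$ and $xyy^+$. But $xy^+\in E(G)$ only when $S_2$ is short. In that case $y^+=b$ is forced to be a neighbour of $v$, so both $vx$ and $vb$ join the two triangles and there is no prism. Your suggested application of Lemma~\ref{lem:external-connector-obstruction} also fails: no end of the proposed path has exactly two neighbours on $R$, so its hypotheses are not met. As written, the equality clause is not proved when $pp'\in E(G)$.

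The missing point is that there are no escapes. Suppose $vy\notin E(G)$ and $v$ has exactly two neighbours $p,p'$ in $V(S_1)\setminus\{y\}$. Then $S_1$ is long, $xS_1x$ is a hole, and $N(v)\cap V(xS_1x)=\{x,p,p'\}$. A hole together with a vertex having exactly three neighbours on it is an induced subdivision of $K_4$ with branch vertices $v,x,p,p'$. This holds whether or not $pp'$, or $xp'$ when $p'=a$, is a rim edge; such branch paths simply have length one. That single observation is the paper's entire proof of the equality clause, and it makes the prism and connector-lemma detours unnecessary.
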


\begin{proof}
	For $i=1,2$, let $a_i$ be the end of $S_i$ different from $y$.
	Suppose, by symmetry, that $z$ is the unique neighbor of $v$ in
	$V(S_1)\setminus V(S_2)$.  First $vy\notin E(G)$, since otherwise $vx,vy,vz$ would link $v$ to
	the induced cycle $xS_1x$.
	
	Let $w$ be the neighbor of $v$ in $S_2$ closest to $y$.  If $w\ne a_2$, the paths
	\[
	vz,\quad vx,\quad vwS_2y
	\]
	link $v$ to the induced cycle $xS_1x$, a contradiction.  Hence $w=a_2$.  If $z=a_1$, then $G[\{x,v,a_1,a_2\}]$ is a diamond when
	$a_1a_2\notin E(G)$ and a $K_4$ when $a_1a_2\in E(G)$.  Thus $z\ne a_1$.
	Now the paths
	\[
	va_2,\quad vx,\quad vzS_1y
	\]
	link $v$ to the induced cycle $xS_2x$, a contradiction.  This proves
	the two lower bounds.
	
	Finally, suppose that one lower bound, say the one for $i$, is an equality
	and $vy\notin E(G)$.  The sector $S_i$ is necessarily long, and $v$ has
	exactly three neighbors on the hole $xS_ix$: the vertex $x$ and its two
	neighbors in $V(S_i)\setminus V(S_{3-i})$.  This is a forbidden
	three-spoke wheel.  Therefore $vy\in E(G)$.
\end{proof}

We next recall the notion of a skip.  Given a wheel $W=(C,x)$, a vertex
$v\in V(G)\setminus V(W)$ is a \textit{skip} for $W$ if there are two
consecutive sectors $S_1,S_2$ of $W$ such that
\begin{enumerate}[label=(S\arabic*)]
	\item $vx\in E(G)$;\label{skip-c-1}
	\item $v$ has a neighbor in each of
	$V(S_1)\setminus V(S_2)$ and $V(S_2)\setminus V(S_1)$;\label{skip-c-2}
	\item $N_C(v)\subseteq V(S_1)\cup V(S_2)$; and\label{skip-c-3}
	\item every $u\in V(G)\setminus V(W)$ adjacent to $v$ satisfies
	$N_C(u)\subseteq V(S_1)\cup V(S_2)$.\label{skip-c-4}
\end{enumerate}
If $V(S_1)\cap V(S_2)=\{a\}$, we also call $v$ an \textit{$a$-skip}.

For a positive integer $r$, the wheel $W$ is \textit{$r$-almost proper} if
there are pairwise nonconsecutive spokes $x_1,\ldots,x_r$ and a set
$X\subseteq V(G)\setminus V(W)$ such that $W$ is proper in $G-X$ and every
vertex of $X$ is an $x_i$-skip for some $i$.

\begin{lemma}\label{lem:one-skip-repair}
	Let $G$ be \(\{\mathrm{ISK}_4,\text{diamond}, \text{prism},K_{3,3}\}\)-free, and let $W=(C,x)$ be a one-almost proper wheel, with
	distinguished spoke $x_1$ and exceptional set $X$.  If $S_1,S_2$ are the
	two sectors incident with $x_1$, then $G$ contains a proper wheel
	$W'=(C',x)$ with $\sigma(W')=\sigma(W)$.  Moreover, either $W'=W$, or
	\[
	V(W')\setminus V(W)=\{v^*\},\quad
	V(W)\setminus V(W')\subseteq V(S_1^*)\cup V(S_2^*)\cup\{x_1\},
	\]
	where $v^*$ is an $x_1$-skip for $W$.
\end{lemma}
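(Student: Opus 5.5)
If $X$ is empty, $W$ is already proper and we take $W'=W$. Otherwise fix $v\in X$; it is an $x_1$-skip with respect to the two sectors $S_1=a_1Cx_1$ and $S_2=x_1Ca_2$ at $x_1$.

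By Lemma~\ref{lem:adjacent-sector-multiplicity}, $v$ has at least two neighbors in $V(S_1)\setminus\{x_1\}$ and at least two in $V(S_2)\setminus\{x_1\}$. Let $p$ be the neighbor of $v$ on $S_1$ closest to $a_1$, and $q$ the neighbor on $S_2$ closest to $a_2$. Replace the rim path $pCq$, which passes through $x_1$, by $p-v-q$, and call the resulting cycle $C_v$.

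The first step is to check that $C_v$ is a hole and that $(C_v,x)$ has exactly $\sigma(W)$ spokes.
\begin{itemize}
\item Extremality of $p$ and $q$, together with \ref{skip-c-3}, makes $C_v$ a hole.
\item The spoke $x_1$ is lost and $v$ is gained, since $vx\in E(G)$ by \ref{skip-c-1}.
\item No other spoke changes, because $pCq$ lies inside $S_1\cup S_2$.
\end{itemize}
The delicate case is when $p=a_1$ or $q=a_2$, which would make $a_1$ or $a_2$ adjacent to both $v$ and $x$. Since $vx\in E(G)$ and $\Delta(G[N(x)])\le1$, the vertex $v$ has at most one neighbor among the spokes. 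I would rule this case out, or absorb it by choosing $p$ or $q$ as the next neighbor, with the same diamond and linked-cycle arguments used in Lemma~\ref{lem:adjacent-sector-multiplicity}. Once this holds, the containment
\[
V(W')\setminus V(W)=\{v\},\qquad V(W)\setminus V(W')\subseteq V(S_1^*)\cup V(S_2^*)\cup\{x_1\}
\]
is immediate.

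The second step, which I expect to be the main obstacle, is to show that $(C_v,x)$ is proper for a suitable choice of $v\in X$. I would choose $v$ so that $|V(pCq)|$ is maximal, i.e.\ the replaced arc is as long as possible. The new sectors are $a_1C_vv=a_1Cp-v$ and $vC_va_2$, and all other sectors are unchanged. Now take any $y\notin V(C_v)\cup\{x\}$.
\begin{itemize}
\item Suppose $y$ is nonadjacent to $v$ and not in $X$. Then $y$ is proper for $W$, and its trace on the shortened rim stays within one sector. This is a rim-transfer argument as in Lemma~\ref{lem:modified-lsw-33}.
\item Suppose $y$ is adjacent to $v$. By \ref{skip-c-4}, $N_C(y)\subseteq V(S_1)\cup V(S_2)$. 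Sparse neighbourhoods (Lemma~\ref{lem:sparse-neighborhood}) then limit the possibilities. If $y$ had neighbors on both new sectors away from $v$, I would use the linked-cycle principle on $xa_1C_vvx$ or $xvC_va_2x$ to get a contradiction.
\item Suppose $y\in X\setminus\{v\}$ is another $x_1$-skip. Its extreme neighbors lie within $pCq$ by the maximality of $v$. So its rim neighbors on $C_v$ reduce to $v$ together with points of $a_1Cp$ or $qCa_2$, and I would rule out a nonproper configuration using the prism-freeness and $K_{3,3}$-freeness hypotheses. $K_{3,3}$-freeness is what excludes two skips sharing several neighbors on both sides.
\item Deleted vertices of $pCq$, including $x_1$, have at most one neighbor on the retained arc by chordlessness. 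Any $v$-adjacency is again handled by sparseness: for example, $x_1$ has $v$ and $x$ but $vx\in E(G)$, and I would show that $x_1v\notin E(G)$ or treat it directly.
\end{itemize}

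Finally, $C_v$ is a proper wheel with centre $x$ and the same spoke count, and setting $v^*=v$ gives the stated form.
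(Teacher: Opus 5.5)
Your construction and extremal choice match the paper's: you splice $v\in X$ into the rim between its extreme neighbors $p,q$, choosing $v$ so that $pCq$ is longest. However, two of your properness cases do not go through as planned.

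\textbf{The case $y\in X\setminus\{v\}$.} The claim that the extreme neighbors of $y$ lie within $pCq$ ``by the maximality of $v$'' is false. Maximality only bounds the \emph{length} of $y$'s arc, which may overhang $p$ while stopping short of $q$. Such a $y$ is harmless, since its neighbors on $C_v$ then lie in one new sector and $yx\in E(G)$. For the harmful configuration, prism- and $K_{3,3}$-freeness are not the relevant tools. The missing step is to apply Lemma~\ref{lem:adjacent-sector-multiplicity} to $y$ and the \emph{new} wheel $(C_v,x)$, whose sectors $a_1Cp-v$ and $v-qCa_2$ meet at $v$. If $y$ is not proper for $(C_v,x)$, then, since $yx\in E(G)$ and $N_{C_v}(y)$ lies in the union of these two sectors, $y$ has neighbors on both exclusive sides $a_1Cp$ and $qCa_2$. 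The lemma then gives at least two on each side. So $y$ has a neighbor strictly beyond $p$ and one strictly beyond $q$, its arc strictly contains $pCq$, and this contradicts the maximal choice of $v$.

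\textbf{The case $y\notin V(W)\cup X$ with $yv\in E(G)$.} Your plan addresses a configuration that cannot occur and misses the one that can. Because $y$ is proper for $W$ and \ref{skip-c-4} gives $N_C(y)\subseteq V(S_1)\cup V(S_2)$, Lemma~\ref{lem:sector-carrier-facts}\ref{carrier-facts-1} puts $N_C(y)$ inside $S_1$ or inside $S_2$. So $y$ never sees both new sectors away from $v$; instead $N_{C_v}(y)$ lies in a single new sector. The only possible violation is then $yx\notin E(G)$ with at least three neighbors on $C_v$. Properness for $W$ allows at most two old neighbors, so $y$ would have exactly three neighbors on the hole $C_v$ (two old ones and $v$). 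That is a forbidden three-spoke wheel, and this is the argument you need.

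\textbf{The first step.} The ``delicate case'' $p=a_1$ is not an obstacle. The edge $va_1$ simply becomes a short sector of the new wheel, $a_1$ stays a spoke, and the spoke count is unchanged. By contrast, ``absorbing'' it by taking the next neighbor as $p$ would leave the chord $va_1$ and destroy the hole.

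\textbf{Deleted rim vertices.} Do not try to prove $x_1v\notin E(G)$: that edge can genuinely occur, and it is forced in the equality case of Lemma~\ref{lem:adjacent-sector-multiplicity}. Deleted rim vertices are proper simply because that lemma gives $|V(pCq)|\ge4$. Hence each deleted vertex sees at most $v$ and one retained end, and these lie in a common new sector.
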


\begin{proof}
	If $W$ is already proper, take $W'=W$; hence assume it is not proper.
	Then $X\ne\emptyset$.  For $v\in X$, let $P(v)$ be the subpath of
	$C[V(S_1)\cup V(S_2)]$ between the two extreme rim neighbors of $v$.
	Choose $v^*\in X$ so that $|V(P(v^*))|$ is maximum, and let $Y$ be the
	interior of $P(v^*)$.  Replacing $P(v^*)$ by the two-edge path through
	$v^*$ gives the induced cycle \(C'=G[(V(C)\cup\{v^*\})\setminus Y]\).
	Every rim neighbor of $v^*$ lies on $P(v^*)$, and after its interior is
	deleted the only such neighbors retained are the two ends of $P(v^*)$.
	Thus no chord incident with $v^*$ remains, while the rest of the cycle is
	an induced subpath of $C$.  Every old spoke other than $x_1$ is retained,
	and $v^*$ is a new spoke.  Thus $C'$ has at least $\sigma(W)\ge4$ vertices
	and is a hole.  Also
	\[
	N(x)\cap V(C')=
	\bigl((N(x)\cap V(C))\setminus\{x_1\}\bigr)\cup\{v^*\},
	\]
	so $W'=(C',x)$ is a wheel with the same number of spokes as $W$.
	
	Suppose $W'$ is not proper, and choose $y\notin V(W')$ nonproper for it.
	Let $S'_1,S'_2$ be the two sectors of $W'$ containing the surviving parts
	of $S_1,S_2$.
	
	First suppose $y\in V(W)$.  Then $y\in Y$.  By
	Lemma~\ref{lem:adjacent-sector-multiplicity}, $v^*$ has at least two
	neighbors on each side of $x_1$, so $|V(P(v^*))|\ge4$.  Every neighbor of
	$y$ on $C'$ lies in one of $S'_1,S'_2$.  Apart from $v^*$, the only
	possible such neighbors are the two ends of $P(v^*)$, since $C$ is a
	hole.  The inequality $|V(P(v^*))|\ge4$ prevents $y$ from being adjacent
	on $C$ to both ends.  Hence $|N_{C'}(y)|\le2$; moreover, a possible old
	end neighbor and $v^*$ lie in the same new sector incident with $v^*$.
	Thus $y$ is proper for $W'$, a contradiction.
	
	Next suppose $y\in X$.  Then $yx\in E(G)$ and
	$N_{C'}(y)\subseteq V(S'_1)\cup V(S'_2)$.  Nonproperness gives a neighbor
	in each exclusive side.  Lemma~\ref{lem:adjacent-sector-multiplicity}
	gives at least two on each side.  On each side, one of these lies strictly
	beyond the corresponding extreme end of $P(v^*)$; consequently the path
	$P(y)$ strictly contains $P(v^*)$, contrary to maximality.
	
	Thus $y\notin V(W)\cup X$.  If $yv^*\notin E(G)$, every $C'$-neighbor of
	$y$ is an old rim neighbor, and properness for $W$ implies properness for
	$W'$.  Hence $yv^*\in E(G)$.  Skip condition~\ref{skip-c-4}, properness for $W$, and
	Lemma~\ref{lem:sector-carrier-facts}\ref{carrier-facts-1} allow us, after relabelling,
	to take $N_C(y)\subseteq V(S_1)$.  Hence
	$N_{C'}(y)\subseteq V(S'_1)$.  Nonproperness now forces
	$yx\notin E(G)$ and $|N_{C'}(y)|\ge3$.  But properness for $W$
	gives $|N_C(y)|\le2$, and the only possible new neighbor is
	$v^*$, so $|N_{C'}(y)|=3$, a forbidden three-spoke wheel.
\end{proof}

\begin{lemma}\label{lem:two-skip-repair}
	Let $G$ be \(\{\mathrm{ISK}_4,\text{diamond}, \text{prism},K_{3,3}\}\)-free, and let $W=(C,x)$ be a two-almost proper wheel.
	Then $G$ contains a proper wheel with center $x$ and with at most
	$\sigma(W)$ spokes.
\end{lemma}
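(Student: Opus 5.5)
The natural plan is to apply Lemma~\ref{lem:one-skip-repair} twice, once for each distinguished spoke. Let $x_1,x_2$ be the nonconsecutive distinguished spokes and $X$ the exceptional set. Split $X=X_1\cup X_2$, where $X_i$ consists of the $x_i$-skips. A vertex cannot be both: its rim neighbours lie in the union of the two sectors at $x_1$ and also in the union of the two sectors at $x_2$. Since $x_1,x_2$ are nonconsecutive, these unions share at most one spoke, which contradicts condition \ref{skip-c-2} for both pairs.

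First step: consider $G_1=G-X_2$. In $G_1$ the wheel $W$ is proper in $G_1-X_1$, and every vertex of $X_1$ is still an $x_1$-skip; deleting vertices only weakens \ref{skip-c-4}. So $W$ is one-almost proper in $G_1$. Lemma~\ref{lem:one-skip-repair}, applied in $G_1$ (which is still in the class), gives a proper wheel $W'=(C',x)$ of $G_1$ with $\sigma(W')=\sigma(W)$. Either $W'=W$, or $W'$ is obtained by replacing a subpath of $S_1\cup S_2$ around $x_1$ by a two-edge path through one $x_1$-skip $v^*$.

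Second step: show that $W'$ is one-almost proper in $G$ with distinguished spoke $x_2$ and exceptional set $X_2$. Spoke $x_2$ survives, because the modification is confined to $S_1^*\cup S_2^*\cup\{x_1\}$ and $x_2$ is not consecutive to $x_1$; so the two sectors at $x_2$ are unchanged. Three things must be checked.
\begin{enumerate}[label=(\alph*)]
\item $W'$ is proper in $G-X_2$. This holds by construction.
\item Each $u\in X_2$ is still an $x_2$-skip for $W'$. Conditions \ref{skip-c-1}--\ref{skip-c-3} persist, since $N_{C'}(u)\subseteq N_C(u)\cup\{v^*\}$. Moreover $uv^*\notin E(G)$: otherwise \ref{skip-c-4} applied to $v^*$ as a neighbour of $u$ would confine $N_C(v^*)$ to the sectors at $x_2$, contradicting that $v^*$ has neighbours strictly on both sides of $x_1$.
\item Condition \ref{skip-c-4} for $W'$. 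The new wheel vertex set loses some old rim vertices, which may now be neighbours $w\notin V(W')$ of $u$. Such a $w$ lies in $S_1^*\cup S_2^*\cup\{x_1\}$, so it was a rim vertex of $W$. Since $u$'s rim neighbours are confined near $x_2$, $w$ is adjacent to $u$ only if $w$ lies in both regions, which is ruled out by nonconsecutiveness, possibly except at a shared end. The newly exposed vertices also include old rim vertices adjacent to $u$'s other neighbours; these are handled by the same locality.
\end{enumerate}

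Then apply Lemma~\ref{lem:one-skip-repair} again in $G$ to $W'$. This gives a proper wheel centred at $x$ with $\sigma(W)$ spokes, which is at most $\sigma(W)$ as required.

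The main obstacle is the bookkeeping in step (c), together with the boundary case in which the sectors at $x_1$ and at $x_2$ share a spoke (four-spoke wheel, $x_1,x_2$ opposite). There a removed vertex of $S_1^*$ could be adjacent to an $x_2$-skip through the common end. I would first try to show that common-end neighbours are spokes and remain on $C'$, so nothing leaves $V(W')$ there. Failing that, I would instead choose the order of the repairs, or argue directly that such a $u$ is proper for $W'$ and can be dropped from $X_2$.
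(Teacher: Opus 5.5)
There is a genuine gap in step (c). For $W'$ to be one-almost proper, condition (S4) must hold for each $u\in X_2$: every $w\notin V(W')$ adjacent to $u$ must satisfy $N_{C'}(w)\subseteq V(S_3)\cup V(S_4)$. You only examine old rim vertices that leave the wheel. The dangerous vertices are the ones that were already outside $W$. For such a $w$, the old (S4) controls $N_C(w)$, but $C'$ contains the new rim vertex $v^*$, and $v^*\notin V(S_3)\cup V(S_4)$. So if some $w\notin V(W)$ is adjacent to both $u$ and $v^*$, then $u$ is nonproper for $W'$ but is not an $x_2$-skip, and the second application of Lemma~\ref{lem:one-skip-repair} is unavailable. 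Your argument gives no reason why such a $w$ cannot exist. It satisfies both skip conditions on the old wheel as soon as it is anticomplete to $C$ and nonadjacent to $x$, and the paper does not exclude it either. Reordering the repairs does not help: take $X_1=\{v^*\}$ and $X_2=\{u\}$ with a common neighbour $w$; whichever skip is inserted first, the other fails (S4) through $w$.

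This configuration is where the paper's proof spends most of its effort, and it needs an idea absent from your plan.

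\begin{enumerate}
\item The paper shows $wx\notin E(G)$, since otherwise $\{x,w,u,v^*\}$ induces a diamond ($uv^*$ being a nonedge). Hence $w$ is proper for $W$.
\item The two skip conditions put $N_C(w)$ among the common boundary spokes, so $|N_C(w)|\le1$.
\item Rerouting $C'$ through $u$ and using a three-spoke wheel then forces $N_C(w)=\emptyset$.
\item The paper then abandons $W'$ and reroutes through $w$. It takes the hole $D=w-u-s_3P'_1v^*-w$, where $s_3$ is the neighbour of $u$ in $S_3$ farthest from $x_2$ and $P'_1$ is the $s_3$--$v^*$ arc of $C'$ avoiding $x_2$.
\item In $(D,x)$ the spoke $x_2$ is traded for $u$, and the spokes on the discarded arc are lost. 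The remaining work proves $(D,x)$ proper, using anticompleteness of $X_1$ and $X_2$, Lemma~\ref{lem:adjacent-sector-multiplicity}, and Lemma~\ref{lem:external-connector-obstruction}.
\end{enumerate}

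This is why the statement says ``at most $\sigma(W)$ spokes''. Your plan always outputs exactly $\sigma(W)$ spokes, which already signals that it cannot cover every case.

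Two smaller points. When $\sigma(W)=4$ and $x_1,x_2$ are opposite, the two unions of sectors share two spokes. So both the disjointness of $X_1,X_2$ and your claim $uv^*\notin E(G)$ need Lemma~\ref{lem:adjacent-sector-multiplicity} (at least two neighbours on each exclusive side), not just (S2). Also, the boundary case you flag is harmless: the deleted vertices lie in $S_1^*\cup S_2^*\cup\{x_1\}$, which is disjoint from $V(S_3)\cup V(S_4)$.
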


\begin{proof}
	Let $x_1,x_2$ and $X^0$ certify that $W$ is two-almost proper.  Replace
	$X^0$ by the set $X$ of all vertices outside $W$ that are not proper for
	$W$.  Since $W$ is proper in $G-X^0$, the new set is contained in $X^0$
	and is still a valid exceptional set.  If it is empty, there is nothing to
	prove.  Let
	\[
	X_1=\{z\in X:z\text{ is an }x_1\text{-skip}\},\quad X_2=X\setminus X_1.
	\]
	Every member of $X_2$ is an $x_2$-skip.  If either part is empty,
	Lemma~\ref{lem:one-skip-repair} applies, so assume both are nonempty.
	Let $S_1,S_2$ be the sectors incident with $x_1$.
	
	The wheel $W$ is one-almost proper but not proper in $G-X_2$: every member
	of $X_1$ remains nonproper for the fixed wheel $W$ after $X_2$ is deleted.
	Apply Lemma~\ref{lem:one-skip-repair} in $G-X_2$.  Its output
	$W'=(C',x)$ therefore differs from $W$ and inserts a vertex
	$v^*\in X_1$.  In particular $v^*$ is still an $x_1$-skip in the full
	graph $G$.  The wheel $W'$ is proper in $G-X_2$ and has the same number of
	spokes as $W$.  If every nonproper vertex of $W'$ in $X_2$ is an
	$x_2$-skip, then $W'$ is one-almost proper and a second application of the
	one-skip repair completes the proof.  Thus the set
	\[
	\mathcal B=\{z\in X_2:z\text{ is neither proper nor an }x_2
	\text{-skip for }W'\}
	\]
	is nonempty.
	
	Let $S_3,S_4$ be the sectors of $W$ incident with $x_2$.  Since $x_1$ and
	$x_2$ are not consecutive, neither $S_3$ nor $S_4$ is $S_1$ or $S_2$;
	the one-skip repair leaves them unchanged, and they are sectors of $W'$.
	
	Moreover, $X_1$ is anticomplete to $X_2$.  Indeed, an edge
	$z_1z_2$ with $z_i\in X_i$ would, by skip condition~\ref{skip-c-4}, give
	\[
	N_C(z_1)\subseteq
	(V(S_1)\cup V(S_2))\cap(V(S_3)\cup V(S_4)).
	\]
	This intersection consists only of outer boundary spokes, at most one
	in each of $V(S_1)\setminus V(S_2)$ and
	$V(S_2)\setminus V(S_1)$, contrary to
	Lemma~\ref{lem:adjacent-sector-multiplicity}.
	
	Choose $v\in\mathcal B$, and for $i\in\{3,4\}$ let $s_i$ be its
	neighbor in $S_i$ farthest from $x_2$.  The vertex $v$ has a neighbor
	in each exclusive side of $S_3,S_4$.
	
	In particular, $vv^*\notin E(G)$.
	Consequently $v$ still satisfies conditions~\ref{skip-c-1}--\ref{skip-c-3} in the definition
	of an $x_2$-skip for $W'$: it is adjacent to $x$, it has a neighbor in
	each exclusive side of $S_3,S_4$, and
	$N_{C'}(v)=N_C(v)\subseteq V(S_3)\cup V(S_4)$.  Since $v$ is not an
	$x_2$-skip for $W'$, it is condition~\ref{skip-c-4} that fails.  Hence there is a
	vertex $u\in N(v)\setminus V(W')$ with a neighbor on
	$C'-(V(S_3)\cup V(S_4))$.
	
	We have $u\notin V(W)$.  Otherwise
	$u\in V(W)\setminus V(W')$, and the one-skip repair places $u$ in the
	deleted interior on the two sides of $x_1$.  But $uv\in E(G)$ would then
	put this old rim vertex in
	$N_C(v)\subseteq V(S_3)\cup V(S_4)$, impossible because $x_1,x_2$ are
	nonconsecutive.  Now skip condition~\ref{skip-c-4} for $v$ on the old wheel gives
	$N_C(u)\subseteq V(S_3)\cup V(S_4)$.  Therefore the neighbor of $u$ on
	$C'-(V(S_3)\cup V(S_4))$ is the only new rim vertex $v^*$, and
	$uv^*\in E(G)$.
	
	The two skip conditions now give
	\[
	N_C(u)\subseteq
	(V(S_1)\cup V(S_2))\cap(V(S_3)\cup V(S_4))\subseteq N_C(x).
	\]
	Moreover $ux\notin E(G)$: otherwise
	$G[\{x,u,v,v^*\}]$ is a diamond, with missing edge $vv^*$.  Hence
	$u\notin X$, and $u$ is proper for $W$.  The displayed intersection
	consists only of boundary spokes of the four sectors; since $x_1,x_2$ are
	nonconsecutive, no sector contains two of these common boundary vertices.
	It follows from properness that $|N_C(u)|\le1$.
	
	Let $C_v$ be the cycle obtained from $C'$ by replacing the
	$s_3$--$s_4$ subpath through $x_2$ by $s_3-v-s_4$.
	The extremal choices of $s_3,s_4$ exclude every chord from $v$,
	while $C'$ is a hole.  The retained vertex $v^*$ is nonadjacent
	to $v$, so $C_v$ has length at least four and is a hole.
	If $N_C(u)=\{q\}$, then $q$ is a common boundary spoke of the two
	pairs of sectors and hence is retained on $C_v$.  Thus $u$ has
	exactly the three neighbors $v,v^*,q$ on $C_v$, impossible.
	Therefore $N_C(u)=\emptyset$.
	
	Let $P'_1$ be the $s_3$--$v^*$ subpath of $C'$ avoiding $x_2$, and put
	$P_1=v-s_3P'_1v^*$.  Define $P'_2$ and $P_2=v-s_4P'_2v^*$ analogously.
	Thus $C_v=P_1\cup P_2$.  The cycle \(D=u-v-s_3P'_1v^*-u\) is a hole.  Indeed, the extreme choice of $s_3$ leaves no further
	$v$-neighbor on $P'_1-s_3$, the vertex $u$ is anticomplete to the old rim,
	$vv^*\notin E(G)$, and $P'_1$ is an induced subpath of the hole $C'$.
	The four displayed vertices $u,v,s_3,v^*$ are distinct, so this induced
	cycle has length at least four.  Let $q$ be the end of $S_3$ different
	from $x_2$.  Since $x_1,x_2$ are nonconsecutive, $q\ne x_1$, so $q$
	survives the first repair and lies on the retained $s_3$--$v^*$ arc.
	Thus $x$ has the three distinct neighbors $v,v^*,q$ on $D$, and
	$W''=(D,x)$ is a wheel.  In passing from $W'$ to $W''$, the old spoke
	$x_2$ is deleted and only the new spoke $v$ is
	inserted; any additional deleted spokes only decrease the count.  Hence
	$\sigma(W'')\le\sigma(W')=\sigma(W)$.  Let $S'_3$ be its sector
	containing $v$ but not $u$.
	
	If $W''$ is proper we are done.  Otherwise choose
	$y\notin V(W'')$ nonproper for $W''$.  Every vertex of
	$V(W)\setminus V(W'')$ has at most two neighbors on $D$, and those
	neighbors lie in a single sector of $W''$.  To see this directly, a rim
	vertex deleted in the first repair sees only $v^*$ and possibly one
	retained end of $P(v^*)$; a vertex deleted on the $x_2$-side sees only
	$v$ and possibly $s_3$; and a vertex on the discarded $P'_2$-side sees
	at most $v$ and $v^*$.  The first two possible pairs lie on a common rim
	subpath of one sector, and the last pair lies in the sector $v-u-v^*$.
	Thus all these old vertices are proper for $W''$, and $y\notin V(W)$.
	
	We first show $yu\in E(G)$.  Suppose not.  If $y\in X_2$, then
	$yx\in E(G)$ and
	$N_D(y)\subseteq V(S'_3)\cup\{v^*\}$.  Also $yv^*\notin E(G)$ by the anticompleteness of $X_1$ and $X_2$.  Therefore
	$N_D(y)\subseteq V(S'_3)$, making $y$ proper for $W''$, a contradiction.
	Thus $y\notin X_2$, and $y$ is proper for $W'$.  If $yv\notin E(G)$, then
	$N_D(y)\subseteq N_{C'}(y)$.  The retained arc
	$D\cap C'=s_3P'_1v^*$ has a complementary arc containing the old spoke
	$x_2$ in its interior.  Thus no sector of $W'$ can contain that
	complementary arc, and each sector has a connected trace on the
	retained arc.  No new spoke lies inside the retained arc, so each such
	trace lies in one sector of $W''$.  No new neighbor of $y$ is inserted,
	and therefore $y$ is proper for $W''$.  Hence
	$yv\in E(G)$.  Condition~\ref{skip-c-4} for the $x_2$-skip $v$ gives
	$N_C(y)\subseteq V(S_3)\cup V(S_4)$.
	
	Thus $N_D(y)\subseteq V(S'_3)\cup\{v^*\}$ and $yv\in E(G)$.
	If $yx\in E(G)$, then $yv^*\notin E(G)$, since otherwise
	$\{x,y,v,v^*\}$ induces a diamond; consequently $y$ is proper for
	$W''$.  If $yx\notin E(G)$, properness for $W'$ gives
	$|N_{C'}(y)|\le2$.  Because $yu\notin E(G)$ and the only other new
	rim vertex is $v$, we have $|N_D(y)|\le3$.  Equality is forbidden,
	so $|N_D(y)|\le2$.  One neighbor is $v$, and any other lies with
	$v$ in $S'_3$ or in the sector $v-u-v^*$; hence $y$ is again proper.
	Both cases contradict the choice of $y$, proving $yu\in E(G)$.
	
	Diamond-freeness now gives
	\begin{equation}\label{eq:y-three-nonedges-full}
		|N(y)\cap\{v^*,v,x\}|\le1.
	\end{equation}
	Since $y$ is nonproper for $W''$, it has a neighbor in $P_1^*$.
	We first show that $y\notin X_2$.  Otherwise $yx\in E(G)$, so
	\eqref{eq:y-three-nonedges-full} gives $yv,yv^*\notin E(G)$.
	The old skip condition gives \(N_D(y)\subseteq V(S'_3)\cup\{u\}\), since the part of $S_3\cup S_4$ retained on $D$ lies in $S'_3$.
	The consecutive sectors $S'_3$ and $v-u-v^*$ then contradict
	Lemma~\ref{lem:adjacent-sector-multiplicity}: $y$ sees
	$S'_3-\{v\}$ but has the unique neighbor $u$ on the exclusive
	side of $v-u-v^*$.  Hence $y\notin X_2$.  Since $y\notin V(W)$
	and the only new vertex of $W'$ is $v^*\in V(D)$, the vertex $y$
	is outside $W'$ and is therefore proper for $W'$.
	
	Suppose that $y$ sees $\{v,v^*\}$.  By
	\eqref{eq:y-three-nonedges-full}, it sees exactly one of these vertices
	and $yx\notin E(G)$.  Properness for $W'$ and
	$V(C_v)\subseteq V(C')\cup\{v\}$ give
	$|N_{C_v}(y)|\le3$.  Since $y$ also sees $P_1^*$, this number
	is at least two and therefore equals two, as three is forbidden.
	Both neighbors lie on $P_1\subseteq D$; together with $u$ they are
	exactly the three neighbors of $y$ on $D$, again impossible.
	Thus $yv,yv^*\notin E(G)$.
	
	Since $y$ is proper for $W'$ and sees $P_1^*$, it cannot also
	see $P_2^*$: these interiors are separated in $C'$ by the
	spokes $v^*$ and $x_2$, so no sector of $W'$ contains a vertex
	of each.  Together with $yv,yv^*\notin E(G)$, this makes $y$
	anticomplete to $P_2$.
	
	Apply Lemma~\ref{lem:external-connector-obstruction} with $R=C_v$,
	$Q=u-y$, $a=v$, $b=v^*$, $A=P_2$, and $B=P_1$.
	Here $N_{C_v}(u)=\{v,v^*\}$, while
	$\emptyset\ne N_{C_v}(y)\subseteq V(P_1^*)$, so all attachment
	conditions hold.  The exceptional outcome requires $vv^*\in E(G)$,
	which is false.  The resulting induced subdivision of $K_4$ or prism
	is forbidden, completing the proof.
\end{proof}

\subsection{The endpoint lemma}

The next elementary dichotomy is useful both for the globally minimal case
and for the four-spoke base case.

\begin{lemma}\label{lem:two-end-dichotomy-full}
	Let $G$ be \(\{\mathrm{ISK}_4,\text{diamond}, \text{prism},K_{3,3}\}\)-free, let $W=(C,x)$ be a proper wheel in $G$, and let
	$P=p_1\cdots p_k\subseteq G-V(W)$ be an induced path.  Suppose that its
	ends have neighbors on $C$, its internal vertices are anticomplete to $C$,
	and no sector of $W$ contains \((N_C(p_1)\cup N_C(p_k))\).
	If $xp_1\notin E(G)$, then either $p_1$ has a unique neighbor on $C$, or,
	after reversing the orientation of $C$ if necessary, there are three
	consecutive rim vertices $r,q,s$ such that
	\[
	N_C(p_1)=\{r,q\},\quad N_C(p_k)=\{q,s\},
	\quad xp_1,xp_k\notin E(G),
	\]
	where $q$ is a spoke and $rq,qs$ lie in the two sectors incident with
	$q$.  In the latter case, $G$ also contains a proper exact-four wheel
	centred at $q$.  The same assertion holds with $p_1,p_k$ interchanged.
\end{lemma}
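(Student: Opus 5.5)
Since $xp_1\notin E(G)$ and $W$ is proper, $N_C(p_1)$ lies in one sector and has at most two vertices. I assume $|N_C(p_1)|=2$ and aim for the second outcome.

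First I show that the two neighbours are adjacent. Suppose $N_C(p_1)=\{r,q\}$ with $rq\notin E(G)$. Then $r,q$ lie in a sector $S$. I replace $rSq$ by $r-p_1-q$ to get a hole $C_1$ on which $x$ keeps all its spokes. Then $p_1Pp_k$ attaches to $C_1$ only at $p_1$ and at $N_C(p_k)$. Since $N_C(p_k)\not\subseteq V(S)$, some neighbour $t$ of $p_k$ lies outside $S$. I would then link $p_1$ to $C_1$ via $p_1r$, $p_1q$ and $P$ followed by the edge to the neighbour of $p_k$ closest along $C$, or apply Lemma~\ref{lem:external-connector-obstruction}. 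This should give an ISK$_4$ or a prism. The exceptional outcome of that lemma needs adjacency that is unavailable here, or else it yields an exact-four configuration that I handle in the same way as below. So $rq\in E(C)$.

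Next I use Lemma~\ref{lem:external-connector-obstruction} with $R=C$, $a,b$ equal to $r,q$ (in some order), $A$ the edge $rq$, $B$ the complementary arc, and $Q=P$. The interior of $P$ is anticomplete to $C$. The hypotheses need $N_C(p_k)$ to avoid one of $r,q$ and to meet $B^*$. The sector condition gives a neighbour of $p_k$ outside $\{r,q\}$. If $p_k$ saw both $r$ and $q$, then $p_1,p_k,r,q$ together with $P$ would form a cycle with the chord $rq$, which I expect gives a diamond or a $K_4$-subdivision. So I can label $a,b$ so that $b\notin N(p_k)$. Since ISK$_4$ and prisms are excluded, the exceptional outcome must hold: $N_C(p_k)=\{a,t\}$ with $at\in E(C)$, and $(x\text{-}\ldots)$ gives an exact-four wheel centred at $a$. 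Renaming $q:=a$, $r:=b$, $s:=t$ yields three consecutive rim vertices with the stated neighbourhoods.

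Then I check the remaining claims. The vertex $q$ must be a spoke: otherwise $r,q,s$ lie in one sector containing $N_C(p_1)\cup N_C(p_k)$, contradicting the hypothesis. For the same reason $rq$ and $qs$ lie in the two different sectors at $q$. For $xp_k\notin E(G)$: if $xp_k\in E(G)$, then $x$, $p_k$, $q$, $s$ give a diamond or $K_4$ whenever $xs\in E(G)$. Otherwise properness of $W$ is consistent, but the hole $p_1PpkP$ closed through $r,q$ or $s$, together with $x$, gives a three-spoke wheel. This is the case analysis I expect to be fiddliest.

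Finally, the exact-four wheel centred at $q$ has rim $p_1Pp_k$–$s\ldots r$ and opposite short sectors $rp_1$ and $p_ks$, so (N4) (Lemma~\ref{lem:n4}) makes it proper. The interchanged statement follows by symmetry. The main obstacle is verifying the attachment hypotheses for Lemma~\ref{lem:external-connector-obstruction}, in particular excluding that $p_k$ sees both of $r,q$.
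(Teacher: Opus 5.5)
Your overall route is the paper's: one application of Lemma~\ref{lem:external-connector-obstruction} with $R=C$ and $Q=P$, where only the exceptional outcome survives, followed by the spoke check and \textbf{(N4)}. However, the step proving $xp_k\notin E(G)$ is wrong as written. Suppose $xp_k\in E(G)$. Because $q$ is a spoke, $\{x,p_k,q\}$ and $\{p_k,q,s\}$ are triangles sharing the edge $p_kq$. So $\{x,p_k,q,s\}$ induces a diamond when $xs\notin E(G)$ and a $K_4$ when $xs\in E(G)$, and both are forbidden. You attribute both outcomes to the case $xs\in E(G)$ and send the case $xs\notin E(G)$ to a three-spoke-wheel argument, and that argument does not work:
\begin{itemize}
\item Closing $p_1Pp_k$ through the arc of $C$ from $s$ to $r$ avoiding $q$ gives a hole on which $x$ sees $p_k$ and all $\sigma(W)-1\ge3$ spokes other than $q$. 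That is at least four neighbours, not three.
\item Closing through $q$ alone gives a triangle when $k=2$. Otherwise it gives a hole on which the number of $x$-neighbours in $P^*$ is not controlled by the hypotheses.
\end{itemize}
So the case you expected to be the fiddliest is immediate, but the argument you proposed for it would not go through.

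Two further remarks. First, your preliminary step forcing $rq\in E(G)$ is unnecessary, and as phrased it is ill-posed: $p_1$ lies on $C_1$, so it cannot be linked to $C_1$. The paper applies Lemma~\ref{lem:external-connector-obstruction} once. It takes $A$ to be the $r$--$q$ arc inside the sector $S_1$ carrying $N_C(p_1)$, and $B$ the complementary arc. The exceptional outcome itself requires $ab\in E(R)$, so adjacency comes for free, and the nonadjacent case yields a forbidden configuration directly.

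Second, the attachment hypotheses you call the main obstacle follow in one line. Properness puts $N_C(p_k)$ in a sector $S_2$, and the hypothesis on the union gives $S_2\ne S_1$. Distinct sectors share at most one end, so $S_2\subseteq B$ and $p_k$ misses one of $r,q$. The same hypothesis supplies a neighbour of $p_k$ in $B^*$. Your alternative for the adjacent case is also valid: it gives a $K_4$ when $k=2$, and for $k\ge3$ the vertex $q$ has exactly the three neighbours $p_1,p_k,r$ on the hole $p_1Pp_k-r-p_1$.
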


\begin{proof}
	Properness puts $N_C(p_1)$ and $N_C(p_k)$ in sectors $S_1$ and $S_2$,
	respectively.  The two sectors are distinct, and in particular $k\ge2$.
	Since $xp_1\notin E(G)$,
	properness also gives $|N_C(p_1)|\le2$.  Assume that $p_1$ does not have a
	unique rim neighbor; thus $N_C(p_1)=\{a,b\}$ with $a\ne b$.
	
	One $a$--$b$ arc of $C$ lies inside $S_1$; denote the other by $B$.
	Since distinct sectors meet in at most one common end, $S_2\subseteq B$
	and $|N_C(p_k)\cap\{a,b\}|\le1$.  Interchange $a,b$ if necessary so
	that $b\notin N_C(p_k)$.  Moreover, $p_k$ has a neighbor in $B^*$:
	otherwise its rim neighborhood is contained in $\{a,b\}$ and the union
	of the two endpoint neighborhoods is contained in $S_1$, a contradiction.
	
	Apply Lemma~\ref{lem:external-connector-obstruction} to $R=C$ and
	$Q=P$, with outside endpoints $p_1,p_k$ and the rim arc $B$ just chosen.
	All its attachment conditions have been verified, and $P^*$ is
	anticomplete to $C$ by hypothesis.  Since induced subdivisions of $K_4$ and induced prisms
	are forbidden, its exceptional conclusion gives a vertex $t\in B^*$
	with
	\[
	ab,at\in E(C),\quad N_C(p_k)=\{a,t\}.
	\]
	Set $r=b$, $q=a$, and $s=t$.  These are three consecutive rim vertices,
	and the two endpoint neighborhoods have the required form.  If $q$ were
	not a spoke, both edges $rq,qs$ would lie in the sector containing $q$,
	contrary to the hypothesis on their union.  Thus $q$ is a spoke, and the
	two edges lie in its two incident sectors.  Finally, if $xp_k\in E(G)$,
	then $\{x,p_k,q,s\}$ induces a diamond when $xs\notin E(G)$ and a
	$K_4$ when $xs\in E(G)$; both are forbidden.  Hence $xp_k\notin E(G)$,
	as required.  The exceptional conclusion of
	Lemma~\ref{lem:external-connector-obstruction} also supplies an exact-four
	wheel centred at $a=q$, with opposite short sectors.  Applying \textbf{(N4)}
	gives the asserted proper exact-four wheel.  Interchanging the ends
	of $P$ proves the symmetric assertion.
\end{proof}

\begin{lemma}\label{lem:modified-endpoint}
	Under the hypotheses of Lemma~\ref{lem:two-end-dichotomy-full}, suppose
	that $W$ is globally spoke-minimal and $\sigma(W)\ge5$.
	For $i\in\{1,k\}$, if $xp_i\notin E(G)$, then $p_i$ has exactly one
	neighbor on $C$.
\end{lemma}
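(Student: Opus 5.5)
This is a direct consequence of Lemma~\ref{lem:two-end-dichotomy-full} combined with global spoke-minimality. I will treat $i=1$; the case $i=k$ follows from the symmetric form of that lemma, obtained by interchanging $p_1$ and $p_k$.

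Assume $xp_1\notin E(G)$. The hypotheses of Lemma~\ref{lem:two-end-dichotomy-full} hold verbatim. So either $p_1$ has a unique neighbor on $C$, and we are done, or we are in the exceptional configuration. In that configuration there are three consecutive rim vertices $r,q,s$ with $N_C(p_1)=\{r,q\}$, $N_C(p_k)=\{q,s\}$, and $q$ a spoke. The same lemma then also gives a proper exact-four wheel centred at $q$.

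This proper wheel has exactly four spokes. By \eqref{eq:global-minimal-wheel}, $W$ has the minimum number of spokes among all proper wheels of $G$, whatever their centre. Hence $\sigma(W)\le 4$, contradicting $\sigma(W)\ge5$. So the exceptional configuration cannot occur, and $p_1$ has exactly one neighbor on $C$.

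We should also note that $p_1$ has at least one neighbor on $C$, which the hypotheses assume for both ends. It has at most two, since $W$ is proper and $xp_1\notin E(G)$. Only the two-neighbor case needs the argument above.

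The only point needing care is that the exact-four wheel may have a centre different from $x$. The definition \eqref{eq:global-minimal-wheel} deliberately allows the centre to vary, so this causes no difficulty.
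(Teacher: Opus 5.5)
Your proof is correct and follows essentially the same route as the paper: you apply Lemma~\ref{lem:two-end-dichotomy-full} at the end not adjacent to $x$, and you rule out its exceptional outcome because the proper exact-four wheel it yields (with any centre) contradicts global spoke-minimality when $\sigma(W)\ge5$. You also note, as the paper does, that \eqref{eq:global-minimal-wheel} ranges over all centres, and that the hypothesis that both ends attach to $C$ supplies the ``at least one'' neighbour.
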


\begin{proof}
	Otherwise Lemma~\ref{lem:two-end-dichotomy-full}, applied from an end
	having more than one rim neighbor and not adjacent to $x$, gives a
	proper exact-four wheel.  This contradicts global spoke-minimality,
	since $\sigma(W)\ge5$.  The centre of the new wheel need not be $x$;
	this is why minimality is taken over all centres.
\end{proof}

\subsection{The four-spoke base case}

\begin{lemma}\label{lem:four-spoke-attachment}
	Let $G$ be \(\{\mathrm{ISK}_4,\text{diamond}, \text{prism},K_{3,3}\}\)-free, let $W=(C,x)$ be a proper wheel with
	$\sigma(W)=4$, and let $P\subseteq G-V(W)$ be an induced path with
	$|P\cap N(x)|\le1$.  If $P\cap N(x)=\emptyset$, all $C$-ends of
	$P$--$C$ edges lie in one sector.  If $|P\cap N(x)|=1$, all such ends lie
	in the union of two intersecting sectors, which may be equal.
\end{lemma}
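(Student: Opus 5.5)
We argue by contradiction, using a shortest counterexample path. Fix $W=(C,x)$ with spokes $a_1,a_2,a_3,a_4$ in cyclic order. By Lemma~\ref{lem:sparse-neighborhood} the short sectors form a matching in the cyclic sequence of sectors. So either there are two opposite short sectors, or at least three sectors are long.

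\textbf{Choosing a minimal subpath.} Take $P$ violating the conclusion, and pass to a minimal subpath $P'$ whose attachments already violate it. When $P\cap N(x)=\emptyset$, minimality makes $P'$ a path $p_1\cdots p_k$ with the following properties: its interior is anticomplete to $C$, its ends have neighbors on $C$, and $N_C(p_1)\cup N_C(p_k)$ lies in no single sector. Indeed, the attachments of each single vertex lie in one sector by properness. So if some interior vertex had a rim neighbor, one of the two shorter subpaths would already violate the conclusion. This relies on Lemma~\ref{lem:sector-carrier-facts}\ref{carrier-facts-1}, a Helly-type fact for sectors of a four-spoke wheel.

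\textbf{The case $P\cap N(x)=\emptyset$.} Here $xp_1,xp_k\notin E(G)$. Lemma~\ref{lem:two-end-dichotomy-full} then gives one of two outcomes.
\begin{itemize}
\item Each end has a unique rim neighbor. Call these neighbors $r_1$ and $r_k$. We then either link $x$ to the hole formed by $P$ and a rim arc, or obtain an ISK$_4$ from $C\cup P$. Specifically, take the arc from $r_1$ to $r_k$ containing at least two spokes; it exists since no sector contains both. Close it with $P$ to form a hole, and let $x$ use three spokes on it. If the arc contains three spokes, $x$ is linked directly. If it contains exactly two, close the other arc instead; then $x$ sees the two spokes on it together with whichever of $r_1,r_k$ is a spoke. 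When neither $r_1$ nor $r_k$ is a spoke, $x$ has only two neighbors on each such hole. In that case we argue with the whole configuration: the cycle $C$, with $P$ as a chord path joining interior points of non-co-sectoral arcs, together with $x$, forms an ISK$_4$ or a prism via Lemma~\ref{lem:external-connector-obstruction}. Apply that lemma with $R$ the hole $x$–spoke–arc–spoke–$x$, i.e.\ $xSx$ for a suitable union of sectors.
\item The exceptional configuration $r,q,s$. Here $P$ together with $q$ and the edges $rq,qs$ give a short detour around a spoke $q$. We exclude this directly: the hole $r\,p_1Pp_k\,s\,C\,r$ (going the long way) has $x$ adjacent to the three other spokes, a three-spoke wheel, unless one of $r,s$ is a spoke. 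If one is, we get a diamond or $K_4$ at $q$.
\end{itemize}

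\textbf{The case $|P\cap N(x)|=1$.} Let $p_j$ be the unique vertex of $P$ adjacent to $x$. Split $P$ at $p_j$ into two subpaths $P_1$ and $P_2$, each meeting $N(x)$ only at $p_j$. The segments $P_1-p_j$ and $P_2-p_j$ avoid $N(x)$, so by the first case each has attachments in one sector; call these sectors $T_1$ and $T_2$. The vertex $p_j$ itself has attachments in one sector $T_0$. The goal is to show that $T_0\cup T_1\cup T_2$ is contained in the union of two intersecting sectors. Apply the first-case argument to the paths $p_j\cdots$ now with $xp_j\in E(G)$. Lemma~\ref{lem:two-end-dichotomy-full} still applies from the end not adjacent to $x$, which forces a unique rim neighbor there. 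If the attachments were spread over two disjoint sectors, we would build a hole through $x$, $p_j$ and the relevant rim arc, and find a vertex linked to it. Lemma~\ref{lem:adjacent-sector-multiplicity} controls the situation where $p_j$ itself straddles two consecutive sectors.

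\textbf{Main obstacle.} I expect the main obstacle to be the subcase where neither end attachment is a spoke and the two sectors are opposite. There the linking is not immediate, and the prism/$K_{3,3}$ exclusions must be used. In particular, $K_{3,3}$-freeness should rule out the configuration where the two paths meet opposite long sectors symmetrically.
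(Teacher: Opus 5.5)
\textbf{Gap 1: the reduction to $P^*$ anticomplete to $C$ is false.} Your claim in the case $P\cap N(x)=\emptyset$ does not hold as stated. Minimality of $P=p_1\cdots p_k$ only gives distinct sectors $U,V$ carrying the attachments of $p_1\cdots p_{k-1}$ and of $p_2\cdots p_k$. Any internal attachment must then lie in $V(U)\cap V(V)$. This set is nonempty exactly when $U,V$ are consecutive, and then every internal attachment is their common spoke.

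For example, let $p_1$ see $T_1^*$, $p_2$ see only $a_2$, and $p_3$ see $T_2^*$. This survives minimality, because each two-vertex subpath is carried by one sector, and Lemma~\ref{lem:sector-carrier-facts}\ref{carrier-facts-1} says nothing against it. Since Lemma~\ref{lem:two-end-dichotomy-full} requires $P^*$ to be anticomplete to $C$, you cannot invoke it until this configuration is excluded separately. The paper does this by rerouting around $a_2$. Let $s$ be the neighbor of $p_1$ in $T_1$ closest to $a_1$, and $z$ the neighbor of $p_k$ in $T_2$ closest to $a_3$. Then the hole $p_1Pp_kzT_2a_3T_3a_4T_4a_1T_1sp_1$ contains exactly the three neighbors $a_1,a_3,a_4$ of $x$, a three-spoke wheel.

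\textbf{Gap 2: the case $|P\cap N(x)|=1$ is not proved.} Splitting at $p_j$ and applying the first case to $P_1-p_j$ and $P_2-p_j$ is a legitimate start. But it only yields three carrier sectors, and the whole content of this case is showing they fit into two intersecting sectors. You give no argument for that. The tools you name do not supply it:
\begin{itemize}
\item Lemma~\ref{lem:two-end-dichotomy-full} needs $P^*$ anticomplete to $C$, which fails precisely when $p_j$ is internal and meets the rim.
\item Lemma~\ref{lem:adjacent-sector-multiplicity} is irrelevant, since $p_j$ is proper for $W$ and can never straddle two sectors.
\end{itemize}

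What is actually required splits into two situations.
\begin{itemize}
\item If $P^*$ is anticomplete to $C$ and $p_j$ is internal, both ends have unique rim neighbors, say $s$ and $z$. These are forced into opposite interiors, $z\in T_1^*$ and $s\in T_3^*$. The hole $p_1Pp_kzT_1a_2T_2a_3T_3sp_1$ then has exactly the $x$-neighbors $p_j,a_2,a_3$. If instead $p_j=p_1$, let $u$ be the neighbor of $p_1$ in $T_3$ closest to $a_3$. The paths $p_1x$, $p_1Pp_kzT_1a_2$, $p_1uT_3a_3$ link $p_1$ to $xT_2x$.
\item If $P^*$ attaches to $C$, you must show several things in turn. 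First, $p_j$ is the only internal attacher. Second, $p_j$ has no neighbor in the interior of its carrier; otherwise there is a hole with $x$-neighbors $p_j,a_3,a_4$. Hence $N_C(p_j)=\{a_1\}$ after reflection. Third, an end escaping $T_1\cup T_4$ has rim neighborhood exactly $\{a_3\}$; otherwise $p_j$ is linked to $xT_4x$. Then $T_1,T_2$ or $T_3,T_4$ cover all attachments.
\end{itemize}

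Finally, your anticipated main obstacle is not one. Take opposite interior attachments $z\in T_1^*$, $s\in T_3^*$ with no $x$-neighbor on $P$. The paths $xa_2$, $xa_3$, $xa_1T_1z$ link $x$ to the hole $p_1Pp_kzT_1a_2T_2a_3T_3sp_1$, which excludes this case at once. $K_{3,3}$-freeness is never needed in this lemma.
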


\begin{proof}
	Choose a counterexample $P=p_1\cdots p_k$ with $k$ minimum.  Both ends
	have a neighbor on $C$.  Indeed, an end without a rim neighbor may be
	deleted without changing the rim-attachment set; if that end was the
	unique neighbor of $x$, failure of the two-sector conclusion for $P$
	implies failure of the one-sector conclusion for the remaining path.
	Either way a shorter counterexample results.  Properness gives $k>1$.
	Write the four spokes of $W$ as
	$a_1,a_2,a_3,a_4$ in cyclic order and let $T_i=a_iCa_{i+1}$, with
	subscripts modulo four.
	
	First suppose that $P^*$ is anticomplete to $C$.  By
	Lemma~\ref{lem:two-end-dichotomy-full}, every end nonadjacent to $x$ has a
	unique rim neighbor, unless the two attachment pairs have the form
	\[
	N_C(p_1)=\{r,q\},\quad N_C(p_k)=\{q,s\},
	\]
	where $q$ is the common spoke of two consecutive sectors.  In the latter
	case, if $P\cap N(x)=\emptyset$, diamond-freeness gives
	$xr,xs\notin E(G)$: otherwise $\{x,p_1,q,r\}$ or
	$\{x,p_k,q,s\}$ induces a diamond.  Replacing $rCqCs$ by
	$r-p_1Pp_k-s$ then gives a three-spoke wheel centered at $x$.  If instead
	$|P\cap N(x)|=1$, the displayed two sectors already cover all attachments
	of $P$.  Both alternatives contradict our choice of $P$.  Thus every end
	nonadjacent to $x$ has a unique rim neighbor.
	
	At least one end of $P$ misses $x$; reverse $P$ so that it is $p_k$.
	After cyclic relabeling, write
	$N_C(p_k)=\{z\}\subseteq V(T_1)$.  Suppose first that $p_1x\notin E(G)$,
	and write $N_C(p_1)=\{s\}$.  If $P$ has no neighbor of $x$, then no sector
	contains both $s$ and $z$.  Up to the dihedral symmetries of the four
	sectors, there are two possibilities.  In the case where $s,z$ can be
	assigned to two consecutive containing sectors, take
	$z\in V(T_1)$ and $s\in V(T_4)$; if one attachment is a spoke, choose the
	labels so that it is $z$, while the case of two nonspoke attachments in
	adjacent sectors has the same form.  Neither attachment is $a_1$, since
	then $T_1$ or $T_4$ would contain both.  Consequently \(H=p_1Pp_kzT_1a_2T_2a_3T_3a_4T_4sp_1\) is a hole and $N_H(x)=\{a_2,a_3,a_4\}$, a forbidden three-spoke wheel.
	In the remaining case, where both attachments are interior to opposite
	sectors, reflection permits us to take
	$z\in T_1^*$ and $s\in T_3^*$.  The vertex $x$ is linked to the
	hole \(H'=p_1Pp_kzT_1a_2T_2a_3T_3sp_1\) by $xa_2,xa_3$, and $xa_1T_1z$.
	
	The cycles just displayed are induced: $P^*$ is anticomplete to
	$C$, the two path ends have the unique rim neighbors $s,z$, and $P,C$
	are induced.  The chosen open sector arcs also exclude every extra
	$x$-edge in the stated three-spoke or linked-hole construction.
	
	If $P$ has the unique $x$-neighbor $p_j$, then $p_j\in P^*$.  A
	counterexample cannot have $s,z$ in intersecting sectors, and hence the
	same relabeling gives $s\in T_3^*$ and $z\in T_1^*$.  On the hole
	$H'$ we then have $N_{H'}(x)=\{p_j,a_2,a_3\}$, again impossible.  Thus
	$p_1x\in E(G)$ and it is the unique edge from $x$ to $P$.
	
	Let $S$ be the sector containing $N_C(p_1)$, whose existence follows
	from properness.  The unique neighbor $z$ of $p_k$ is not a spoke.  For
	if it were, one of the two sectors incident with $z$ would intersect
	$S$, and those two intersecting sectors would cover all attachments of
	$P$.  Relabel so that $z\in T_1^*$.  Failure of the conclusion then
	forces $S=T_3$.  Let $u$ be the neighbor of $p_1$ in $T_3$ closest to
	$a_3$.  We have $u\ne a_4$: otherwise extremality gives
	$N_C(p_1)=\{a_4\}$ and the intersecting sectors $T_4,T_1$ cover every
	attachment.  The paths
	\[
	p_1x,\quad p_1Pp_kzT_1a_2,\quad p_1uT_3a_3
	\]
	link $p_1$ to the induced cycle $xT_2x$.  Here
	$z\in T_1^*$ and $u\ne a_4$ exclude extra $x$-edges on the two long
	routes; the extreme choices of $z,u$, the fact that $P^*$ is
	anticomplete to $C$, and inducedness of $P,C$ exclude all other
	cross-edges.  Hence
	$P^*$ has a neighbor on $C$.
	
	Assume next that $P\cap N(x)=\emptyset$.  Apply minimality to the proper
	prefix $P^-=p_1\cdots p_{k-1}$ and suffix $P^+=p_2\cdots p_k$.
	There are sectors $U,V$ containing, respectively, every rim end of an edge
	from $P^-$ and from $P^+$.  Since $P$ is a counterexample,
	$U\ne V$, $p_1$ has an attachment in $V(U)\setminus V(V)$, and $p_k$ one
	in $V(V)\setminus V(U)$.  Every attachment of $P^*$ lies in
	$V(U)\cap V(V)$.  This intersection is nonempty, so $U,V$ are consecutive.
	Relabel them as $T_1,T_2$, with common spoke $a_2$.  Let $s$ be the
	neighbor of $p_1$ in $T_1$ closest to $a_1$, and let $z$ be the neighbor
	of $p_k$ in $T_2$ closest to $a_3$.  Then \(H=p_1Pp_kzT_2a_3T_3a_4T_4a_1T_1sp_1\) is a hole on which $x$ has exactly the three neighbors $a_1,a_3,a_4$.
	Indeed, every $C$-neighbor of $P^*$ is the omitted common spoke
	$a_2$; the extreme choices of $s,z$ exclude endpoint chords, and $P,C$
	are induced.  This forbidden three-spoke wheel is a contradiction.
	
	It remains that $P$ has a unique $x$-neighbor, say $p_j$.  Choose an
	arbitrary $p_i\in P^*$ with a neighbor on $C$.  We claim that $i=j$.
	If not,
	reverse $P$ so that $j<i$.  Minimality applied to
	$p_1\cdots p_{k-1}$ and to $p_{j+1}\cdots p_k$ gives two intersecting
	sectors $U,V$ covering the former attachments and one sector $R$ covering
	the latter.  The attachment of $p_i$ lies in
	$(V(U)\cup V(V))\cap V(R)$.  Since $P$ is a counterexample, $R$ is not
	one of $U,V$, and $U\ne V$; otherwise two intersecting sectors already
	cover all attachments of $P$.  Thus, after relabeling,
	$U=T_1,V=T_2,R=T_3$.  Every attachment of
	$p_{j+1},\ldots,p_{k-1}$ is consequently at the common spoke $a_3$.
	
	If every attachment of $p_1,\ldots,p_j$ lies in $T_2$, then $T_2,T_3$
	cover $P$.  Thus let $r\le j$ be largest such that $p_r$ has a neighbor
	in $V(T_1)\setminus V(T_2)$.  The vertex $p_k$ has an attachment in
	$V(T_3)\setminus\{a_3\}$; otherwise $T_1,T_2$ would cover all
	attachments.  Let $t$ be the neighbor of $p_r$ in $T_1$ closest to
	$a_1$, and let $z$ be the neighbor of $p_k$ in $T_3$ closest to $a_4$.
	Then \(H=p_rPp_kzT_3a_4T_4a_1T_1tp_r\) is a hole and has exactly the three $x$-neighbors $p_j,a_4,a_1$.
	Indeed, maximality of $r$ excludes every displayed $T_1$-attachment
	after $p_r$; every $p_h$ with $j<h<k$ can attach only at the omitted
	spoke $a_3$; the extreme choices of $t,z$ exclude the two endpoint
	chords; and $P,C$ are induced.  This is a forbidden three-spoke wheel.
	Thus $i=j$.  Since the internal rim-attacher $p_i$ was arbitrary,
	$P-\{p_1,p_i,p_k\}$ is anticomplete to $C$.
	
	Properness puts $N_C(p_i)$ in one sector, say $T_1$.  If $p_i$ has a
	neighbor in $T_1^*$, apply minimality to $p_1Pp_i$ and $p_iPp_k$.
	Both paths have order less than $k$, and $p_i$ is the unique neighbor of
	$x$ on each.  Any covering pair of intersecting sectors must contain
	$T_1$; its other member is one of $T_4,T_1,T_2$.  If one end has no
	attachment outside $T_1$, or if both ends use the same neighboring
	sector, then $T_1$ together with that neighbor covers every attachment
	of $P$.  Since $P$ is a counterexample, reflection therefore gives
	\[
	N_C(p_1)\subseteq V(T_4),\quad
	N_C(p_k)\subseteq V(T_2),
	\]
	and each end has an attachment outside $T_1$.  Choose
	$s\in N_C(p_1)$ closest to $a_4$ in $T_4$ and
	$z\in N_C(p_k)$ closest to $a_3$ in $T_2$.  Then \(H=p_1Pp_kzT_2a_3T_3a_4T_4sp_1\) is a hole.  The vertices of $P-\{p_1,p_i,p_k\}$ are anticomplete to $C$,
	all $C$-neighbors of $p_i$ lie on the omitted sector $T_1$, and the
	extreme choices of $s,z$ exclude endpoint chords; hence
	$N_H(x)=\{p_i,a_3,a_4\}$.  This forbidden three-spoke wheel is a
	contradiction.
	Therefore \(\emptyset\ne N_C(p_i)\subseteq\{a_1,a_2\}\).
	Adjacency to both ends gives a diamond when $T_1$ is long and a $K_4$
	when it is short.  Reflecting if needed, take $N_C(p_i)=\{a_1\}$.
	
	If the attachments of both ends lay in $V(T_1)\cup V(T_4)$, those two
	intersecting sectors would cover every attachment of $P$.  Failure of the
	conclusion therefore permits us, after reflecting and reversing $P$ if
	necessary, to take $N_C(p_1)\subseteq V(T_3)$.  Explicitly, the singleton
	$N_C(p_i)=\{a_1\}$ is carried by both $T_1$ and $T_4$; reflection fixing
	$a_1$ interchanges $T_2$ with $T_3$, and reversal of $P$ selects the end
	whose attachments escape $T_1\cup T_4$.  Let $s$ be its neighbor
	closest to $a_4$.  Suppose that $s\ne a_3$.  Then the paths
	\[
	p_ix,\quad p_ia_1,\quad p_iPp_1sT_3a_4
	\]
	link $p_i$ to the induced cycle $xT_4x$.  Indeed, the extreme choice of
	$s$ excludes every further $p_1$-edge to $sT_3a_4$,
	$P-\{p_1,p_i,p_k\}$ is anticomplete to $C$, and $s\ne a_3$ excludes an
	$xa_3$ chord on the third path.  This is impossible.
	Hence $s=a_3$, and extremality of $s$
	gives $N_C(p_1)=\{a_3\}$.  Finally properness puts $N_C(p_k)$ in some
	$T_h$.  If $h\in\{1,2\}$, then $T_1,T_2$ cover all attachments; if
	$h\in\{3,4\}$, then $T_3,T_4$ do.  This is the required conclusion.
\end{proof}

\section{Attachments of induced paths}\label{sec:path-attachments}

Throughout this section $G\in\mathcal C$.  We retain the carrier terminology
of Section~\ref{sec:wheel-repairs} and the definition of global
spoke-minimality from Section~\ref{sec:local-wheels}.
The next two lemmas concern paths with no neighbor, or exactly one
neighbor, of the wheel centre.  Their smaller-path hypotheses are stated
explicitly so that Section~\ref{sec:sector-separation} can combine them in
a well-founded simultaneous induction.  The four-spoke case has already
been treated in Lemma~\ref{lem:four-spoke-attachment}; here the wheel has
at least five spokes.

\subsection{The no-center-neighbor path lemma}

\begin{lemma}\label{lem:modified-lsw-313}
	Let $G$ be \(\{\mathrm{ISK}_4,\text{diamond}, \text{prism},K_{3,3}\}\)-free, let $W=(C,x)$ be globally spoke-minimal with
	$\sigma(W)\ge5$, and let $P=p_1\cdots p_k\subseteq G-V(W)$ be an induced
	path with $P\cap N(x)=\emptyset$.  Suppose that every induced path
	$P'\subseteq G-V(W)$ of order less than $k$ satisfies both statements
	below:
	\begin{enumerate}
		\item if $P'\cap N(x)=\emptyset$, all $C$-ends of $P'$--$C$ edges lie
		in one sector;
		\item if $|P'\cap N(x)|=1$, all such ends lie in the union of two
		intersecting sectors, which may be equal.
	\end{enumerate}
	Then all $C$-ends of $P$--$C$ edges lie in one sector of $W$.
\end{lemma}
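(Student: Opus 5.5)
Argue by contradiction, suppose the rim-attachments of $P$ lie in no single sector, and use the hypothesis on shorter paths to localise everything; this mirrors the second half of the proof of Lemma~\ref{lem:four-spoke-attachment}. First reduce to the case where both ends have a neighbour on $C$. If, say, $p_k$ had no rim neighbour, the attachment set of $P$ would equal that of $p_1\cdots p_{k-1}$, and the first hypothesis (with no $x$-neighbour) would put it in one sector. Also $k\ge2$, since a single vertex is proper for $W$: it is nonadjacent to $x$, so its neighbours lie in one sector.

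\textbf{Case 1: $P^*$ is anticomplete to $C$.} Since $xp_1,xp_k\notin E(G)$, Lemma~\ref{lem:modified-endpoint} gives $N_C(p_1)=\{s\}$ and $N_C(p_k)=\{z\}$, with no sector containing both $s$ and $z$. I would split according to whether $s,z$ lie in two consecutive sectors or are farther apart along $C$. Let $Q$ be the $s$--$z$ rim arc containing fewer spokes, and close it up as the hole $H=p_1Pp_kzQsp_1$.

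If $s$ and $z$ lie in consecutive sectors with common spoke $a$, and neither is $a$, take $Q$ to be the complementary arc. Then $N_H(x)$ consists of all spokes except $a$. This is a wheel with $\sigma(W)-1\ge4$ spokes and centre $x$, and its rim minus the new vertices lies on $C$. I would feed this into Lemma~\ref{lem:modified-lsw-33}-style reasoning to get a proper wheel with fewer spokes, or derive a three-spoke contradiction; this may need care, because there is no adjacent pair $u,v$ as Lemma~\ref{lem:modified-lsw-33} requires. A cleaner route is to use the short arc instead: when $s,z$ are interior to sectors separated by exactly one spoke $a$, the hole through $P$ and the short arc has $x$-neighbourhood $\{a\}$ plus possibly nothing else. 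In that case $x$ is linked to it via $xa$ and two paths from $x$ through the neighbouring spokes along the sectors to $s$ and $z$, exactly as in the linked-hole argument of Lemma~\ref{lem:four-spoke-attachment}.

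If the short arc contains at least two spokes, the hole $H$ through that arc gives a wheel $(H,x)$ whose spokes are exactly those interior spokes. Taking the short arc, this number is at most $\lfloor\sigma(W)/2\rfloor+1<\sigma(W)$. With exactly two spokes, $x$ is linked to $H$ by its two spoke edges together with a third path $x$--(next spoke)--along the sector to $s$, which contradicts the linked-cycle principle. With three or more spokes, I would obtain a wheel with fewer spokes and then repair it into a proper one with no more spokes, contradicting (GM). The repair is the main obstacle: Lemma~\ref{lem:modified-lsw-33} needs an adjacent pair of centre-neighbours. I would instead re-run the minimal-rim argument of Lemma~\ref{lem:proper-wheel-exists} restricted to wheels centred at $x$ whose rim uses $P$. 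Alternatively, using the two-skip repair, I would note that vertices that are improper for $(H,x)$ must attach to $P$ and to $C$, and would yield shorter paths violating the hypothesis.

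\textbf{Case 2: some internal vertex attaches to $C$.} Apply the hypothesis to $P^-=p_1\cdots p_{k-1}$ and $P^+=p_2\cdots p_k$ to obtain carrier sectors $U$ and $V$. Since $P$ is a counterexample, $U\ne V$, and all attachments of $P^*$ lie in $V(U)\cap V(V)$. Hence $U$ and $V$ are consecutive with common spoke $a$, and every internal attachment equals $a$. Choose $s\in N_C(p_1)$ extreme toward the far end of $U$ and $z\in N_C(p_k)$ extreme toward the far end of $V$. The hole $p_1Pp_kz\,C\,sp_1$, going the long way around, misses $a$. Its $x$-neighbours are exactly the $\sigma(W)-1\ge4$ other spokes, together with any spoke among $s$ and $z$. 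Here I would instead use that $p_i$ is adjacent to $a$: then $p_i$ together with the paths along $P$ to $s$ and $z$ can be linked to the hole $xUx$ or $xVx$. Choose the vertex $p_i$ attaching at $a$ that is closest to $p_1$; then the paths $p_ia$, $p_iPp_1s\,U$, and $p_iPp_kz\,V\ldots x$ link $p_i$ to $xUx$, by the same pattern as in Lemma~\ref{lem:adjacent-sector-multiplicity}. This yields a contradiction with the linked-cycle principle, modulo a check of the degenerate cases where $s$ or $z$ equals $a$; these are excluded by the exclusive-witness choice.
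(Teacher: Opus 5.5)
Your proposal has a genuine gap: the case you set aside as ``the main obstacle'' is the entire content of the lemma.

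\textbf{Case~1.} When the arcs carry enough spokes, the rerouted wheel $W_1=(D_1,x)$ has between $4$ and $\sigma(W)-1$ spokes, and every vertex of $V(C)\setminus V(D_1)$ is proper for it. So $G[V(C)\cup V(P)\cup\{x\}]$ alone yields no contradiction, and you cannot finish by ``repairing'' $W_1$. Neither suggested repair works:
\begin{itemize}
\item Lemma~\ref{lem:modified-lsw-33} needs an adjacent pair of centre-neighbours.
\item The minimum-rim argument of Lemma~\ref{lem:proper-wheel-exists} compares wheels with other centres (such as $(xSx,v)$), so it cannot be confined to centre $x$, and it gives no control on the number of spokes.
\end{itemize}
An improper vertex $v$ also does not automatically ``yield shorter paths violating the hypothesis''. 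If $vx\in E(G)$, the shortcut contains an $x$-neighbour and may satisfy the two-sector conclusion. If $N_P(v)=\{p_2\}$, the path $v-p_2Pp_k$ has order $k$.

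The idea you are missing is the paper's: use (GM) only to conclude that $W_1$ is \emph{not} proper, fix a vertex $v\notin V(W)\cup V(W_1)$ improper for $W_1$, and analyse it.
\begin{itemize}
\item $v$ has a unique neighbour on $P$, via shortcuts, Lemma~\ref{lem:external-connector-obstruction} and Lemma~\ref{lem:modified-lsw-32}.
\item This forces an internal attachment at $b_1=a_2$.
\item Then $v$ misses $p_1,p_k$, $vx\in E(G)$, and $v$ misses $a_1,b_2$.
\item Finally $x$ is linked to the hole $v-v^*S_3a_1S_1s-p_1Pp_j-v$.
\end{itemize}

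\textbf{Case~2.} This case fails too, and not only in detail. Your third path $p_iPp_k\cdots x$ passes through every later internal vertex attached at $a$, and each of these sees $a\in V(xUx)$. Also, $P^*$ is not anticomplete to $C$ here, so Lemma~\ref{lem:modified-endpoint} does not apply and $p_1$ may have a second neighbour on $U$.

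No linkage argument inside $G[V(C)\cup V(P)\cup\{x\}]$ can fix this. Take $C$ with spokes $a_1,b_1,b_2,c_1,c_2$ and sectors $a_1s_1s_2s_3b_1$, $b_1z_3z_2z_1b_2$, $b_2d_1c_1$, $c_1d_2c_2$, $c_2d_3a_1$. Let $P=p_1\cdots p_9$ with $N_C(p_1)=\{s_2\}$ and $N_C(p_9)=\{z_2\}$, and add the edges $b_1p_2,b_1p_4,b_1p_6,b_1p_8$. Then:
\begin{itemize}
\item the graph is triangle-free, $K_{3,3}$-free and ISK$_4$-free;
\item $W$ is a proper five-spoke wheel;
\item every proper subpath of $P$ satisfies the inductive hypotheses;
\item the attachments $s_2,b_1,z_2$ of $P$ lie in no common sector.
\end{itemize}
The only failing hypothesis is (GM). With $H=s_2-p_1Pp_9-z_2z_1b_2d_1c_1d_2c_2d_3a_1s_1s_2$, the wheel $(H,x)$ is a proper four-spoke wheel. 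So in the internal-attachment case as well, the contradiction must come from a vertex outside $V(C)\cup V(P)\cup\{x\}$ that is improper for $W_1$, exactly as in the paper.
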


\begin{proof}
	\setcounter{claim}{0}
	Assume that $P$ violates the conclusion.  Properness gives $k>1$.  The
	induction hypothesis applied to the two proper end-deleted subpaths gives
	distinct sectors $S_1,S_2$ such that
	\[
	\bigcup_{h<k}N_C(p_h)\subseteq V(S_1),\quad
	\bigcup_{h>1}N_C(p_h)\subseteq V(S_2).
	\]
	In particular $p_1$ has an attachment in $V(S_1)\setminus V(S_2)$,
	$p_k$ has one in $V(S_2)\setminus V(S_1)$, and every attachment of
	$P^*$ lies in $V(S_1)\cap V(S_2)$.
	
	For $i=1,2$, write the ends of $S_i$ as $a_i,b_i$, so that
	$a_1,b_1,a_2,b_2$ occur in this cyclic order and $a_1\ne b_2$.  Let
	$Q_1$ be the $b_2$--$a_1$ subpath of $C$ avoiding $b_1$, and $Q_2$ the
	$b_1$--$a_2$ subpath avoiding $a_1$.  Choose $S_1,S_2$ so that
	$|V(Q_2)|$ is minimum.  Let $s,t$ be the neighbors of $p_1$ in $S_1$
	closest to $a_1,b_1$, respectively, and let $y,z$ be the neighbors of
	$p_k$ in $S_2$ closest to $a_2,b_2$, respectively.  Then
	$s\ne b_1$ and $z\ne a_2$.  We give the minimality argument, since it
	also fixes the endpoint convention used below.  Suppose first that
	$s=b_1$.  By the choice of $s$, this says $N_C(p_1)=\{b_1\}$.  If $Q_2$
	is the singleton $b_1=a_2$, then $b_1\in V(S_2)$, contradicting the
	exclusive $S_1\setminus S_2$ attachment of $p_1$.  Thus $Q_2$ is
	nontrivial.  In that case $S_1$ and $S_2$ are disjoint (two sectors can
	meet only in a common end, which here would make $b_1=a_2$), and hence
	$P^*$ is anticomplete to $C$.  Consequently every attachment of
	$p_1\cdots p_{k-1}$ is the vertex $b_1$.  Let $R$ be the sector contained
	in $Q_2$ and incident with $b_1$.  Replacing $S_1$ by $R$ still covers
	all attachments of $p_1\cdots p_{k-1}$, while the new path between the
	end of $R$ different from $b_1$ and $a_2$ is a proper subpath of $Q_2$.
	This contradicts the minimal choice of $|V(Q_2)|$.  Thus $s\ne b_1$.
	The proof that $z\ne a_2$ is symmetric: if $z=a_2$, then
	$N_C(p_k)=\{a_2\}$, and, unless $Q_2$ is the forbidden singleton,
	replacing $S_2$ by the sector of $Q_2$ incident with $a_2$ strictly
	shortens $Q_2$ while preserving the suffix cover.  Consequently
	$\{s,z\}\cap V(Q_2)=\emptyset$.  If the
	two sectors meet, they meet at $b_1=a_2$ and $Q_2$ is that one vertex.
	In all cases $P^*$ is anticomplete to $Q_1$.  Hence
	\begin{equation}\label{eq:313-D1}
		D_1=s-p_1Pp_k-zS_2b_2Q_1a_1S_1s
	\end{equation}
	is a hole.

	\begin{claim}\label{modified-lsw-313-claim1}
		$W_1=(D_1,x)$ is a wheel with fewer spokes than $W$.
	\end{claim}
	\begin{proof}[Proof of Claim~\ref{modified-lsw-313-claim1}]
	The path $Q_2$ is disjoint from $D_1$ and contains a spoke, so $x$ has
	fewer neighbors on $D_1$ than on $C$.  It remains to show that $Q_1$
	contains at least three spokes, counting its ends.  Otherwise $Q_1$ is a
	sector of $W$.  At least one of $t\ne a_1$ and $y\ne b_2$ holds.  Indeed,
	if $t=a_1$ and $y=b_2$, then
	$N_C(p_1)=\{a_1\}$ and $N_C(p_k)=\{b_2\}$.  Since $Q_1$ is here a sector
	and $\sigma(W)\ge5$, $S_1,S_2$ are disjoint (if they met,
	$S_1,S_2,Q_1$ would be all three sectors of $W$).  Hence $P^*$ is
	anticomplete to $C$, and all attachments of $P$ lie in $Q_1$, a
	contradiction.  Assume by symmetry that $t\ne a_1$.  As $W$ has at least
	five spokes, $S_1,S_2$
	are disjoint.  Thus $P^*$ is anticomplete to all of $C$, and Lemma~\ref{lem:modified-endpoint} gives
	$s=t$.  The vertex $s$ is then linked to the induced cycle
	$a_1Q_1b_2-x-a_1$ by
	\[
	sS_1a_1,\quad s-p_1Pp_k-zS_2b_2,\quad sS_1b_1-x,
	\]
	a contradiction.  Here $s\in S_1^*$, $P^*$ is anticomplete
	to $C$, and the extreme choices of $s,z$ exclude further end-attachments.
	Since $\sigma(W)\ge5$ and $Q_1$ is a sector, $Q_2^*$ contains a
	spoke, excluding an edge from $b_1$ to the retained $S_2$-segment.
	This proves Claim~\ref{modified-lsw-313-claim1}.
	\end{proof}
	
	The wheel $W_1$ cannot have exactly three spokes; hence $x$ has at least
	two neighbors in $Q_1^*$.  Since $W_1$ has fewer spokes than the
	globally minimal proper wheel $W$, it is not proper.  Let \(S_0=a_1S_1s-p_1Pp_k-zS_2b_2\) be its sector complementary to the sectors contained in $Q_1$.

	\begin{claim}\label{modified-lsw-313-claim2}
		There is a vertex
		$v\notin V(W_1)\cup V(W)$ satisfying one of the following:
		\begin{enumerate}[label=(\alph*)]
			\item $vx\notin E(G)$,
			$|N(v)\cap V(S_0)|\ge3$, and $N_{D_1}(v)\subseteq V(S_0)$;\label{modified-lsw-313-claim2-1}
			\item for a sector $S_3$ of $W$ contained in $Q_1$, the vertex $v$
			has a neighbor in $V(S_3)\setminus V(S_0)$ and one in
			$V(S_0)\setminus V(S_3)$, and $N_C(v)\subseteq V(S_3)$.\label{modified-lsw-313-claim2-2}
		\end{enumerate}
	\end{claim}
	\begin{proof}[Proof of Claim~\ref{modified-lsw-313-claim2}]
	Choose a nonproper vertex $v\notin V(W_1)$.  We first check explicitly
	that $v\notin V(C)$.  The vertices deleted from $C$ in forming $D_1$
	lie on the $s$--$b_1$ tail of $S_1$, on $Q_2$, or on the
	$a_2$--$z$ tail of $S_2$.  An interior vertex of the first tail has all
	its new-rim neighbors in the adjacent pair $\{s,p_1\}$, and an interior
	vertex of the last tail has all of them in $\{p_k,z\}$; the same is true
	of $b_1$ and $a_2$, respectively, when these ends are distinct.
	Vertices of $Q_2^*$ have no neighbor on the new rim.  These facts
	follow from chordlessness of $C$, the attachment containments in
	$S_1,S_2$ displayed above, and the
	fact that every attachment of $P^*$ lies in
	$V(S_1)\cap V(S_2)$.  If $b_1=a_2$, that common spoke may see several
	vertices of $P^*$, but all its $D_1$-neighbors lie in $S_0$ and it
	is adjacent to $x$.  Thus every vertex of $C\setminus D_1$ is proper for
	$W_1$, proving $v\notin V(C)$.  Since $v$ is proper for $W$ but not for
	$W_1$, it has a neighbor on $P$.  If \ref{modified-lsw-313-claim2-1} fails, nonproperness supplies a sector
	$S_3\subseteq Q_1$ in which $v$ has an attachment outside $S_0$.
	Properness for $W$ gives $N_C(v)\subseteq V(S_3)$, while the already noted
	neighbor on $P\subseteq S_0$ proves \ref{modified-lsw-313-claim2-2}.
	\end{proof}
	
	Fix $v$ as in Claim~\ref{modified-lsw-313-claim2}.  In case \ref{modified-lsw-313-claim2-2}, let $T_a,T_b$ be the sectors of
	$Q_1$ incident with $a_1,b_2$, respectively.  Since $Q_1^*$ contains
	at least two spokes, $T_a,T_b$ are disjoint, so at most one carries the
	nonempty set $N_C(v)$.  Reflect the configuration if necessary so that
	$T_b$ does not carry $N_C(v)$, and choose $S_3$ to be the first carrier
	of $N_C(v)$ encountered from $b_2$ along $Q_1$.  This choice preserves
	\ref{modified-lsw-313-claim2-2}; in particular, $S_3$ is not incident with $b_2$.

	\begin{claim}\label{modified-lsw-313-claim3}
		The vertex $v$ has a unique neighbor on $P$.
	\end{claim}
	\begin{proof}[Proof of Claim~\ref{modified-lsw-313-claim3}]

	Suppose not.  Let $v_1,v_2$ be its neighbors on $P$ closest to $p_1,p_k$,
	respectively.
	
	Suppose first that $vx\notin E(G)$.  Properness gives
	$|N_C(v)|\le2$.  If $v$ has at least three neighbors on $P$, the
	shortcut assertion of Lemma~\ref{lem:sparse-neighborhood} makes
	\[
	P'=p_1Pv_1-v-v_2Pp_k
	\]
	an induced path of order less than $k$.  It has no neighbor of $x$ and
	retains the attachments $s,z$.  No sector contains both: $s\ne b_1$,
	$z\ne a_2$, and $Q_1$ contains at least four spokes.  This contradicts
	the induction hypothesis.  Hence $v$ has exactly two neighbors on $P$.
	
	In either alternative of Claim~\ref{modified-lsw-313-claim2}, $v$ has a neighbor on $D_1-P$.
	If it had exactly one there, $(D_1,v)$ would be a forbidden three-spoke
	wheel.  Thus it has two, and these are all its $C$-neighbors; write
	$N_C(v)=\{r,r'\}$.  In case~\ref{modified-lsw-313-claim2-1} of Claim~\ref{modified-lsw-313-claim2} they lie on one of the retained tails
	$a_1S_1s$ and $zS_2b_2$: no sector contains a vertex of each tail.
	Reflect within this case so that both lie on $a_1S_1s$.  Their unique
	carrier is then $S_1$.  In case~\ref{modified-lsw-313-claim2-2} of Claim~\ref{modified-lsw-313-claim2} retain the fixed orientation;
	their unique carrier is $S_3\subseteq Q_1$, not incident with $b_2$.
	
	If $v_2\ne p_2$, the shorter induced path $v-v_2Pp_k$ has no neighbor
	of $x$.  Its one-sector cover would have to be the unique carrier of
	$\{r,r'\}$.  In case~\ref{modified-lsw-313-claim2-1} of Claim~\ref{modified-lsw-313-claim2} that sector is $S_1$, which misses $z$; in
	case~\ref{modified-lsw-313-claim2-2} of Claim~\ref{modified-lsw-313-claim2} it is $S_3$, which is disjoint from $S_2$ and also misses $z$.
	This is impossible.  Hence $N_P(v)=\{p_1,p_2\}$.
	
	If $s\in\{r,r'\}$, then $\{v,p_1,p_2,s\}$ induces a diamond or a
	$K_4$.  Let $A$ be the $r$--$r'$ subpath of their carrier, and let $B$
	be the complementary arc of $C$.  We have
	\[
	\emptyset\ne N_C(p_1)\subseteq V(B^*).
	\]
	Indeed, in case~\ref{modified-lsw-313-claim2-1} of Claim~\ref{modified-lsw-313-claim2} both ends of $A$ strictly precede $s$, while all
	$p_1$-neighbors lie between $s$ and $t$.  In case~\ref{modified-lsw-313-claim2-2} of Claim~\ref{modified-lsw-313-claim2}, $S_3$ meets
	$S_1$ only possibly at $a_1$; if $p_1a_1\in E(G)$, then $s=a_1$ is
	not an end of $A$, and as an end of $S_3$ it cannot lie inside $A$.
	Apply Lemma~\ref{lem:external-connector-obstruction} with $R=C$,
	$Q=v-p_1$, and rim ends $r,r'$.  The path has no internal vertex,
	and the displayed attachment sets satisfy all hypotheses.  Its
	exceptional conclusion requires a common rim neighbor of $v,p_1$,
	which has just been excluded.  The other outcomes are forbidden.
	Thus $vx\in E(G)$, and Claim~\ref{modified-lsw-313-claim2}\ref{modified-lsw-313-claim2-2} holds with the fixed orientation.
	
	Suppose $v_2\ne p_2$.  Let $S_4\subseteq Q_1$ be the sector incident
	with $b_2$, and write its other end as $a_3$.  The shorter path
	$v-v_2Pp_k$ contains the unique $x$-neighbor $v$, so its attachments lie
	in two intersecting sectors.  By Lemma~\ref{lem:sector-carrier-facts}\ref{carrier-facts-1},
	one covering sector carries $N_C(v)$ and one carries $N_C(p_k)$.
	Since $z\ne a_2$, the latter is $S_2$, unless $N_C(p_k)=\{b_2\}$,
	when $S_4$ is also available.  A pair using $S_2$ would require its
	$Q_1$-neighbor $S_4$ to carry $N_C(v)$: that set contains a vertex of
	$Q_1^*$, so neither $S_2$ nor its other neighboring sector can
	carry it.  This contradicts our orientation.  Hence
	$N_C(p_k)=\{b_2\}$, carried by $S_4$.  The other covering sector must
	be the next sector along $Q_1$, which is $S_3$ by its choice.
	Thus $a_3\in V(S_3)$ and $y=b_2$.  Let $v'_1$ be the neighbor of
	$v$ in $S_3$ closest to $a_3$, and let $b_3$ be the other end of $S_3$.
	If $v'_1\ne b_3$, then $x$ has exactly three neighbors on \(v-v_2Pp_k-b_2S_4a_3S_3v'_1-v\).
	If $v'_1=b_3$, put \(D=v-v_2Pp_k-b_2S_4a_3S_3b_3-v\).
	Then $(D,x)$ is an exact four-spoke wheel, with spokes $v,b_2,a_3,b_3$ in rim
	order.  The adjacent pair $v,b_3$ bounds the short rim edge $vb_3$, and
	the designated sector $v-v_2Pp_k-b_2$ is the only part of $D$ not
	contained in $C$.  Lemma~\ref{lem:modified-lsw-32}, applied to that pair
	and that designated sector, yields a proper exact-four wheel.  Both
	outcomes are impossible, so $v_2=p_2$ and also $v_1=p_1$.
	
	Consider the hole \(H=p_2Pp_k-zS_2b_2-x-v-p_2\) and the induced path $Q=p_1-sS_1a_1$, disjoint from $H$.  The attachment
	distribution gives $s\in V(S_1)\setminus V(S_2)$ and
	$z\in V(S_2)\setminus V(S_1)$; also $S_3$ is not incident with $b_2$
	and $N_P(v)=\{p_1,p_2\}$.  These facts, together with the extreme
	choices of $s,z$, exclude chords of $H$ and edges from $Q^*$ to $H$.
	More explicitly, $Q^*$ lies in $S_1^*$, whereas the
	$C$-neighbors of $P-p_1$ lie in $S_2$ and those of $v$ lie in $S_3$;
	the only possible $x$-neighbor on $Q$ is its end $a_1$.  Thus
	\[
	N_H(p_1)=\{v,p_2\},\quad
	\{x\}\subseteq N_H(a_1)\subseteq\{x,v\}.
	\]
	Here $a_1b_2\notin E(G)$ since $Q_1^*$ contains at least two spokes.
	Apply Lemma~\ref{lem:external-connector-obstruction} with $R=H$, the
	path $Q$, rim ends $a=v,b=p_2$, and the arc $B=H-vp_2$.  The vertex
	$x$ lies in $B^*$, so every required attachment condition holds.
	Since $G$ contains neither an induced subdivision of $K_4$ nor a prism,
	the lemma yields an exact-four wheel with opposite short sectors.
	By \textbf{(N4)} this gives a proper exact-four wheel, contrary to global
	spoke-minimality.  Claim~\ref{modified-lsw-313-claim3} follows.
	\end{proof}
	
	Write $N_P(v)=\{p_j\}$.  Alternative \ref{modified-lsw-313-claim2-1} of Claim~\ref{modified-lsw-313-claim2} is now impossible, because it would make $v$ have exactly three
	neighbors on $D_1$.  Hence \ref{modified-lsw-313-claim2-2} of Claim~\ref{modified-lsw-313-claim2} holds and $v$ has a neighbor in $Q_1^*$.

	\begin{claim}\label{modified-lsw-313-claim4}
		Some internal vertex of $P$ has a neighbor on $C$.
	\end{claim}
	\begin{proof}[Proof of Claim~\ref{modified-lsw-313-claim4}]
	Suppose not.  Lemma~\ref{lem:modified-endpoint} gives
	$N_C(p_1)=\{s\}$ and $N_C(p_k)=\{z\}$.  If $N_C(v)=\{v'\}$,
	then $v'\in Q_1^*$ by Claim~\ref{modified-lsw-313-claim2}\ref{modified-lsw-313-claim2-2}, so $v'\notin\{s,z\}$, and
	$p_j$ is linked to $C$ by
	\[
	p_jPp_1-s,\quad p_jPp_k-z,\quad p_j-v-v',
	\]
	a contradiction.  If $N_C(v)$ has two nonadjacent vertices, choose its
	extreme neighbors $r,r'$ in $S_3$.  Replacing $rS_3r'$ by $r-v-r'$
	gives a hole retaining $s,z$.  Indeed, $S_3$ is disjoint from $S_2$ and
	meets $S_1$ only possibly at $a_1$, which can only be an extreme retained
	endpoint if it belongs to the replaced arc.  The vertex $p_j$ is linked
	to the new hole by the two $P$-subpaths ending through $s,z$ and the edge
	$p_jv$.  In both constructions, the singleton end-attachments,
	anticompleteness of $P^*$ to $C$, the extreme choices, and
	$N_P(v)=\{p_j\}$ exclude all extra edges, including when $p_j$ is an
	end of $P$.
	
	The only remaining possibility is $N_C(v)=\{r,r'\}$ with $rr'\in E(C)$.
	But then $N_{D_1}(v)=\{r,r',p_j\}$, making $(D_1,v)$ a forbidden
	three-spoke wheel.  This proves Claim~\ref{modified-lsw-313-claim4}.
	\end{proof}
	
	The attachment distribution now gives $b_1=a_2$, $k\ge3$, and an edge
	from $b_1$ to $P^*$.  Let $p_i$ be the neighbor of $b_1$ in
	$P^*$ closest to $p_1$.

	\begin{claim}\label{modified-lsw-313-claim5}
		$v$ is anticomplete to $\{p_1,p_k\}$.
	\end{claim}
	\begin{proof}[Proof of Claim~\ref{modified-lsw-313-claim5}]
	Suppose by symmetry that $vp_1\in E(G)$.  First let $vx\notin E(G)$.
	The order-two path $v-p_1$ has no neighbor of $x$.  Let $T$ be the
	sector supplied by the one-sector conclusion for this shorter path.
	Since $N_C(v)$ meets $Q_1^*$ whereas
	$N_C(p_1)\subseteq V(S_1)$, the sector $T$ must be the sector of $Q_1$
	incident with $a_1$, and $N_C(p_1)=\{a_1\}$.  Hence $s=t=a_1$.
	Since $vx\notin E(G)$, properness gives $|N_C(v)|\le2$.
	All these neighbors lie on $Q_1\subseteq D_1$, and Claim~\ref{modified-lsw-313-claim3} gives
	$N_P(v)=\{p_1\}$.  Two $C$-neighbors would therefore make $(D_1,v)$
	a forbidden three-spoke wheel.  Consequently
	\[
	N_C(v)=\{q\},\quad q\in Q_1^*.
	\]
	Now the three paths
	\[
	p_1a_1,\quad p_1vq,\quad p_1Pp_i b_1
	\]
	link $p_1$ to $C$.  Their rim ends $a_1,q,b_1$ are distinct.
	The choice of $p_i$ makes $p_2,\ldots,p_{i-1}$ anticomplete to $C$;
	$p_i$ has only the rim neighbor $b_1$, and $v$ has the unique
	$P$-neighbor $p_1$.  Together with $N_C(p_1)=\{a_1\}$ and
	$N_C(v)=\{q\}$, these facts exclude every additional attachment
	and cross-edge.  This contradiction excludes $vx\notin E(G)$.
	
	Thus $vx\in E(G)$.  If $va_1\in E(G)$, properness of $v$ for $W$ and
	its neighbor in $Q_1^*$ make it anticomplete to $S_1^*$, and
	$v$ is linked to $xS_0x$ by $vx,va_1,vp_1$.  Symmetrically
	$vb_2\notin E(G)$.  If $s\ne t$, the paths $p_1s,p_1t,p_1vx$ link
	$p_1$ to the induced cycle $xS_1x$.  If $s=t$, use
	$p_1s,p_1Pp_ib_1,p_1vx$ instead.  Here $v$ is anticomplete to $S_1$
	and has the unique $P$-neighbor $p_1$; properness gives
	$|N_C(p_1)|\le2$, and $p_i$ is the first internal vertex of $P$
	adjacent to $b_1$.  Both alternatives contradict the linked-cycle
	principle, proving Claim~\ref{modified-lsw-313-claim5}.
	\end{proof}
	  
	\begin{claim}\label{modified-lsw-313-claim6}
		$vx\in E(G)$.
	\end{claim}
	\begin{proof}[Proof of Claim~\ref{modified-lsw-313-claim6}]
	Put \(L=p_iPp_j-v\).
	Claims~\ref{modified-lsw-313-claim3} and~\ref{modified-lsw-313-claim5} give $N_P(v)=\{p_j\}$ and $p_j\in P^*$.
	Thus $L$ is induced and, since $p_i,p_j$ are internal vertices of $P$,
	has order less than $k$.  Suppose that $vx\notin E(G)$.  Then $L$
	has no neighbor of $x$.
	Its $p_i$-end has the rim attachment $b_1$, whereas its $v$-end has
	an attachment in $V(S_3)\setminus V(S_0)$.  No sector of $W$ contains
	both: the sectors incident with $b_1=a_2$ are $S_1,S_2$, while $S_3$ is
	a sector of $Q_1$ with an attachment outside $\{a_1,b_2\}$.  This
	contradicts the no-$x$ induction hypothesis and proves Claim~\ref{modified-lsw-313-claim6}.
\end{proof}
	  
	\begin{claim}\label{modified-lsw-313-claim7}
		$v$ is anticomplete to $\{a_1,b_2\}$.
	\end{claim}
	\begin{proof}[Proof of Claim~\ref{modified-lsw-313-claim7}]
	If, say, $va_1\in E(G)$, then $v$ has exactly the three neighbors
	$x,a_1,p_j$ on the hole $xS_0x$, a forbidden three-spoke wheel.
	Here $S_0$ contains all of $P$, whose order is at least three.
	The other end is symmetric.
\end{proof}
	
	The same path $L=p_iPp_j-v$ now contains exactly one neighbor of $x$,
	namely $v$.  By induction its attachments lie in two intersecting sectors.  One of those sectors
	contains the attachment $b_1$ and is therefore $S_1$ or $S_2$; the other
	must meet it and carry the attachment of $v$ in $Q_1$.  After reflecting
	the whole configuration we may consequently choose $S_3$ with
	$a_1\in V(S_3)$.
	
	Let $v^*$ be the neighbor of $v$ in $S_3$ closest to $a_1$.  Claim~\ref{modified-lsw-313-claim7}
	gives $v^*\ne a_1$, and \(R=v-v^*S_3a_1S_1s-p_1Pp_j-v\) is a hole.  Indeed, the choice of $v^*$ excludes further edges from
	$v$ to $v^*S_3a_1$; Claim~\ref{modified-lsw-313-claim3} excludes further edges from $v$ to $P$;
	the choice of $s$ excludes further edges from $p_1$ to $a_1S_1s$; and,
	since $S_1\cap S_2=\{b_1\}$, every attachment of $P^*$ to $C$ is
	$b_1$, which is absent from $R$.  All remaining pieces are induced
	subpaths of the induced cycle $C$ and the induced path $P$.
	
	If $v^*$ is the end of $S_3$ different from $a_1$, then
	$N_R(x)=\{v,v^*,a_1\}$.  Thus the three edges $xv,xv^*,xa_1$ link $x$ to $R$.
	
	Suppose that $v^*$ is not the other end of $S_3$.  Then
	$N_R(x)=\{v,a_1\}$, and a third $x$--$R$ path is \(B=x-b_2S_2z-p_kPp_j\).
	The path $B$ is induced and its interior is anticomplete to $R-p_j$:
	$z$ was chosen closest to $b_2$ among the $p_k$-neighbors in $S_2$;
	$P$ is induced and $x$ is anticomplete to $P$; the attachment
	distribution gives $N_C(P^*)\subseteq\{b_1\}$, while $b_1$ lies on
	neither displayed $C$-segment; and $N_C(v)\subseteq V(S_3)$ together
	with $N_P(v)=\{p_j\}$ excludes the remaining possible cross-edges.
	The three paths $xv$, $xa_1$, and $B$ therefore link $x$ to the hole
	$R$, again a contradiction.  This proves the lemma.
\end{proof}

\subsection{The one-center-neighbor path lemma}

\begin{lemma}\label{lem:modified-lsw-314}
	Let $G$ be \(\{\mathrm{ISK}_4,\text{diamond}, \text{prism},K_{3,3}\}\)-free, let $W=(C,x)$ be globally spoke-minimal with
	$\sigma(W)\ge5$, and let $P=p_1\cdots p_k\subseteq G-V(W)$ be an induced
	path containing exactly one neighbor of $x$.  Suppose that every induced
	path $P'\subseteq G-V(W)$ of order less than $k$ satisfies the one-sector
	conclusion when $P'\cap N(x)=\emptyset$ and the two-intersecting-sector
	conclusion when $|P'\cap N(x)|=1$.  Suppose also that the one-sector
	conclusion holds for every induced path of order $k$ having no neighbor of
	$x$.  Then all $C$-ends of $P$--$C$ edges lie in the union of two
	intersecting sectors, which may be equal.
\end{lemma}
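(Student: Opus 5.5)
We argue by contradiction, mirroring the structure of Lemma~\ref{lem:four-spoke-attachment} and Lemma~\ref{lem:modified-lsw-313}. Let $P=p_1\cdots p_k$ be a path violating the two-intersecting-sector conclusion, and let $p_j$ be its unique neighbor of $x$.

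\textbf{Step 1: both ends attach, and $p_j$ is an end or the only internal attacher.} We first show that both ends of $P$ have neighbors on $C$. If an end $p_1\ne p_j$ has none, then $P-p_1$ is a shorter counterexample. If $p_1=p_j$ has none, then $P-p_1$ is a path of order $k-1$ with no neighbor of $x$; the one-sector conclusion for it already gives the two-sector conclusion for $P$. Next, apply the induction hypothesis to the two end-deleted subpaths. As in the ``$i=j$'' argument of Lemma~\ref{lem:four-spoke-attachment}, we locate the covering sectors. Whenever some internal $p_i\ne p_j$ attaches to $C$, we build a hole through an extreme attachment of $p_r$ and an extreme attachment of $p_k$, omitting a common spoke. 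On this hole $x$ has exactly three neighbors, namely $p_j$ and two spokes; this is a forbidden three-spoke wheel. Hence $P-\{p_1,p_j,p_k\}$ is anticomplete to $C$. Here the surplus of spokes ($\sigma(W)\ge5$) only makes the omitted arc contain more spokes, so we must instead arrange that the rebuilt hole keeps exactly two old spokes, or else has fewer spokes than $W$. If it has fewer spokes, we obtain a contradiction through non-properness and Lemma~\ref{lem:modified-lsw-33} or Lemma~\ref{lem:modified-lsw-32}, in the manner of Claim~\ref{modified-lsw-313-claim1}.

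\textbf{Step 2: case $p_j$ an end, say $p_1$.} Then $P^-=p_2\cdots p_k$ has no neighbor of $x$, so by hypothesis its attachments lie in one sector $T$. Properness puts $N_C(p_1)$ in one sector $S$, and failure forces $S,T$ to be disjoint. By Lemma~\ref{lem:modified-endpoint}, $p_k$ has a unique rim neighbor $z$ whenever $P^*$ is anticomplete to $C$. We then link $p_1$ to the induced cycle $xS'x$, for a sector $S'$ between $S$ and $T$, by the paths $p_1x$, the route through $P$ and along $C$ to one spoke of $S'$, and the route from an extreme neighbor $u$ of $p_1$ along $S$ to the other spoke. This is exactly as in the paragraph of Lemma~\ref{lem:four-spoke-attachment} producing the link to $xT_2x$. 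When $S$ and $T$ are far apart (possible since $\sigma(W)\ge5$), we instead form the hole $D=p_1Pp_kz\ldots u p_1$ through the shorter side. This gives a wheel $(D,x)$ containing the spoke $p_1$ and fewer old spokes; choosing the side with fewer spokes gives a wheel with fewer spokes than $W$. By global minimality it is not proper. Following Claims~\ref{modified-lsw-313-claim2}--\ref{modified-lsw-313-claim7}, we take a nonproper vertex $v$ and analyse it to a contradiction. In this analysis we use Lemma~\ref{lem:modified-lsw-34} whenever $v\in N(x)$ is adjacent to $p_1$, and Lemma~\ref{lem:two-end-dichotomy-full} or Lemma~\ref{lem:external-connector-obstruction} otherwise.

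\textbf{Step 3: case $p_j$ internal.} Then $p_1Pp_j$ and $p_jPp_k$ are shorter paths, each with a unique $x$-neighbor. Their covering pairs of intersecting sectors must both contain a carrier of $N_C(p_j)$, when that set is nonempty. As in the last two paragraphs of Lemma~\ref{lem:four-spoke-attachment}, either $N_C(p_j)$ meets the interior of a sector $T_1$ and the ends escape on opposite sides, which yields a three-spoke hole $N_H(x)=\{p_j,\cdot,\cdot\}$ after a fewer-spoke reduction, or $N_C(p_j)$ is a single spoke and a linking argument at $xT x$ finishes. If $N_C(p_j)=\emptyset$, the whole argument reduces to the Step 2 analysis applied to $p_j$ with its two arms: the hypothesis gives one sector each for $p_1\cdots p_{j-1}$ and $p_{j+1}\cdots p_k$, and Lemma~\ref{lem:modified-lsw-35}, applied to a claw centred at $p_j$ with third leg $x$, excludes the noncrossing configuration.

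I expect Step 2, together with the reduced-spoke wheel in Step 3, to be the main obstacle. With at least five spokes, a naive rerouting produces a wheel with fewer spokes that is not known to be proper. This wheel must be either repaired via Lemmas~\ref{lem:modified-lsw-32}--\ref{lem:modified-lsw-34} or handled by a nonproper-vertex analysis that replicates the claim sequence of Lemma~\ref{lem:modified-lsw-313}, now with the extra spoke $p_j$ on the new rim.
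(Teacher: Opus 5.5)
Your overall mechanism is right: reroute the rim through $P$ to get a wheel with fewer spokes, which global spoke-minimality forces to be nonproper. But the proposal stops where the real work begins, and two of the steps you do state fail when $\sigma(W)\ge5$.

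\textbf{Step~1.} The analogue of the hole from Lemma~\ref{lem:four-spoke-attachment} keeps $p_j$ and every old spoke except the two on the omitted side. It therefore has $\sigma(W)-1\ge4$ spokes, not three. Your fallback is not available. Lemma~\ref{lem:modified-lsw-33} needs $V(C')\setminus\{u,v\}\subseteq V(C)$, which fails for a rim running through $P$. Lemma~\ref{lem:modified-lsw-32} needs an exact-four wheel. Claim~\ref{modified-lsw-313-claim1} only counts spokes and yields no contradiction by itself. The paper never shows that $P-\{p_1,p_j,p_k\}$ is anticomplete to $C$. Its Claim~\ref{modified-lsw-314-claim1} only confines internal attachments to $b_1$ before $p_i$, a sector $S_3$ at $p_i$, and $a_2$ after $p_i$. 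That suffices because the rerouted rim $D_1$ avoids $Q_2$.

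\textbf{Step~3.} Lemma~\ref{lem:modified-lsw-35} cannot take $x$ as a leaf. $N_C(x)$ is the whole spoke set, so it cannot occupy one block of a noncrossing cyclic configuration while $N_C(p_1)$ and $N_C(p_k)$ lie in disjoint sectors. The paper applies that lemma to claws whose third leaf is a nonproper vertex $v$ attached inside a sector of $Q_1$.

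\textbf{The essential gap: the nonproper wheel $W_1=(D_1,x)$.} ``Following Claims~\ref{modified-lsw-313-claim2}--\ref{modified-lsw-313-claim7}'' does not transfer. In Lemma~\ref{lem:modified-lsw-313} all of $P$ lies in a single sector $S_0$ of $W_1$. Here $p_i$ is a new spoke on $D_1$ and splits the $P$-segment into two sectors $S_0,T_0$. So $W_1$ can have skips at $p_i$, $a_1$ and $b_2$, that is, neighbors of $x$ straddling two consecutive sectors. Nothing in your plan handles them.

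The missing idea is the skip machinery of Section~\ref{sec:wheel-repairs}:
\begin{enumerate}
\item The paper first excludes every vertex seeing both $S_0\setminus T_0$ and $T_0\setminus S_0$ (Claim~\ref{modified-lsw-314-claim2}). This uses shortcut paths, Lemma~\ref{lem:sector-carrier-facts}\ref{carrier-facts-2}, Lemma~\ref{lem:modified-lsw-32} and Lemma~\ref{lem:external-connector-obstruction}.
\item Since $a_1,b_2$ are nonconsecutive spokes of $W_1$, Lemma~\ref{lem:two-skip-repair} shows that $W_1$ stays nonproper even after all skips are deleted.
\item Only for a non-skip nonproper $v$ does the analysis close. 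One shows $v$ has the unique $P$-neighbor $p_i$ and $vx\notin E(G)$; Lemma~\ref{lem:modified-lsw-35} and the order-two path $p_1v$ then finish.
\end{enumerate}
That endgame also needs a normalized representation ($s\ne b_1$, $z\ne a_2$) chosen with $|V(Q_1)|$ minimum, which your plan lacks. As written, you identify the hard part but defer it rather than prove it.
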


\begin{proof}
	\setcounter{claim}{0}
	Suppose otherwise, and initially let $P$ be any counterexample of the
	prescribed order.  Let $p_i$ be the unique neighbor of $x$ on $P$.
	Properness gives $k>1$.
	Both ends of $P$ have a
	neighbor on $C$.  Otherwise delete an unattached
	end.  If that end is $p_i$, the resulting path has no neighbor of $x$ and
	the already available one-sector conclusion covers the unchanged
	attachment set; otherwise the shorter path still has one $x$-neighbor and
	the two-sector induction conclusion applies.  Either outcome contradicts
	the choice of $P$.  By properness, choose sectors $S_1,S_2$ carrying
	$N_C(p_1),N_C(p_k)$, respectively, and write their ends as $a_h,b_h$
	for $h=1,2$.  If $P^*$ is anticomplete to $C$, these sectors are
	disjoint, since otherwise they would cover every attachment of $P$.
	When internal attachments exist, we refine this choice in the following claim.

	\begin{claim}\label{modified-lsw-314-claim1}
		One of the following holds.
		\begin{enumerate}[label=(\alph*)]
			\item $P^*$ is anticomplete to $C$.\label{modified-lsw-314-claim1-a}
			\item The sectors may be chosen so that $a_1,b_1,a_2,b_2$ occur in
			this cyclic order and there is a sector $S_3$ with ends $b_1,a_2$
			such that every edge between $P^*$ and $C$ is one of the
			following types:
			$$b_1p_h\ (2\le h<i),\quad
			p_iq\ (q\in V(S_3)),\quad
			a_2p_h\ (i<h\le k-1).\label{modified-lsw-314-claim1-b}$$
		\end{enumerate}
	\end{claim}
	\begin{proof}[Proof of Claim~\ref{modified-lsw-314-claim1}]
	Assume that \ref{modified-lsw-314-claim1-a} fails.  First suppose that some $p_j\in P^*$ has a
	neighbor in the interior of a sector $R$.  Then $j=i$.  Indeed, if for
	example $i<j$, the induction hypothesis puts all attachments of
	$p_{i+1},\ldots,p_k$ in $R$, while it puts the attachments of
	$p_1,\ldots,p_j$ in two intersecting sectors, one of which must be $R$.
	Those same two sectors then cover all of $P$, a contradiction.  Thus
	$P^*-\{p_i\}$ is anticomplete to $C-N(x)$.
	
	Let $R^-,R^+$ be the two sectors immediately before and after $R$.  Apply
	the two-sector hypothesis to $p_1\cdots p_{k-1}$ and
	$p_2\cdots p_k$.  Since $P$ is a counterexample and $p_i$ attaches in
	$R^*$, the former pair is $R^-,R$ and the latter is $R,R^+$, after
	reflection if needed.  Hence $p_1$ has an attachment in
	$R^-\setminus R$, $p_k$ one in $R^+\setminus R$, and all other internal
	attachments are confined to the two boundary spokes of $R$, on the side
	dictated by their position relative to $p_i$.  Taking
	$S_1=R^-$, $S_2=R^+$, and $S_3=R$ proves \ref{modified-lsw-314-claim1-b}.
	
	It remains to consider the case in which all internal attachments are
	spokes.  If some $p_j\in P^*-\{p_i\}$ has a spoke neighbor, reverse $P$
	if necessary so that $j<i$, and call that neighbor $b_1$.
	Let $S_1,S_3$ be the sectors meeting at $b_1$, with
	the attachments of $p_1,\ldots,p_{i-1}$ in $S_1$.  Let $a_2$ be the other
	end of $S_3$ and $S_2$ the next sector at $a_2$.  The two-sector conclusion
	for $p_2\cdots p_k$, together with the fact that $P$ itself is a
	counterexample, forces that pair to be $S_3,S_2$.  It follows that every
	internal attachment before $p_i$ is at $b_1$, every one after $p_i$ is at
	$a_2$, and, when $1<i<k$, $N_C(p_i)\subseteq V(S_3)$.
	This proves \ref{modified-lsw-314-claim1-b} in this case.
	
	It remains that $p_i$ is the only internal vertex having a rim attachment.
	Thus $p_i\in P^*$ and has an attachment.  Every spoke
	neighbor of $p_i$ is a common neighbor of the adjacent vertices $x,p_i$;
	Lemma~\ref{lem:sparse-neighborhood} therefore says that \(p_i\) has a unique
	spoke neighbor, say $b_1$.  Let the two sectors at $b_1$ be
	$R_1,R_2$.  Write $c_h$ for the end of $R_h$ different from $b_1$, and
	let $R'_h$ be the other sector incident with $c_h$.  We claim that both
	$N_C(p_1)$ and $N_C(p_k)$ are contained in
	$V(R_1)\cup V(R_2)$.
	The following argument
	applies to either end
	$p_e\in\{p_1,p_k\}$.  If its attachments are not in this inner pair,
	the two-sector conclusion for the subpath between $p_e$ and $p_i$, and
	properness of $p_e$, give an index $h\in\{1,2\}$ such that \(N_C(p_e)\subseteq V(R'_h)\).
	Let $t_e$ be the $p_e$-neighbor in $R'_h$ closest to $c_h$, and let $d_h$
	be the other end of $R'_h$.  If $t_e\ne d_h$, the three paths
	\[
	p_ib_1,\quad p_ix,\quad p_iPp_e-t_eR'_hc_h
	\]
	link $p_i$ to the induced cycle $xR_hx$, a contradiction.
	
	It remains that $t_e=d_h$.  Then \(D_e=p_iPp_e-t_eR'_hc_hR_hb_1-p_i\) is a hole.  Indeed, $P$ is induced, the last-subcase assumption makes the
	interior of the displayed $P$-segment anticomplete to $C$, the extreme
	choice of $t_e$ excludes every chord from $p_e$ to the retained
	$C$-segment, and $C$ itself is chordless.  Moreover $(D_e,x)$ is an exact four-spoke wheel, with spokes
	$p_i,t_e,c_h,b_1$ in this order.  The adjacent center-neighbors
	$p_i,b_1$ form the short rim edge $p_ib_1$.  Its sector
	$p_iPp_e-t_e$, containing $p_i$ but not $b_1$, is the only part of the
	rim not contained in $C$.  Lemma~\ref{lem:modified-lsw-32}, applied with
	$u=p_i$, $v=b_1$, and this designated sector, gives a proper exact-four
	wheel.  This is impossible by global minimality.  Thus both end
	attachment sets lie in $R_1\cup R_2$.  Since every other internal vertex
	is anticomplete to $C$ and $N_C(p_i)=\{b_1\}$, the intersecting sectors
	$R_1,R_2$ cover every attachment of $P$, contradicting that $P$ is a
	counterexample.  Claim~\ref{modified-lsw-314-claim1} follows.
\end{proof}
	
	Let us call a choice of the two end sectors supplied by Claim~\ref{modified-lsw-314-claim1} a
	\textit{representation}.  We now normalize such a representation before
	making the only extremal choice used in the rest of the proof.  Orient
	and label its two end sectors so that
	$a_1,b_1,a_2,b_2$ occur in this order.  Let $Q_1$ be the
	$b_2$--$a_1$ path avoiding $b_1$ and $Q_2$ the $b_1$--$a_2$ path
	avoiding $a_1$; in case \ref{modified-lsw-314-claim1-b} of Claim~\ref{modified-lsw-314-claim1}, $Q_2=S_3$.  Properness gives
	$N_C(p_1)\subseteq V(S_1)$ and $N_C(p_k)\subseteq V(S_2)$.  Let $s,t$
	be the respective extreme neighbors of $p_1$ toward $a_1,b_1$, and let
	$y,z$ be those of $p_k$ toward $a_2,b_2$.  Let us call the representation
	\textit{normalized} when
	\[
	s\ne b_1\quad\hbox{and}\quad z\ne a_2.
	\]
	
	Every counterexample has a normalized representation.  In case \ref{modified-lsw-314-claim1-b} of Claim~\ref{modified-lsw-314-claim1}, if
	$s=b_1$, then $N_C(p_1)=\{b_1\}$ and Claim~\ref{modified-lsw-314-claim1}\ref{modified-lsw-314-claim1-b} shows that the
	intersecting sectors $S_3,S_2$ cover every attachment of $P$, a
	contradiction.  Similarly, $z=a_2$ would make $S_1,S_3$ a covering pair.
	In case \ref{modified-lsw-314-claim1-a} of Claim~\ref{modified-lsw-314-claim1}, $P^*$ is anticomplete to $C$.  If $s=b_1$, then
	$N_C(p_1)=\{b_1\}$.  Let $R$ be the sector contained in $Q_2$ and
	incident with $b_1$.  (The path $Q_2$ is nontrivial, since intersecting
	$S_1,S_2$ would already cover all attachments.)  If $R$ meets $S_2$,
	then $R,S_2$ cover the two nonempty end-attachment sets.  Otherwise
	replace $S_1$ by $R$ and relabel the old vertex $b_1$ as the new $a_1$;
	the unique $p_1$-attachment is now the $a_1$-end, so the first
	normalization inequality holds.  The symmetric inward move at $a_2$
	enforces the second inequality.  If the two inward replacement sectors
	meet, they themselves cover all attachments; otherwise both moves can be
	made and neither destroys the normalization at the other end.  Claim~\ref{modified-lsw-314-claim1}\ref{modified-lsw-314-claim1-a}
	is unchanged throughout.
	
	Among all counterexamples of order $k$ and all their normalized
	representations supplied by Claim~\ref{modified-lsw-314-claim1}, choose one for which $|V(Q_1)|$ is
	minimum, and retain the notation just fixed.  Thus
	$s\ne b_1$ and $z\ne a_2$.  Notice also that $S_1,S_2$ do not intersect:
	in case \ref{modified-lsw-314-claim1-a} of Claim~\ref{modified-lsw-314-claim1} they would cover all attachments because $P^*$ is
	anticomplete to $C$, while in case \ref{modified-lsw-314-claim1-b} of Claim~\ref{modified-lsw-314-claim1} the intervening sector $S_3$ and
	$\sigma(W)\ge5$ separate them.  The cycle
	\begin{equation}\label{eq:314-D1}
		D_1=a_1S_1s-p_1Pp_k-zS_2b_2Q_1a_1
	\end{equation}
	is a hole.  Replacing the part through $Q_2$ by $P$ deletes at least the
	two spokes $b_1,a_2$ and inserts only $p_i$, so $W_1=(D_1,x)$ has fewer
	spokes than $W$.  It has at least four spokes: otherwise its three spokes
	$a_1,p_i,b_2$ give an induced subdivision of $K_4$.  Thus
	$Q_1^*$ contains a spoke.  Let
	\[
	S_0=a_1S_1s-p_1Pp_i,\quad
	T_0=p_iPp_k-zS_2b_2
	\]
	be the two sectors of $W_1$ incident with $p_i$.
	
	We record where the old rim vertices deleted in forming $D_1$ can attach
	to the new rim.  For every $c\in V(C)\setminus V(D_1)$, all neighbors of
	$c$ on $D_1$ are contained in $S_0$, or all are contained in $T_0$, or
	the only such neighbor is the common spoke $p_i$.  Indeed, on the deleted
	$S_1$-side the retained cyclic boundary neighbor and every possible
	$P$-neighbor lie in $S_0$; the symmetric statement holds on the deleted
	$S_2$-side.  In case \ref{modified-lsw-314-claim1-a} of Claim~\ref{modified-lsw-314-claim1}, $P^*$ is anticomplete to $C$.  Here
	$S_1,S_2$ cannot meet, since otherwise those two sectors already cover
	all attachments of $P$; in particular $b_1\ne a_2$.  A vertex of
	$Q_2^*$ has at most a retained cyclic boundary neighbor.  The
	endpoint $b_1$ has all its $D_1$-neighbors in
	$\{s,p_1\}\subseteq V(S_0)$, and symmetrically all $D_1$-neighbors of
	$a_2$ lie in $\{p_k,z\}\subseteq V(T_0)$.  In case \ref{modified-lsw-314-claim1-b} Claim~\ref{modified-lsw-314-claim1}, a
	vertex of $S_3^*$ can see on $P$ only $p_i$, while all $D_1$-neighbors
	of $b_1$ lie in $S_0$ and all those of $a_2$ lie in $T_0$.  Every deleted
	vertex other than $b_1,a_2$ consequently has at most two $D_1$-neighbors;
	$b_1,a_2$ are old spokes and are adjacent to $x$.  Thus every vertex of
	$C\setminus D_1$ is proper for $W_1$, and none has neighbors in both
	$S_0\setminus T_0$ and $T_0\setminus S_0$.  In particular, every
	nonproper vertex or skip for $W_1$ lies outside $V(W)$.

	\begin{claim}\label{modified-lsw-314-claim2}
		No vertex $v\notin V(W_1)$ has both a neighbor in $V(S_0)\setminus V(T_0)$ and a neighbor in $V(T_0)\setminus V(S_0)$.
	\end{claim}
	\begin{proof}[Proof of Claim~\ref{modified-lsw-314-claim2}]
	Suppose that such a $v$ exists.  The preceding observation puts
	$v\notin V(W)$, so it is proper for $W$.  First assume
	$vx\notin E(G)$.  If $v$ sees $a_1S_1s$, properness makes it anticomplete
	to $zS_2b_2$, so it has a neighbor $p_j$ with $j>i$; choose $j$ maximum.
	The no-$x$-neighbor lemma applied to $v-p_jPp_k$ (whose order is at most
	$k$) would put its attachments in one sector, contrary to the exclusive
	$S_1,S_2$ attachments.  Hence $v$ has no rim neighbor on either truncated
	side and all its $S_0\cup T_0$ neighbors lie on $P$.  Let $p_j,p_l$ be
	its first and last $P$-neighbors.  Then $j<i<l$, and the path obtained
	from $P$ by replacing $p_jPp_l$ with $p_j-v-p_l$ contains no neighbor of
	$x$, has order at most $k$, and has the same end attachments.  The
	one-sector conclusion again contradicts the choice of $P$.  Therefore
	$vx\in E(G)$.
	
	Suppose next that $N_{D_1}(v)\subseteq V(S_0)\cup V(T_0)$.  Applying
	Lemma~\ref{lem:adjacent-sector-multiplicity} to the two sectors incident
	with $p_i$ shows that $v$ has at least three neighbors in each of
	$S_0,T_0$: each exclusive side has at least two, and if either bound is an
	equality the common spoke $p_i$ is also adjacent to $v$.  If $v$ sees
	$a_1S_1s$, it has at least three neighbors on $p_iPp_k$.  Let $p_r$ be
	its last neighbor there.  Then $r\ge i+2$, and $L=v-p_rPp_k$ is
	induced, has order less than $k$, and has $v$ as its unique neighbor of
	$x$.  Choose $w\in N_C(v)\cap V(a_1S_1s)$.  Thus
	$w\in V(S_1)\setminus\{b_1\}$, while
	$z\in N_C(p_k)\cap(V(S_2)\setminus\{a_2\})$.  Apply induction to $L$ and Lemma~\ref{lem:sector-carrier-facts}\ref{carrier-facts-2}
	to $\{w\},\{z\}$.  They give $w=a_1$, $z=b_2$, and a covering pair
	consisting of the two sectors of $Q_1$, with unique internal spoke $q$.
	Since that pair contains every attachment of $L$, the standing inclusion
	$N_{D_1}(v)\subseteq V(S_0)\cup V(T_0)$ and
	$N_C(p_k)\subseteq V(S_2)$ further give
	\[
	N_C(v)=\{a_1\},\quad N_C(p_k)=\{b_2\}.
	\]
	In this exceptional case
	\[
	H=v-p_rPp_k-b_2Q_1a_1-v
	\]
	is a hole and $(H,x)$ is an exact four-spoke wheel, with spokes
	$v,b_2,q,a_1$.  Indeed, the choice of $p_r$ excludes further
	$v$--$P$ edges on the displayed segment; the two singleton attachment
	sets exclude endpoint chords; in Claim~\ref{modified-lsw-314-claim1}\ref{modified-lsw-314-claim1-a}, $P^*$ is anticomplete
	to $C$, while in Claim~\ref{modified-lsw-314-claim1}\ref{modified-lsw-314-claim1-b} every $C$-neighbor of $p_h$ with
	$i<h<k$ is $a_2\notin V(Q_1)$; and $C,P$ are induced.  The adjacent
	spokes $v,a_1$ form the short sector $va_1$, and the sector
	$v-p_rPp_k-b_2$ containing $v$ but not $a_1$ is the only part of $H$
	outside $C$.  Lemma~\ref{lem:modified-lsw-32} contradicts global
	spoke-minimality.  Thus induction cannot have the asserted covering
	pair.  The symmetric argument excludes a neighbor of $v$ on
	$zS_2b_2$.
	
	Consequently all neighbors of $v$ on $D_1$ lie on $P$.  Necessarily
	$1<i<k$.  Let $p_j,p_l$ be the first and last $P$-neighbors of $v$.
	Multiplicity gives \(j\le i-2<i+2\le l\).
	The shortcut \(L=p_1Pp_j-v-p_lPp_k\) is induced, has order less than $k$, and has $v$ as its unique neighbor
	of $x$.  It retains the attachments
	$s\in V(S_1)\setminus\{b_1\}$ and
	$z\in V(S_2)\setminus\{a_2\}$.  If its attachments are not covered by
	two intersecting sectors, induction gives the desired contradiction.
	By Lemma~\ref{lem:sector-carrier-facts}\ref{carrier-facts-2}, applied to
	$N_C(p_1),N_C(p_k)$, such a cover is possible only if
	\begin{equation}\label{eq:314-claim2-boundary}
		N_C(p_1)=\{a_1\},\quad N_C(p_k)=\{b_2\},
	\end{equation}
	and $Q_1$ consists of two sectors having a unique common spoke
	$q\in V(Q_1^*)$.  Moreover $N_C(v)=\emptyset$: all $D_1$-neighbors
	of $v$ lie on $P$, whereas the covering pair is contained in
	$Q_1\subseteq D_1$.  Claim~\ref{modified-lsw-314-claim1} also shows that every retained internal
	vertex of $L$ is anticomplete to $C$.  Hence every edge from $P^*$
	to $C$ has its $P$-end strictly between $p_j$ and $p_l$.
	
	We show that even this boundary case is impossible.  The two sectors of
	$Q_1$ already cover the attachment sets in
	\eqref{eq:314-claim2-boundary}; since $P$ is a counterexample,
	$P^*$ has a neighbor on $C$.  Thus Claim~\ref{modified-lsw-314-claim1}\ref{modified-lsw-314-claim1-b} holds.  Suppose first
	that some $p_h$ with $1<h<i$ has a rim neighbor, and
	choose $h$ least.  Claim~\ref{modified-lsw-314-claim1}\ref{modified-lsw-314-claim1-b} gives $N_C(p_h)=\{b_1\}$ and
	$j<h<i$; in particular $p_h\in P^*$.  Thus \(R=a_1-p_1Pp_h-b_1S_1a_1\) is a hole.  Apply
	Lemma~\ref{lem:external-connector-obstruction} with
	$a=a_1$, $b=b_1$, $A=S_1$, $B=a_1-p_1Pp_h-b_1$, $w=v$,
	and $Q=xv$.  The vertex $x$ sees exactly $a_1,b_1$ on $R$ because
	$h<i$; moreover $N_C(v)=\emptyset$, and $vp_j\in E(G)$ supplies
	an attachment in $B^*$.  These facts verify every hypothesis.
	The first two outcomes are forbidden; the third, by \textbf{(N4)}, contradicts
	global spoke-minimality.  The symmetric argument, choosing
	the greatest attaching index after $i$ and using $S_2$, excludes every
	attachment after $p_i$.
	
	It remains that $N_C(p_i)\ne\emptyset$ and
	$N_C(P^*-\{p_i\})=\emptyset$.  Let
	$c\in N_C(p_i)$ be closest to $b_1$ along $S_3$.  If $c\ne a_2$, then \(p_i-cS_3b_1S_1a_1-p_1Pp_i\) is a hole on which $x$ has exactly the three neighbors
	$p_i,b_1,a_1$.  If $c=a_2$, extremality gives
	$N_C(p_i)=\{a_2\}$, and \(p_iPp_k-b_2S_2a_2-p_i\) is a hole on which $x$ has exactly the three neighbors
	$p_i,b_2,a_2$.  The extreme choice of $c$, the absence of every other
	internal attachment, and \eqref{eq:314-claim2-boundary} verify
	inducedness.  Both are forbidden three-spoke wheels.  Thus the boundary
	case cannot occur, and the induction hypothesis is contradicted.
	
	Consequently $v$ has a neighbor in a sector $S_4\subseteq Q_1$ outside
	$S_0\cup T_0$.  By properness, choose $S_4$ to be a carrier of
	$N_C(v)$ containing such a neighbor.
	
	Since $v$ is proper for $W$, it sees at most one of $a_1,b_2$; assume
	$vb_2\notin E(G)$.  Suppose $va_1\in E(G)$.  Diamond- and
	$K_4$-freeness give $vp_i\notin E(G)$.  Let $p_j$ be its last neighbor
	on $P$; then $j>i$.  If $j>2$, induction applied to $v-p_jPp_k$ forces
	$y=b_2$ and forces $Q_1^*$ to contain exactly one spoke $a_3$.
	Indeed, this shorter path contains exactly the $x$-neighbor $v$.  A pair
	of intersecting sectors covering its attachments must contain $S_4$.
	Here $a_1\in V(S_4)$ because $va_1\in E(G)$ and
	$N_C(v)\subseteq V(S_4)$, while the choice of $S_4$ in the preceding
	paragraph supplies a second $v$-attachment in
	$S_4\setminus\{a_1\}$; hence $S_4$ is the unique sector carrying
	$N_C(v)$.  The other covering sector must
	both meet $S_4$ and cover the exclusive $S_2$-attachment of $p_k$.  In
	the cyclic order along $Q_1$ this is possible only when $S_4$ meets the
	sector $S_5$ incident with $b_2$ at a single spoke $a_3$ and
	$N_C(p_k)\cap V(S_2)=\{b_2\}$.  Thus $S_4,S_5$ are the only sectors of
	$Q_1$, and $a_3$ is its unique internal spoke.  If
	$v'$ is the neighbor of $v$ in $S_4$ closest to $a_3$ and $S_5$ is the
	sector from $a_3$ to $b_2$, then $x$ is linked to \(v-v'S_4a_3S_5b_2-p_kPp_j-v\) by $xv,xa_3,xb_2$.  Hence $j=2$ and $i=1$.  The edge $p_1v$ is impossible:
	$\{x,v,a_1,p_1\}$ induces a diamond if $a_1p_1\notin E(G)$ and a $K_4$ if
	$a_1p_1\in E(G)$.  Now $x$ is linked to
	$v-p_2-p_1-sS_1a_1-v$ by $xv,xa_1,xp_1$.  This contradiction proves
	$va_1\notin E(G)$, and symmetrically $vb_2\notin E(G)$.
	
	Thus all neighbors of $v$ in $S_0\cup T_0$ lie on $P$.  If $p_j,p_l$
	are its first and last $P$-neighbors, then $j<i<l$.  Suppose first that
	$l-j\ge3$.  The shortcut \(L=p_1Pp_j-v-p_lPp_k\) is a shorter induced path with $v$ as its unique neighbor of $x$.
	Apply the induction hypothesis.  The normalized end attachments,
	together with the attachment of $v$ in
	$S_4\setminus(V(S_0)\cup V(T_0))$, force the only possible covering
	pair to be the two sectors of $Q_1$.  More explicitly,
	\[
	N_C(p_1)=\{a_1\},\quad N_C(p_k)=\{b_2\},
	\]
	$Q_1$ consists of exactly two sectors with common internal spoke $q$,
	and $S_4$ is one of them.  Recall that
	$va_1,vb_2\notin E(G)$.  If $S_4$ is the $a_1$--$q$ sector, choose
	$r\in N_C(v)\cap V(S_4)$ closest to $q$.  Then \(v-p_lPp_k-b_2Q_1qS_4r-v\) is a hole on which $x$ has exactly the three neighbors $v,b_2,q$.
	If $S_4$ is the $q$--$b_2$ sector, the symmetric hole \(v-p_jPp_1-a_1Q_1qS_4r-v\), where again $r$ is chosen closest to $q$, has precisely the three
	$x$-neighbors $v,a_1,q$.  Both are forbidden three-spoke wheels.  For
	inducedness, the extreme choices of $p_j,p_l$ exclude further
	$v$--$P$ edges; Claim~\ref{modified-lsw-314-claim1} and the two singleton end attachments exclude
	all chords from the retained $P$-segment to $Q_1$; the choice of $r$
	excludes further $v$--$C$ edges on the retained sector segment; and the
	retained $P$-segment avoids $p_i$.  Thus $l-j=2$, and necessarily
	$j=i-1$ and $l=i+1$.  We have $vp_i\notin E(G)$, for otherwise
	$G[\{v,p_{i-1},p_i,p_{i+1}\}]$ is a diamond.  Hence
	$v-p_{i-1}-p_i-p_{i+1}-v$ is a hole, and $x$ is linked to it by
	$xv,xp_i,xa_1S_1s-p_1Pp_{i-1}$.  This proves Claim~\ref{modified-lsw-314-claim2}.
	\end{proof}
  
	\begin{claim}\label{modified-lsw-314-claim3}
		Every skip for $W_1$ is an $a_1$-skip or a
		$b_2$-skip.
	\end{claim}
	\begin{proof}[Proof of Claim~\ref{modified-lsw-314-claim3}]
	Indeed, the preceding old-rim check puts the skip outside $V(W)$, and
	properness for $W$ puts its old rim attachments into one
	sector of $W$.  Hence the only possible junctions on $W_1$ are
	$a_1,b_2,p_i$; Claim~\ref{modified-lsw-314-claim2} excludes a $p_i$-skip.
	\end{proof}
	
	Let $X$ be the set of all skips for $W_1$.  The spokes $a_1,b_2$ are
	nonconsecutive in $W_1$: $Q_1^*$ contains a spoke, while the other
	arc contains $p_i$.  If $W_1$ were proper in $G-X$, it would be
	two-almost proper.  Lemma~\ref{lem:two-skip-repair} would then give a
	proper wheel with at most $\sigma(W_1)<\sigma(W)$ spokes, contrary to
	global minimality.  Thus $W_1$ is not proper in $G-X$.

	\begin{claim}\label{modified-lsw-314-claim4}
		There is
		$v\notin V(W_1)\cup V(W)\cup X$ satisfying one of:
		\begin{enumerate}[label=(\alph*)]
			\item $vx\notin E(G)$, $|N(v)\cap V(S_0)|\ge3$, and
			$N_{D_1}(v)\subseteq V(S_0)$;\label{modified-lsw-314-claim4-a}
			\item $vx\notin E(G)$, $|N(v)\cap V(T_0)|\ge3$, and
			$N_{D_1}(v)\subseteq V(T_0)$;\label{modified-lsw-314-claim4-b}
			\item for a sector $S_4\subseteq Q_1$, the vertex $v$ has a neighbor
			in $V(S_4)\setminus(V(S_0)\cup V(T_0))$, a neighbor in
			$V(S_0)\setminus V(S_4)$, no neighbor in
			$V(T_0)\setminus(V(S_0)\cup V(S_4))$, and
			$N_C(v)\subseteq V(S_4)$; the roles of $S_0,T_0$ may be exchanged.\label{modified-lsw-314-claim4-c}
		\end{enumerate}
	\end{claim}
	\begin{proof}[Proof of Claim~\ref{modified-lsw-314-claim4}]
		Choose a nonproper vertex of $W_1$ outside $X$.  The preceding old-rim
		check puts it outside $V(W)$.  If neither \ref{modified-lsw-314-claim4-a} nor \ref{modified-lsw-314-claim4-b} holds, properness
		for $W$, nonproperness for $W_1$, and Claim~\ref{modified-lsw-314-claim2} give a
		sector $S_4\subseteq Q_1$ and, after symmetry, exactly the distribution
		in \ref{modified-lsw-314-claim4-c}. This proves the claim.
	\end{proof}

	Fix such a vertex $v$ as in Claim~\ref{modified-lsw-314-claim4}.
	A sector that satisfies Claim~\ref{modified-lsw-314-claim4}\ref{modified-lsw-314-claim4-c} will be called an \textit{admissible carrier} for $N_C(v)$.
	
	\noindent\textbf{Return-path observation.}
	Suppose that Claim~\ref{modified-lsw-314-claim4}\ref{modified-lsw-314-claim4-c} holds, and let $p_j$ be the first neighbor of $v$
	on $P$.  Let $T\subseteq Q_1$ be the sector incident with $a_1$, with
	other end $b_3$.  No admissible carrier $B$ with
	$a_1\notin V(B)$ and $V(B)\cap V(T)=\{b_3\}$ can exist in either of
	the following situations:
	\begin{enumerate}[label=(\roman*)]
		\item $j<i$ and $xv\in E(G)$;\label{modified-lsw-314-claim4-+i}
		\item $j=1$, $xp_1\in E(G)$, $xv\notin E(G)$, and
		$N_C(p_1)=\{a_1\}$.\label{modified-lsw-314-claim4-+ii}
	\end{enumerate}
	Indeed, let $a_3$ be the other end of $B$ and choose the neighbor $r$ of
	$v$ in $B$ closest to $b_3$.  The cycle
	\[
	D=p_1Pp_j-v-rBb_3Ta_1S_1s-p_1
	\]
	is a hole.  The first-neighbor choice of $p_j$ excludes additional
	$v$--$P$ edges, the extreme choices of $s,r$ exclude endpoint chords,
	and Claim~\ref{modified-lsw-314-claim1} puts every possible attachment of the internal prefix
	vertices at the omitted spoke $b_1$.  In \ref{modified-lsw-314-claim4-+ii} the prefix consists only
	of $p_1$.  Moreover $p_1,v,b_3,a_1$ are four distinct vertices of $D$.
	If $r\ne a_3$, the only neighbors of $x$ on $D$ are $a_1,b_3$ and
	$v$ in \ref{modified-lsw-314-claim4-+i}, or $a_1,b_3$ and $p_1$ in \ref{modified-lsw-314-claim4-+ii}, giving a forbidden
	three-spoke wheel.  If $r=a_3$, then $(D,x)$ is an exact four-spoke wheel.
	In \ref{modified-lsw-314-claim4-+i}, apply Lemma~\ref{lem:modified-lsw-32} to the adjacent spokes
	$v,a_3$ and the designated sector $v-p_jPp_1-sS_1a_1$.
	In \ref{modified-lsw-314-claim4-+ii}, $s=a_1$; use the adjacent spokes $p_1,a_1$ and the designated
	sector $p_1-v-a_3$.  In each case every rim vertex outside the designated
	sector belongs to $C$, so the lemma contradicts global spoke-minimality.

	\begin{claim}\label{modified-lsw-314-claim5}
		The vertex $v$ has a unique neighbor on $P$.
	\end{claim}
	\begin{proof}[Proof of Claim~\ref{modified-lsw-314-claim5}]
	Assume that $v$ has at least two neighbors on $P$.  We first record an
	endpoint-safe form of the shortcut argument.  Let
	$w\notin V(W)\cup V(P)$ be proper for $W$, let $wx\notin E(G)$, and
	suppose that every $P$-neighbor of $w$ lies on $p_1Pp_i$.  Then
	\begin{equation}\label{eq:314-one-sided-shortcut}
		|N(w)\cap V(P)|\le 2.
	\end{equation}
	Indeed, suppose that $w$ has at least three $P$-neighbors, and let
	$p_j,p_l$ be its first and last such neighbors.  The shortcut assertion of Lemma~\ref{lem:sparse-neighborhood}
	gives $l-j\ge3$.  Hence \(L=p_1Pp_j-w-p_lPp_k\) is an induced path of order less than $k$ and has $p_i$ as its unique
	neighbor of $x$.  The induction hypothesis gives two intersecting
	sectors covering all attachments of $L$ to $C$.
	
	Apply Lemma~\ref{lem:sector-carrier-facts}\ref{carrier-facts-2} to the retained
	end-attachment sets, using normalization and the spoke in $Q_1^*$.
	It gives
	\begin{equation}\label{eq:314-one-sided-boundary}
		N_C(p_1)=\{a_1\},\quad N_C(p_k)=\{b_2\},
	\end{equation}
	and $Q_1$ consists of exactly two sectors, which are the covering
	pair.  Write $q$ for their unique common spoke in $Q_1^*$.  The covering
	pair also contains $N_C(w)$.  Moreover every retained internal vertex of $P$ that
	occurs on $L$ is anticomplete to $C$: this is immediate in Claim~\ref{modified-lsw-314-claim1}\ref{modified-lsw-314-claim1-a}, while in
	Claim~\ref{modified-lsw-314-claim1}\ref{modified-lsw-314-claim1-b} every possible internal attachment lies at $b_1$, at $a_2$,
	or in $S_3$, all outside the covering pair.
	
	Since $P$ is a counterexample whereas its end attachments are covered
	by the pair in \eqref{eq:314-one-sided-boundary}, some vertex of
	$p_{j+1}Pp_{l-1}$ attaches to $C$.  Thus Claim~\ref{modified-lsw-314-claim1}\ref{modified-lsw-314-claim1-b} holds.  Choose
	$h$ least with $j<h<l$ and $N_C(p_h)\ne\emptyset$.
	Then $h<i$, $N_C(p_h)=\{b_1\}$, and \(R=a_1-p_1Pp_h-b_1S_1a_1\) is a hole.  Indeed, the retained internal vertices of the shortcut
	$L$ are anticomplete to $C$, and the choice of $h$ excludes every
	other attachment before $p_h$.  If $h=l-1$, then
	$xa_1,xb_1,x-p_iPp_l-p_h$ link $x$ to $R$, a contradiction.
	Hence $h\le l-2$.
	
	Apply Lemma~\ref{lem:external-connector-obstruction} with
	$a=a_1$, $b=b_1$, $A=S_1$, $B=a_1-p_1Pp_h-b_1$, and
	$Q=x-p_iPp_l-w$.  The choice of $p_l$ as the last $P$-neighbor
	of $w$ makes $Q$ induced.  Every internal vertex of $Q$ other than
	a possible $p_k$ is a retained internal vertex of $L$ and hence is
	anticomplete to $C$.  If $p_k$ occurs, then
	$N_C(p_k)=\{b_2\}$ and $b_2\notin V(R)$.  Thus $Q^*$ has
	no edge to $R\cap C$; the inequality $h\le l-2$ and inducedness
	of $P$ exclude its edges to $p_1Pp_h$.  Hence $Q^*$ is
	anticomplete to $R$.  Also $N_R(x)=\{a_1,b_1\}$ because $h<i$.
	The covering pair gives $N_C(w)\subseteq V(Q_1)$, so
	$N_R(w)\subseteq V(B)\setminus\{b_1\}$, and its neighbor $p_j$
	lies in $B^*$.  All hypotheses hold.  The first two outcomes
	are forbidden, and the third, by \textbf{(N4)}, contradicts global
	spoke-minimality.  This proves \eqref{eq:314-one-sided-shortcut}.  By reversing $P$ and interchanging
	$(S_1,a_1,b_1)$ with $(S_2,b_2,a_2)$, the symmetric assertion holds when
	all $P$-neighbors of $w$ lie on $p_iPp_k$.
	
	First suppose  Claim~\ref{modified-lsw-314-claim4}\ref{modified-lsw-314-claim4-a} holds; case  Claim~\ref{modified-lsw-314-claim4}\ref{modified-lsw-314-claim4-b} is symmetric.  The one-sided shortcut
	observation gives exactly two $P$-neighbors.  Since $v$ has at least
	three neighbors in $S_0$, it has a neighbor on $a_1S_1s$.  Properness
	and $vx\notin E(G)$ give $1\le |N_C(v)|\le2$.  If only one rim neighbor
	lay on $D_1$, then $(D_1,v)$ would be a forbidden three-spoke wheel.
	Thus $v$ has exactly two rim neighbors $v_1,v_2$, both on $a_1S_1s$.
	Let $p_j$ be its last $P$-neighbor.  If $j\ge3$, then $v-p_jPp_k$ is a
	shorter induced path having $p_i$ as its unique neighbor of $x$.  The
	induction hypothesis would require two intersecting sectors to cover its
	attachments.  The two distinct $S_1$-attachments of $v$ force one of
	them to be $S_1$.  A sector meeting $S_1$ can carry $N_C(p_k)$ only
	through $a_2$ along $Q_2$, which would force
	$N_C(p_k)=\{a_2\}$ contrary to $z\ne a_2$, or through $b_2$ along
	$Q_1$, which would force $Q_1$ to have no internal spoke.  Both are
	impossible.  Hence $j=2$ and $N_P(v)=\{p_1,p_2\}$.  Neither $v_1$ nor
	$v_2$ belongs to
	$\{s,t\}$: otherwise $\{v,p_1,p_2,s\}$ or the analogous four-vertex set induces a
	diamond or a $K_4$.  Relabel $v_1,v_2$ in their order from $a_1$
	toward $s$, let $A=v_1S_1v_2$, and let $B$ be its complementary
	arc on $C$.  Both $v_1,v_2$ strictly precede $s$, whereas every
	neighbor of $p_1$ on $C$ lies on $sS_1t$.  Hence \(\emptyset\ne N_C(p_1)\subseteq V(B^*)\).
	Apply Lemma~\ref{lem:external-connector-obstruction} with $R=C$,
	$Q=v-p_1$, and rim ends $a=v_1$, $b=v_2$.
	The path has no internal vertex, and the exceptional conclusion would
	require a common rim neighbor of $v$ and $p_1$, which the displayed
	attachment sets exclude.  Thus an induced subdivision of $K_4$ or a
	prism results, a contradiction.  Consequently  Claim~\ref{modified-lsw-314-claim4}\ref{modified-lsw-314-claim4-c} holds after symmetry
	and $N_P(v)\subseteq V(S_0)$.
	
	Let $p_j,p_l$ be the first and last $P$-neighbors of $v$; then
	$j<l\le i$.  Suppose $vx\notin E(G)$.  It has exactly two such neighbors.
	Apply the no-$x$-neighbor conclusion to
	$L=p_1Pp_j-v$, whose order is $j+1\le k$ and which avoids $p_i$.  The
	no-$x$-neighbor conclusion puts all rim attachments of $L$ in one sector
	$R$.  This sector contains an attachment of $p_1$ in $S_1$ and an
	attachment of $v$ in $Q_1\setminus(V(S_0)\cup V(T_0))$.
	Consequently $R$ meets $S_1$ at $a_1$.  Replacing the carrying sector
	$S_4$ in  Claim~\ref{modified-lsw-314-claim4}\ref{modified-lsw-314-claim4-c} by $R$ if necessary, we may assume
	$a_1\in V(S_4)$; the replacement preserves the outside attachment and
	all three incidence conditions in  Claim~\ref{modified-lsw-314-claim4}\ref{modified-lsw-314-claim4-c}.  Moreover
	\[
	N_C(p_1)=\{a_1\},\quad s=t=a_1.
	\]
	Let $v'$
	be the neighbor of $v$ in $S_4$ closest to $a_1$.  If $v'$ is not the
	other end of $S_4$, then $p_l$ is linked to the induced cycle \(p_1Pp_j-v-v'S_4a_1-p_1\) by $p_lPp_j,p_lv,p_lPp_i-x-a_1$.  This includes the triangular case
	$j=1$ and $v'=a_1$.  Hence $v'$ is the other end of $S_4$; its extreme choice gives
	$N_C(v)=\{v'\}$.  Put
	\[
	R=p_1Pp_i-x-a_1-p_1,\quad A=p_jPp_l,
	\]
	and let $B$ be the complementary $p_j$--$p_l$ arc of $R$.
	The cycle $R$ is a hole: $j<l\le i$ gives $i\ge2$, the only
	$x$-neighbor on $P$ is $p_i$, and  Claim~\ref{modified-lsw-314-claim1} excludes every
	$a_1$--$P$ edge other than $a_1p_1$.
	The spoke $v'$ lies in $Q_1^*$, so it is outside $R$, and
	 Claim~\ref{modified-lsw-314-claim1} also gives
	\[
	N_R(v)=\{p_j,p_l\},\quad
	\{x\}\subseteq N_R(v')\subseteq\{x,a_1\}
	\subseteq V(B^*).
	\]
	Apply Lemma~\ref{lem:external-connector-obstruction} with
	$Q=v-v'$, $a=p_j$, and $b=p_l$.  Its exceptional outcome would
	require a common rim neighbor of $v$ and $v'$, which the displayed
	sets exclude.  Thus an induced subdivision of $K_4$ or a prism
	results, a contradiction.  Therefore $vx\in E(G)$.
	
	Choose the carrier $S_4$ in  Claim~\ref{modified-lsw-314-claim4}\ref{modified-lsw-314-claim4-c} to contain $a_1$ whenever
	possible.  Suppose that no admissible carrying sector contains $a_1$.
	Let $S_5\subseteq Q_1$ be the sector incident with $a_1$, with other end
	$b_3$.  Because $v$ has at least two
	$P$-neighbors and both $v,p_i$ lie in $N(x)$, sparse neighborhoods exclude
	the simultaneous pair $vp_{i-1},vp_i$; hence $j\le i-2$.  The path
	$p_1Pp_j-v$ therefore has order less than $k$ and has the unique
	$x$-neighbor $v$.  By Lemma~\ref{lem:sector-carrier-facts}\ref{carrier-facts-1}, one member $B$ of
	its two-sector cover contains all of $N_C(v)$.  The sector $B$ may
	replace $S_4$ in  Claim~\ref{modified-lsw-314-claim4}\ref{modified-lsw-314-claim4-c}, so $a_1\notin V(B)$.  The other member
	cannot be $S_1$.  Normalization therefore forces
	$N_C(p_1)=\{a_1\}$ (so $t=a_1$), the other member is $S_5$, and
	$B\cap S_5=\{b_3\}$.  The return-path observation~(i), with
	$T=S_5$, gives a contradiction, since $j\le i-2$ and $xv\in E(G)$.
	Hence an admissible carrying sector containing
	$a_1$ exists.  Choose it as $S_4$, and let $b_3$ be its other end.
	The vertex $v$ has a neighbor in $S_4^*$, for otherwise
	$vb_3\in E(G)$ and the paths $vx,vb_3$ and either $va_1$ or
	$vp_jPp_1sS_1a_1$ link $v$ to the induced cycle $xS_4x$.  Let $S_5$ be the next sector after $S_4$
	at $b_3$, and let $a_3$ be the other end of $S_5$.
	
	At this point $S_0,S_4$ are consecutive sectors of $W_1$ meeting at
	$a_1$.   Claim~\ref{modified-lsw-314-claim4}\ref{modified-lsw-314-claim4-c},  Claim~\ref{modified-lsw-314-claim2}, $N_P(v)\subseteq V(S_0)$, and
	$N_C(v)\subseteq V(S_4)$ show that $v$ satisfies skip conditions
	\ref{skip-c-1}--\ref{skip-c-3} for this pair.  Because $v\notin X$, it is not a skip; hence
	condition~\ref{skip-c-4} fails and supplies
	$u\notin V(W_1)$ adjacent to $v$ and having a neighbor in
	\[
	V(D_1)\setminus(V(S_4)\cup V(S_0)).
	\]
	In fact $u\notin V(W)$.  It is not $x$, since $x\in V(W_1)$; and if it
	were an old rim vertex, the edge $uv$ and
	$N_C(v)\subseteq V(S_4)$ would put $u$ in $S_4\subseteq D_1$, again
	contrary to $u\notin V(W_1)$.
	We claim $ux\notin E(G)$.  Suppose otherwise.  If $u$ has a rim neighbor, Lemma~\ref{lem:modified-lsw-34} puts $N_C(u)\cup N_C(v)$ in one sector.
	Because $v$ sees $S_4^*$, that sector is $S_4$; the same containment
	$N_C(u)\subseteq V(S_4)$ is trivial if $u$ has no rim neighbor.  Hence the
	displayed witness is $p_h$ for some $h>i$.  Sparse neighborhoods applied to
	the edge $uv$ inside $N(x)$ give \(up_i,vp_i\notin E(G)\).
	 Claim~\ref{modified-lsw-314-claim2} excludes every edge $up_q$ with $q<i$, since $u$ already sees the
	$T_0\setminus S_0$ side.  If $p_l$ is the last $P$-neighbor of $v$ and
	$p_r$ the first of $u$, then $l<i<r$, and \(v-p_lPp_r-u-v\) is a hole on which $x$ has exactly the three neighbors $v,p_i,u$.  This
	is a forbidden three-spoke wheel.  Therefore $ux\notin E(G)$.
	
	We next locate the remaining attachments of $u$.  First $i\ge3$.  Indeed,
	if $i\le2$, the two distinct $P$-neighbors of $v$ force $i=2$ and
	$N_P(v)=\{p_1,p_2\}$; because $p_i$ is the unique $x$-neighbor on $P$,
	$G[\{x,v,p_1,p_2\}]$ is a diamond.  Hence the edge $uv$, viewed as a path
	of order two, is shorter than $P$ and contains exactly one neighbor of
	$x$, namely $v$.  The induction hypothesis for $uv$, properness of $u$,
	and the fact that $v$ has a neighbor in $S_4^*$ imply that every
	$C$-neighbor of $u$ lies in $S_4$ or in one of the two sectors adjacent
	to $S_4$.
	
	Suppose that $u$ has a neighbor in $Q_1\setminus S_4$.  Then all its
	$C$-neighbors lie in the sector $S_5$ adjacent to $S_4$ at $b_3$.  Since
	$ux\notin E(G)$, properness gives $1\le |N_C(u)|\le2$.  Let $v'$ be the
	neighbor of $v$ in $S_4$ closest to $b_3$.  If $u$ has one neighbor $r$
	in $S_5$, then the three paths $vx$, $vv'S_4b_3$, and $v-u-r$ link $v$
	to the induced cycle $xS_5x$.  Notice that $r\ne b_3$ because the chosen
	neighbor lies outside $S_4$.
	It remains that $N_C(u)=\{r_1,r_2\}$, labeled in their order
	from $b_3$ toward $a_3$ on $S_5$.  The sector $S_5$ is long, since otherwise
	$G[\{x,u,b_3,a_3\}]$ is a diamond, its only missing edge being $xu$.
	Set $R=xS_5x$, let $A=r_1S_5r_2$, and let $B$ be the complementary
	$r_1$--$r_2$ arc of $R$.  Apply
	Lemma~\ref{lem:external-connector-obstruction} with outside path
	$Q=u-v$ and rim ends $a=r_1$, $b=r_2$.  Indeed,
	$N_R(u)=\{r_1,r_2\}$ because $ux\notin E(G)$,
	whereas $\{x\}\subseteq N_R(v)\subseteq\{x,b_3\}$
	because $vx\in E(G)$ and $N_C(v)\subseteq V(S_4)$.
	Thus $N_R(v)\subseteq V(B)\setminus\{r_2\}$ and $x\in V(B^*)$;
	the path $Q$ has no internal vertex.  All hypotheses hold, so the lemma
	and \textbf{(N4)} contradict global spoke-minimality.  Hence \(N_C(u)\cap(V(Q_1)\setminus V(S_4))=\emptyset\).
	
	The witness furnished by the failure of the skip condition lies outside
	$S_4\cup S_0$.  In the preceding two-sector conclusion the sectors
	adjacent to $S_4$ are $S_1$ at $a_1$ and $S_5$ at $b_3$.  The $S_5$
	possibility has just been excluded, while every vertex of $D_1\cap S_1$
	lies in $S_0$.  Thus the witness cannot lie on $C$, and therefore it is
	$p_h$ for some $h>i$.  Claim~\ref{modified-lsw-314-claim2} then excludes
	every neighbor of $u$ in $S_0\setminus T_0$.  In particular,
	\[
	N_C(u)\cap V(D_1)\subseteq V(S_4),\quad
	N_P(u)\subseteq V(p_iPp_k),
	\]
	and $u$ has no $P$-neighbor before $p_i$.  Shortcutting $P$ through $u$
	is now covered by the symmetric endpoint-safe observation proved at the
	start of Claim~\ref{modified-lsw-314-claim5}: the vertex $u$ is outside $W\cup P$, is proper for
	$W$, satisfies $ux\notin E(G)$, and has all its $P$-neighbors on
	$p_iPp_k$.  Since the witness $p_h$ is one such neighbor, the observation
	gives \(1\le |N(u)\cap V(p_iPp_k)|\le2\).
	Let $u_1,u_2$ be respectively the first and last such neighbors; they are
	equal exactly when this neighbor is unique.  Since the witness $p_h$ is
	one of these neighbors and $h>i$, the last neighbor $u_2$ occurs strictly
	after $p_i$ on $P$; in particular, if $u_1=u_2$, their common value occurs
	strictly after $p_i$.
	
	Let $p_l$ be the last $P$-neighbor of $v$, so $l\le i$.
	If $u_1=u_2$, their common value occurs strictly after $p_i$, and \(H=v-u-u_1Pp_l-v\) is a hole.  The three paths
	\[
	xv,\quad xp_i,\quad xb_2S_2z-p_kPu_1
	\]
	link $x$ to $H$, with distinct ends $v,p_i,u_1$.
	For inducedness, $u$ has the unique $P$-neighbor $u_1$, $p_l$ is
	the last $P$-neighbor of $v$, and neither $u$ nor $v$ sees
	$zS_2b_2$.  Claim~\ref{modified-lsw-314-claim1} excludes every unintended edge from the retained
	$P$-segments to this sector segment: in case \ref{modified-lsw-314-claim1-b} the internal
	attachments after $p_i$ lie at the omitted spoke $a_2$, while in
	case \ref{modified-lsw-314-claim1-a} $P^*$ is anticomplete to $C$.
	The extreme choice of $z$ excludes every further $p_k$-attachment,
	and $x$ has no other $P$-neighbor and is anticomplete to $S_2^*$.
	This also covers $l=i$ and $u_1=p_k$.
	
	If $u_1\ne u_2$, set $R=xT_0x$, let $A=u_1Pu_2$, and let $B$
	be the complementary $u_1$--$u_2$ arc of $R$.  Since $u_2$ lies
	strictly after $p_i$, the sector $T_0$ is long and $R$ is a hole.
	The sectors $S_4$ and $T_0$ are disjoint, so the attachment
	containments established above give
	\[
	N_R(u)=\{u_1,u_2\},\quad
	\{x\}\subseteq N_R(v)\subseteq\{x,p_i\}
	\subseteq V(B)\setminus\{u_2\},\quad x\in V(B^*).
	\]
	Apply Lemma~\ref{lem:external-connector-obstruction} with
	$Q=u-v$, $a=u_1$, and $b=u_2$.  The path is disjoint from $R$
	and has no internal vertex; the displayed conditions include the
	possibility $u_1=p_i$.  The lemma and (N4) contradict global
	spoke-minimality.  Every case is impossible, proving Claim~\ref{modified-lsw-314-claim5}.
	\end{proof}
	Let $p_j$ be the unique neighbor of $v$ on $P$.

	\begin{claim}\label{modified-lsw-314-claim6}
		Alternative \ref{modified-lsw-314-claim4-c} of Claim~\ref{modified-lsw-314-claim4} holds.
	\end{claim}
	\begin{proof}[Proof of Claim~\ref{modified-lsw-314-claim6}]
	If \ref{modified-lsw-314-claim4-a} of Claim~\ref{modified-lsw-314-claim4} held, the linked-cycle principle would give at least four neighbors
	of $v$ on $xS_0x$.  By Claim~\ref{modified-lsw-314-claim5} only one is on $P$, so $v$ would have at
	least three neighbors on the sector subpath $a_1S_1s$, contrary to
	properness of $W$.  Alternative \ref{modified-lsw-314-claim4-b} of Claim~\ref{modified-lsw-314-claim4} is symmetric.
	\end{proof}
	  
	\begin{claim}\label{modified-lsw-314-claim7}
		At least one of the following holds:
		\begin{enumerate}[label=(\alph*)]
			\item $P^*$ has an edge to $C$;\label{modified-lsw-314-claim7-a}
			\item $j=1$ and $S_4$ can be chosen with $a_1\in V(S_4)$;\label{modified-lsw-314-claim7-b}
			\item $j=k$ and $S_4$ can be chosen with $b_2\in V(S_4)$.\label{modified-lsw-314-claim7-c}
		\end{enumerate}
	\end{claim}
	\begin{proof}[Proof of Claim~\ref{modified-lsw-314-claim7}]
	Suppose not.  If $1<j<k$, the paths $p_jPp_1$, $p_jPp_k$, and the edge
	$p_jv$ form an induced subdivision of $K_{1,3}$ with center $p_j$ and
	leaves $p_1,p_k,v$.  Claim~\ref{modified-lsw-314-claim5} gives $N_P(v)=\{p_j\}$, and the assumed
	failure of \ref{modified-lsw-314-claim7-a} makes every nonleaf vertex anticomplete to $C$.  Take the
	three cyclic blocks to be
	\[
	I_1=S_1,\quad I_2=S_2,\quad I_3=S_4.
	\]
	They occur noncrossingly in cyclic order because $S_4\subseteq Q_1$;
	two can meet only at a common spoke.  The exclusive $S_1$-attachment of
	$p_1$, the exclusive $S_2$-attachment of $p_k$, and the attachment of
	$v$ in $S_4\setminus(V(S_0)\cup V(T_0))$ show that all three attachment
	sets are nonempty and pairwise distinct.  Since each set is contained in
	its block, pairwise intersections have size at most one.
	Lemma~\ref{lem:modified-lsw-35} would make one leaf anticomplete to $C$ or
	produce a proper four-spoke wheel.  Neither is possible, so
	$j\in\{1,k\}$; assume $j=1$.
	
	Because Claim~\ref{modified-lsw-314-claim7} is assumed false and $j=1$, no choice of the carrying
	sector $S_4$ satisfying Claim~\ref{modified-lsw-314-claim4}\ref{modified-lsw-314-claim4-c} contains
	$a_1$.  Let $S_5$ be the sector of $Q_1$ incident with $a_1$, and
	denote its other end by $b_3$.  Fix a witness
	\[
	q\in N_C(v)\cap
	\bigl(V(S_4)\setminus(V(S_0)\cup V(T_0))\bigr)
	\]
	supplied by Claim~\ref{modified-lsw-314-claim4}\ref{modified-lsw-314-claim4-c}.  Since
	$V(Q_1)\cap V(S_0)=\{a_1\}$ and
	$V(Q_1)\cap V(T_0)=\{b_2\}$, we have $q\in V(Q_1^*)$.
	
	No single sector of $W$ contains $N_C(p_1)\cup N_C(v)$.  Indeed, such a
	sector $R$ would contain $q$ and an attachment of $p_1$ in
	$S_1\setminus S_2$.  Hence $R\subseteq Q_1$ is incident with $a_1$,
	and all of $N_C(v)$ lies in $R$.  Thus $R$ could replace $S_4$ in
	Claim~\ref{modified-lsw-314-claim4}\ref{modified-lsw-314-claim4-c}, while the neighbor $p_1\in V(S_0)\setminus V(R)$ preserves
	the $S_0$-side condition.  This gives an admissible carrier containing
	$a_1$, a contradiction.
	
	It follows that exactly one of $p_1,v$ is adjacent to $x$.  If both were
	adjacent, Lemma~\ref{lem:modified-lsw-34} would put their rim attachments in
	one sector; if neither were adjacent, the no-$x$-neighbor lemma, available
	at order two, would do the same.  Both conclusions contradict the
	preceding paragraph.
	
	We next claim that $p_1-v$ is a counterexample to the
	two-intersecting-sector conclusion.  Suppose that two intersecting
	sectors cover its rim attachments.  By Lemma~\ref{lem:sector-carrier-facts}\ref{carrier-facts-1}, one member, say $B$,
	contains all of $N_C(v)$.
	The sector $B$ may replace the carrier $S_4$ in Claim~\ref{modified-lsw-314-claim4}\ref{modified-lsw-314-claim4-c}.  The assumed
	failure of Claim~\ref{modified-lsw-314-claim7} therefore gives $a_1\notin V(B)$.  The other covering
	sector cannot be $S_1$, since a $Q_1$-sector intersecting $S_1$ would then
	meet it at $a_1$ and force $a_1\in V(B)$.  Hence normalization
	$s\ne b_1$ forces $N_C(p_1)=\{a_1\}$; the other covering sector is
	$S_5$, and $B\cap S_5=\{b_3\}$.
	If $xv\in E(G)$, then $xp_1\notin E(G)$ and $i>1$, so the return-path observation~\ref{modified-lsw-314-claim4-+i} applies with $j=1$ and $T=S_5$.
	Otherwise $xp_1\in E(G)$, and the return-path observation~\ref{modified-lsw-314-claim4-+ii} applies.
	Both alternatives are impossible, proving the claim.
	
	We record the \textit{endpoint observation} for later use: under the
	standing setup, if $N_P(v)=\{p_1\}$, exactly one of $p_1,v$ is adjacent
	to $x$, and no admissible carrier contains $a_1$, then $p_1v$ violates
	the two-intersecting-sector conclusion.  The preceding cover argument
	uses only these conditions and normalization; it does not require
	$P^*$ to be anticomplete to $C$.
	
	If $k\ge3$, this order-two counterexample contradicts the induction
	hypothesis.  Hence $k=2$.  We now give it a normalized representation
	with a smaller complementary path.  Retain $S_1$ as the carrier of
	$N_C(p_1)$, and let $U\subseteq Q_1$ be a sector carrying $N_C(v)$.
	No such $U$ contains $a_1$, since otherwise it could replace $S_4$ in
	Claim~\ref{modified-lsw-314-claim4}\ref{modified-lsw-314-claim4-c}.  Orient the new pair $S_1,U$ so that the end $r$ of $U$
	nearer $a_1$ is its new $b_2$-end.  The $p_1$-side remains normalized.
	If the $v$-side is not normalized, then $N_C(v)$ consists solely of the
	other end $d$ of $U$.  The fixed witness $q$ then equals $d$, so
	$d\in V(Q_1^*)$ and in particular $d$ is not the old $b_2$.
	Replace $U$ by the sector incident with $d$ on the side toward the old
	$b_2$; this sector lies in $Q_1$, still carries $N_C(v)=\{d\}$, and has
	$d$ as its new $b_2$-end.  Thus the new representation is normalized.
	The new complementary path is
	respectively $rQ_1a_1$ or $dQ_1a_1$, and in either case it is a proper
	subpath of the old $Q_1$.  This contradicts the normalized minimum choice of
	$|V(Q_1)|$ and proves Claim~\ref{modified-lsw-314-claim7}.
	\end{proof}
	
	\begin{claim}\label{modified-lsw-314-claim8}
		$i=j$.
	\end{claim}
	\begin{proof}[Proof of Claim~\ref{modified-lsw-314-claim8}]
	Assume by symmetry that $j<i$.  First let $vx\notin E(G)$.  The
	no-$x$-neighbor lemma applied to $p_1Pp_j-v$ puts all of its rim
	attachments in one sector $R$.  That sector contains an attachment of
	$p_1$ in $S_1$ and an attachment of $v$ in $Q_1$; hence it meets $S_1$
	at $a_1$.  Replacing the carrying sector in Claim~\ref{modified-lsw-314-claim4}\ref{modified-lsw-314-claim4-c} by $R$, we may
	take $a_1\in V(S_4)$, and normalization gives
	$N_C(p_1)=\{a_1\}$ and $t=a_1$.  Properness gives at most two
	$v$-neighbors there.
	If there are two and either $va_1\notin E(G)$ or $j\ne1$, use the two attachment edges from \(v\) to \(S_4\) together with $v-p_jPp_i-x$ when $j>1$, and with $v-p_1-a_1$
	when $j=1$ and $va_1\notin E(G)$.  These paths link $v$ to the
	induced cycle $xS_4x$.
	If there are two neighbors, the preceding argument therefore forces
	$va_1\in E(G)$ and $j=1$.  Writing $v'$ for the other neighbor, we have
	$a_1v'\notin E(G)$ by diamond-freeness, and $p_1$ is linked to the hole
	obtained by rerouting $S_4$ through $v$ by
	$p_1a_1,p_1v,p_1Pp_i-x$.  Hence $v$ has a unique neighbor $v'$ in
	$S_4-a_1$, and the paths $p_jvv',p_jPp_1a_1,p_jPp_i-x$ link $p_j$ to
	the induced cycle $xS_4x$.  Therefore $vx\in E(G)$.
	
	If now $a_1\in V(S_4)$, take the $v$-neighbor $v'$ closest to $a_1$.
	Put
	\[
	D=p_1Pp_j-v-v'S_4a_1S_1s-p_1.
	\]
	The extreme choice of $v'$, $N_P(v)=\{p_j\}$, and Claim~\ref{modified-lsw-314-claim1} exclude
	chords of $D$.  The only possible triangular degeneration has $j=1$
	and $v'=s=a_1$; then $G[\{x,v,p_1,a_1\}]$ is a diamond, whose only
	missing edge is $xp_1$ because $j<i$.  Thus $D$ is a hole.
	If $v'$ is the end of $S_4$ different from $a_1$, then the three linking edges $xv,xa_1,xv'$ link $x$ to $D$.  Otherwise, if $v'\ne a_1$, then
	$v'\in S_4^*$ and $xv'\notin E(G)$; if $v'=a_1$, its adjacency to
	$x$ is already the listed edge $xa_1$.  In either case
	$N_D(x)=\{v,a_1\}$, and the three paths
	\[
	xv,\quad xa_1,\quad x-p_iPp_j
	\]
	link $x$ to $D$.  The extreme choice of $v'$, $N_P(v)=\{p_j\}$, and
	Claim~\ref{modified-lsw-314-claim1} exclude all cross-edges in these linked-hole constructions.
	Hence $a_1\notin V(S_4)$.  Let $S_5$ be the sector
	preceding $S_1$ at $a_1$, with other end $b_3$.
	The preceding argument excludes every admissible carrier containing
	$a_1$.  Any admissible carrier meeting $S_5$ must therefore meet it
	at $b_3$.  Since $j<i$ and $xv\in E(G)$, the return-path observation~\ref{modified-lsw-314-claim4-+i} excludes every such carrier.
	Suppose nevertheless that two intersecting sectors
	cover the attachments of $p_1Pp_j-v$.  By Lemma~\ref{lem:sector-carrier-facts}\ref{carrier-facts-1}, one member $B$ of
	the covering pair contains all of $N_C(v)$.  Relabel
	$B$ as $S_4$; it is an admissible carrier in Claim~\ref{modified-lsw-314-claim4}\ref{modified-lsw-314-claim4-c}.  The other
	covering sector cannot be $S_1$, since then $B\subseteq Q_1$ would meet
	$S_1$ at $a_1$.  Consequently normalization forces
	$N_C(p_1)=\{a_1\}$, the other covering sector is $S_5$, and
	$B\cap S_5\ne\emptyset$, contrary to the return-path observation.  Thus no two
	intersecting sectors cover the attachments of $p_1Pp_j-v$.
	
	The induction hypothesis forces $j=k-1$ and $i=k$.  Claim~\ref{modified-lsw-314-claim1} makes
	$a_2$ anticomplete to $P-p_k$.  Claim~\ref{modified-lsw-314-claim7} now gives an internal
	$P$--$C$ edge, so Claim~\ref{modified-lsw-314-claim1} supplies a sector $S_3$ with ends $b_1,a_2$
	and an internal neighbor of $b_1$.  Choose such a neighbor $p_{j'}$
	closest to $p_{k-1}$.  Fix the Claim~\ref{modified-lsw-314-claim4}\ref{modified-lsw-314-claim4-c} witness
	$q\in N_C(v)\cap V(Q_1^*)$.  Since every admissible carrier of
	$N_C(v)$ is disjoint from $S_5$, the attachments $b_1$ and $q$ cannot
	be covered by two intersecting sectors.  The induced path
	$p_{j'}Pp_j-v$ has order less than $k$, has $v$ as its unique neighbor
	of $x$, and retains these two attachments.  It therefore contradicts
	the induction hypothesis.  This proves Claim~\ref{modified-lsw-314-claim8}.
	\end{proof}
	
	\begin{claim}\label{modified-lsw-314-claim8+}
		If $1<i<k$, then $N_C(p_i)\ne\emptyset$.
	\end{claim}
	\begin{proof}[Proof of Claim~\ref{modified-lsw-314-claim8+}]
		Suppose otherwise, and choose $j_1<i$ maximum and $j_2>i$ minimum
		such that $p_{j_1},p_{j_2}$ attach to $C$.  These indices exist
		because both ends of $P$ attach to $C$.  By Claims~\ref{modified-lsw-314-claim5} and~\ref{modified-lsw-314-claim8},
		the paths $p_iPp_{j_1}$, $p_iPp_{j_2}$, and $p_i-v$ form an
		induced subdivision of \(K_{1,3}\).  Its nonleaf vertices are anticomplete
		to $C$ by the choices of $j_1,j_2$ and the present assumption.
		Use the three blocks
		\[
		I_L=\begin{cases}S_1,&j_1=1,\\ \{b_1\},&j_1>1,\end{cases}
		\quad
		I_R=\begin{cases}S_2,&j_2=k,\\ \{a_2\},&j_2<k,\end{cases}
		\quad I_v=S_4.
		\]
		Claim~\ref{modified-lsw-314-claim1} gives containment of the three leaf-attachment sets in these
		blocks; in Claim~\ref{modified-lsw-314-claim1}\ref{modified-lsw-314-claim1-a}, necessarily $j_1=1,j_2=k$.  The blocks occur
		noncrossingly on $C$ and meet, if at all, at a common end spoke.
		The two end witnesses or the distinct spokes $b_1,a_2$, and the
		Claim~\ref{modified-lsw-314-claim4}\ref{modified-lsw-314-claim4-c} witness in $Q_1^*$, make the three attachment sets
		nonempty and pairwise distinct.  Any two meet in at most one vertex.
		Lemma~\ref{lem:modified-lsw-35} therefore gives an anticomplete leaf
		or a proper exact-four wheel.  Both are impossible, proving the claim.
	\end{proof}

	\begin{claim}\label{modified-lsw-314-claim9}
	$vx\notin E(G)$.
	\end{claim}
	\begin{proof}[Proof of Claim~\ref{modified-lsw-314-claim9}]
	Suppose $vx\in E(G)$.  Then $p_i,v\in N(x)$ and $p_iv\in E(G)$ by
	Claim~\ref{modified-lsw-314-claim8}.  Applying Lemma~\ref{lem:sparse-neighborhood} to the edge
	$p_iv$ inside $G[N(x)]$ gives \(N_C(p_i)\cap N_C(x)=N_C(v)\cap N_C(x)=\emptyset\), because the edge $p_iv$ already uses the unique possible neighbor of
	each of $p_i,v$ in $G[N(x)]$.  In particular, the Claim~\ref{modified-lsw-314-claim4}\ref{modified-lsw-314-claim4-c} witness in
	$S_4\setminus(V(S_0)\cup V(T_0))$ lies in $S_4^*$.  We claim that
	$p_i$ is anticomplete to $C$.  If it attached to $C$, properness and the
	preceding equality would put all its attachments in the interior
	of a unique sector.  Claim~\ref{modified-lsw-314-claim1} places that sector on the
	$S_1$--$Q_2$--$S_2$ side, whereas $v$ has an attachment in
	$S_4^*\subseteq Q_1$.  No sector contains both attachment sets,
	contradicting Lemma~\ref{lem:modified-lsw-34}.  Hence $N_C(p_i)=\emptyset$, so
	$p_i\in P^*$.
	Since $p_i\in P^*$, this contradicts Claim~\ref{modified-lsw-314-claim8+}.  Claim~\ref{modified-lsw-314-claim9} follows.
	\end{proof}
	  
	\begin{claim}\label{modified-lsw-314-claim10}
		No admissible carrier contains $a_1$ or $b_2$.
	\end{claim}
	\begin{proof}[Proof of Claim~\ref{modified-lsw-314-claim10}]
	Suppose that an admissible carrier $S_4$ can be chosen with
	$a_1\in V(S_4)$, and let $a_3$ be its other end.  Since
	$Q_1^*$ contains a spoke, $b_2\notin V(S_4)$ and $S_4$ is disjoint
	from $S_2$.  Claim~\ref{modified-lsw-314-claim1} therefore gives
	$N_C(p_i)\cap V(S_4)\subseteq\{a_1\}$, including when $i=k$;
	moreover, $p_i a_1\in E(G)$ forces $i=1$.
	If $v$ has a unique
	neighbor $v'$ on $S_4$, the fixed witness in $Q_1^*$ ensures
	$v'\ne a_1$.  Then the paths
	\[
	p_i-v-v',\quad p_i-x,\quad p_iPp_1-sS_1a_1
	\]
	link $p_i$ to the induced cycle $xS_4x$.
	If $p_i a_1\in E(G)$, then Claim~\ref{modified-lsw-314-claim1} gives $i=1$ and the extremal
	definition gives $s=a_1$, so the third displayed path is the edge
	$p_1a_1$; otherwise Claim~\ref{modified-lsw-314-claim1} and the choice of $s$ exclude every chord
	from $p_iPp_1-sS_1a_1$ to $xS_4x$.
	Thus $v$ has at least two neighbors on $S_4$; by properness and Claim~\ref{modified-lsw-314-claim9}
	it has exactly two.  Denote them by $r_1,r_2$ in their order from $a_1$
	toward $a_3$.
	
	The sector $S_4$ is long, since otherwise
	$G[\{x,v,a_1,a_3\}]$ is a diamond, its only missing edge being $xv$.
	Set $R=xS_4x$, let $A=r_1S_4r_2$, and let $B$ be the complementary
	$r_1$--$r_2$ arc of $R$.  By Claim~\ref{modified-lsw-314-claim9} and the attachment restriction
	at the start of this claim,
	\[
	N_R(v)=\{r_1,r_2\},\quad
	\{x\}\subseteq N_R(p_i)\subseteq\{x,a_1\}
	\subseteq V(B)\setminus\{r_2\},\quad x\in V(B^*).
	\]
	Here $r_2\ne a_1$ because $r_1,r_2$ occur in that order from $a_1$
	toward $a_3$.  Thus Lemma~\ref{lem:external-connector-obstruction}
	applies with $Q=v-p_i$, $a=r_1$, and $b=r_2$: the path is disjoint
	from the hole $R$ and has no internal vertex.  The lemma and \textbf{(N4)}
	contradict global spoke-minimality.  Therefore no admissible choice
	of $S_4$ contains $a_1$.  By symmetry no admissible carrier contains $b_2$ either:
	Claims~\ref{modified-lsw-314-claim5} and~\ref{modified-lsw-314-claim8} give $N_P(v)=\{p_i\}$, and $p_i\in V(S_0)\cap V(T_0)$,
	so reflecting the two sides preserves the conditions in Claim~\ref{modified-lsw-314-claim4}\ref{modified-lsw-314-claim4-c}.
	This proves Claim~\ref{modified-lsw-314-claim10}.
	\end{proof}
	
	Claim~\ref{modified-lsw-314-claim7} now forces an edge between $P^*$ and $C$.  Consequently
	$k\ge3$ and Claim~\ref{modified-lsw-314-claim1}\ref{modified-lsw-314-claim1-b} holds.  Suppose first that $1<i<k$.
	Claim~\ref{modified-lsw-314-claim8+} gives \(\emptyset\ne N_C(p_i)\subseteq V(S_3)\).
	Apply the induction hypothesis to the edge $p_i-v$, whose unique
	$x$-neighbor is $p_i$ by Claim~\ref{modified-lsw-314-claim9}.  Its order is $2<k$, so two
	intersecting sectors cover its rim attachments.  By
	Lemma~\ref{lem:sector-carrier-facts}\ref{carrier-facts-1}, one member $B$ of this pair
	contains all of $N_C(v)$.  The fixed witness in $Q_1^*$ puts $B$
	in $Q_1$.  Since $N_P(v)=\{p_i\}$, replacing $S_4$ by $B$ preserves
	Claim~\ref{modified-lsw-314-claim4}\ref{modified-lsw-314-claim4-c}; thus $B$ is admissible and contains neither $a_1$ nor $b_2$.
	The other member must meet the nonempty set $N_C(p_i)\subseteq V(S_3)$,
	and hence is one of $S_1,S_3,S_2$.  The sector $S_3$ is disjoint from
	$Q_1$, while $S_1$ and $S_2$ can meet a sector of $Q_1$ only at $a_1$
	and $b_2$, respectively.  None can meet $B$, a contradiction.
	Therefore $i\in\{1,k\}$.
	
	By symmetry let $i=1$.  Claims~\ref{modified-lsw-314-claim5}, \ref{modified-lsw-314-claim8}, \ref{modified-lsw-314-claim9} and~\ref{modified-lsw-314-claim10} verify the endpoint
	observation in Claim~\ref{modified-lsw-314-claim7}, so $p_1v$ is an order-two counterexample to the
	two-intersecting-sector conclusion.  Since $k\ge3$, this contradicts
	the induction hypothesis and proves the lemma.
\end{proof}

\section{Sector separation}\label{sec:sector-separation}

We now combine the preceding attachment lemmas.

\begin{theorem}\label{thm:wheel-attachment}
	Let $G$ be \(\{\mathrm{ISK}_4,\text{diamond}, \text{prism},K_{3,3}\}\)-free, let $W=(C,x)$ be a proper wheel with globally minimum
	number of spokes, and let $P\subseteq G-V(W)$ be an induced path containing
	at most one neighbor of $x$.
	\begin{enumerate}[label=(\roman*)]
		\item If $P\cap N(x)=\emptyset$, all ends on $C$ of edges between $P$ and
		$C$ lie in a single sector.\label{wheel-attachment-1}
		\item If $|P\cap N(x)|=1$, those ends lie in the union of two intersecting
		sectors; the two sectors are allowed to be equal.\label{wheel-attachment-2}
	\end{enumerate}
\end{theorem}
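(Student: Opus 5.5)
The plan is to prove both statements simultaneously by induction on the order $k$ of $P$, splitting on the number of spokes $\sigma(W)$. Since $W$ is a wheel in $G$, \eqref{eq:global-wheel-at-least-four-full} gives $\sigma(W)\ge 4$.

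Suppose first that $\sigma(W)=4$. Then $W$ is a proper wheel with exactly four spokes, and both \ref{wheel-attachment-1} and \ref{wheel-attachment-2} are exactly the conclusion of Lemma~\ref{lem:four-spoke-attachment}. No induction is needed here; that lemma already performed its own minimal-counterexample argument.

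Now suppose $\sigma(W)\ge5$. Then $W$ satisfies \eqref{eq:global-minimal-wheel}, because it has the globally minimum number of spokes among proper wheels. We run a well-founded induction on $k$, and for each fixed $k$ we prove \ref{wheel-attachment-1} before \ref{wheel-attachment-2}. The statement at order $k$ is: every induced path $P\subseteq G-V(W)$ of order $k$ satisfies \ref{wheel-attachment-1} if it contains no neighbor of $x$, and \ref{wheel-attachment-2} if it contains exactly one.

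For $k=1$ both statements follow from properness of $W$. If $P=p_1$ and $p_1x\notin E(G)$, all of $N_C(p_1)$ lies in one sector. If $p_1x\in E(G)$, it again lies in one sector, which we may count as two equal intersecting sectors.

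For the inductive step, assume both statements for all orders less than $k$.
\begin{enumerate}
\item For a path $P$ of order $k$ with $P\cap N(x)=\emptyset$, the hypotheses of Lemma~\ref{lem:modified-lsw-313} are exactly the inductive hypothesis. That lemma therefore yields \ref{wheel-attachment-1} at order $k$.
\item For a path with exactly one neighbor of $x$, Lemma~\ref{lem:modified-lsw-314} needs two things. It needs both conclusions for all orders less than $k$, which is the inductive hypothesis. It also needs the one-sector conclusion for order-$k$ paths with no neighbor of $x$, which was just established in the first step. Hence \ref{wheel-attachment-2} holds at order $k$.
\end{enumerate}
This closes the induction.

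The work here is bookkeeping rather than new combinatorics. Two points need checking. First, the class of paths quantified over in the lemmas must match the theorem's: induced paths in $G-V(W)$ with the stated number of $x$-neighbors, for the fixed $W$. Second, the ordering within level $k$ must be respected, so that no statement is used before it is proved. The one subtle point I expect is that Lemma~\ref{lem:modified-lsw-314} invokes the no-neighbor conclusion at the same order $k$, for the shortcut paths built in its proof. This is exactly why \ref{wheel-attachment-1} has to be settled for all paths of order $k$ before \ref{wheel-attachment-2} is attempted at that order.
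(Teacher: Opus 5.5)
Your proposal is correct and matches the paper's proof essentially step for step. The case $\sigma(W)=4$ is handed to Lemma~\ref{lem:four-spoke-attachment}. For $\sigma(W)\ge5$, a simultaneous strong induction on $|V(P)|$ settles $k=1$ by properness and, at each order $k$, proves (i) via Lemma~\ref{lem:modified-lsw-313} before (ii) via Lemma~\ref{lem:modified-lsw-314}, which is needed because the latter uses the one-sector conclusion at the same order $k$, exactly as you observed.
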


\begin{proof}
	If $\sigma(W)=4$, apply Lemma~\ref{lem:four-spoke-attachment}; its proof
	includes both the case $P^*$ is anticomplete to $C$ and the case in
	which an internal vertex of $P$ attaches to the rim.
	
	Let $\sigma(W)\ge5$ and use strong induction on $k=|V(P)|$.
	For $k=1$, both conclusions follow from properness: a single outside
	vertex has all its rim neighbors in one sector, and in assertion~\ref{wheel-attachment-2}
	the two sectors may be taken equal.  For $k\ge2$, first prove
	assertion~\ref{wheel-attachment-1} by Lemma~\ref{lem:modified-lsw-313}; its hypotheses use
	both path conclusions only at orders below $k$.
	Then apply Lemma~\ref{lem:modified-lsw-314} to obtain assertion~\ref{wheel-attachment-2}.
	That lemma uses the one-sector conclusion at order at most $k$, including
	the case just proved, and the two-sector conclusion only below $k$.
	Thus the simultaneous induction is well founded and has no circular
	invocation; it is used only for $\sigma(W)\ge5$.
\end{proof}

\begin{theorem}\label{thm:sector-separation}
	Let $G$ be \(\{\mathrm{ISK}_4,\text{diamond}, \text{prism},K_{3,3}\}\)-free, and let $W=(C,x)$ be a proper wheel whose number of spokes is minimum over all proper wheels and all centres in $G$.  Then the
	following hold.
	\begin{enumerate}[label=(\roman*)]
		\item Every component of $G-N[x]$ meets the nonempty interior of at most one
		sector of $W$.\label{sector-separation-1}
		\item For $u\in N(x)$, let $D_u$ be the component containing $u$ in
		$G-(N[x]\setminus\{u\})$.  Then $D_u$ meets the nonempty interiors of
		at most two sectors; if it meets two, those sectors intersect.\label{sector-separation-2}
	\end{enumerate}
\end{theorem}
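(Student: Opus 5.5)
The plan is to derive both parts from Theorem~\ref{thm:wheel-attachment}, using a path through the component between two of its vertices. Throughout, $W$ is proper and globally spoke-minimal, so that theorem applies to every induced path in $G-V(W)$ that contains at most one neighbor of $x$.

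For \ref{sector-separation-1}, let $D$ be a component of $G-N[x]$. Suppose $D$ meets the interiors of two distinct sectors $S,S'$, at vertices $c\in S^*$ and $c'\in S'^*$. Interior rim vertices are nonadjacent to $x$, so these vertices of $C$ may themselves lie in $D$. The argument therefore works with attachments rather than membership. Choose a shortest path in $D$ from $c$ to $c'$. Its vertices all lie outside $N[x]$, and we take the maximal subpath avoiding $C$. Concretely, pick $d,d'\in D\setminus V(C)$ adjacent to rim vertices in $S^*$ and in $S'^*$ respectively; these exist because $D$ is connected and contains rim vertices from both interiors. If the rim arc between them inside $D$ leaves $S$, it must pass through a spoke, and spokes lie in $N(x)$, so that cannot happen. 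Hence the only way $D$ reaches two sectors is through vertices off $C$. Take an induced $d$--$d'$ path $P$ in $D\setminus V(C)$, with $d,d'$ chosen to minimise the path. Then $P\cap N(x)=\emptyset$. Its attachments include a vertex of $S^*$ and a vertex of $S'^*$, and no single sector contains both. This contradicts Theorem~\ref{thm:wheel-attachment}\ref{wheel-attachment-1}. The minimisation also handles the degenerate case in which $d=d'$, where properness already gives the contradiction.

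For \ref{sector-separation-2}, argue the same way in $D_u$, which contains exactly one neighbor of $x$, namely $u$. Note that $u$ might lie on $C$ as a spoke. In that case $D_u$ is $u$ together with components of $G-N[x]$ attached to it. Each such component lies in one sector interior by \ref{sector-separation-1}, and each is reached either through $u$'s two incident sectors or via off-rim vertices.

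Suppose first that $u\notin V(C)$. Assume $D_u$ meets three sector interiors, or two nonintersecting ones, at $c_1,c_2$ (and possibly $c_3$). Connect the corresponding off-rim attaching vertices by an induced path in $D_u\setminus V(C)$. If a third interior is involved, use a tree of such paths: take paths from $u$ to each attaching vertex, and reduce to a single induced path through $u$ whose attachments already violate the conclusion. An induced path in $D_u\setminus V(C)$ contains at most one neighbor of $x$, because $u$ is the only one in $D_u$. Theorem~\ref{thm:wheel-attachment}\ref{wheel-attachment-2} then confines its attachments to two intersecting sectors, which gives the contradiction.

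The hard step is exactly this reduction from three interiors to a single path. Pairwise, the theorem only says that every two of the three interiors lie in intersecting sectors. Three pairwise-intersecting sectors cannot occur, since there are at least four spokes. But the path through two of them yields a cover by a pair, and covering the third requires one more argument. I would handle it by the cyclic argument: with at least four sectors, no three sector interiors can pairwise lie in intersecting sectors, so some pair is nonintersecting, and the path for that pair already contradicts the theorem.

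If $u\in V(C)$, the sectors incident with $u$ intersect. A component attached through $u$ lies in one interior. If it is not one of $u$'s sectors, a path from $u$ into it gives attachments at $u$'s neighbors on $C$ and in that far interior; apply the theorem to that path, after deleting $u$ itself as needed.
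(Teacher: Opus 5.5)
Your overall architecture matches the paper's. Both parts are reduced to Theorem~\ref{thm:wheel-attachment} through a suitable outside path. In (ii) you correctly reduce to excluding two \emph{disjoint} represented sectors; the sector-intersection graph, a cycle of length at least four, then leaves at most two, and they must intersect.

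The genuine gap is the construction of the outside path in (i). For the fixed pair $S,S'$ there need not be any $d$--$d'$ path in $D\setminus V(C)$. Interior rim vertices are never in $N[x]$, so $D$ may also contain $R^*$ for a third sector $R$. Every route from $S^*$ to $S'^*$ inside $D$ may then leave $C$, re-enter at $R^*$, and leave again. Your remark that a rim arc leaving $S$ must pass through a spoke does not rule this out. Likewise, ``the maximal subpath avoiding $C$'' of a shortest $c$--$c'$ path may simply join two vertices of one interior.

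The missing idea is to let the pair of sectors vary. Among all paths in $D$ joining the interiors of two \emph{distinct} sectors, take a shortest one $v_0\cdots v_\ell$.
\begin{itemize}
\item An internal vertex on $C$ is not a spoke, since $D\cap N(x)=\emptyset$. So it lies in some $R^*$, and $R$ differs from one of the two end sectors, which gives a shorter such path.
\item Moreover $\ell\ge2$, because interiors of distinct sectors are anticomplete ($C$ is a hole).
\end{itemize}
Then $v_1\cdots v_{\ell-1}$ is induced, lies in $G-V(W)$, has no neighbor of $x$, and contradicts part (i) of Theorem~\ref{thm:wheel-attachment}. Equivalently, on a shortest $c$--$c'$ path you can take two \emph{consecutive} rim vertices lying in different sector interiors and use the open subpath between them.

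In (ii), for $u\notin V(C)$, the same existence question arises. It is part (i) that settles it, so you should invoke (i) explicitly rather than a ``tree of paths''.
\begin{itemize}
\item Every component of $D_u-u$ is a component of $G-N[x]$. Hence disjoint $S,T$ are met in different components $K_S,K_T$, and all rim vertices of $K_S$ lie in $S^*$.
\item Shortest paths from $u$ to $V(C)\cap K_S$ and to $V(C)\cap K_T$ concatenate at $u$ to an induced path, because $K_S,K_T$ are anticomplete.
\item Delete the two rim ends and apply part (ii) of the theorem.
\end{itemize}

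For $u\in V(C)$, your description of the attachments is wrong. The vertex $u$ cannot lie on an outside path. The path $r_1\cdots r_m$ obtained from a shortest path from $u$ to $V(C)\cap K$ attaches at the spoke $u$ itself, not at $u$'s rim neighbors. It is the attachment at $u$ that gives the contradiction: no sector contains both $u$ and a vertex of $S^*$ when $S$ is not incident with $u$. Attachments at a rim neighbor of $u$ would not suffice when that neighbor is itself a spoke and an end of $S$, as happens when $u$ bounds a short sector. You also need $m\ge1$, which holds because $C$ is induced, so $u$ has no neighbor in $S^*$.
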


\begin{proof}
	For \ref{sector-separation-1}, suppose that a component $K$ of $G-N[x]$ meets the interiors of two
	distinct sectors $S,T$.  Among paths in $K$ joining two distinct sector
	interiors, choose
	\[
	v_0v_1\cdots v_\ell,
	\quad v_0\in S^*,\quad v_\ell\in T^*,
	\]
	with minimum length.  No internal vertex lies on $C$: it cannot be a spoke
	because $K\subseteq G-N[x]$, while splitting at an internal nonspoke rim
	vertex leaves a shorter path between two distinct sector interiors.  Since
	$C$ is induced and $S\ne T$, we have $\ell\ge2$.  Hence
	$v_1\cdots v_{\ell-1}$ is an induced path outside $W$ with no neighbor of
	$x$.  It attaches in $S^*$ and $T^*$, which cannot be covered by one
	sector, contrary to Theorem~\ref{thm:wheel-attachment}\ref{wheel-attachment-1}.
	
	For \ref{sector-separation-2}, every component of $D_u-u$ is a component of $G-N[x]$, so \ref{sector-separation-1}
	applies to it.  First suppose $u\notin V(C)$.  If $D_u$ meets interiors of
	two disjoint sectors $S,T$, they lie in different components $K_S,K_T$ of
	$D_u-u$.  In each of $G[K_S\cup\{u\}]$ and $G[K_T\cup\{u\}]$, take a shortest
	path from $u$ to the prescribed sector interior, stopping at its first rim
	vertex.  Apart from that endpoint, each path lies outside $C$, because \ref{sector-separation-1}
	places all rim vertices of its component in the same sector interior.  The
	two paths otherwise lie in different components of $D_u-u$; shortestness
	therefore makes their concatenation at $u$ induced.  Delete the two rim
	endpoints.  The resulting outside path contains exactly one neighbor of
	$x$, namely $u$, and has attachments in $S^*$ and $T^*$.  No two
	intersecting sectors cover those two
	attachments, contrary to Theorem~\ref{thm:wheel-attachment}\ref{wheel-attachment-2}.  The sectors
	represented in $D_u$ are therefore pairwise intersecting.  Since the sector
	intersection graph is a cycle of length at least four, there are at most two.
	
	It remains to suppose $u\in V(C)$.  If a component $K$ of $D_u-u$ meets
	the interior of a sector $S$ not incident with $u$, then \ref{sector-separation-1} says that all rim
	vertices in $K$ lie in $S^*$.  As $u$ has a neighbor in $K$, take in
	$G[K\cup\{u\}]$ a shortest path from $u$ to the set $V(C)\cap V(K)$,
	and write it as
	\[
	u r_1\cdots r_m a,
	\quad a\in V(C)\cap V(K)\subseteq S^*.
	\]
	Its internal vertices are outside $C$.  Moreover $m\ge1$, because $S$ is not
	incident with $u$ and $C$ is induced.  Thus $r_1\cdots r_m$ is an induced
	outside path with no $x$-neighbor, attached both at the spoke $u$ and in the
	interior of a nonincident sector.  This contradicts
	Theorem~\ref{thm:wheel-attachment}\ref{wheel-attachment-1}.  Hence every represented sector is
	incident with $u$, and there are at most the two sectors meeting at $u$.
\end{proof}

\section{Series--parallel lobes and rooted transversals}
\label{sec:core}\label{sec:rooted-marker}

Let us call $(c,D)$ a \textit{full-star pair} if $G-N[c]$ has at least two nonempty
components and $D$ is one of them.  Put
\begin{equation}\label{eq:full-star-lobe}
	A=N_G(D)=N(D)\cap N(c),
	\quad F=G[D\cup A\cup\{c\}].
\end{equation}
We call $F$ the \textit{lobe associated with $(c,D)$}.
The first equality uses the external-neighborhood convention.  Since $D$ is
a component of $G-N[c]$, all its external neighbors lie in $A$, and $D$ is
anticomplete to $c$.

\begin{lemma}\label{lem:min-lobe-sp}
	Let $G$ be \(\{\mathrm{ISK}_4,\text{diamond}, \text{prism},K_{3,3}\}\)-free and connected.  Suppose that $(c,D)$ is a full-star
	pair for which no full-star pair $(s,Q)$ has $Q\subsetneq D$.  Then the
	graph $F$ in \eqref{eq:full-star-lobe} is series--parallel.
\end{lemma}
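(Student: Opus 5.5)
We argue by contradiction using \ref{L1}. The graph $F$ is an induced subgraph of $G$, hence $\{\mathrm{ISK}_4,\text{diamond},\text{prism},K_{3,3}\}$-free. If $F$ is not series--parallel, then by \ref{L1} it contains a prism, a wheel, or an induced $K_{3,3}$. The first and last are excluded by hypothesis, so $F$ contains a wheel. By Lemma~\ref{lem:proper-wheel-exists}, $F$ then contains a proper wheel. Among proper wheels of $F$, over all centres, we fix one $W=(C,x)$ with the minimum number of spokes. Thus $W$ is globally spoke-minimal in $F$, and Theorem~\ref{thm:sector-separation} applies to $W$ inside the graph $F$.

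The aim is to produce a full-star pair $(s,Q)$ of $G$ with $Q\subsetneq D$. Take $s=x$. Since $W$ has at least four spokes, it has two vertex-disjoint long sectors, and hence at least two nonempty sector interiors. First we check that the centre is not $c$. The set $A$ is contained in $N(c)$, and by Lemma~\ref{lem:sparse-neighborhood} the neighbourhood $N(c)$ induces a graph of maximum degree at most one. A hole through $c$ would have to use two vertices of $A$ and a path through $D$. A wheel centred at $c$ would need a rim inside $A\cup D$ with at least four spokes in $A$; we then argue that the sectors, which are paths through $D$ between spokes, together with the hole produce a linked vertex or a three-spoke configuration. If this case is hard, the alternative is to use that $c$ is complete to $A$: then $F-D$ is a star, which should exclude the configuration directly.

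Next, by Theorem~\ref{thm:sector-separation}\ref{sector-separation-1}, distinct sector interiors lie in distinct components of $F-N_F[x]$. Choose a sector interior avoiding $c$; the vertex $c$ lies in at most one component, so at least one such sector remains. Let $Q'$ be the component of $F-N_F[x]$ containing that interior. We must pass from components of $F-N_F[x]$ to components of $G-N_G[x]$. The vertex set of $F$ is $D\cup A\cup\{c\}$, and $D$ attaches to the rest of $G$ only through $A$. Consequently, a component of $G-N[x]$ that contains a vertex of $D\setminus N(x)$ and avoids $A\cup\{c\}$ stays inside $D$. So we choose $Q'$ to be a component contained in $D$, with $Q'\cap (A\cup\{c\})=\emptyset$. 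Theorem~\ref{thm:sector-separation}\ref{sector-separation-2} is used to control the components of $D_u$ through vertices $u\in A\cap N(x)$. The resulting $Q'$ is then a component of $G-N[x]$.

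It remains to show that $G-N[x]$ has a second component and that $Q'\neq D$. The second component comes from the other sector interiors, or from the other components of $G-N[c]$ when $x\in D$. The strict containment $Q'\subsetneq D$ holds because $D$ contains rim vertices of other sectors, or because $x\in D$ and $x\notin Q'$.

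The main obstacle is this transfer between $F$ and $G$. The difficulty is ensuring that the chosen component is separated from $A\cup\{c\}$. Vertices of $A$ adjacent to $x$ are harmless, but vertices of $A$ not adjacent to $x$ may merge several sector interiors in $G$ through $c$ and the rest of the graph. Here we expect to rely on the fact that $c$ is complete to $A$: all non-neighbours of $x$ in $A\cup\{c\}$ lie in one component of $F-N[x]$, unless $x=c$, a case already excluded. Hence that component accounts for at most one sector, and it suffices to choose a different sector interior.
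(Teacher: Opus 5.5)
The overall plan is the right one: take a globally spoke-minimal proper wheel $W=(C,x)$ in $F$, then use Theorem~\ref{thm:sector-separation} to produce a full-star pair $(x,Q)$ of $G$ with $Q\subsetneq D$. There is, however, a genuine gap in the case $x\in A$.

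\textbf{The case $x\in A$.} Here $xc\in E(G)$, so $c\in N_F[x]$ is deleted. The surviving vertices of $A$ were joined only through $c$, so they can lie in several components of $F-N_F[x]$. Your claim that all non-neighbours of $x$ in $A\cup\{c\}$ lie in one component of $F-N_F[x]$ is therefore false in this case. Nothing else in your argument guarantees that some long-sector interior lies in a component contained in $D$, and without that the component need not lift to a component of $G-N_G[x]$.

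The missing idea is to apply Theorem~\ref{thm:sector-separation}\ref{sector-separation-2} with the retained neighbour $u=c$. Your use of it with $u\in A\cap N(x)$ is misdirected: those vertices are deleted and harmless, as you note yourself. With $u=c$, the component $R$ of $F-(N_F[x]\setminus\{c\})$ containing $c$ contains every surviving vertex of $A$. It also meets the interiors of at most two intersecting sectors. Hence one of the two vertex-disjoint long sectors has its interior in a component $Q\ne R$. This $Q$ avoids $A\cup\{c\}$, lies in $D$, remains a component of $F-N_F[x]$, and lifts to $G$.

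When $x\in D$, your argument is fine; it coincides with the paper's case $xc\notin E(F)$.

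\textbf{The case $x=c$.} This case is not actually handled: the sketch about linked vertices and the remark that $F-D$ is a star are not arguments. It follows immediately from part~\ref{sector-separation-1}. Every sector interior avoids $N_F[c]$, so it lies in $F-N_F[c]=D$. That graph is connected, yet it contains the interiors of two long sectors, a contradiction.

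\textbf{Strict containment when $x\in A$.} To get $Q\subsetneq D$, do not rely on $D$ containing rim vertices of another sector, since a sector interior may lie inside $A$. Use instead that $x\in A=N_G(D)$ has a neighbour in $D$, and that neighbour cannot belong to $Q$.
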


\begin{proof}
	We first record a boundary-lifting observation.  If $s\in V(F)$ and
	$Q\subseteq D$ is a component of $F-N_F[s]$, then $Q$ is also a component of
	$G-N_G[s]$.  Indeed, if $q\in Q$, $qv\in E(G)$ and $v\notin N_G[s]$, then
	$N_G(D)=A$ gives $v\in D\cup A\subseteq V(F)$.  Thus
	$v\in V(F)\setminus N_F[s]$, and it must belong to $Q$.
	
	Suppose that $F$ is not series--parallel.  Since $F\in\mathcal C$, \ref{L1} and Lemma~\ref{lem:proper-wheel-exists} give a proper
	wheel in $F$.  In $F$, choose such a wheel $W=(C,s)$ with globally minimum
	number of spokes and apply Theorem~\ref{thm:sector-separation}.  It has two vertex-disjoint long sectors by the observation following
	Lemma~\ref{lem:sparse-neighborhood}.
	
	If $s=c$, then $F-N_F[c]=D$ is connected but contains at least two long-sector
	interiors, contrary to Theorem~\ref{thm:sector-separation}\ref{sector-separation-1}.
	
	Now suppose $s\ne c$.  Put \(J=F-(N_F[s]\setminus\{c\})\), and let $R$ be its component containing $c$.  If $sc\notin E(F)$,
	then $J=F-N_F[s]$, so separation~\ref{sector-separation-1} of Theorem~\ref{thm:sector-separation} puts interiors from at most one
	sector in $R$.  If $sc\in E(F)$, separation~\ref{sector-separation-2} of Theorem~\ref{thm:sector-separation}, with retained
	neighbor $c$, puts interiors from at most two intersecting sectors
	in $R$.  Since $W$ has two vertex-disjoint long sectors, some long-sector
	interior therefore lies in a component $Q\ne R$ of $J$.  Every
	surviving vertex of $A$ is adjacent to $c$, so $Q\subseteq D$.
	
	Passing from $J$ to $F-N_F[s]$ deletes at most $c$ and does not affect
	$Q$; hence $Q$ remains a component and the boundary-lifting observation
	applies.  All long-sector interiors survive this deletion because they
	contain no neighbor of $s$.  By separation~\ref{sector-separation-1} of Theorem~\ref{thm:sector-separation}, an interior from
	another long sector lies outside $Q$, so $G-N_G[s]$ has another
	nonempty component.  Moreover, $N_F[s]\cap D\ne\emptyset$: it
	contains $s$ if $s\in D$, and a neighbor of $s$ in $D$ if
	$s\in A=N_G(D)$.  Thus $Q\subsetneq D$, contradicting the choice of
	$(c,D)$.
	These contradictions show that $F$ is series--parallel.
\end{proof}

\begin{lemma}\label{lem:lobe-gluing}
	Let $(c,D)$ be a full-star pair and use the notation in \eqref{eq:full-star-lobe}.
	Suppose that $F$ is series--parallel.  If $G-D$ has a stable triangle
	transversal $S_0$, then there is a stable triangle transversal \(S\) of \(G\) with \(S \cap V (G-D) = S_0\).
\end{lemma}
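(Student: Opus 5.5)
The plan is to reduce the statement to a $3$-colouring of a minor of $F$, using the observation that in any proper $3$-colouring every triangle receives all three colours. Consequently every colour class of a proper $3$-colouring is a stable triangle transversal. We shall produce a proper $3$-colouring of $G[D\cup A]$ in which colour $1$ is used on $A$ exactly by $A_0:=A\cap S_0$. We then take $S=S_0\cup\{d\in D:\ d \text{ has colour } 1\}$.

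First we locate the triangles of $G$ that $S_0$ might miss. Any triangle not contained in $G-D$ has a vertex in $D$. Since $D$ is a component of $G-N[c]$ and $N_G(D)=A$, all vertices of such a triangle lie in $D\cup A$; in particular $c$, which is anticomplete to $D$, is not among them. Triangles inside $G-D$ are already hit by $S_0$.

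Next we build the auxiliary graph. If $A_0=\emptyset$, put $F''=F$ and $z=c$. Otherwise contract all edges $ca$ with $a\in A_0$, which is possible because $A\subseteq N(c)$. Call the resulting vertex $z$. It is adjacent to every vertex of $A\setminus A_0$ and to every vertex of $D$ having a neighbour in $A_0$. The graph $F''$ is a minor of $F$. Since the class of series--parallel graphs is minor-closed, $F''$ is series--parallel, hence $2$-degenerate and $3$-colourable. Fix a proper $3$-colouring of $F''$ and permute colours so that $z$ has colour $1$.

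We then lift the colouring to $G[D\cup A]$. Give every vertex of $A_0$ colour $1$ and keep the colours of all other vertices of $D\cup A$. This is proper for the following reasons.
\begin{enumerate}[label=(\alph*)]
\item $A_0\subseteq S_0$ is stable.
\item Every vertex of $A\setminus A_0$ is adjacent to $z$ in $F''$, so its colour is not $1$.
\item Every vertex of $D$ with a neighbour in $A_0$ is adjacent to $z$ in $F''$, so its colour is not $1$.
\item All remaining edges of $G[D\cup A]$ are edges of $F''$.
\end{enumerate}

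Finally we verify that $S=S_0\cup\{d\in D:\ d \text{ has colour } 1\}$ has the required properties.

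Stability: vertices of $D$ coloured $1$ are pairwise nonadjacent by properness. They are nonadjacent to $A_0$ by (c). Their only neighbours outside $D$ lie in $A$, and the only vertices of $S_0$ there are those of $A_0$. So $S$ is stable.

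Transversality: every triangle meeting $D$ lies in $D\cup A$ and is rainbow, so it contains a vertex of colour $1$. That vertex lies either in $D$ or in $A_0\subseteq S_0$, hence in $S$.

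Trace: $S\cap V(G-D)=S_0$ by construction.

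The only delicate step is building $F''$ so that colour $1$ on $A$ is forced to be exactly $A_0$ while $F''$ remains a minor of $F$. This is achieved by merging $A_0$ into the apex $c$, which is adjacent to all of $A$. One must check that the contraction is legitimate, including the degenerate case $A_0=\emptyset$, and that series--parallelity survives it, which it does because the class is minor-closed. With this in place, the rest is the routine verification above.
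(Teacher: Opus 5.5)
Your proof is correct and follows essentially the same route as the paper: contract $\{c\}\cup(A\cap S_0)$ in $F$ to a single vertex, $3$-colour the resulting series--parallel minor, and add to $S_0$ the vertices of $D$ that receive the colour of the contracted vertex. The only difference is cosmetic. You lift the colouring to a proper $3$-colouring of $G[D\cup A]$ and use that every triangle is rainbow, whereas the paper argues by contradiction that a triangle missed by $S$ would survive as a triangle in the minor while avoiding the root's colour.
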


\begin{proof}
	Set $X=S_0\cap A$.  In $F$, contract the connected set $\{c\}\cup X$ to one
	vertex $r$ and suppress loops and parallel edges.  The resulting graph $F_X$
	is a minor of $F$, hence is series--parallel.  Let $\varphi$ be a proper
	$3$-coloring of $F_X$, and set
	\begin{equation}\label{eq:lobe-transversal-gluing}
		Z=\{d\in D:\varphi(d)=\varphi(r)\},
		\quad S=S_0\cup Z.
	\end{equation}
	The set $Z$ is stable.  If $x\in X$ is adjacent to $d\in D$, then $rd$ is an
	edge of $F_X$, so $d\notin Z$.  Every external neighbor of $D$ lies in $A$,
	and $D$ is anticomplete to $c$; consequently $S$ is stable.
	
	Suppose that a triangle $T$ disjoint from $S$ meets $D$.  Since every
	external neighbor of $D$ lies in $A$, we have
	$T\subseteq D\cup(A\setminus X)$.  Thus no vertex of $T$ is contracted,
	and $T$ remains a triangle in $F_X$.
	Its vertices in $D$ avoid the color $\varphi(r)$ by the definition of $Z$,
	and its vertices in $A\setminus X$ avoid that color because they are
	adjacent to $r$.  Thus $T$ would use only two colors in a proper coloring,
	a contradiction.  Triangles contained in $G-D$ are already met by $S_0$.
\end{proof}

A degree-two marker allows us to prescribe whether its two neighbors have
one color or two.  This is needed later to glue colorings across a proper
$2$-cutset.

\begin{theorem}\label{thm:rooted-marker}
	Let $H$ be \(\{\mathrm{ISK}_4,\text{diamond}, \text{prism},K_{3,3}\}\)-free and connected, let $d_H(u)=2$, and suppose
	\[
	N_H(u)=\{a,b\},\quad ab\notin E(H).
	\]
	Put $B=H-u$.  There are stable triangle transversals
	$S_{=},S_{\ne}\subseteq V(B)$ such that
	\begin{equation}\label{eq:rooted-transversal-memberships}
		\{a,b\}\subseteq S_=,
		\quad a\in S_{\ne},\quad b\notin S_{\ne}.
	\end{equation}
	Consequently $B$ has two $4$-colorings $c_=$ and $c_{\ne}$ satisfying
	\begin{equation}\label{eq:rooted-color-signatures}
		c_=(a)=c_=(b),
		\quad c_{\ne}(a)\ne c_{\ne}(b).
	\end{equation}
\end{theorem}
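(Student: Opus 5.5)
We argue by induction on $|V(H)|$, with the statement about $S_=$ and $S_{\ne}$ as the inductive claim. The $4$-colorings then follow at once. Given a stable triangle transversal $S$ of $B$, the graph $B-S$ is $\{\mathrm{ISK}_4,\text{triangle}\}$-free, so it is $3$-colorable by \ref{L4}. Giving $S$ a fourth colour yields $c_=$ from $S_=$ (both $a,b$ receive the fourth colour) and $c_{\ne}$ from $S_{\ne}$ ($a$ receives the fourth colour and $b$ does not). So everything reduces to constructing the two transversals.

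\emph{Base case: $H$ is series--parallel.} For $S_=$, contract the path $a-u-b$ to a single vertex $r$. The result is a minor of $H$, so it is series--parallel and has a proper $3$-colouring $\varphi$. Let $S_=$ consist of $a$, $b$ and the vertices of $B-\{a,b\}$ coloured $\varphi(r)$. This set is stable. It meets every triangle of $B$: a triangle avoiding $\{a,b\}$ is still a triangle in the contracted graph, so one of its vertices has colour $\varphi(r)$; a triangle through $a$ or $b$ is met there. Since $ab\notin E$, no triangle contains both $a$ and $b$, so this is consistent.

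For $S_{\ne}$, first check that $H+ab$ is still series--parallel. Suppose a $K_4$-subdivision in $H+ab$ uses the edge $ab$. If it avoids $u$, reroute $ab$ through $a-u-b$ to get a $K_4$-subdivision in $H$. If it uses $u$, then $u$ is an interior vertex of a branch path joining $a$ and $b$, and the edge $ab$ would be a second branch path between the same two branch vertices, which is impossible in a subdivision of $K_4$. Now take a $3$-colouring $\varphi$ of $H+ab$, so $\varphi(a)\ne\varphi(b)$. Let $S_{\ne}$ be the colour class of $a$ in $B$. Each triangle of $B$ is rainbow under $\varphi$, hence meets this class, and the class contains $a$ but not $b$.

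\emph{Inductive step: $H$ is not series--parallel.} By \ref{L1}, Lemma~\ref{lem:proper-wheel-exists} and the two vertex-disjoint long sectors, a globally spoke-minimal proper wheel $(C,x)$ exists. Theorem~\ref{thm:sector-separation}\ref{sector-separation-1} then shows that $H-N[x]$ has at least two components, so full-star pairs exist.

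We restrict to full-star pairs $(c,D)$ with $D\cap\{u,a,b\}=\emptyset$, and need at least one to exist.
\begin{itemize}
\item If $u\notin N[c]$, then $u,a,b$ lie in one component of $H-N[c]$ together with $N[c]$, so another component works.
\item If $u\in N[c]$, then $c\in\{u,a,b\}$. Both $a$ and $b$ are adjacent to $u$, and at most one of them lies outside $N[c]$. That vertex lies in at most one component, so again another component works.
\end{itemize}

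Now take such a pair with $D$ inclusion-minimal among the admissible pairs. Rerun the proof of Lemma~\ref{lem:min-lobe-sp} for this $D$. The smaller pair $(s,Q)$ it produces has $Q\subsetneq D$, so $Q$ also avoids $\{u,a,b\}$ and is admissible. This contradicts minimality, and hence the lobe $F$ is series--parallel.

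Next apply induction to $H'=H-D$. This graph is connected, since every vertex of $A$ is adjacent to $c$. It is an induced subgraph of $H$, so it lies in the class. It still contains $u$ with $N(u)=\{a,b\}$ and degree $2$. Induction gives $S_=$ and $S_{\ne}$ for $H'-u$, and Lemma~\ref{lem:lobe-gluing} extends each to $B$ without changing its trace on $V(H')\supseteq\{a,b\}$. That trace is all that matters for the memberships of $a$ and $b$ in \eqref{eq:rooted-transversal-memberships}, so both properties survive.

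The step we expect to be the main obstacle is this relativised minimality: verifying that the boundary-lifting argument in Lemma~\ref{lem:min-lobe-sp} really only ever produces components $Q\subsetneq D$. We also need that applying Lemma~\ref{lem:lobe-gluing} with $G$ taken as $H-u$ (rather than $H$) is legitimate, using that $u\notin D\cup A$ whenever $c\ne u$. If $c=u$, we handle it directly: then $A\subseteq\{a,b\}$, and the gluing can be done with $c$ replaced by the contracted root. This case needs separate checking.
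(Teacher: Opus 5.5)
Your proof is essentially the paper's proof. In the series--parallel case you use the same two contractions: colouring $H+ab$ and restricting to $B$ is the same as colouring the minor $B+ab$ that the paper gets by contracting $\{a,u\}$. In the inductive step you use the same sequence: a globally spoke-minimal proper wheel, an inclusion-minimal full-star pair avoiding $\{u,a,b\}$, induction on $H-D$, and Lemma~\ref{lem:lobe-gluing}.

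The obstacles you flag are not real, and the one case you leave unchecked goes away.

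\begin{itemize}
\item You do not need to rerun Lemma~\ref{lem:min-lobe-sp}, because its hypothesis holds verbatim. Any full-star pair $(s,Q)$ with $Q\subsetneq D$ avoids $\{u,a,b\}$, so it is admissible, which contradicts the minimality of $D$.
\item The case $c=u$ and the question of gluing in $H-u$ both disappear if you apply Lemma~\ref{lem:lobe-gluing} in $H$ itself, as the paper does. Since $ab\notin E(H)$, the vertex $u$ lies in no triangle. So the inductive transversal $S_0$ of $(H-D)-u$ is already a stable triangle transversal of $H-D$. The extension $S$ then satisfies $S\cap V(H-D)=S_0$, and since $u\notin S_0$ you get $u\notin S$. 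Hence $S\subseteq V(B)$, and both root conditions are preserved. The lemma's proof works for $c=u$ as well, because $\{u\}\cup X$ with $X\subseteq A\subseteq\{a,b\}$ is connected.
\item Your justification that $H-D$ is connected is incomplete. You also need that every other component of $H-N[c]$ has a neighbour in $N(c)$, which holds because $H$ is connected.
\end{itemize}
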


\begin{proof}
	We prove the two assertions simultaneously by induction on $|V(H)|$.
	Suppose first that $H$ is series--parallel.  For each of the two connected
	sets $T\in\{\{a,u,b\},\{a,u\}\}$, contract $T$ to a vertex $r_T$ and
	suppress loops and parallel edges.  The resulting graph is a
	series--parallel minor of $H$, so fix a proper $3$-coloring of it.
	Let $S(T)$ consist of $T\cap V(B)$ and the vertices of $B-T$ having the
	color of $r_T$.  Since $T\cap V(B)$ is stable and every edge from $T$ to
	its complement survives as an edge incident with $r_T$, the set $S(T)$
	is stable.  Every triangle of $B$ meets $T$ in at most one vertex, because
	$ab\notin E(H)$, and therefore survives as a triangle in the minor.
	Its three colors show that it meets $S(T)$.  Taking
	$S_=S(\{a,u,b\})$ gives both roots in the transversal.  Taking
	$S_{\ne}=S(\{a,u\})$ includes $a$ and excludes $b$, since $b$ is adjacent
	to $r_T$ in this contraction.
	
	Assume now that $H$ is not series--parallel.  Since $H$ is \(\{\mathrm{ISK}_4,\text{diamond}, \text{prism},K_{3,3}\}\)-free, \ref{L1} gives a wheel in $H$, and
	Lemma~\ref{lem:proper-wheel-exists} gives a proper one.  Choose such a wheel
	globally spoke-minimal, and denote its centre by $c$.
	Theorem~\ref{thm:sector-separation} and the existence
	of two long sectors show that $H-N[c]$ has at least two components.  We must
	protect the entire rooted set
	\begin{equation}\label{eq:protected-roots}
		P_0=\{u,a,b\},
	\end{equation}
	not merely $\{a,b\}$.  All vertices of $P_0$ surviving the deletion of
	$N[c]$ lie in at most one component.  Indeed, if $c\notin P_0$, then
	$c\notin N(u)=\{a,b\}$, so $u$
	survives and every surviving one of $a,b$ is joined to it; if $c=a$ or
	$c=b$, at most the other root survives.  The case $c=u$ is impossible because
	a wheel centre has degree at least three.  Hence some component $D$ is
	disjoint from $P_0$.
	
	Among full-star pairs $(c',D')$ with $D'\cap P_0=\emptyset$, choose one
	with $D'$ inclusion-minimal.  Any proper subset of $D'$ also avoids $P_0$,
	so no full-star pair $(s,Q)$ has $Q\subsetneq D'$.  Lemma~\ref{lem:min-lobe-sp}
	therefore says that the associated lobe is series--parallel.
	
	The graph $H-D'$ is connected.  The vertices of $N[c']$ are joined through
	$c'$, and every other component of $H-N[c']$ has a neighbor in $N[c']$
	because $H$ is connected.  Since $D'$ avoids $\{u,a,b\}$, the same degree-two
	marker and both roots remain.  Apply the two induction statements to $H-D'$
	to obtain transversals in $(H-D')-u$.  The vertex $u$ lies in no triangle,
	because its two neighbors $a,b$ are nonadjacent.  Hence each of these sets
	is also a stable triangle transversal of $H-D'$.  Apply
	Lemma~\ref{lem:lobe-gluing} in $H$ to extend it through the lobe.  All newly
	added vertices lie in $D'$, so the
	root conditions in \eqref{eq:rooted-transversal-memberships} are unchanged.
	
	Finally, each of $B-S_=$ and $B-S_{\ne}$ is triangle-free and ISK$_4$-free,
	so \ref{L4} gives a $3$-coloring; give the corresponding
	transversal the fourth color.  The root memberships in \eqref{eq:rooted-transversal-memberships} give
	the two signatures in \eqref{eq:rooted-color-signatures}.
\end{proof}

\begin{corollary}\label{thm:core-transversal}
	Every \(\{\mathrm{ISK}_4,\text{diamond}, \text{prism},K_{3,3}\}\)-free graph $G$ has a stable triangle transversal and is $4$-colorable.  Moreover, the transversal can be required to contain any
	one prescribed vertex.
\end{corollary}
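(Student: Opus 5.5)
We derive the corollary from Theorem~\ref{thm:rooted-marker} by attaching a degree-two marker to the graph. Let $G$ be in the class and let $v$ be the prescribed vertex. We may assume $G$ is connected: a stable triangle transversal of $G$ is obtained by taking one in each component, the component containing $v$ being required to contain $v$, and the union is stable because distinct components are anticomplete.

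If $G$ has only one vertex, take $\{v\}$. Otherwise choose a vertex $w\ne v$ (any one), add two new vertices $u,a'$ with edges $ua'$, $uv$ and $a'w$, and call the result $H$. Then $d_H(u)=2$ with $N_H(u)=\{a',v\}$ and $a'v\notin E(H)$. We use the root $a=a'$ and $b=v$, or, more conveniently, the rooted transversal $S_=$, which contains both roots. The crucial check is that $H$ remains $\{\mathrm{ISK}_4,\text{diamond},\text{prism},K_{3,3}\}$-free. This is where I expect the obstacle, since attaching a path $w-a'-u-v$ can create an induced subdivision of $K_4$ or a prism through $G$.

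To avoid this difficulty I would attach a pendant path instead: add new vertices $u,a'$ with edges $vu$ and $ua'$ only. Then $u$ has $N(u)=\{v,a'\}$, $va'\notin E$, and $d(u)=2$. Every induced subdivision of $K_4$, every prism, diamond and $K_{3,3}$ is $2$-connected, so none of them contains the degree-one vertex $a'$. Consequently none of them contains $u$ either, because after discarding $a'$ the vertex $u$ has degree one. Thus any such induced subgraph of $H$ lies in $G$, so $H$ belongs to the class, and $H$ is connected.

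Apply Theorem~\ref{thm:rooted-marker} to $H$ with $a=v$ and $b=a'$. The transversal $S_{\ne}\subseteq V(B)$, where $B=H-u$, is a stable triangle transversal of $B$ containing $v$. Its restriction $S_{\ne}\cap V(G)$ is still stable, still contains $v$, and still meets every triangle of $G$, since the triangles of $G$ are triangles of $B$. This proves the transversal statement with prescribed vertex $v$. For $4$-colourability, $G-S$ is $\{\mathrm{ISK}_4,\text{triangle}\}$-free, so by \ref{L4} it is $3$-colourable; giving $S$ the fourth colour yields a $4$-colouring of $G$.
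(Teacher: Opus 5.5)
Your final argument is correct and is essentially the paper's proof. The paper also attaches the pendant path $a-u-b$ at the prescribed vertex and checks membership in the class via the minimum degree of the forbidden graphs. It then applies Theorem~\ref{thm:rooted-marker}, restricts the transversal to $G$, and colours $G-S$ using \ref{L4}. The only differences are cosmetic: the paper takes $S_=$ rather than $S_{\ne}$ (both contain the prescribed root), and your abandoned first construction through $w$, together with the one-vertex special case, can simply be deleted.
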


\begin{proof}
	It suffices to treat a nonempty connected graph $G$.  Choose $a\in V(G)$
	and add two new vertices $u,b$ and only the edges $au,ub$, obtaining $H$.
	The graph $H$ belongs to $\mathcal C$: every forbidden graph has minimum
	degree at least two, so none can contain $b$; after excluding \(b\), none can contain
	\(u\) either.
	Theorem~\ref{thm:rooted-marker} supplies a stable triangle transversal of
	$H-u=G\mathbin{\dot\cup}\{b\}$ containing $a,b$.  Its restriction to $G$
	is the required transversal $S$.  The graph $G-S$ is triangle-free and
	ISK$_4$-free, so it is $3$-colorable by \ref{L4}; use a
	fourth color on $S$.  Components can be treated independently.
\end{proof}

\section{The proper 2-cutset reduction}
\label{sec:proper-two-cut}

Throughout this section, let $G$ be a connected
$\{\mathrm{ISK}_4,\textup{diamond}\}$-free graph with no clique
cutset.
Suppose that \(G\) has a proper 2-cutset.
Among all oriented splits \eqref{eq:proper-cutset-split} of all proper 2-cutsets of \(G\), choose one with $|X|$ minimum, say \(X=\{a,b\}\).
Since a one-vertex set is a clique, $G$ is $2$-connected.
Consequently every component of $G-\{a,b\}$ has a neighbor in both $a$ and $b$.

Write $B_Y=G[Y\cup\{a,b\}]$.  This graph is connected, and every vertex
of $Y$ has degree at least two in it.  If $B_Y$ were a path, its ends would
therefore be $a,b$, contrary to properness.  Thus $B_Y$ is not a path,
and neither is any graph containing $B_Y$ as a subgraph.

Add a marker vertex and
form
\begin{equation}\label{eq:marker-graph}
	M=G[X\cup\{a,b\}]+u,
	\quad N_M(u)=\{a,b\}.
\end{equation}
The marker path \(a-u-b\) must not be contracted to the edge $ab$: that
contraction can create a new diamond.

\begin{lemma}\label{lem:marker-forbidden}
	The graph $M$ is ISK$_4$-free and diamond-free.
\end{lemma}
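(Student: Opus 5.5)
The plan is to handle the two forbidden configurations separately, using the fact that the marker $u$ has degree two with nonadjacent neighbours. Recall that $\{a,b\}$ is a proper $2$-cutset, so $ab\notin E(G)$ by definition, and hence $ab\notin E(M)$.

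\emph{Diamond-freeness.} Every vertex of a diamond lies in a triangle of the diamond. In $M$, the vertex $u$ lies in no triangle, because $N_M(u)=\{a,b\}$ and $ab\notin E(M)$. So any induced diamond in $M$ avoids $u$ and is therefore an induced subgraph of $G[X\cup\{a,b\}]$, hence of $G$. This is impossible.

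\emph{ISK$_4$-freeness: the reduction.} Suppose $H$ is an induced subdivision of $K_4$ in $M$. If $u\notin V(H)$, then $H$ is an induced subgraph of $G$, a contradiction. So assume $u\in V(H)$. Every vertex of a subdivision of $K_4$ has degree at least two in it, and $d_M(u)=2$. Hence $a,b\in V(H)$, $u$ has degree exactly two in $H$, and $u$ is an internal vertex of one of the six subdivided paths of $H$. That path passes through $a-u-b$.

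\emph{ISK$_4$-freeness: the replacement path.} By the definition of a proper $2$-cutset, $B_Y=G[Y\cup\{a,b\}]$ contains an $a$--$b$ path. Let $R$ be a shortest such path. Since $ab\notin E(G)$, $R$ has at least one internal vertex, and $R^*\subseteq Y$. Form $H'$ from $H$ by replacing $a-u-b$ with $R$. I would check that $H'$ is induced in $G$:
\begin{itemize}
\item $V(H)\setminus\{u\}\subseteq X\cup\{a,b\}$, and $X$ is anticomplete to $Y\supseteq R^*$, so there are no edges from $R^*$ to $V(H)\setminus\{a,b,u\}$.
\item By shortestness, $R$ is induced. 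In particular, $a$ and $b$ have no neighbours on $R^*$ other than their neighbours along $R$, and $ab\notin E(G)$.
\end{itemize}
Thus $H'$ is obtained from $H$ by lengthening one subdivided edge, so it is again an induced subdivision of $K_4$, now in $G$. This contradicts the hypothesis that $G$ is ISK$_4$-free.

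The only step needing care is this inducedness check. It uses exactly three facts: the anticompleteness of $X$ and $Y$, the shortest-path choice, and the nonedge $ab$. The same nonedge is also why the marker path $a-u-b$ cannot be contracted to the edge $ab$.
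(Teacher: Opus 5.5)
Your proposal is correct and follows essentially the same route as the paper. For diamonds, you note that $u$ lies in no triangle; the paper says the equivalent thing via the possible degrees of $u$ inside a diamond. For ISK$_4$, you replace the branch subpath $a-u-b$ by a shortest $a$--$b$ path through $Y$, exactly as the paper does; the only cosmetic difference is that you obtain that path from the definition of a proper $2$-cutset rather than from $2$-connectivity.
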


\begin{proof}
	By $2$-connectivity, $G[Y\cup\{a,b\}]$ contains an $a$--$b$ path; choose a
	shortest one $P_Y$.  It is induced, its interior is anticomplete to $X$, and
	its only incidences with $a,b$ are its first and last edges.
	
	Every induced subgraph of $M$ that avoids $u$ is an induced subgraph of
	$M-u=G[X\cup\{a,b\}]$, and hence of $G$.  Thus every forbidden
	configuration newly created by the marker contains $u$.
	
	If an induced subdivision of $K_4$ in $M$ uses $u$, then $u$, which has
	degree two, is internal to one subdivided branch and $a-u-b$ is a subpath of
	that branch.  Replacing $a-u-b$ by $P_Y$ gives an induced subdivision of
	$K_4$ in $G$, a contradiction.  A new diamond would have to use $u$, but the
	two neighbors of every degree-two vertex in a diamond are adjacent, whereas
	$ab\notin E(G)$; $u$ cannot be a degree-three vertex of a diamond either.	
\end{proof}

\begin{lemma}\label{lem:marker-uncrossing}
	The graph $M$ has neither a clique cutset nor a proper $2$-cutset.
\end{lemma}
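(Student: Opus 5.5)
We argue by transferring any cutset of $M$ back to $G$ and then contradicting either the absence of clique cutsets in $G$ or the minimality of $|X|$. Throughout, $M-u=G[X\cup\{a,b\}]$, and we use the shortest $a$--$b$ path $P_Y$ through $Y$ from the proof of Lemma~\ref{lem:marker-forbidden}. The path $a-u-b$ in $M$ plays the role of $P_Y$ (equivalently, of all of $B_Y$) in $G$. The basic transfer principle is this: if $Z\subseteq V(M)$ and $M-Z$ has two vertices in different components, both outside $\{u\}$, then in $G-Z'$, where $Z'=Z\setminus\{u\}$, these vertices are also separated. The reason is that any path through $Y$ enters and leaves only via $a,b$, which is exactly what $u$ simulates. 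The one caveat is that if $u\notin Z$ but $a$ or $b$ lies in $Z$, then $Y$ may hang off the remaining root.

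\textbf{Clique cutsets.} Let $K$ be a clique cutset of $M$. Since $ab\notin E$, $K$ contains at most one of $a,b$. Also $u\in K$ forces $|K|\le 2$, since $u$ lies in no triangle.

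If $u\notin K$, then $u$, $a\setminus K$ and $b\setminus K$ lie in one component of $M-K$. Because $G$ is $2$-connected, every component of $G-\{a,b\}$, and in particular each component of $G[Y]$, attaches to both $a$ and $b$. Adding $Y$ to the component containing the surviving root(s) therefore keeps $K$ a cutset of $G$, which is impossible.

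If $u\in K$, then $K=\{u\}$ or $K=\{u,a\}$ (up to symmetry). If $K=\{u\}$, then $M-u=G[X\cup\{a,b\}]$ is disconnected. This contradicts the fact that every component of $G-\{a,b\}$, including those inside $X$, sees both $a$ and $b$; the only escape would be $X$ split into several components, each seeing both roots, but then the graph remains connected through $a$. If $K=\{u,a\}$, then $a$ is a cutvertex of $M-u$ separating some part $X_1\subseteq X$ from $b$. In $G$, $X_1$ is then separated from the rest by $\{a\}$, contradicting $2$-connectivity.

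\textbf{Proper $2$-cutsets.} Let $\{c,d\}$ be a proper $2$-cutset of $M$ with split $(X',Y',\{c,d\})$. Note that $u\notin\{c,d\}$: otherwise the other vertex together with the remaining root would give a $1$-cut or a cut of $G$ of the same type, and we handle this similarly. So $u$ lies in, say, $Y'$ or $X'$.

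\emph{Case 1: $u\in Y'$.} Then $a,b\in Y'\cup\{c,d\}$. In $G$, take the split $(X', Y'\setminus\{u\}\cup Y, \{c,d\})$. This is a proper $2$-cutset of $G$: properness of the blocks transfers by replacing $a-u-b$ with $P_Y$, and the block on the $Y$-side still contains $B_Y$ as a subgraph, so it is not a path. Moreover $X'\subsetneq X$, since $X'\subseteq X$ and $c,d$ or at least $u$'s side removes something. This contradicts the minimality of $|X|$.

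\emph{Case 2: $u\in X'$.} Then $Y'\subseteq X$, and we apply the same construction with the roles swapped, obtaining the oriented split with first side $Y'\subsetneq X$. This again contradicts minimality.

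\textbf{Main obstacle.} The main obstacle is checking that the transferred blocks are still ``proper''. We must verify that each block contains a $c$--$d$ path and is not itself a $c$--$d$ path. In particular, when $\{c,d\}=\{a,b\}$ the $M$-side block containing $u$ might be just $a-u-b$, and this degenerate case must be ruled out using the fact that the blocks in $M$ are not paths. The other delicate case is when one of $c,d$ is $a$ or $b$, where $X'$ could fail to be strictly smaller than $X$.
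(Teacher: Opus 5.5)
Your treatment of clique cutsets, and of proper $2$-cutsets of $M$ that avoid $u$, is the paper's argument. Your Cases~1 and~2 are one case up to renaming the sides. The strictness $X'\subsetneq X$ works as you indicate at the end: $X'=X$ forces $\{c,d\}=\{a,b\}$ and $Y'=\{u\}$, whose block is the path $a-u-b$.

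The gap is the case you set aside at the outset, a proper $2$-cutset of $M$ that contains the marker, $T'=\{u,q\}$. It cannot be assumed away, and it is not ``similar''. The cutset you must exhibit in $G$ is a different pair, $\{a,q\}$, and each of the following steps needs proof.
\begin{enumerate}
\item $q\in X$, because $qu\notin E(M)$.
\item $a$ and $b$ lie on opposite sides of the split. Otherwise $u$ has no neighbor on the other side, and $\{q\}$ is a clique cutset of $M$.
\item With $a\in U$, one needs $U\setminus\{a\}\ne\emptyset$. If $U=\{a\}$, the $U$-block is forced to be the path $u-a-q$.
\item A proper $2$-cutset must be a nonadjacent pair, so you must show $aq\notin E(G)$. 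This is the idea genuinely missing from your sketch, and it is where the clique-cutset half of the lemma is used. Since $N(u)=\{a,b\}$ with $b\in V$, the set $U\setminus\{a\}$ has all its neighbors in $\{a,q\}$. An edge $aq$ would therefore make $\{a,q\}$ a clique cutset of $M$.
\end{enumerate}

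Only after these steps is $\{a,q\}$ a proper $2$-cutset of $G$, with sides $U\setminus\{a\}$ and $V\cup Y$:
\begin{itemize}
\item The first block contains an $a$--$q$ path: delete $u$ from a $u$--$q$ path of the old $U$-block.
\item The first block is not itself a path: if it were, adjoining the pendant edge $ua$ would make the old $U$-block a $u$--$q$ path.
\item The second block contains an $a$--$q$ path through $Y$ and $V$, and it contains $B_Y$, so it is not a path.
\item $U\setminus\{a\}\subseteq X\setminus\{q\}\subsetneq X$, which contradicts the minimality of $|X|$.
\end{itemize}

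Your remark that ``the other vertex together with the remaining root would give a 1-cut or a cut of $G$ of the same type'' points in this direction. However, it leaves every one of these verifications unproved, in particular the nonadjacency of the new pair. As written, your proof of the second assertion covers only the cutsets that avoid $u$.
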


\begin{proof}
	Suppose first that $K$ is a clique cutset of $M$.  If $u\in K$, then, because
	$N(u)=\{a,b\}$ and $ab\notin E(G)$,
	\[
	K\in\bigl\{\{u\},\{u,a\},\{u,b\}\bigr\}.
	\]
	Every component of $G[X]$ has a neighbor in each of $a,b$.
	Hence $G[X\cup\{a,b\}]$, $G[X\cup\{a\}]$, and
	$G[X\cup\{b\}]$ are connected, so none of the three candidates is a cutset.
	If $u\notin K$, all surviving vertices among $a,b$ lie with $u$ in one
	component of $M-K$.  Any other component lies in $X$.  Since $K$ cannot
	contain both nonadjacent roots, and every component of $G[Y]$ has a
	neighbor at each root, replacing $u$ by $Y$ keeps its component
	connected.  Thus the same $K$ is a clique cutset of $G$, a contradiction.  Thus $M$ has no clique cutset and, in particular, is
	$2$-connected.
	
	Suppose next that $T'=\{p,q\}$ is a proper $2$-cutset of $M$, with split
	$(U,V,T')$.  First let $u\notin T'$ and choose the notation so that $u\in U$.
	Every surviving one of $a,b$ is adjacent to $u$, and hence $V\subseteq X$.
	In $G$, the same cutset separates $V$ from $(U\setminus\{u\})\cup Y$.  The
	$V$-block is the same nonpath block as in $M$.  The other block contains a
	$p$--$q$ path: if a path in the old $U$-block uses $a-u-b$, replace that
	subpath by an $a$--$b$ path through $Y$.  The new complementary block contains $B_Y$, so it is not a path.
	Hence $T'$ is a proper
	$2$-cutset of $G$.  Moreover $V\subsetneq X$: equality would force
	$T'=\{a,b\}$ and $U=\{u\}$, whose marker block is the path $a-u-b$.
	This contradicts the minimum choice of $X$.
	
	It remains that $u\in T'$, say $T'=\{u,q\}$.  Then $q\in X$ because the two
	vertices of a proper $2$-cutset are nonadjacent.  Since $M$ is $2$-connected,
	$a,b$ lie on different sides; otherwise $q$ is a cut vertex.  Relabel the
	sides and $a,b$ so that $a\in U$ and $b\in V$.  We have
	$U\setminus\{a\}\ne\emptyset$: if $U=\{a\}$, $2$-connectivity forces
	$aq\in E(M)$ and the $U$-block is the path $u-a-q$.  If $aq\in E(M)$, then
	$\{a,q\}$ is a clique cutset of $M$, already excluded.  Thus $aq\notin E(M)$.
	
	Now $\{a,q\}$ is a proper $2$-cutset of $G$, with sides
	$U_0=U\setminus\{a\}$ and $V\cup Y$.  The first block cannot be an
	$a$--$q$ path, since adjoining the leaf edge $ua$ would make the old
	$U$-block a $u$--$q$ path.  The second block contains $B_Y$, so it is not a path.
	Finally, $U_0\subseteq X\setminus\{q\}\subsetneq X$, contradicting
	the choice of $X$.
	
	For completeness, both new blocks do contain an $a$--$q$ path.  Removing
	$u$ from a $u$--$q$ path in the old $U$-block leaves one from $a$ to $q$
	through $U_0$.  On the other side, remove the initial edge $ub$ from a
	$u$--$q$ path through $V$ and prepend an $a$--$b$ path in the original
	$Y$-block.
\end{proof}

Put
\[
B_X=M-u=G[X\cup\{a,b\}].
\]
We next construct on $B_X$ a $4$-coloring with $a,b$ equal and one with
$a,b$ different.  We shall use repeatedly the following observation: if
$B_X$ is $3$-colorable, then, since $ab\notin E(G)$, recoloring both
$a,b$ with a new color gives the equal signature, while recoloring only
$a$ gives the unequal signature.

The graph $M$ contains no induced $K_{3,3}$.  Otherwise, \ref{L2} and Lemma~\ref{lem:marker-uncrossing} would make
$M$ a thick complete bipartite or complete tripartite graph.  Every vertex
of such a graph has degree at least three, contradicting $d_M(u)=2$.
If $M$ contains no prism, it therefore belongs to $\mathcal C$, and
Theorem~\ref{thm:rooted-marker} gives the two signatures.

Suppose that $M$ contains a prism.  By \ref{L3} and Lemma~\ref{lem:marker-uncrossing}, $M$ is a rich square or a line graph.

If $M$ is a rich square, diamond-freeness implies that \(M\) has no short links.
Thus $M$ is a long rich square, hence wheel-free, and \ref{L4.5} gives a $3$-coloring of $M$.
Its restriction to $B_X$ and the preceding observation give both signatures.

It remains that
\begin{equation}\label{eq:marker-line-graph}
	M=L(Q),
	\quad Q\text{ is simple and }\Delta(Q)\le3.
\end{equation}
Let $u$ correspond to an edge $rs$ of $Q$.  Since $d_M(u)=2$ and its two
line-graph neighbors $a,b$ are nonadjacent, we must have
$d_Q(r)=d_Q(s)=2$; the alternatives $(1,3)$ and $(3,1)$ would make the two
neighbors adjacent.  Write $rp$ and $sq$ for the other incident edges,
corresponding to $a$ and $b$.  They are disjoint.  In $Q-rs$ they are pendant
edges; in particular $p\ne q$.  Set
\begin{equation}\label{eq:line-graph-auxiliary}
	Q_0=Q-\{rs,rp,sq\},
\end{equation}
where only the edges are deleted.

For the unequal signature, take a $4$-edge-coloring of $Q_0$.  Choose a
color $\alpha$ missing at $p$.  At least two colors are missing at $q$, so
choose a missing color $\beta\ne\alpha$ and give $rp,sq$ the colors
$\alpha,\beta$, respectively.  For the equal signature, first suppose
$pq\notin E(Q_0)$.  Add the edge $pq$ to $Q_0$; the graph remains simple and
subcubic because $p\ne q$ and each has degree at most two in $Q_0$.
After $4$-edge-coloring it, delete the artificial edge and give
its color to both $rp$ and $sq$.  If $pq\in E(Q_0)$, take a
$4$-edge-coloring of $Q_0$ directly.  The union of the incident edges at
$p$ and $q$ has size at most three, so some color is missing at both; give
that color to $rp$ and $sq$.  These edge-colorings are exactly the two
vertex-coloring signatures of $B_X=L(Q-rs)$.

We have proved the following lemma.

\begin{lemma}\label{lem:two-signatures}
	Under the assumptions of this section, $G[X\cup\{a,b\}]$ has a proper
	$4$-coloring in which $a,b$ have the same color and one in which they have
	different colors.
\end{lemma}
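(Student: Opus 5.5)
The text immediately before the statement is already a case analysis on the marker graph $M$ from \eqref{eq:marker-graph}, so the plan is to collect those cases. Everything runs on $M$ and its restriction $B_X=M-u=G[X\cup\{a,b\}]$. By Lemma~\ref{lem:marker-forbidden}, $M$ is $\{\mathrm{ISK}_4,\text{diamond}\}$-free. By Lemma~\ref{lem:marker-uncrossing}, $M$ has no clique cutset and no proper $2$-cutset, and in particular $M$ is connected. We record once the \emph{recoloring observation}: $ab\notin E(G)$, so any proper $3$-coloring of $B_X$ gives both signatures. Give $a$ and $b$ a fourth color to get the equal signature, or recolor only $a$ to get the unequal one.

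First I will exclude an induced $K_{3,3}$ in $M$. If $M$ contained one, then \ref{L2} together with the absence of clique cutsets would make $M$ a thick complete bipartite or complete tripartite graph. Every vertex of such a graph has degree at least three, which contradicts $d_M(u)=2$. If $M$ also contains no prism, then $M\in\mathcal C$, and $M$ is connected with $N_M(u)=\{a,b\}$ and $ab\notin E(M)$. Theorem~\ref{thm:rooted-marker} applied with $H=M$ then gives the colorings $c_=$ and $c_{\ne}$ of $B=M-u=B_X$ directly.

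Suppose instead that $M$ contains a prism. Since $M$ has neither a clique cutset of size at most three nor a proper $2$-cutset, \ref{L3} leaves two possibilities: $M$ is a rich square, or $M$ is the line graph of a simple subcubic graph.

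In the rich-square case, a short link together with two adjacent vertices of the square $S$ induces a diamond, so every link is long. Hence $M$ is wheel-free, \ref{L4.5} gives a $3$-coloring of $M$, and the recoloring observation applied to its restriction to $B_X$ finishes this case.

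In the line-graph case, $M=L(Q)$, and the main obstacle is translating the two signatures into edge-colorings. Let $u$ correspond to the edge $rs$. Because $d_M(u)=2$ and its two neighbors are nonadjacent, we get $d_Q(r)=d_Q(s)=2$; write $rp$ and $sq$ for the other edges at $r$ and $s$, which correspond to $a$ and $b$. Set $Q_0=Q-\{rs,rp,sq\}$ (deleting edges only). Then $p\ne q$, and each of $p,q$ has degree at most two in $Q_0$. A proper $4$-edge-coloring of $Q-rs$ is exactly a proper $4$-coloring of $B_X=L(Q-rs)$, and such colorings are supplied by \ref{L5}.

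For the unequal signature, $4$-edge-color $Q_0$, pick a color $\alpha$ missing at $p$, and pick $\beta\ne\alpha$ missing at $q$; this is possible because at least two colors are missing at $q$. Color $rp$ with $\alpha$ and $sq$ with $\beta$.

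For the equal signature, there are two subcases. If $pq\notin E(Q_0)$, add $pq$ as an auxiliary edge (the graph stays simple and subcubic), $4$-edge-color, delete $pq$, and give its color to both $rp$ and $sq$. If $pq\in E(Q_0)$, the edges at $p$ and at $q$ together number at most three, so some color is missing at both, and we give that color to $rp$ and $sq$.

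Each of these edge-colorings is proper on $Q-rs$: the only new edges are $rp$ and $sq$, each of which meets only previously colored edges at $p$ or $q$, and the two are disjoint.
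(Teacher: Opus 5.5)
Your proposal is correct and follows the paper's argument essentially step for step. You exclude an induced $K_{3,3}$ via (L2) together with $d_M(u)=2$, apply Theorem~\ref{thm:rooted-marker} to $M$ in the prism-free case, and split the prism case via (L3) into the long-rich-square and line-graph subcases, using the same edge-coloring constructions in the latter. One small fix: a short link together with two adjacent vertices of $S$ spans only a triangle; the diamond you want is the short link together with three consecutive vertices of $S$.
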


\begin{lemma}\label{lem:proper-two-gluing}
	Suppose that every smaller $\{\mathrm{ISK}_4,\textup{diamond}\}$-free graph is $4$-colorable.  Then a graph $G$ satisfying the assumptions of this section is $4$-colorable.
\end{lemma}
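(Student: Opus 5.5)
The plan is to color the two sides of the chosen proper $2$-cutset separately and glue the colorings at $\{a,b\}$. The $X$-side is already handled by Lemma~\ref{lem:two-signatures}, which gives both signatures on $G[X\cup\{a,b\}]$. The work is on the $Y$-side. We need one $4$-coloring of $B_Y=G[Y\cup\{a,b\}]$; its signature on $\{a,b\}$ is either equal or unequal. We then pick the coloring of $G[X\cup\{a,b\}]$ with the same signature and permute its colors so that it agrees with the $B_Y$-coloring on $a$ and $b$. Since $X$ is anticomplete to $Y$ and the only shared vertices are $a,b$, the union is a proper $4$-coloring of $G$.

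To get the coloring of $B_Y$ we use the hypothesis that smaller graphs are $4$-colorable. The graph $B_Y$ is an induced subgraph of $G$, so it is $\{\mathrm{ISK}_4,\text{diamond}\}$-free. It has fewer vertices than $G$ because $X\neq\emptyset$. Hence it is $4$-colorable, and any $4$-coloring will do. The gluing step then needs no marker or extra structure on the $Y$-side.

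The key point is that the $X$-side can realize either signature, so whichever signature the $Y$-coloring happens to have can be matched. Matching is a color permutation. In the equal case, map the common color of $a,b$ on the $X$-side to the color of $a$ on the $Y$-side. In the unequal case, map the two distinct $X$-colors of $a,b$ to the two distinct $Y$-colors, and extend to a bijection of $\{1,2,3,4\}$. Every edge of $G$ lies inside $G[X\cup\{a,b\}]$ or inside $B_Y$, so properness follows.

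I do not expect a serious obstacle here. The only things to check are that $|V(B_Y)|<|V(G)|$, which holds since $X\ne\emptyset$, and that no edge of $G$ joins $X$ to $Y$, which is part of the split definition. The substantive difficulty, producing both signatures on the $X$-side, was handled in Lemma~\ref{lem:two-signatures} through the marker graph $M$ and the case analysis via \ref{L2}, \ref{L3}, \ref{L4.5} and Theorem~\ref{thm:rooted-marker}.
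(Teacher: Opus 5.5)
Your proposal is correct and follows essentially the same argument as the paper. You $4$-color $B_Y$ by the induction hypothesis, take from Lemma~\ref{lem:two-signatures} the $X$-side coloring with the matching signature, permute its colors to agree on $(a,b)$, and glue using that $X$ is anticomplete to $Y$.
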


\begin{proof}
	The block $B_Y=G[Y\cup\{a,b\}]$ is a proper induced subgraph and hence has a
	$4$-coloring by induction.  If its boundary colors are equal, choose the
	equal signature from Lemma~\ref{lem:two-signatures}; otherwise choose the
	unequal signature.  A permutation of the four colors on the $X$-block makes
	the ordered pair of colors on $(a,b)$ agree exactly.  The two colorings then
	glue, since $X$ is anticomplete to $Y$.
\end{proof}

\section{Proof of the main theorem}
\label{sec:main-proof}


\begin{proof}[\bf Proof of Theorem~\ref{thm:main-intro}]
	Let \(G\) be an $\{\mathrm{ISK}_4,\textup{diamond}\}$-free graph.
	We induct on $|V(G)|$.  Components can be colored independently, so assume
	$G$ is connected.Suppose that \(G\) has a clique cutset \(K\). So \(V(G)\setminus K\) can be partitioned into two nonempty sets \(X, Y\) such that there is no edge between them. Since \(G\) contains no \(\mathrm{ISK}_4\), we have \(|K|\leqslant 3\). By the induction hypothesis, the two subgraphs of \(G\) induced by \(X\cup K\) and \(Y\cup K\) are 4-colorable. We can combine 4-colorings of these subgraphs so that they coincide on \(K\), and consequently we obtain a 4-coloring of \(G\).
	Hence we may assume that $G$ has no clique cutset.
	
	If $G$ is \(\{\mathrm{ISK}_4,\text{diamond}, \text{prism},K_{3,3}\}\)-free, then Corollary~\ref{thm:core-transversal} applies.  Otherwise
	$G$ contains an induced $K_{3,3}$ or a prism.
	
	If $G$ contains an induced $K_{3,3}$, then by \ref{L2}, and since \(G\) has no clique cutset,
	\(G\) is a thick complete bipartite or complete tripartite graph.  Such a graph is at most $3$-colorable.
	
	We may therefore assume that $G$ is $K_{3,3}$-free and contains a prism.  By \ref{L3}, $G$ is one of the following: a line graph
	$L(Q)$ of a simple graph of maximum degree at most three, a rich square, a graph with a clique
	cutset, or a graph with a proper $2$-cutset.  The clique-cutset case has
	already been excluded.  In the line-graph case, \ref{L5} gives \(\chi(G)=\chi'(Q)\le4\).
	In the rich-square case, diamond-freeness excludes short links, so
	$G$ is a wheel-free long rich square and is $3$-colorable by \ref{L4.5}.
	In the remaining case, choose a proper split with a minimum oriented side and apply Lemma~\ref{lem:proper-two-gluing}.
	
	All decomposition outcomes have now been covered, and the induction closes.
\end{proof}

\section{Sharpness of the four-color bound}\label{sec:sharpness}


Let \(A=\{a_1,a_2,a_3\}\), \(B=\{b_1,b_2,b_3\}\) be the two parts of $K_{3,3}$.  Delete the edge $a_1b_1$, introduce a new
vertex $w$, and add the edges $a_1w$ and $wb_1$.  Denote the resulting graph
by $H$, and put \(G=L(H)\) (see \autoref{fig:sharpness}).
Thus $H$ is obtained by subdividing $a_1b_1$ exactly once.  For later
reference, write
\[
x_1=a_1w,\quad x_2=wb_1,
\quad e_{ij}=a_ib_j\quad ((i,j)\ne(1,1)).
\]

\begin{figure}[htbp]
	\centering
	\begin{tikzpicture}[
		vertex/.style={circle,draw,fill=white,inner sep=1.4pt,minimum size=5.5mm},
		lvertex/.style={circle,draw,fill=white,inner sep=0.8pt,minimum size=6.4mm,
			font=\scriptsize},
		hedge/.style={line width=.65pt},
		aedge/.style={draw=blue,line width=.8pt},
		bedge/.style={draw=red,line width=.8pt}]
		\coordinate (a1c) at (0,3);
		\coordinate (a2c) at (0,1.5);
		\coordinate (a3c) at (0,0);
		\coordinate (b1c) at (4,3);
		\coordinate (b2c) at (4,1.5);
		\coordinate (b3c) at (4,0);
		\coordinate (wc)  at (2,3);
		\draw[hedge] (a1c)--(wc) node[midway,above,font=\scriptsize] {$x_1$}
		--(b1c) node[midway,above,font=\scriptsize] {$x_2$};
		\foreach \u/\v in {a1c/b2c,a1c/b3c,a2c/b1c,a2c/b2c,a2c/b3c,
			a3c/b1c,a3c/b2c,a3c/b3c}
		\draw[hedge] (\u)--(\v);
		\node[vertex,label=left:$a_1$] at (a1c) {};
		\node[vertex,label=left:$a_2$] at (a2c) {};
		\node[vertex,label=left:$a_3$] at (a3c) {};
		\node[vertex,label=right:$b_1$] at (b1c) {};
		\node[vertex,label=right:$b_2$] at (b2c) {};
		\node[vertex,label=right:$b_3$] at (b3c) {};
		\node[vertex,label=above:$w$] at (wc) {};
		\node[font=\small] at (2,-.72) {$(a)\ H$};
		
		\begin{scope}[xshift=7.2cm]
			\coordinate (x1c)  at (0,4.8);
			\coordinate (x2c)  at (-1.3,3.6);
			\coordinate (e12c) at (1.2,3.6);
			\coordinate (e13c) at (3.6,3.6);
			\coordinate (e21c) at (-1.3,1.8);
			\coordinate (e22c) at (1.2,1.8);
			\coordinate (e23c) at (3.6,1.8);
			\coordinate (e31c) at (-1.3,0);
			\coordinate (e32c) at (1.2,0);
			\coordinate (e33c) at (3.6,0);
			
			\draw[aedge] (x1c)--(e12c)--(e13c)--cycle;
			\draw[aedge] (e21c)--(e22c)--(e23c);
			\draw[aedge] (e21c) to[bend left=18] (e23c);
			\draw[aedge] (e31c)--(e32c)--(e33c);
			\draw[aedge] (e31c) to[bend right=18] (e33c);
			
			\draw[bedge] (x2c)--(e21c)--(e31c);
			\draw[bedge] (x2c) to[bend right=18] (e31c);
			\draw[bedge] (e12c)--(e22c)--(e32c);
			\draw[bedge] (e12c) to[bend left=15] (e32c);
			\draw[bedge] (e13c)--(e23c)--(e33c);
			\draw[bedge] (e13c) to[bend right=15] (e33c);
			
			\draw[line width=1pt] (x1c)--(x2c);
			
			\node[lvertex] at (x1c)  {$x_1$};
			\node[lvertex] at (x2c)  {$x_2$};
			\node[lvertex] at (e12c) {$e_{12}$};
			\node[lvertex] at (e13c) {$e_{13}$};
			\node[lvertex] at (e21c) {$e_{21}$};
			\node[lvertex,fill=yellow] at (e22c) {$e_{22}$};
			\node[lvertex] at (e23c) {$e_{23}$};
			\node[lvertex] at (e31c) {$e_{31}$};
			\node[lvertex] at (e32c) {$e_{32}$};
			\node[lvertex] at (e33c) {$e_{33}$};
			\node[font=\small] at (1.2,-.72) {$(b)\ G=L(H)$};
		\end{scope}
	\end{tikzpicture}
	\caption{The sharpness example.  In panel~$(b)$, the blue edges form the three
		cliques corresponding to the vertices \(a_1,a_2,a_3\) of \(H\), the red edges form the three cliques corresponding
		to \(b_1,b_2,b_3\), and the black edge \(x_1x_2\) corresponds to the new vertex
		\(w\).
		The blue triangle $\{e_{21},e_{22},e_{23}\}$ and the red triangle
		$\{e_{12},e_{22},e_{32}\}$ induce a bowtie with centre $e_{22}$.}
	\label{fig:sharpness}
\end{figure}

We first record a convenient observation about induced subdivisions in
claw-free graphs.

\begin{lemma}\label{lem:claw-diamond-isk4}
	Every $\{K_4,\textup{claw, diamond}\}$-free graph is $\mathrm{ISK}_4$-free.
\end{lemma}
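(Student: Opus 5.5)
We argue by contradiction. Suppose $G$ is $\{K_4,\text{claw},\text{diamond}\}$-free but contains an induced subgraph $J$ that is a subdivision of $K_4$, with branch vertices $v_1,v_2,v_3,v_4$ and branch paths $P_{ij}$. Every induced subgraph of $G$ is again $\{K_4,\text{claw},\text{diamond}\}$-free, so it suffices to show that $J$ itself contains a claw, a diamond, or a $K_4$ as an induced subgraph. The only vertices of degree three in $J$ are the branch vertices. In $J$, the three neighbours of a branch vertex $v_1$ are its first vertices $n_{12},n_{13},n_{14}$ on $P_{12},P_{13},P_{14}$, and these are distinct.

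Since $J$ is induced and claw-free, the neighbourhood $\{n_{12},n_{13},n_{14}\}$ of $v_1$ contains at least one edge. Every edge of $J$ lies on a branch path, so an edge $n_{12}n_{13}$ must itself be a branch path. Hence $n_{12}=v_2$, $n_{13}=v_3$ and $P_{23}=v_2v_3$, so that $v_1v_2v_3$ is a triangle consisting of three length-one branch paths. The same argument at each of the four branch vertices gives, for every $i$, a triangle of $J$ through $v_i$ whose three vertices are all branch vertices.

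Now we count. Each such triangle uses three of the four branch vertices, and every branch vertex lies in one of them. Therefore $J$ contains at least two distinct triangles on branch vertices, say $v_1v_2v_3$ and a second one that contains $v_4$ and two of $v_1,v_2,v_3$, for instance $v_1v_2v_4$. All edges among $v_1,v_2,v_3,v_4$ are then present except possibly $v_3v_4$. So $\{v_1,v_2,v_3,v_4\}$ induces a diamond if $v_3v_4\notin E$ and a $K_4$ if $v_3v_4\in E$. Either case contradicts the hypothesis.

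The main point requiring care is the observation that, in an induced subdivision, an edge between two neighbours of a branch vertex forces those two neighbours to be branch vertices joined by an unsubdivided path. The reason is that the only edges of $J$ are edges of branch paths, and two distinct branch paths leaving $v_1$ can be joined by an edge only through a third branch path whose ends are branch vertices. Once this is established, the rest is the counting argument above.
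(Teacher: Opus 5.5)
Your proof is correct and follows the paper's argument: claw-freeness at each branch vertex forces a triangle on branch vertices, and two such triangles among four branch vertices yield a diamond or a $K_4$. Where the paper simply notes that the two neighbours of an internal vertex are nonadjacent, you justify the triangle-on-branch-vertices step by analysing the branch paths.
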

\begin{proof}
	Suppose that an induced subgraph $Q$ is a subdivision of $K_4$ with branch
	vertices $a,b,c,d$.  Every triangle of $Q$ consists of branch vertices,
	since the two neighbors of an internal vertex are nonadjacent.
	Claw-freeness forces every branch vertex to lie in a triangle; relabel so
	that $abc$ is one.  Since $d$ also lies in a triangle, it is adjacent to at
	least two of $a,b,c$.  Hence $\{a,b,c,d\}$ induces a diamond or a $K_4$,
	a contradiction.
\end{proof}

\begin{proposition}\label{prop:sharpness}
	The graph $G=L(H)$ in~\autoref{fig:sharpness} is $\{\mathrm{ISK}_4,\textup{diamond}\}$-free, and \(\chi(G)=4\).
\end{proposition}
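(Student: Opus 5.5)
The plan has three parts: show $G$ is $\{K_4,\text{claw},\text{diamond}\}$-free and invoke Lemma~\ref{lem:claw-diamond-isk4}; get $\chi(G)\le4$ from \ref{L5}; and show $\chi(G)\ge4$ by ruling out proper $3$-edge-colorings of $H$.

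\textbf{Forbidden subgraphs.} Every line graph is claw-free. A $K_4$ or a diamond in $L(H)$ comes from four edges of $H$ with at least five adjacent pairs. Since $H$ is simple, such edges must form a star $K_{1,4}$ or contain a triangle together with an extra edge. $H$ is bipartite (subdividing one edge of $K_{3,3}$ once gives the odd cycle? no: $a_1wb_1$ replaces an edge by a path of length $2$, so the cycle $a_1 w b_1 a_2 b_2 a_1$ has length $5$). So I will argue directly instead. The maximum degree of $H$ is $3$, so no $K_{1,4}$ exists. $H$ is triangle-free, because every cycle through $w$ has length at least $5$ and $K_{3,3}$ has no triangles. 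Hence any clique of $L(H)$ corresponds to a star of size at most $3$, which excludes $K_4$. A diamond in a line graph needs two triangles sharing an edge. In $L(H)$ with $H$ triangle-free, every triangle is a $3$-star, and two distinct $3$-stars at different centers share at most one edge. So no diamond exists, and Lemma~\ref{lem:claw-diamond-isk4} gives ISK$_4$-freeness.

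\textbf{Upper bound.} $H$ is simple with maximum degree $3$, so \ref{L5} gives $\chi(G)=\chi'(H)\le4$.

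\textbf{Lower bound.} I expect this to be the main step. It suffices to show $\chi'(H)\neq3$, and I will use a parity argument. Suppose $H$ has a proper $3$-edge-coloring. The vertex $w$ has degree $2$ and every other vertex has degree $3$. Fix a color $\gamma$ and let $M_\gamma$ be its color class, a matching. Each degree-$3$ vertex sees all three colors, so $M_\gamma$ covers all six vertices $a_i,b_j$. If $w\in V(M_\gamma)$, then $M_\gamma$ covers $7$ vertices, which is odd and impossible. Therefore no color class covers $w$. But $w$ has two incident edges carrying two colors, and each of those color classes covers $w$. This contradiction gives $\chi'(H)\ge4$, so $\chi(G)=4$.

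Equivalently, $|V(H)|=7$ is odd, so every matching misses a vertex, and the color classes of the degree-$3$ vertices force the conflict above.
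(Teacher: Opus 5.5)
Your proof is correct and follows the paper's route: $\{K_4,\text{claw},\text{diamond}\}$-freeness of $L(H)$ combined with Lemma~\ref{lem:claw-diamond-isk4}, Vizing for $\chi(G)\le 4$, and the odd order of $H$ for $\chi(G)\ge 4$. There are two small local differences, neither affecting correctness. For $K_4$ and diamonds, the paper argues via vertex neighborhoods of $L(H)$, whereas you use stars and triangles of $H$. For the lower bound, the paper counts $10>3\lfloor 7/2\rfloor$ edges, whereas you note that the color class of an edge at $w$ would cover all seven vertices, an equivalent parity fact. In a write-up, delete the abandoned ``$H$ is bipartite'' aside, since $H$ contains the $5$-cycle $a_1wb_1a_2b_2a_1$.
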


\begin{proof}
	We divide the proof into the structural assertion and the chromatic
	assertion.
	
	The graph $H$ is simple, triangle-free, and has maximum degree three.
	For any edge $rs\in E(H)$, the neighborhood of its vertex in $L(H)$
	is the disjoint union of the cliques formed by the other edges at $r$
	and at $s$.  There is no edge between these cliques, since such an edge
	would give a triangle in $H$; each clique has at most two vertices.
	Thus every neighborhood in $L(H)$ is a matching with isolated vertices.
	It contains neither a triangle nor an induced three-vertex path, excluding
	$K_4$ and diamonds in $L(H)$, respectively.
	As a union of at most two cliques, it also has stability number at most two, excluding claws.
	Lemma~\ref{lem:claw-diamond-isk4} now implies that $G$ is ISK$_4$-free.
	
	A vertex-coloring of $L(H)$ is precisely an edge-coloring of $H$, and so $\chi(G)=\chi'(H)$.  Since $H$ is a simple graph of maximum degree three, \ref{L5} gives $\chi'(H)\leq 4$.
	
	The graph $H$ has seven vertices and ten edges.  Each color class in
	a proper edge-coloring is a matching, so it contains at most
	$\lfloor 7/2\rfloor=3$ edges.  Three color classes would therefore
	cover at most nine edges, fewer than $|E(H)|=10$.  Thus
	$\chi'(H)\ge4$, and consequently $\chi(G)=\chi'(H)=4$.
\end{proof}

\begin{proof}[\bf Proof of Theorem~\ref{thm:sharp-intro}.]
	It follows from Proposition~\ref{prop:sharpness}.
\end{proof}


\section*{Acknowledgements}
\textbf{Qing Cui} was supported by the National Natural Science Foundation of China (No. 12171239).
\textbf{Kaiyang Lan} was partially supported by the Foundation for Cultivated Young Talents of Fujian Province, China (Grant No. 2026350294), by the Natural Science Foundation of Fujian Province, China (Grant No. 2026J001968), and by the Fujian Key Laboratory of Granular Computing and Applications (Minnan Normal University), the Institute of Meteorological Big Data-Digital Fujian, and the Fujian Key Laboratory of Data Science and Statistics.


%

\subsection*{Data availability}
Data sharing is not applicable to this article as no datasets were generated or analysed during the current study.

\subsection*{Conflict of interest}
The authors declare that they have no known competing financial interests or personal relationships that could have appeared to influence the work reported in this paper.

\end{document}